\definecolor{purpleish}{RGB}{102,0,102}
\definecolor{pinkish}{RGB}{255,153,204}
\newcommand{\area}{\mathsf{area}}
\newcommand{\dinv}{\mathsf{dinv}}
\newcommand{\inv}{\mathsf{inv}}
\newcommand{\bounce}{\mathsf{bounce}}
\newcommand{\maj}{\mathsf{maj}}
\newcommand{\ides}{\mathsf{ides}}
\newcommand{\rdiagword}{\mathsf{rdiagword}}
\newcommand{\shift}{\mathsf{shift}}
\newcommand{\diagword}{\mathsf{diagword}}
\newcommand{\Yconsec}{\mathsf{Yconsec}}
\newcommand{\touch}{\mathsf{touch}}
\newcommand{\dcomp}{\mathsf{dcomp}}
\newcommand{\D}{\mathsf{D}} 
\newcommand{\W}{\mathsf{W}} 
\newcommand{\ED}{\mathsf{ED}} 
\newcommand{\SQ}{\mathsf{SQ}} 
\newcommand{\LSQ}{\mathsf{LSQ}} 
\newcommand{\LD}{\mathsf{LD}} 
\newcommand{\Pref}{\mathsf{Pref}} 
\newcommand{\Park}{\mathsf{Park}} 
\newcommand{\qbinom}[2]{\genfrac{[}{]}{0pt}{}{#1}{#2}}
\newdimen\qrr@tikz@sharp@z@
	\edef\pgf@marshal{\noexpand\pgfutil@in@{and}{\pgfgetarrowoptions{sharp >}}}%
	\edef\pgf@tempa{\pgfgetarrowoptions{sharp >}}
	\qrr@tikz@sharp@parse\pgfgetarrowoptions{sharp >}and-\pgfgetarrowoptions{sharp >}\@qrr@tikz@sharp@parse
	\let\qrr@tikz@sharp@max\pgfmathresult
	\pgfmathsetlength\pgf@xa{.5*\pgflinewidth * tan(\qrr@tikz@sharp@max)}%
	\edef\pgf@marshal{\noexpand\pgfutil@in@{and}{\pgfgetarrowoptions{sharp >}}}%
	\edef\pgf@tempa{\pgfgetarrowoptions{sharp >}}
	\qrr@tikz@sharp@parse\pgfgetarrowoptions{sharp >}and-\pgfgetarrowoptions{sharp >}\@qrr@tikz@sharp@parse
	\pgfmathsetlength\pgf@ya{.5*\pgflinewidth * tan(max(\pgf@tempa,\pgf@tempb,0))}%
	\pgfmathsetlength\pgf@xa{-.5*\pgflinewidth * tan(\pgf@tempa)}%
	\pgfmathsetlength\pgf@xb{-.5*\pgflinewidth * tan(\pgf@tempb)}%
	\edef\pgf@marshal{\noexpand\pgfutil@in@{and}{\pgfgetarrowoptions{sharp <}}}%
	\edef\pgf@tempa{\pgfgetarrowoptions{sharp <}}
\qrr@tikz@sharp@parse\pgfgetarrowoptions{sharp <}and-\pgfgetarrowoptions{sharp <}\@qrr@tikz@sharp@parse
	\let\qrr@tikz@sharp@max\pgfmathresult
	\pgfmathsetlength\pgf@xa{.5*\pgflinewidth * tan(\qrr@tikz@sharp@max)}%
	\edef\pgf@marshal{\noexpand\pgfutil@in@{and}{\pgfgetarrowoptions{sharp <}}}%
	\edef\pgf@tempa{\pgfgetarrowoptions{sharp <}}
\qrr@tikz@sharp@parse\pgfgetarrowoptions{sharp <}and-\pgfgetarrowoptions{sharp <}\@qrr@tikz@sharp@parse
	\pgfmathsetlength\pgf@ya{.5*\pgflinewidth * tan(max(\pgf@tempa,\pgf@tempb,0))}%
	\pgfmathsetlength\pgf@xa{-.5*\pgflinewidth * tan(\pgf@tempa)}%
	\pgfmathsetlength\pgf@xb{-.5*\pgflinewidth * tan(\pgf@tempb)}%
\def\qrr@tikz@sharp@parse#1and#2\@qrr@tikz@sharp@parse{\def\pgf@tempa{#1}\def\pgf@tempb{#2}}
\newcommand\multiset[2]%
\let\existstemp\exists \renewcommand*{\exists}{\mathop \existstemp}
\let\foralltemp\forall \renewcommand*{\forall}{\mathop \foralltemp}
\def\quotient#1#2{\raise1ex\hbox{$#1$}\Big/\lower1ex\hbox{$#2$}}
\newcommand{\<}{\langle}
\renewcommand{\>}{\rangle}
\newtheorem{theorem}{Theorem}[section]
\newtheorem{lemma}[theorem]{Lemma}
\newtheorem{proposition}[theorem]{Proposition}
\newtheorem{corollary}[theorem]{Corollary}
\newtheorem{conjecture}[theorem]{Conjecture}
\theoremstyle{definition}
\newtheorem{definition}[theorem]{Definition}
\newtheorem{example}[theorem]{Example}
\theoremstyle{remark}
\newtheorem{remark}[theorem]{Remark}
\title{Theta operators, refined Delta conjectures, and coinvariants}
\author{Michele D'Adderio}
\address{Universit\'e Libre de Bruxelles (ULB)\\D\'epartement de Math\'ematique\\ Boulevard du Triomphe, B-1050 Bruxelles\\ Belgium}\email{mdadderi@ulb.ac.be}
\author{Alessandro Iraci}
\address{Universit\`a di Pisa and Universit\'e Libre de Bruxelles (ULB)\\Dipartimento di Matematica\\ Largo Bruno Pontecorvo 5, 56127 Pisa\\ Italia}\email{iraci@student.dm.unipi.it}
\author{Anna Vanden Wyngaerd}
\address{Universit\'e Libre de Bruxelles (ULB)\\D\'epartement de Math\'ematique\\ Boulevard du Triomphe, B-1050 Bruxelles\\ Belgium}\email{anvdwyng@ulb.ac.be}
\begin{document}
	
\begin{abstract}
	We introduce the family of  Theta operators $\Theta_f$ indexed by symmetric functions $f$ that allow us to conjecture a compositional refinement of the Delta conjecture of Haglund, Remmel and Wilson \cite{Haglund-Remmel-Wilson-2015} for $\Delta_{e_{n-k-1}}'e_n$. We show that the $4$-variable Catalan theorem of Zabrocki \cite{Zabrocki-4Catalan-2016} is precisely the Schr\"{o}der case of our compositional Delta conjecture, and we show how to relate this conjecture to the Dyck path algebra introduced by Carlsson and Mellit in \cite{Carlsson-Mellit-ShuffleConj-2015}, extending one of their results. 
	
	Again using the Theta operators, we conjecture a touching refinement of the generalized Delta conjecture for $\Delta_{h_m}\Delta_{e_{n-k-1}}'e_n$, and prove the case $k=0$, which was also conjectured in \cite{Haglund-Remmel-Wilson-2015}, extending the shuffle theorem of Carlsson and Mellit to a \emph{generalized shuffle theorem} for $\Delta_{h_m}\nabla e_n$. Moreover we show how this implies the case $k=0$ of our generalized Delta square conjecture for $\frac{[n-k]_t}{[n]_t}\Delta_{h_m}\Delta_{e_{n-k}}\omega(p_n)$, extending the square theorem of Sergel \cite{Leven-2016} to a \emph{generalized square theorem} for $\Delta_{h_m}\nabla \omega(p_n)$.
	
	Still the Theta operators will provide a conjectural formula for the Frobenius characteristic of super-diagonal coinvariants with two sets of Grassmanian variables, extending the one of Zabrocki in \cite{Zabrocki_Delta_Module} for the case with one set of such variables. We propose a combinatorial interpretation of this last formula at $q=1$, leaving open the problem of finding a $\dinv$ statistic that gives the whole symmetric function.
\end{abstract}
	
\maketitle
\tableofcontents

\section{Introduction}

In the 90's Garsia and Haiman introduced the $\mathfrak{S}_n$-module of \emph{diagonal harmonics}, i.e.\ the coinvariants of the diagonal action of $\mathfrak{S}_n$ on polynomials in two sets of $n$ variables, and they conjectured that its Frobenius characteristic was given by $\nabla e_n$, where $\nabla$ is the \emph{nabla} operator on symmetric functions introduced in \cite{Bergeron-Garsia-Haiman-Tesler-Positivity-1999}. In 2002 Haiman proved this conjecture (see \cite{Haiman-Vanishing-2002}). Later the authors of \cite{HHLRU-2005} formulated the so called \emph{shuffle conjecture}, i.e.\ they predicted a combinatorial formula for $\nabla e_n$ in terms of labelled Dyck paths. Several years later in \cite{Haglund-Morse-Zabrocki-2012} Haglund, Morse and Zabrocki conjectured a \emph{compositional} refinement of the shuffle conjecture, which specified also the points where the Dyck paths touches the main diagonal. Recently Carlsson and Mellit in \cite{Carlsson-Mellit-ShuffleConj-2015} proved precisely this refinement, thanks to the introduction of what they called the \emph{Dyck path algebra}. See \cite{Willigenburg_History_Shuffle} for more on this story.  

\medskip

In \cite{Haglund-Remmel-Wilson-2015} Haglund, Remmel and Wilson formulated the \emph{Delta conjecture}, which can be stated as
\[\Delta_{e_{n-k-1}}'e_n=\sum_{P\in \LD(0,n)^{\ast k}} q^{\dinv(P)} t^{\area(P)}x^P, \]
where the sum is over labelled Dyck paths of size $n$ with positive labels and $k$ decorated rises. It turns out that for $k=0$ this formula reduces to the shuffle conjecture. Recently this conjecture attracted quite a bit of interest: see \cite[Section~2]{TheBible} for the state of the art on this problem.

Even more recently Zabrocki in \cite{Zabrocki_Delta_Module} conjectured that this formula gives the Frobenius characteristic of the submodule of the \emph{super-diagonal coinvariants} of degree $k$ in the Grassmannian variables (see Section~\ref{sec:superdiag} for the missing definitions). It turns out that the submodule of degree $0$ in the Grassmannian variables is precisely the module of diagonal harmonics.
So the whole framework of ``diagonal harmonics $+$ shuffle conjecture'' got generalized to this new setting.

\medskip

In this work we add a new piece to the puzzle, by extending the compositional shuffle conjecture to a \emph{compositional Delta conjecture}. In order to do so, we introduce a new family of \emph{Theta operators} $\Theta_f$ on symmetric functions, indexed by symmetric functions $f$. In fact with this article we want to make the case for the Theta operators.

Here are the highlights of the present paper:
\begin{itemize}
	\item We state a \emph{compositional Delta conjecture}, which will read as follows: for a composition $\alpha\vDash n-k$
	\[\Theta_{e_k}\nabla C_\alpha=\mathop{\sum_{P\in \LD(0,n)^{\ast k}}}_{\dcomp(P)=\alpha}q^{\dinv(P)}t^{\area(P)} x^P,\]
	where $\nabla C_\alpha$ are the symmetric functions appearing in the compositional shuffle conjecture, and $\dcomp(P)$ is the composition given by the distances between the consecutive points where the Dyck path touches the diagonal, ignoring the rows containing a decorated rise.
	
	\item We show that Zabrocki's $4$-variable Catalan theorem \cite{Zabrocki-4Catalan-2016} is actually the Schr\"{o}der case of our compositional Delta conjecture.
	
	\item We show that the combinatorial side satisfies a recursion that can be described by the Dyck path algebra in the same way as Carlsson and Mellit proved it for the compositional shuffle conjecture. In this case our theorem will read as
	\[d_-^{\ell(\alpha)} M_\alpha^{*k} =\mathop{\sum_{P\in \LD(0,n)^{\ast k}}}_{\dcomp(P)=\alpha}q^{\dinv(P)}t^{\area(P)} x^P,\]
	where $M_\alpha^{*k}$ is defined recursively, and it coincides with $N_\alpha$ in \cite{Carlsson-Mellit-ShuffleConj-2015} for $k=0$. This reduces our refinement of the Delta conjecture to an identity of operators.
	
	\item We state a \emph{touching generalized Delta conjecture}, which refines the generalized Delta conjecture in \cite{Haglund-Remmel-Wilson-2015} for $\Delta_{h_m}\Delta_{e_{n-k-1}}'e_n$ to 
	\begin{equation*}
	\Delta_{h_m}\Theta_{e_k}\nabla E_{n-k,r}=\mathop{\sum_{P\in \LD(m,n)^{\ast k}}}_{\touch(P)=r}q^{\dinv(P)}t^{\area(P)} x^P,
	\end{equation*}
	where $\nabla E_{n-k,r}$ are the symmetric functions already appearing in the shuffle conjecture and the sum of the righthand side is over labelled Dyck paths with nonnegative labels of size $m+n$, with $m$ zero labels, $k$ decorated rises and touching the diagonal $r$ times. Furthermore we state a \emph{touching generalized Delta square conjecture}, which refines our generalized Delta square conjecture \cite{DAdderio-Iraci-VandenWyngaerd-Delta-Square} for $[n-k]_t/[n]_t\Delta_{h_m}\Delta_{e_{n-k}}\omega(p_n)$ to
	\begin{equation*}
	\frac{[n]_q}{[r]_q} \Delta_{h_m} \Theta_{e_k}\nabla E_{n-k,r}=\mathop{\sum_{P\in \mathsf{LSQ}(m,n)^{\ast k}}}_{\touch(P)=r}q^{\dinv(P)}t^{\area(P)} x^P,
	\end{equation*} where the sum is over labelled square paths ending east with nonnegative labels, $k$ decorated rises and touching the diagonal $r$ times. 
	
	\item We prove the case $k=0$ of our touching generalized Delta conjecture, which was already conjectured in \cite[Conjecture~7.5]{Haglund-Remmel-Wilson-2015}. This extends the shuffle theorem of Carlsson and Mellit \cite{Carlsson-Mellit-ShuffleConj-2015} to a \emph{generalized shuffle theorem} for $\Delta_{h_m}\nabla e_n$.
	
	\item We prove the case $k=0$ of our touching generalized Delta square conjecture. This extends the square theorem of Sergel \cite{Leven-2016} to a \emph{generalized square theorem} for $\Delta_{h_m}\nabla \omega(p_n)$.
	
	\item We extend Zabrocki's conjecture \cite{Zabrocki_Delta_Module} to the module $M_n^{(2)}$ of super-diagonal coinvariants of the diagonal action of $\mathfrak{S}_n$ on polynomials in two sets of $n$ commutative variables and two sets of $n$ Grassmanian variables: 
	\begin{equation*}
	\mathcal{F}_{q,t,\underline{z}}(M_n^{(2)}) = \mathop{\sum_{i,j\geq 0}}_{1\leq i+j<n} z_1^iz_2^j\Theta_{e_i}\Theta_{e_j}\nabla e_{n-(i+j)}.
	\end{equation*}
	
	\item We conjecture a combinatorial interpretation for the formula in the previous item at $q=1$:
	\begin{equation*}
	\left.\Theta_{e_r}\Theta_{e_k} \nabla e_{n-r-k}\right|_{q=1}  = \sum_{P\in \LD(0,n)^{\ast k, \circ r}}t^{\area(P)}x^P
	\end{equation*}
	where $\LD(0,n)^{\ast k, \circ r}$ is the set of labelled Dyck paths with positive labels of size $n$ with $k$ decorated rises and $r$ decorated contractible valleys. We leave open the outstanding problem of finding a $\dinv$ statistic that gives the whole symmetric function $\Theta_{e_r}\Theta_{e_k} \nabla e_{n-r-k}$.
\end{itemize}

The rest of this paper is organized in the following way. In Section~2 we introduce the basic ingredients of symmetric functions, that will be needed in Section~3 to introduce the Theta operators and to state the basic theorems that led us to their definition. In Section~4 we recall the combinatorial definitions needed in Section~5 to state our refined Delta conjectures. In Section~6 we prove our results about the compositional Delta conjecture, while in Section~7 we prove our results about the touching generalized Delta conjectures. In Section~8 we state our conjectures about the Frobenius characteristics of super-diagonal coinvariants, while in Section~9 we state our combinatorial interpretation of those at $q=1$. In Section~10 we state more conjectures about the Theta operators. Finally in Section~11 we give the details of the technical proofs of symmetric function theory that we left out in the previous sections.

\section*{Acknowledgements}

We are happy to thank Mike Zabrocki for providing us useful references, programs and information to check our conjecture on the Frobenius characteristic of super-diagonal coinvariants.

\section{Symmetric functions: basics}

In this section we limit ourselves to introduce the necessary notation to state our main theorems and conjectures. We refer to Section~\ref{sec:SF_tools} for more on symmetric functions.

The main references that we will use for symmetric functions
are \cite{Macdonald-Book-1995}, \cite{Stanley-Book-1999} and \cite{Haglund-Book-2008}. 

\medskip

The standard bases of the symmetric functions that will appear in our
calculations are the complete $\{h_{\lambda}\}_{\lambda}$, elementary $\{e_{\lambda}\}_{\lambda}$, power $\{p_{\lambda}\}_{\lambda}$ and Schur $\{s_{\lambda}\}_{\lambda}$ bases.

\medskip

\emph{We will use the usual convention that $e_0=h_0=1$ and $e_k=h_k=0$ for $k<0$.}

\medskip

The ring $\Lambda$ of symmetric functions can be thought of as the polynomial ring in the power
sum generators $p_1, p_2, p_3,\dots$. This ring has a grading $\Lambda=\bigoplus_{n\geq 0}\Lambda^{(n)}$ given by assigning degree $i$ to $p_i$ for all $i\geq 1$. As we are working with Macdonald symmetric functions
involving two parameters $q$ and $t$, we will consider this polynomial ring over the field $\mathbb{Q}(q,t)$.
We will make extensive use of the \emph{plethystic notation}.

With this notation we will be able to add and subtract alphabets, which will be represented as sums of monomials $X = x_1 + x_2 + x_3+\cdots $. Then, given a symmetric function $f$, and thinking of it as an element of $\Lambda$, we denote by $f[X]$ the expression $f$ with $p_k$ replaced by $x_{1}^{k}+x_{2}^{k}+x_{3}^{k}+\cdots$, for all $k$. More generally, given any expression $Q(z_1,z_2,\dots)$, we define the plethystic substitution $f[Q(z_1,z_2,\dots)]$ to be $f$ with $p_k$ replaced by $Q(z_1^k,z_2^k,\dots)$.

We denote by $\<\, , \>$ the \emph{Hall scalar product} on symmetric functions, which can be defined by saying that the Schur functions form an orthonormal basis. We denote by $\omega$ the fundamental algebraic involution which sends $e_k$ to $h_k$, $s_{\lambda}$ to $s_{\lambda'}$ and $p_k$ to $(-1)^{k-1}p_k$.

With the symbol ``$\perp$'' we denote the operation of taking the adjoint of an operator with respect to the Hall scalar product, i.e.
\begin{equation}
\langle f^\perp g,h\rangle=\langle g,fh\rangle\quad \text{ for all }f,g,h\in \Lambda.
\end{equation}

For a partition $\mu\vdash n$, we denote by
\begin{equation}
\widetilde{H}_{\mu} \coloneqq \widetilde{H}_{\mu}[X]=\widetilde{H}_{\mu}[X;q,t]=\sum_{\lambda\vdash n}\widetilde{K}_{\lambda \mu}(q,t)s_{\lambda}
\end{equation}
the \emph{(modified) Macdonald polynomials}, where 
\begin{equation}
\widetilde{K}_{\lambda \mu} \coloneqq \widetilde{K}_{\lambda \mu}(q,t)=K_{\lambda \mu}(q,1/t)t^{n(\mu)}\quad \text{ with }\quad n(\mu)=\sum_{i\geq 1}\mu_i(i-1)
\end{equation}
are the \emph{(modified) Kostka coefficients} (see \cite[Chapter~2]{Haglund-Book-2008} for more details). 

The set $\{\widetilde{H}_{\mu}[X;q,t]\}_{\mu}$ is a basis of the ring of symmetric functions $\Lambda$ with coefficients in $\mathbb{Q}(q,t)$. This is a modification of the basis introduced by Macdonald \cite{Macdonald-Book-1995}, and they are the Frobenius characteristic of the so called Garsia-Haiman modules (see \cite{Garsia-Haiman-PNAS-1993}).

If we identify the partition $\mu$ with its Ferrers diagram, i.e. with the collection of cells $\{(i,j)\mid 1\leq i\leq \mu_j, 1\leq j\leq \ell(\mu)\}$, then for each cell $c\in \mu$ we refer to the \emph{arm}, \emph{leg}, \emph{co-arm} and \emph{co-leg} (denoted respectively as $a_\mu(c), l_\mu(c), a_\mu(c)', l_\mu(c)'$) as the number of cells in $\mu$ that are strictly to the right, above, to the left and below $c$ in $\mu$, respectively (see Figure~\ref{fig:notation}).

\begin{figure}[h]
	\centering
	\begin{tikzpicture}[scale=.4]
	\draw[gray,opacity=.4](0,0) grid (15,10);
	\fill[white] (1,10)|-(3,9)|- (5,7)|-(9,5)|-(13,2)--(15.2,2)|-(1,10.2);
	\draw[gray]  (1,10)|-(3,9)|- (5,7)|-(9,5)|-(13,2)--(15,2)--(15,0)-|(0,10)--(1,10);
	\fill[blue, opacity=.2] (0,3) rectangle (9,4) (4,0) rectangle (5,7); 
	\fill[blue, opacity=.5] (4,3) rectangle (5,4);
	\draw (7,4.5) node {\tiny{Arm}} (3.25,5.5) node {\tiny{Leg}} (6.25, 1.5) node {\tiny{Co-leg}} (2,2.5) node {\tiny{Co-arm}} ;
	\end{tikzpicture}
	\caption{}
	\label{fig:notation}
\end{figure}

We set
\begin{equation}
M \coloneqq (1-q)(1-t),
\end{equation}
and we define for every partition $\mu$
\begin{align}
B_{\mu} &  \coloneqq B_{\mu}(q,t)=\sum_{c\in \mu}q^{a_{\mu}'(c)}t^{l_{\mu}'(c)}\\
D_{\mu} &  \coloneqq MB_{\mu}(q,t)-1\\
T_{\mu} &  \coloneqq T_{\mu}(q,t)=\prod_{c\in \mu}q^{a_{\mu}'(c)}t^{l_{\mu}'(c)}\\
\Pi_{\mu} &  \coloneqq \Pi_{\mu}(q,t)=\prod_{c\in \mu/(1)}(1-q^{a_{\mu}'(c)}t^{l_{\mu}'(c)})\\
w_{\mu} &  \coloneqq w_{\mu}(q,t)=\prod_{c\in \mu}(q^{a_{\mu}(c)}-t^{l_{\mu}(c)+1})(t^{l_{\mu}(c)}-q^{a_{\mu}(c)+1}).
\end{align}

Notice that
\begin{equation} \label{eq:Bmu_Tmu}
B_{\mu}=e_1[B_{\mu}]\quad \text{ and } \quad T_{\mu}=e_{|\mu|}[B_{\mu}].
\end{equation}

For every symmetric function $f[X]$ we set
\begin{equation}
f^*=f^*[X] \coloneqq f\left[\frac{X}{M}\right].
\end{equation}

The following linear operators were introduced in \cites{Bergeron-Garsia-ScienceFiction-1999,Bergeron-Garsia-Haiman-Tesler-Positivity-1999}, and they are at the basis of the conjectures relating symmetric function coefficients and $q,t$-combinatorics in this area. 

We define the \emph{nabla} operator on $\Lambda$ by
\begin{equation}
\nabla  \widetilde{H}_{\mu}=T_{\mu} \widetilde{H}_{\mu}\quad \text{ for all }\mu,
\end{equation}
and we define the \emph{Delta} operators $\Delta_f$ and $\Delta_f'$ on $\Lambda$ by
\begin{equation}
\Delta_f \widetilde{H}_{\mu}=f[B_{\mu}(q,t)]\widetilde{H}_{\mu}\quad \text{ and } \quad 
\Delta_f' \widetilde{H}_{\mu}=f[B_{\mu}(q,t)-1]\widetilde{H}_{\mu},\quad \text{ for all }\mu.
\end{equation}

Observe that on the vector space of symmetric functions homogeneous of degree $n$, denoted by $\Lambda^{(n)}$, the operator $\nabla$ equals $\Delta_{e_n}$. Moreover, for every $1\leq k\leq n$,
\begin{equation} \label{eq:deltaprime}
\Delta_{e_k}=\Delta_{e_k}'+\Delta_{e_{k-1}}'\quad \text{ on }\Lambda^{(n)},
\end{equation}
and for any $k>n$, $\Delta_{e_k}=\Delta_{e_{k-1}}'=0$ on $\Lambda^{(n)}$, so that $\Delta_{e_n}=\Delta_{e_{n-1}}'$ on $\Lambda^{(n)}$.

Recall the standard notation for $q$-analogues: $n\in \mathbb{N}$
\begin{equation}
[0]_q \coloneqq 0,\quad \text{ and }\quad [n]_q \coloneqq \frac{1-q^n}{1-q}=1+q+q^2+\cdots+q^{n-1} \quad \text{ for } n\geq 1,
\end{equation}
\begin{equation}
[0]_q! \coloneqq 1\quad \text{ and }\quad [n]_q! \coloneqq [n]_q[n-1]_q\cdots [2]_q[1]_q\quad \text{ for }n\geq 1,
\end{equation}
and
\begin{equation}
\qbinom{n}{k}_q \coloneqq \frac{[n]_q!}{[k]_q![n-k]_q!} \quad \text{ for } n \geq k \geq 0, \quad \text{ and }\quad  
\qbinom{n}{k}_q \coloneqq 0 \quad \text{ for } n < k,
\end{equation}
and also the standard notation for the $q$-\emph{rising factorial}
\begin{equation}
(a;q)_n \coloneqq (1-a)(1-qa)(1-q^2a)\cdots (1-q^{n-1}a).
\end{equation}


\medskip

In \cite{Haglund-Morse-Zabrocki-2012} the following operators were introduced: for any $m\geq 0$ and any $F[X]\in \Lambda$
\begin{equation}
\mathbb{C}_mF[X] \coloneqq (-1/q)^{m-1}\sum_{r\geq 0}q^{-r}h_{m+r}[X]h_{r}[X(1-q)]^\perp F[X],
\end{equation}
and for any composition $\alpha=(\alpha_1,\alpha_2,\dots,\alpha_l)$ of $n$, denoted $\alpha\vDash n$, we set
\begin{equation}
C_\alpha=C_\alpha[X;q] \coloneqq \mathbb{C}_{\alpha_1}\mathbb{C}_{\alpha_2}\cdots \mathbb{C}_{\alpha_l}(1).
\end{equation}

The symmetric functions $E_{n,k}$ were introduced in \cite{Garsia-Haglund-qtCatalan-2002} by means of the following expansion:
\begin{equation} \label{eq:def_Enk}
e_n\left[X\frac{1-z}{1-q}\right]=\sum_{k=1}^n \frac{(z;q)_k}{(q;q)_k}E_{n,k}.
\end{equation}

Notice that setting $z=q^j$ in \eqref{eq:def_Enk} we get
\begin{equation} \label{eq:en_q_sum_Enk}
e_n\left[X[j]_q\right]=e_n\left[X\frac{1-q^j}{1-q}\right]=\sum_{k=1}^n \frac{(q^j;q)_k}{(q;q)_k}E_{n,k} =\sum_{k=1}^n \begin{bmatrix}
k+j-1\\
k
\end{bmatrix}_qE_{n,k} .
\end{equation}
In particular, for $z=q$ we get
\begin{equation} \label{eq:en_sum_Enk}
e_n=E_{n,1}+E_{n,2}+\cdots +E_{n,n}.
\end{equation}

The following identity is proved in \cite{Haglund-Morse-Zabrocki-2012}*{Section~5}:
\begin{equation} \label{eq:Enk=sumCalpha}
E_{n,r}=\mathop{\sum_{\alpha\vDash n}}_{\ell(\alpha)=r}C_\alpha\quad \text{ for all }r=1,2,\dots, n,
\end{equation}
where $\ell(\alpha)$ denotes the length of the composition $\alpha$.

Together with \eqref{eq:en_sum_Enk} it gives immediately
\begin{equation}
e_n=\sum_{\alpha\vDash n} C_\alpha.
\end{equation}

The following identity is proved in \cite{Can-Loehr-2006}*{Theorem~4}:
\begin{equation} \label{eq:pn_Enk}
\omega(p_n)=\sum_{k=1}^n\frac{[n]_q}{[k]_q}E_{n,k}.
\end{equation}

\section{The Theta operators} \label{sec:Delkm_ops}

In this section we introduce the family of operators $\Theta_f$ and we state a few results about them: \emph{we will give the missing proofs in Section~\ref{sec:proofs_sec_Delkm}}. See also Section~\ref{sec:conj_Delk0} for some conjectures about them.

\medskip

Recall the definition of the invertible linear operator $\mathbf{\Pi}$ on $\Lambda=\oplus_{n\geq 0}\Lambda^{(n)}$ defined by $\mathbf{\Pi} 1=1$, and for any non-empty partition $\mu$
\begin{equation}
\mathbf{\Pi} \widetilde{H}_\mu \coloneqq \Pi_\mu \widetilde{H}_\mu .
\end{equation}

For any symmetric function $f\in \Lambda$ we introduce the following \emph{Theta operators} on $\Lambda$: for every $F[X]\in \Lambda$ we set
\begin{equation} \label{eq:def_Deltaf}
\Theta_fF[X] \coloneqq \mathbf{\Pi}f^*\mathbf{\Pi}^{-1}F[X].
\end{equation}
It is clear that $\Theta_f$ is linear, and moreover, if $f$ is homogenous of degree $k$, then so is $\Theta_f$, i.e. \[\Theta_f\Lambda^{(n)}\subseteq \Lambda^{(n+k)} \qquad \text{ for }f\in \Lambda^{(k)}. \]

Since in the present article we will use mostly a special case of these operators, we introduced the following shorter notation: for $k\geq 0$, we set
%
\begin{equation} \label{eq:def_Deltakm}
\Theta_k \coloneqq \Theta_{e_k}.
\end{equation}
Notice that $\Theta_0$ is the identity operator on $\Lambda$.

The following theorems, which we prove in Section~\ref{sec:proofs_sec_Delkm}, led to the definition of the Theta operators.
\begin{theorem} \label{thm:DeltakmGD}
	For $n\geq 1$ and $k\geq 0$,
	\begin{equation} \label{eq:thm:DeltakmGD}
	\Theta_k\nabla e_{n-k}=\Delta_{e_{n-k-1}}'e_n.
	\end{equation}
\end{theorem}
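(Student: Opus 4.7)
The plan is to expand both sides of the desired identity in the modified Macdonald basis $\{\widetilde H_\mu\}_\mu$ and verify that the coefficient of each $\widetilde H_\nu$ (for $\nu\vdash n$) agrees. Starting from the classical Garsia-Haiman summation formula
\[
e_n = \sum_{\mu\vdash n}\frac{MB_\mu\,\Pi_\mu}{w_\mu}\widetilde H_\mu
\]
and the fact that $\Delta_{e_{n-k-1}}'$ acts on $\widetilde H_\mu$ with eigenvalue $e_{n-k-1}[B_\mu-1]$, the right-hand side becomes
\[
\Delta_{e_{n-k-1}}'e_n \;=\; \sum_{\nu\vdash n}\frac{e_{n-k-1}[B_\nu-1]\,MB_\nu\,\Pi_\nu}{w_\nu}\widetilde H_\nu.
\]

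For the left-hand side, applying the same expansion to $e_{n-k}$ and then the diagonal operators $\nabla$ (eigenvalue $T_\mu$) and $\mathbf{\Pi}^{-1}$ (eigenvalue $1/\Pi_\mu$) yields
\[
\mathbf{\Pi}^{-1}\nabla e_{n-k} \;=\; \sum_{\mu\vdash n-k}\frac{T_\mu\,MB_\mu}{w_\mu}\widetilde H_\mu.
\]
The only step that is not diagonal in this basis is multiplication by $e_k[X/M]$. Writing $e_k[X/M]\widetilde H_\mu=\sum_{\nu\vdash n} a^{(k)}_{\mu,\nu}\widetilde H_\nu$ and using $\mathbf{\Pi}\widetilde H_\nu=\Pi_\nu\widetilde H_\nu$, the theorem reduces to the coefficient identity
\[
\sum_{\mu\vdash n-k}\frac{T_\mu\,B_\mu\,a^{(k)}_{\mu,\nu}}{w_\mu}\;=\;\frac{e_{n-k-1}[B_\nu-1]\,B_\nu}{w_\nu}\qquad\text{for every }\nu\vdash n.
\]

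The main obstacle is establishing this coefficient identity. I expect to handle it by invoking the Macdonald-Cauchy kernel
\[
\sum_\lambda \frac{\widetilde H_\lambda[X]\,\widetilde H_\lambda[Y]}{w_\lambda} = \Omega\!\left[\frac{XY}{M}\right],
\]
which shows that $\{\widetilde H_\nu/w_\nu\}$ is the basis dual to $\{\widetilde H_\nu\}$ with respect to the ``star'' pairing $\langle f,g\rangle_\ast\coloneqq\langle f[X],g[MX]\rangle$, so that $a^{(k)}_{\mu,\nu}$ can be expressed as a star-scalar product. This scalar product can be massaged via the Macdonald-Koornwinder reciprocity
\[
\widetilde H_\mu[1-u] \;=\; \prod_{c\in\mu}\bigl(1-u\,q^{a_\mu'(c)}t^{l_\mu'(c)}\bigr),
\]
and its immediate consequence $\sum_{i\ge 0}(-u)^i e_i[B_\nu-1]=\widetilde H_\nu[1-u]/(1-u)$. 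Extracting the coefficient of $u^k$ on both sides after suitable rearrangement should then yield the target equality. The bulk of the work lies in this plethystic manipulation; once the summation identity is in place, matching coefficients of each $\widetilde H_\nu$ concludes the proof. A useful sanity check is $k=0$, where $\Theta_0=\mathrm{id}$ and $\nabla=\Delta_{e_{n-1}}'$ on $\Lambda^{(n)}$, so the identity is tautological, together with a direct verification for small $n,k$ (e.g.\ $n=2,k=1$, where both sides equal $e_2$).
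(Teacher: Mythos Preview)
Your overall structure matches the paper's proof exactly: expand $e_{n-k}$ in the Macdonald basis, apply $\nabla$ and $\mathbf{\Pi}^{-1}$ diagonally, multiply by $e_k^*$ using Pieri coefficients, apply $\mathbf{\Pi}$, and compare to the Macdonald expansion of $\Delta_{e_{n-k-1}}'e_n$. Your coefficient identity, after using $a^{(k)}_{\mu,\nu}=d^{(k)}_{\nu\mu}$ and $c^{(k)}_{\nu\mu}=\tfrac{w_\nu}{w_\mu}d^{(k)}_{\nu\mu}$, is precisely
\[
e_{n-k-1}[B_\nu-1]\,B_\nu \;=\; \sum_{\mu\subset_k \nu} c_{\nu\mu}^{(k)}\,B_\mu\,T_\mu,
\]
which is exactly the identity the paper invokes (labelled \eqref{eq:lem52}, quoted from \cite{DAdderio-VandenWyngaerd-2017}*{Lemma~5.2}). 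So your reduction is correct and identical to the paper's.

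The gap is in how you propose to \emph{prove} this summation identity. The paper does not prove it here; it imports it from the literature, where it is established via the recursion \eqref{eq:cmunu_recursion} for the Pieri coefficients $c_{\nu\mu}^{(k)}$. Your suggested route---expressing $a^{(k)}_{\mu,\nu}$ as a star scalar product and then invoking the principal specialization $\widetilde H_\mu[1-u]=\prod_{c\in\mu}(1-u\,q^{a'_\mu(c)}t^{l'_\mu(c)})$---does not get you there. The principal specialization gives a closed product for the generating function of $e_i[B_\nu-1]$, but it says nothing about a sum over subpartitions $\mu\subset_k\nu$ weighted by $c^{(k)}_{\nu\mu}B_\mu T_\mu$; there is no mechanism in your sketch that produces the Pieri coefficients $c^{(k)}_{\nu\mu}$ at all. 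To close the argument you need either to cite \eqref{eq:lem52} as a known result, or to supply an inductive proof using the Pieri recursion \eqref{eq:cmunu_recursion} together with the classical $k=1$ summation $\sum_{\mu\subset_1\nu}c_{\nu\mu}^{(1)}T_\mu=e_{n-1}[B_\nu]=T_\nu B_\nu$ and its variants.
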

\begin{corollary} \label{cor:sumTouchingGD}
	For $n\geq 1$ and $k\geq 0$,
	\begin{equation}
	\sum_{r=1}^{n-k}\Theta_k\nabla E_{n-k,r} = \Delta_{e_{n-k-1}}'e_n. 
	\end{equation}
\end{corollary}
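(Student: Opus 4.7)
The plan is to combine Theorem~\ref{thm:DeltakmGD} with the decomposition of $e_n$ into the symmetric functions $E_{n,k}$ given in \eqref{eq:en_sum_Enk}. Specifically, applied to $n-k$ in place of $n$, \eqref{eq:en_sum_Enk} reads
\begin{equation*}
e_{n-k} = \sum_{r=1}^{n-k} E_{n-k,r}.
\end{equation*}

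First I would note that the operators $\nabla$ and $\Theta_k$ are both linear on $\Lambda$ (linearity of $\Theta_k$ is stated right after its definition in \eqref{eq:def_Deltaf}, and $\nabla$ is linear by its definition via eigenvalues on the Macdonald basis $\{\widetilde H_\mu\}$). Hence the composition $\Theta_k\nabla$ is linear, and applying it to both sides of the above decomposition yields
\begin{equation*}
\Theta_k\nabla e_{n-k} = \sum_{r=1}^{n-k}\Theta_k\nabla E_{n-k,r}.
\end{equation*}

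Finally, by Theorem~\ref{thm:DeltakmGD}, the left-hand side equals $\Delta_{e_{n-k-1}}'e_n$, which gives the desired identity. No obstacle arises here: the corollary is a purely formal consequence of the theorem together with the expansion of $e_{n-k}$ in the $E_{n-k,r}$'s, so the only point requiring attention is the invocation of Theorem~\ref{thm:DeltakmGD} itself (whose proof is deferred to Section~\ref{sec:proofs_sec_Delkm}).
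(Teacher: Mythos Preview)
Your proof is correct and follows essentially the same approach as the paper: apply \eqref{eq:en_sum_Enk} with $n-k$ in place of $n$, use linearity of $\Theta_k\nabla$, and invoke Theorem~\ref{thm:DeltakmGD}. The paper's version is just slightly more terse, omitting the explicit mention of linearity.
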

\begin{proof}
	Using \eqref{eq:en_sum_Enk}, we have
	\begin{align*}
	\sum_{r=1}^{n-k} \Theta_k\nabla E_{n-k,r}  & = \Theta_k\nabla e_{n-k}\\
	\text{(using \eqref{eq:thm:DeltakmGD})}& = \Delta_{e_{n-k-1}}'e_n .
	\end{align*}
\end{proof}

\begin{theorem} \label{thm:DeltakmDSq}
	For $n\geq 1$ and $k\geq 0$,
	\begin{equation} \label{eq:thm:DeltakmDSq}
	\Theta_k\nabla \frac{[n]_q}{[n-k]_q}\omega(p_{n-k})=\frac{[n-k]_t}{[n]_t}\Delta_{e_{n-k}}\omega(p_n).
	\end{equation}
\end{theorem}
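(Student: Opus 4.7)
The plan is to prove \eqref{eq:thm:DeltakmDSq} by expanding both sides in the modified Macdonald basis $\{\widetilde{H}_\mu\}_{\mu \vdash n}$ and matching the coefficient of each $\widetilde{H}_\mu$. The key ingredient is the Macdonald-basis expansion of $\omega(p_m)$; a direct computation (e.g.\ from \eqref{eq:pn_Enk} together with the Macdonald expansions of the $E_{m,r}$, or from a variant of Macdonald's Cauchy identity) yields
\begin{equation*}
\omega(p_m)[X] = (1-q^m)(1-t^m) \sum_{\mu \vdash m} \frac{T_\mu\, \Pi_\mu}{w_\mu}\,\widetilde{H}_\mu[X].
\end{equation*}
The prefactors $\frac{[n]_q}{[n-k]_q}$ on the LHS and $\frac{[n-k]_t}{[n]_t}$ on the RHS are arranged precisely to absorb the factors $(1-q^{n-k})$ and $(1-t^n)$ produced by this expansion, via the elementary cancellations $\frac{[n]_q}{[n-k]_q}(1-q^{n-k}) = 1-q^n$ and $\frac{[n-k]_t}{[n]_t}(1-t^n) = 1-t^{n-k}$, so that after the expansion both sides of \eqref{eq:thm:DeltakmDSq} carry the common scalar factor $(1-q^n)(1-t^{n-k})$.

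On the RHS, $\Delta_{e_{n-k}}$ acts diagonally, multiplying each $\widetilde{H}_\mu$ by $e_{n-k}[B_\mu]$; so the coefficient of $\widetilde{H}_\mu$ is $\frac{(1-q^n)(1-t^{n-k})\,T_\mu\, \Pi_\mu\, e_{n-k}[B_\mu]}{w_\mu}$. On the LHS, starting from the Macdonald expansion of $\omega(p_{n-k})$, I would apply $\nabla$ (multiplying each $\widetilde{H}_\lambda$ by $T_\lambda$), then $\mathbf{\Pi}^{-1}$ (dividing by $\Pi_\lambda$), then multiplication by $e_k^* = e_k[X/M]$ expanded via the Macdonald Pieri-type rule
\begin{equation*}
e_k[X/M]\,\widetilde{H}_\lambda[X] = \sum_{\mu \supseteq \lambda,\,|\mu/\lambda|=k} d^{(k)}_{\lambda\mu}(q,t)\,\widetilde{H}_\mu[X],
\end{equation*}
and finally $\mathbf{\Pi}$ (multiplying by $\Pi_\mu$). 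Matching the coefficient of $\widetilde{H}_\mu$ on both sides and cancelling the common factor $(1-q^n)(1-t^{n-k})\,\Pi_\mu$, the theorem reduces to the following coefficient-level identity in the Macdonald basis:
\begin{equation*}
\sum_{\lambda \subseteq \mu,\,|\mu/\lambda| = k} \frac{T_\lambda^2\, d^{(k)}_{\lambda\mu}(q,t)}{w_\lambda} = \frac{T_\mu\, e_{n-k}[B_\mu]}{w_\mu}.
\end{equation*}

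This last identity is the computational heart of the proof, and the step I expect to be the main obstacle. As a sanity check, when $k=0$ the sum on the left has only the term $\lambda=\mu$ with $d^{(0)}_{\mu\mu}=1$, and the identity collapses to $T_\mu = e_{|\mu|}[B_\mu]$, which is exactly \eqref{eq:Bmu_Tmu}. For general $k$ I would attempt to prove it by induction on $k$, using the explicit Haiman--Haglund-type formula for the coefficients $d^{(k)}_{\lambda\mu}$ (which factors over the cells of $\mu/\lambda$), combined with residue/contour manipulations in the spirit of the ``science fiction'' identities of Bergeron--Garsia; a possible alternative is to recognize both sides as specializations at $z=1$ of a common generating function $\sum_k z^k \,(\cdot)$ and match them plethystically. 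The very same coefficient-matching procedure, applied to the Macdonald expansion of $e_n$ in place of $\omega(p_n)$, should produce the parallel Pieri identity $\sum_{\lambda} \frac{B_\lambda\, T_\lambda\, d^{(k)}_{\lambda\mu}}{w_\lambda} = \frac{B_\mu\, e_{n-k-1}[B_\mu - 1]}{w_\mu}$ needed for Theorem~\ref{thm:DeltakmGD}, so that both theorems ultimately rest on the same Pieri-identity engine.
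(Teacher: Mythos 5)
Your overall strategy coincides with the paper's: expand everything in the modified Macdonald basis, push $\Theta_k\nabla$ through using the Pieri coefficients of multiplication by $e_k[X/M]$, and reduce the theorem to a coefficient identity. But there is a genuine error at the very first step, and it propagates. The correct Macdonald expansion is \eqref{eq:p_expansion}, i.e.\ $\omega(p_m)=(1-q^m)(1-t^m)\sum_{\mu\vdash m}\Pi_\mu\widetilde{H}_\mu/w_\mu$, with \emph{no} factor $T_\mu$; the formula you wrote, with the extra $T_\mu$, is the expansion of $\nabla\omega(p_m)$, not of $\omega(p_m)$. Since you then apply $\nabla$ again on the left and use the same faulty expansion on the right, your reduced identity carries a spurious $T_\lambda^2$ on the left and a spurious $T_\mu$ on the right, and these do not compensate each other: the identity you reduce to is false. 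Concretely, take $n=2$, $k=1$, $\mu=(2)$: the coefficient of $\widetilde{H}_{(2)}$ in $e_1[X/M]\,\widetilde{H}_{(1)}$ is $\frac{1}{(1-q)(q-t)}$, while $w_{(1)}=M$, $w_{(2)}=(q-t)(1-q^2)(1-q)(1-t)$, $T_{(1)}=1$, $T_{(2)}=q$, $e_1[B_{(2)}]=1+q$; your left-hand side is $\frac{1}{(1-q)^2(1-t)(q-t)}$ but your right-hand side is $\frac{q}{(1-q)^2(1-t)(q-t)}$. (Your $k=0$ sanity check passes only because the spurious factors coincide when $\lambda=\mu$, so it cannot detect this.) With the correct expansion, the identity you actually need is
\begin{equation*}
e_{n-k}[B_\mu]\;=\;\sum_{\lambda\subset_k\mu}c^{(k)}_{\mu\lambda}\,T_\lambda,
\qquad\text{equivalently, in your notation, a single }T_\lambda\text{ on the left and no }T_\mu\text{ on the right,}
\end{equation*}
which is exactly \eqref{eq:mikeomegaid} in the paper.

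The second gap is that, even with the right target, your proposal does not prove it: you flag the Pieri identity as ``the main obstacle'' and only sketch possible routes (induction on $k$, residues, generating functions). That identity is the entire content of the paper's proof of this theorem: \eqref{eq:mikeomegaid} follows in two lines from $e_{n-k}[B_\mu]=T_\mu e_k[B_\mu(1/q,1/t)]$ combined with Zabrocki's Lemma~13 and Garsia--Haglund's identity (3.16) relating the $e_k^\perp$-Pieri coefficients at $(1/q,1/t)$ to the $c^{(k)}_{\mu\lambda}$. By contrast, the ``parallel'' identity you state for Theorem~\ref{thm:DeltakmGD} is correct as written — it is \eqref{eq:lem52}, because the expansion \eqref{eq:en_expansion} of $e_{n-k}$ does carry the factor $B_\lambda$ and only one power of $T_\lambda$ appears after applying $\nabla$ — and this asymmetry between your two reduced identities should have signalled that the extra $T_\mu$ in your $\omega(p_m)$ expansion is spurious. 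So: correct the expansion, restate the reduced identity as above, and then supply (or cite) a proof of \eqref{eq:mikeomegaid}, which is the step your proposal leaves open.
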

\begin{corollary} \label{cor:DeltaSqSum}
	For $n\geq 1$ and $k\geq 0$,
	\begin{equation}
	\sum_{r=1}^{n-k}\frac{[n]_q}{[r]_q} \Theta_k\nabla E_{n-k,r} =\frac{[n-k]_t}{[n]_t} \Delta_{e_{n-k}}\omega(p_{n}). 
	\end{equation}
\end{corollary}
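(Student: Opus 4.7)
The plan is to mirror the strategy used for Corollary~\ref{cor:sumTouchingGD}, replacing the expansion $e_n = \sum_{r=1}^{n} E_{n,r}$ used there with the analogous expansion of $\omega(p_n)$ provided by identity~\eqref{eq:pn_Enk}. Specifically, I would first apply \eqref{eq:pn_Enk} with $n$ replaced by $n-k$ to obtain
\[\omega(p_{n-k}) = \sum_{r=1}^{n-k} \frac{[n-k]_q}{[r]_q} E_{n-k,r},\]
and then multiply both sides by the scalar $\frac{[n]_q}{[n-k]_q}$ (which cancels nicely with the numerator of each summand), yielding
\[\frac{[n]_q}{[n-k]_q}\,\omega(p_{n-k}) = \sum_{r=1}^{n-k} \frac{[n]_q}{[r]_q}\, E_{n-k,r}.\]

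Next, I would apply the linear operator $\Theta_k \nabla$ to both sides. Linearity is crucial here: since the coefficients $\frac{[n]_q}{[r]_q}$ are scalars in the base field $\mathbb{Q}(q,t)$, the operator passes through the sum, giving
\[\Theta_k \nabla \frac{[n]_q}{[n-k]_q}\,\omega(p_{n-k}) = \sum_{r=1}^{n-k} \frac{[n]_q}{[r]_q}\, \Theta_k \nabla E_{n-k,r}.\]
Finally I would invoke Theorem~\ref{thm:DeltakmDSq}, which rewrites the left-hand side as $\frac{[n-k]_t}{[n]_t} \Delta_{e_{n-k}} \omega(p_n)$, matching the claimed identity.

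There is essentially no obstacle here: the argument is a one-line rearrangement combining the known expansion of $\omega(p_{n-k})$ in terms of the $E_{n-k,r}$'s with Theorem~\ref{thm:DeltakmDSq}. All the substantive content sits inside Theorem~\ref{thm:DeltakmDSq}, whose proof is deferred to Section~\ref{sec:proofs_sec_Delkm}; the corollary itself is just bookkeeping and exactly parallels the structure of Corollary~\ref{cor:sumTouchingGD}.
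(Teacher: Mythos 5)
Your argument is correct and is essentially the paper's own proof read in the opposite direction: the paper starts from the sum, factors out $\frac{[n]_q}{[n-k]_q}$, collapses it via \eqref{eq:pn_Enk} to $\Theta_k\nabla\frac{[n]_q}{[n-k]_q}\omega(p_{n-k})$, and then applies Theorem~\ref{thm:DeltakmDSq}, exactly as you do. No gaps; the corollary is indeed just linearity plus the two cited identities.
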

\begin{proof}
	We have
	\begin{align*}
	\sum_{r=1}^{n-k}\frac{[n]_q}{[r]_q} \Theta_k\nabla E_{n-k,r}  & =\frac{[n]_q}{[n-k]_q}\sum_{r=1}^{n-k} \Theta_k\nabla \frac{[n-k]_q}{[r]_q}E_{n-k,r}\\
	\text{(using \eqref{eq:pn_Enk})}& = \Theta_k\nabla\frac{[n]_q}{[n-k]_q}\omega(p_{n-k}) \\
	\text{(using \eqref{eq:thm:DeltakmDSq})}& = \frac{[n-k]_t}{[n]_t} \Delta_{e_{n-k}}\omega(p_n).
	\end{align*}
\end{proof}

\section{Combinatorial definitions}

\begin{definition}
	A \emph{square path ending east} of size $n$ is a lattice paths going from $(0,0)$ to $(n,n)$ consisting of east or north unit steps, always ending with an east step. The set of such paths is denoted by $\SQ(n)$. We call \emph{base diagonal} of a square path the diagonal $y=x+l$ with the smallest value of $l$ that is touched by the path (so that $l\leq 0$). The \emph{shift} of the square path is the non-negative value $-l$. We refer to the line $x=y$ as the \emph{main diagonal}. A \emph{Dyck path} is a square path whose shift is $0$, i.e. its base diagonal is the main diagonal. The set of Dyck paths is denoted by $\D(n)$. Of course $\D(n)\subseteq \SQ(n)$.  
\end{definition}

For example, the path in Figure~\ref{fig: labelled square} has shift $3$. 

\begin{figure}[ht]
	\centering
	\begin{tikzpicture}[scale = 0.6]
	\draw[step=1.0, gray!60, thin] (0,0) grid (8,8);
	\draw[gray!60, thin] (3,0) -- (11,8);
	\draw[gray!60, thin] (8,6) -- (9,6) -- (9,8) (8,7) -- (10,7) -- (10,8) (8,8) -- (11,8);
	
	\draw[blue!60, line width=1.6pt] (0,0) -- (0,1) -- (1,1) -- (2,1) -- (3,1) -- (4,1) -- (4,2) -- (5,2) -- (5,3) -- (5,4) -- (6,4) -- (6,5) -- (6,6) -- (6,7) -- (7,7) -- (7,8) -- (8,8);
	
	\node at (5.5,5.5) {$\ast$};
	
	\node at (0.5,0.5) {$2$};
	\draw (0.5,0.5) circle (.4cm); 
	\node at (4.5,1.5) {$0$};
	\draw (4.5,1.5) circle (.4cm); 
	\node at (5.5,2.5) {$2$};
	\draw (5.5,2.5) circle (.4cm); 
	\node at (5.5,3.5) {$4$};
	\draw (5.5,3.5) circle (.4cm); 
	\node at (6.5,4.5) {$0$};
	\draw (6.5,4.5) circle (.4cm); 
	\node at (6.5,5.5) {$1$};
	\draw (6.5,5.5) circle (.4cm); 
	\node at (6.5,6.5) {$3$};
	\draw (6.5,6.5) circle (.4cm); 
	\node at (7.5,7.5) {$1$};
	\draw (7.5,7.5) circle (.4cm);
	\end{tikzpicture}
	\caption{Example of an element in $\LSQ(2,6)^{\ast 1 }$ with reading word $241231$.}
	\label{fig: labelled square}
\end{figure}
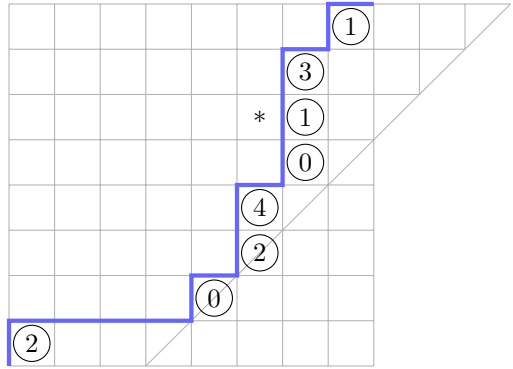

\begin{definition}
	A \emph{labelling} or \emph{word} of a square path $\pi$ of size $m+n$ ending east  is an element $w\in \mathbb N^{m+n}$ such that when we label the $i$-th vertical step of $\pi$ with $w_i$
	\begin{itemize}
		\item the labels appearing in each column of $\pi$ are strictly increasing from bottom to top;
		\item there is at least one nonzero label labelling a vertical step starting from the base diagonal of $\pi$;
		\item if $\pi$ starts with a vertical step, then this first step has a nonzero label.
	\end{itemize}
	The set of such labellings with $m$ labels equal to $0$ is denoted by $\W(\pi, m)$, and we set $\W(\pi)\coloneqq \W(\pi,0)$. 
	
	A \emph{partially labelled square path ending east} is an element $P=(\pi, w)$ of 
	\begin{align*}
	&\LSQ(m,n)\coloneqq \{(\pi, w) \mid \pi \in \SQ(m+n), w \in \W(\pi,m) \}.
	\end{align*}
	We also define the subset of \emph{labelled Dyck paths} as
	\begin{align*}
	&\LD(m,n)\coloneqq \{(\pi,w)\in \LSQ(m,n) \mid \pi \in \D(m+n)\}\subseteq \LSQ(m,n).	
	\end{align*}
\end{definition}

\begin{definition}
	Let $\pi$ be a square path ending east of size $m+n$. We define its \emph{area word} to be the sequence of integers $a(\pi) = (a_1(\pi),a_2(\pi), \cdots, a_{m+n}(\pi))$ such that the $i$-th vertical step of the path starts from the diagonal $y=x+a_i(\pi)$. For example the path in Figure~\ref{fig: labelled square} has area word $(0, \, -\!3, \, -\!3, \, -\!2, \, -\!2, \, -\!1, \, 0, \, 0)$.
\end{definition}

\begin{definition}\label{def: monomial path}
	We define for each $P\in \LSQ(m,n)$ a monomial in the variables $x_1,x_2,\dots$: we set \[ x^P \coloneqq \prod_{i=1}^{m+n} x_{l_i(P)} \] where $l_i(P)$ is the label of the $i$-th vertical step of $P$ (the first being at the bottom), and where we \underline{conventionally set $x_0 = 1$}. The fact that $x_0$ does not appear in the monomial explains the word \emph{partially}.
\end{definition}

\begin{definition}\label{def: rise}
	The \emph{rises} of a square path ending east $\pi$ are the indices \[ r(\pi) \coloneqq \{2\leq i \leq m+n\mid a_{i}(\pi)>a_{i-1}(\pi)\},\] or the vertical steps that are directly preceded by another vertical step. 
	
	A \emph{decorated square path} (respectively Dyck path) is a pair $P=(\pi, dr)$ where $\pi$ is a square path (respectively Dyck path) and $dr\subseteq r(\pi)$. We set 
	\begin{align*}
	&\SQ(n)^{\ast k}\coloneqq \{(\pi, dr)\mid \pi\in \SQ(n), dr\subseteq r(\pi), \vert dr\vert= k \}\\
	&\D(n)^{\ast k} \coloneqq \{(\pi,dr)\in \SQ(n)^{\ast k}\mid \pi \in \D(n) \}
	\end{align*}
	A \emph{partially labelled decorated square path} (respectively Dyck path) is a triple $(\pi, dr, w)$ where $\pi$ is a square path (respectively Dyck path), $dr\subseteq r(\pi)$ and $w$ is a labelling of $\pi$. We set 
	\begin{align*}
	&\LSQ(m,n)^{\ast k}\coloneqq \{(\pi, dr, w)\mid (\pi,w)\in \LSQ(m,n), dr\subseteq r(\pi), \vert dr\vert= k \}\\
	&\LD(m,n)^{\ast k} \coloneqq \{(\pi,dr,w)\in \LSQ(m,n)^{\ast k}\mid \pi \in \D(m+n) \}\subseteq \LSQ(m,n)^{\ast k}.
	\end{align*} We will also use the following natural identifications
	\begin{align*}
	\LSQ(n,m)^{\ast 0}= \LSQ(n,m) && \LD(m,n)^{\ast 0}= \LD(m,n)
	\end{align*}
\end{definition}

\begin{definition}
	Given a partially labelled square path, we call \emph{zero valleys} its vertical steps with label $0$ (which are necessarily preceded by a horizontal step, hence the name valleys).
\end{definition}

\begin{definition}\label{def: reading word}
	Given a partially labelled square path $P$ of shift $s$, we define its \emph{reading word} $\sigma(P)$ as the sequence of \emph{nonzero} labels, read starting from the base diagonal $y=x-s$ going bottom left to top right, then moving to the next diagonal, $y=x-s+1$ again going bottom left to top right, and so on.
	
	If the reading word of $P$ is $r_1\dots r_n$ then the  \emph{reverse reading word} of $P$ is $r_n\dots r_1$. 
\end{definition}
See Figure~\ref{fig: labelled square} and Figure~\ref{fig:pldExample1} for an example.

\begin{figure*}[!ht]
	\centering
	\begin{tikzpicture}[scale = .6]
	
	\draw[step=1.0, gray!60, thin] (0,0) grid (8,8);
	
	\draw[gray!60, thin] (0,0) -- (8,8);
	
	\draw[blue!60, line width=2pt] (0,0) -- (0,1) -- (0,2) -- (1,2) -- (2,2) -- (2,3) -- (2,4) -- (2,5) -- (3,5) -- (4,5) -- (4,6) -- (4,7) -- (4,8) -- (5,8) -- (6,8) -- (7,8) -- (8,8);
	
	\draw (0.5,0.5) circle (0.4 cm) node {$1$};
	\draw (0.5,1.5) circle (0.4 cm) node {$3$};
	\draw (2.5,2.5) circle (0.4 cm) node {$0$};
	\draw (2.5,3.5) circle (0.4 cm) node {$4$};
	\draw (2.5,4.5) circle (0.4 cm) node {$6$};
	\draw (4.5,5.5) circle (0.4 cm) node {$0$};
	\draw (4.5,6.5) circle (0.4 cm) node {$2$};
	\draw (4.5,7.5) circle (0.4 cm) node {$6$};
	
	\node at (1.5,3.5) {$\ast$};
	\node at (3.5,6.5) {$\ast$};
	
	\end{tikzpicture}
	\caption{Example of an element in $\LD(2,6)^{\ast 2}$ with reading word $134626$.}
	\label{fig:pldExample1}
\end{figure*}
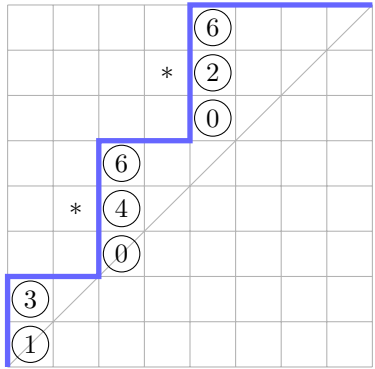

We define two statistics on this set that reduce to the same statistics as defined in \cite{Loehr-Warrington-square-2007} when $m=k=0$. 

\begin{definition}
	\label{def:sqarea}
	Let $P=(\pi, dr) \in \SQ(m,n)^{\ast k}$ and $s$ be its shift. Define 
	\[
	\area(P) \coloneqq \sum_{i\not \in dr} (a_i(\pi) + s).
	\] More visually, the area is the number of whole squares between the path and the base diagonal and not contained in rows containing a decorated rise. 
	
	If $P=(\pi, dr,w)\in \LSQ(m,n)^{\ast k}$ then we set $\area(P)=\area((\pi, dr))$. In other words, the area of a path does not depend on its labelling. 
\end{definition}

For example, the path in Figure~\ref{fig: labelled square} has area $11$. 

\begin{definition} \label{def: dinv SQ}
	Let $P=(\pi, dr, w) \in \LSQ(m,n)^{\ast k}$. For $1 \leq i < j \leq m+n$, we say that the pair $(i,j)$ is an \emph{inversion} if
	\begin{itemize}
		\item either $a_i(\pi) = a_j(\pi)$ and $w_i < w_j$ (\emph{primary inversion}),
		\item or $a_i(\pi) = a_j(\pi) + 1$ and $w_i > w_j$ (\emph{secondary inversion}),
	\end{itemize}
	where $w_i$ denotes the $i$-th letter of $w$, i.e. the label of the vertical step in the $i$-th row.
	
	Then we define 
	\begin{align*}
	\dinv(P) & \coloneqq \# \{ 0\leq i < j \leq m+n \mid (i,j) \; \text{is an inversion}\} \\
	& \quad + \#\{0\leq i\leq m+n \mid a_i(\pi)<0\text{ and } w_i \neq 0 \}.
	\end{align*} 
	This second term is referred to as \emph{bonus dinv}. 
\end{definition}

For example, the path in Figure~\ref{fig: labelled square} has dinv $6$: $2$ primary inversions, i.e. $(1,7)$ and $(2,3)$, $1$ secondary inversion, i.e. $(1,6)$, and $3$ bonus dinv, coming from the rows $3$, $4$ and $6$. Notice that Dyck paths coincide with the square paths with no bonus dinv.

\begin{definition}
	Given $P\in \LD(m,n)^{\ast k}$ we define its \emph{diagonal composition} $\dcomp(P)$ to be the composition of $n-k$ whose $i$-th part is the number of rows of $P$ without a $0$ label or a decoration $\ast$ that lie between the $i$-th and the $(i+1)$-th vertical step of $P$ on the base diagonal not labelled by a $0$ (or from the $i$-th step onwards if it is the last such step). See Figure~\ref{fig:composition} for an example. If $P\in D(n)^{\ast k}$, its diagonal composition is defined identically, without the conditions concerning the labels (since there are none).
\end{definition}

\begin{definition}
	Given $P\in \LSQ(m,n)^{\ast k}$ we define a \emph{touching point} of $P$ to be a starting point of a vertical step of $P$ that lies on the base diagonal, and whose label is not zero. The \emph{touching number} $\touch(P)$ of $P$ is defined as the number of touching points of $P$ or equivalently as the length of its diagonal composition. See Figure~\ref{fig:composition} for an example. For $P\in \D(n)^{\ast}$ these definitions are the same, without the condition concerning the labels (since there are none). 
\end{definition}

\begin{figure}[!ht]
	\centering
	\begin{tikzpicture}[scale = 0.6]
	\draw[gray!60, thin] (0,0) grid (12,12);
	\draw[gray!60, thin] (0,0) -- (12,12);
	
	\draw[blue!60, line width=1.6pt] (0,0) -- (0,1) -- (0,2) -- (1,2) -- (2,2) -- (2,3) -- (3,3) -- (3,4) -- (4,4) -- (4,5) -- (4,6) -- (4,7) -- (5,7) -- (6,7) -- (6,8) -- (7,8) -- (8,8) -- (8,9) -- (9,9) -- (9,10) -- (9,11) -- (10,11) -- (10,12) -- (11,12) -- (12,12);
	
	\draw
	(0.5,0.5) circle(0.4 cm) node {$2$}
	(0.5,1.5) circle(0.4 cm) node {$6$}
	(2.5,2.5) circle(0.4 cm) node {$0$}
	(3.5,3.5) circle(0.4 cm) node {$7$}
	(4.5,4.5) circle(0.4 cm) node {$0$}
	(4.5,5.5) circle(0.4 cm) node {$1$}
	(4.5,6.5) circle(0.4 cm) node {$4$}
	(6.5,7.5) circle(0.4 cm) node {$0$}
	(8.5,8.5) circle(0.4 cm) node {$8$}
	(9.5,9.5) circle(0.4 cm) node {$3$}
	(9.5,10.5) circle(0.4 cm) node {$5$}
	(10.5,11.5) circle(0.4 cm) node {$9$};
	
	\node at (-0.5,1.5) {$\ast$};
	\node at (3.5,5.5) {$\ast$};
	\end{tikzpicture}
	\caption{A partially labelled Dyck path with diagonal composition $\alpha = (1,2,1,3)$ and touching number $\ell(\alpha) = 4$.}
	\label{fig:composition}
\end{figure}
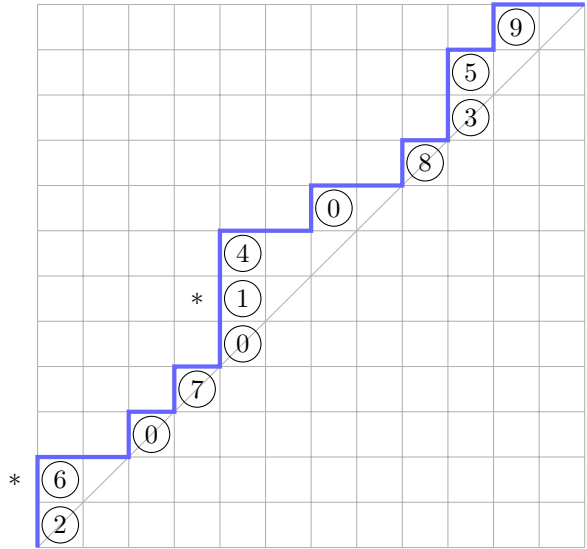

\section{Statements of refined Delta conjectures}

In this section we state our refined conjectures. 

\medskip

The following conjecture is due to Haglund, Remmel and Wilson \cite{Haglund-Remmel-Wilson-2015}.

\begin{conjecture}[Generalized Delta] \label{conj:GenDelta}
	Given $n,k,m\in \mathbb{N}$ with $n>k\geq 0$,
	\begin{equation}
	\Delta_{h_m} \Delta_{e_{n-k-1}}'e_n= \sum_{P\in \mathsf{LSQ}(m,n)^{\ast k}} q^{\dinv(P)}t^{\area(P)} x^P.
	\end{equation}	
\end{conjecture}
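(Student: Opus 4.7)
The plan is to prove the conjecture by refining both sides through a touching decomposition and then matching term by term via the Theta operator identities already established in Section~3. On the combinatorial side, I would partition the sum over $\mathsf{LSQ}(m,n)^{\ast k}$ according to the touching number $\touch(P)$, writing
\[
\sum_{P\in \mathsf{LSQ}(m,n)^{\ast k}} q^{\dinv(P)}t^{\area(P)} x^P \;=\; \sum_{r=1}^{n-k}\;\sum_{\substack{P\in \mathsf{LSQ}(m,n)^{\ast k}\\ \touch(P)=r}} q^{\dinv(P)}t^{\area(P)} x^P.
\]
Each inner sum is what the touching generalized Delta square refinement (stated in the introduction) predicts to be $\frac{[n]_q}{[r]_q}\,\Delta_{h_m}\Theta_{e_k}\nabla E_{n-k,r}$. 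Granting this refinement, summing over $r$ and applying Corollary~\ref{cor:DeltaSqSum} then collapses the right-hand side to a single closed form in terms of Delta operators.

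On the symmetric-function side, I would apply $\Delta_{h_m}$ to Theorem~\ref{thm:DeltakmGD} to rewrite the LHS as $\Delta_{h_m}\Theta_k\nabla e_{n-k}$, and then use \eqref{eq:en_sum_Enk} to expand $e_{n-k}=\sum_{r=1}^{n-k}E_{n-k,r}$, producing $\sum_{r=1}^{n-k}\Delta_{h_m}\Theta_k\nabla E_{n-k,r}$. The goal then becomes an operator identity relating the two closed forms; for the Dyck-path (touching) refinement this matching is immediate, and the task here is to upgrade the matching to all square paths by exploiting the factor $[n]_q/[r]_q$ together with \eqref{eq:pn_Enk}. The $k=0$ case should serve as a sanity check: there it reduces to the generalized shuffle/square theorem for $\Delta_{h_m}\nabla e_n$ (resp.\ $\Delta_{h_m}\nabla\omega(p_n)$) combined with a shift-zero specialisation, both of which are established in Section~7 of the paper.

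The main obstacle sits in Step~2: the touching generalized Delta square refinement is itself conjectural for $k\ge 1$. To prove it, I would mimic the Carlsson--Mellit strategy that worked for the compositional shuffle conjecture and its compositional Delta extension in Section~6 of the paper, namely, establish a recursion on the combinatorial side in terms of Dyck-path-algebra-type operators $d_\pm$ (suitably extended to accommodate decorated rises via the $\Theta_{e_k}$'s), and then show that the symmetric-function side $\tfrac{[n]_q}{[r]_q}\Delta_{h_m}\Theta_{e_k}\nabla E_{n-k,r}$ satisfies the same recursion. The technical core is verifying that $\Theta_{e_k}$ intertwines correctly with the $d_-$ operator when one inserts a decorated rise; this is precisely the type of commutation identity whose Dyck version the paper reduces to an operator statement in Section~6. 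The passage from Dyck to square paths should then come from the extra ``bonus dinv'' contributions and the $[n]_q/[r]_q$ prefactor, exactly as in the reduction of Sergel's square theorem from the shuffle theorem.
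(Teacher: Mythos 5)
This statement is stated as a conjecture in the paper — the paper never proves it, and to this day it remains open for general $k$. What the paper \emph{does} establish is the $k=0$ specialisation of the \emph{touching generalized Delta conjecture} (Theorem~\ref{thm:GenShuffle}) and the corresponding square version (Theorem~\ref{thm: gensquare}). So there is no "paper's own proof" to compare against; your proposal is necessarily a research program, and you are candid about this when you write that the $k\ge 1$ touching refinement ``is itself conjectural.'' That part is a fair assessment.

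There is, however, a concrete internal inconsistency in your decomposition. The statement as written sums over $\mathsf{LSQ}(m,n)^{\ast k}$, and you follow that, invoking the \emph{touching generalized Delta square} refinement, which carries the prefactor $\tfrac{[n]_q}{[r]_q}$. But on the symmetric-function side you correctly expand
\[
\Delta_{h_m}\Delta_{e_{n-k-1}}'e_n = \Delta_{h_m}\Theta_k\nabla e_{n-k} = \sum_{r=1}^{n-k}\Delta_{h_m}\Theta_k\nabla E_{n-k,r},
\]
with \emph{no} prefactor. These two cannot both be right: summing the square-path pieces with the $\tfrac{[n]_q}{[r]_q}$ factor yields $\tfrac{[n-k]_t}{[n]_t}\Delta_{h_m}\Delta_{e_{n-k}}\omega(p_n)$ via Corollary~\ref{cor:DeltaSqSum}, which is the generalized Delta \emph{square} symmetric function, not $\Delta_{h_m}\Delta_{e_{n-k-1}}'e_n$. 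The resolution is that the sum in Conjecture~\ref{conj:GenDelta} should be over $\LD(m,n)^{\ast k}$, not $\mathsf{LSQ}(m,n)^{\ast k}$ (this is consistent with the original Haglund--Remmel--Wilson statement and with the remark that Corollary~\ref{cor:sumTouchingGD} shows the touching refinement implies this conjecture). With that correction, the right refinement to invoke is Conjecture~\ref{conj:touchingGenDelta} (Dyck paths, no prefactor), and your ``upgrade to square paths via $[n]_q/[r]_q$'' step is both unnecessary and incorrect for this particular statement — it belongs to Conjecture~\ref{conj:GenDeltaSQ} instead. Your outline of the Carlsson--Mellit/Dyck-path-algebra strategy for attacking the remaining open $k\geq 1$ case is reasonable and aligns with what Section~6 reduces the compositional Delta conjecture to, but note that the $m>0$ case is specifically flagged in the paper as not having a clean compositional/operator formulation because the dinv ``seems to be off at the compositional level.''
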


We state our ``touching'' refinement of this conjecture.
\begin{conjecture}[Touching generalized Delta] \label{conj:touchingGenDelta}
	Given $n,k,m,r\in \mathbb{N}$, $n>k\geq 0$ and $n-k\geq r\geq 1$,
	\begin{equation}
	\Delta_{h_m}\Theta_k\nabla E_{n-k,r}=\mathop{\sum_{P\in \LD(m,n)^{\ast k}}}_{\touch(P)=r}q^{\dinv(P)}t^{\area(P)} x^P.
	\end{equation}	
\end{conjecture}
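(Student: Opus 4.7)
The plan is to first upgrade the statement to a compositional refinement and then establish it via an extension of the Dyck path algebra of Carlsson--Mellit, mirroring the strategy that the paper uses for the $k=0$ case.

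Concretely, I would introduce (and then prove) the stronger compositional identity
\[
\Delta_{h_m}\Theta_k\nabla C_\alpha \;=\; \mathop{\sum_{P\in \LD(m,n)^{\ast k}}}_{\dcomp(P)=\alpha}q^{\dinv(P)}t^{\area(P)} x^P
\]
for every composition $\alpha\vDash n-k$, from which the touching version in the conjecture follows by summing over compositions $\alpha$ with $\ell(\alpha)=r$ and applying \eqref{eq:Enk=sumCalpha}. This reduction is attractive because the Carlsson--Mellit machinery is intrinsically compositional.

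Next, I would set up a recursion on the combinatorial side by dissecting a path $P\in\LD(m,n)^{\ast k}$ according to the nature of its first vertical step. The four basic cases -- a zero valley, a non-decorated rise, a decorated rise, and a vertical step immediately followed by a horizontal step (``returning'' to the diagonal) -- should correspond to four generators in an enlarged Dyck path algebra: the generator that creates zero valleys (already exploited in Section~7 of the paper for the $k=0$ case), a $d_+$-like generator for ordinary rises, a new generator $d_+^{\ast}$ that decorates the rise it creates, and the usual $d_-$. The plan is then to define inductively a symmetric function $M_\alpha^{(m,k)}$ built from these four generators (extending the $N_\alpha$ of Carlsson--Mellit and the $M_\alpha^{\ast k}$ used in the compositional Delta case) and to verify
\[
d_-^{\ell(\alpha)} M_\alpha^{(m,k)} \;=\; \mathop{\sum_{P\in \LD(m,n)^{\ast k}}}_{\dcomp(P)=\alpha}q^{\dinv(P)}t^{\area(P)} x^P
\qquad\text{and}\qquad
d_-^{\ell(\alpha)} M_\alpha^{(m,k)} \;=\; \Delta_{h_m}\Theta_k\nabla C_\alpha,
\]
the first by unfolding the combinatorial recursion and the second as an operator identity on $\Lambda$.

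The main obstacle is the algebraic realization of $\Theta_k$. Unlike $\Delta_{h_m}$, whose combinatorial incarnation via zero labels is well understood after the work of Sergel and is already incorporated in the generalized shuffle theorem of Section~7, the operator $\Theta_k$ is defined by conjugation of multiplication by $e_k^{\ast}$ by $\mathbf{\Pi}$, and it is not at all clear that it fits cleanly into the Carlsson--Mellit formalism. Isolating the correct commutation relations between the hypothetical generator $d_+^{\ast}$ and the existing generators $d_+,d_-,T_i,z_i$ is the hardest step. The principal technical tools should be Theorem~\ref{thm:DeltakmGD} and the related plethystic manipulations to be detailed in Section~11, pinned down against the already-proven $k=0$ case so as to bootstrap an induction on $k$; an additional consistency check is that summing the compositional identity over all $\alpha\vDash n-k$ and using \eqref{eq:thm:DeltakmGD} must recover the (known) generalized Delta conjecture of Haglund--Remmel--Wilson restricted to Dyck paths, which will serve as a useful sanity test at each stage of the construction.
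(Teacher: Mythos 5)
You have proposed a plan, not a proof, and indeed the statement you address is left open in the paper: only the case $k=0$ is established (Theorem~\ref{thm:GenShuffle}), and you yourself concede that the decisive step --- realizing $\Theta_k$ inside the Carlsson--Mellit formalism --- is unresolved. More importantly, your opening reduction is to a statement the authors expect to be \emph{false}: you propose the compositional identity $\Delta_{h_m}\Theta_k\nabla C_\alpha=\sum_{\dcomp(P)=\alpha}q^{\dinv(P)}t^{\area(P)}x^P$ for all $m$, but the paper deliberately restricts its compositional Delta conjecture (Conjecture~\ref{conj:compDelta}) to $m=0$, remarking that for $m>0$ the dinv ``seems to be off at the compositional level,'' and for $m>0$ it only risks the $q=1$ specialization. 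Hence the summation over $\alpha$ with $\ell(\alpha)=r$ via \eqref{eq:Enk=sumCalpha}, which is the first move of your plan, is not expected to be available at full $q,t$ when $m>0$, and the bootstrap collapses at the outset. (Relatedly, your ``sanity check'' against the generalized Delta conjecture of Haglund--Remmel--Wilson tests against a statement that is itself open, not known.)

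Where the paper does make progress, it proceeds differently in both regimes. For the compositional refinement (only $m=0$) it proves just the combinatorial half of your proposed pair of identities: Theorem~\ref{thm: comp-recursion} expresses the decorated enumerator as $d_-^{\ell(\alpha)}M_\alpha^{\ast k}$ using only the existing generators $d_+,d_-$ --- no new generator $d_+^{\ast}$ is introduced; the decorations are handled through the $\zeta$ map and the dissection maps $\psi,\gamma$, which feed the recursion \eqref{eq:recursion_1}--\eqref{eq:recursion_a} --- while the symmetric-function half $\Theta_k\nabla C_\alpha=d_-^{\ell(\alpha)}M_\alpha^{\ast k}$ is left as the operator Delta conjecture (Conjecture~\ref{conj:operatDelta}). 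For $m>0$ the paper attacks only the touching statement and only for $k=0$, by a non-compositional route: the augmented-path identity \eqref{eq:prop_combId}, the plethystic recursion of Theorem~\ref{thm:SF_GenShuffle}, and inversion of the triangular system \eqref{eq:gen_shuffle_syst} relating $\Delta_{h_m}\nabla e_n[X[s+1]_q]$ to $\Delta_{h_m}\nabla E_{n,r}$. A viable plan for general $k$ would have to follow this second template (touching refinement via $E_{n-k,r}$ and such specialization identities) rather than a $q,t$-compositional refinement in the presence of zero labels.
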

\begin{remark}
	It follows immediately from Corollary~\ref{cor:sumTouchingGD} that our touching generalized Delta conjecture implies the generalized Delta conjecture.
\end{remark}

We now state our \emph{compositional} refinement of the \emph{Delta conjecture}, i.e.\ of the case $k=0$ of Conjecture~\ref{conj:GenDelta}.
\begin{conjecture}[Compositional Delta] \label{conj:compDelta}
	Given $n,k\in \mathbb{N}$, $n>k\geq 0$ and $\alpha\vDash n-k$,
	\begin{equation}
	\Theta_k\nabla C_{\alpha}=\mathop{\sum_{P\in \LD(0,n)^{\ast k}}}_{\dcomp(P)=\alpha}q^{\dinv(P)}t^{\area(P)} x^P.
	\end{equation}	
\end{conjecture}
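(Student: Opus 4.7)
The plan is to follow the blueprint of Carlsson and Mellit's proof of the compositional shuffle theorem \cite{Carlsson-Mellit-ShuffleConj-2015}, suitably enhanced to keep track of the $k$ decorated rises and of the new operator $\Theta_k$. The base case $k=0$ is immediate: since $\Theta_0$ is the identity on $\Lambda$ and paths in $\LD(0,n)^{\ast 0}$ carry no decorations, Conjecture~\ref{conj:compDelta} collapses to the compositional shuffle theorem $\nabla C_\alpha = \sum_{\dcomp(P)=\alpha} q^{\dinv(P)} t^{\area(P)} x^P$, which is now a theorem. This furnishes the anchor of the induction on $k$.

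For the inductive step I would first derive a recursion on the combinatorial side. Given a path $P$ with diagonal composition $\alpha = (\alpha_1,\ldots,\alpha_\ell)$ and $k$ decorated rises, one removes the bottom return to the main diagonal and peels off, row by row, either a decorated rise, an undecorated rise, or an ordinary vertical step, tracking how $\dcomp$, $\dinv$, and $\area$ change under each move and how the $k$ decorations distribute across the resulting subpaths. This yields a recursion that mirrors exactly the one Carlsson and Mellit encode with their Dyck path algebra operators $d_+, d_-, T$, enriched by an extra operator that marks a step as a decorated rise. Defining symmetric functions $M_\alpha^{*k}$ recursively through these operators, one obtains by construction the identity
\begin{equation*}
d_-^{\ell(\alpha)} M_\alpha^{*k} \; = \; \mathop{\sum_{P \in \LD(0,n)^{\ast k}}}_{\dcomp(P)=\alpha} q^{\dinv(P)} t^{\area(P)} x^P,
\end{equation*}
which for $k=0$ specializes to the Carlsson-Mellit identity $\nabla C_\alpha = d_-^{\ell(\alpha)} N_\alpha$. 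This step is essentially combinatorial bookkeeping, though nontrivial to set up cleanly.

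The final, and hardest, step is the purely symmetric-function identity $\Theta_k \nabla C_\alpha = d_-^{\ell(\alpha)} M_\alpha^{*k}$. The natural strategy is to exploit the definition $\Theta_k = \mathbf{\Pi}\, e_k^*\, \mathbf{\Pi}^{-1}$ and attempt to commute $\mathbf{\Pi} e_k^* \mathbf{\Pi}^{-1}$ past the Dyck path algebra operators so as to reduce the statement to the $k=0$ case, with any surplus term matching the recursive definition of $M_\alpha^{*k}$. I expect this to be the main obstacle, because the Dyck path algebra operators $d_\pm$ and $T$ are defined plethystically and their interaction with $\mathbf{\Pi}$ and with multiplication by $e_k^*$ is delicate; this is presumably why the authors flag the reduction as an \emph{identity of operators} rather than a theorem. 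An alternative attack, closer in spirit to Theorems~\ref{thm:DeltakmGD} and~\ref{thm:DeltakmDSq}, would be to expand both sides in the modified Macdonald basis $\{\widetilde{H}_\mu\}$ and verify coefficient-by-coefficient, but this seems to demand new plethystic identities for the compositional pieces $C_\alpha$ that are not currently in the literature.
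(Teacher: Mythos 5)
The statement you are addressing is Conjecture~\ref{conj:compDelta}, and the paper does \emph{not} prove it --- it remains open. What the paper does do (Theorem~\ref{thm: comp-recursion} together with the $\zeta$-translation of Proposition~\ref{prop:zetastats} and \eqref{eq:translation}) is establish, purely combinatorially, that the right-hand side equals $d_-^{\ell(\alpha)} M_\alpha^{\ast k}$, thereby reducing the conjecture to the Operator Delta Conjecture~\ref{conj:operatDelta}, namely $\Theta_k\nabla C_\alpha = d_-^{\ell(\alpha)} M_\alpha^{\ast k}$. That operator identity is left as a conjecture. Your outline reproduces exactly this reduction and, to your credit, you honestly flag the operator identity as "the main obstacle" and "presumably why the authors flag the reduction as an identity of operators rather than a theorem." So your proposal is not a proof (and cannot be, since the statement is open), but it does correctly mirror the paper's structure of reducing the conjecture to an open symmetric-function identity.

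One framing issue worth correcting: you present the argument as an induction on $k$ anchored at $k=0$. The paper's combinatorial recursion (the proof of Theorem~\ref{thm: comp-recursion}) is an induction on $|\alpha|+k$ with initial condition $M_\varnothing^{\ast k} = \delta_{k,0}$; there is no mechanism by which knowing $\nabla C_\alpha = d_-^{\ell(\alpha)} N_\alpha$ for all $\alpha$ (the $k=0$ case, due to Carlsson--Mellit) propagates to $k\geq 1$ on the symmetric-function side. The recursion \eqref{eq:recursion_a} mixes terms with decoration count $k$ and $k-1$, but this only tells you how the unknowns $M_\alpha^{\ast k}$ relate to each other, not that they equal $\Theta_k\nabla C_\alpha$. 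An actual induction on $k$ would require some commutation relation between $\Theta_{e_k}=\mathbf{\Pi} e_k^*\mathbf{\Pi}^{-1}$ and the Dyck path algebra operators $d_\pm$ that recovers the recursion~\eqref{eq:recursion_1}--\eqref{eq:recursion_a}; the paper offers only the $k=1$ hint~\eqref{eq:conjDel10CM} in this direction, and no such relation is known for general $k$. So "base case plus induction on $k$" overstates what either you or the paper actually have: what you both have is a reduction to an open operator identity, plus the combinatorial half.
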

\begin{remark}
	We observe immediately that the compositional Delta conjecture implies the touching Delta conjecture: combinatorially we clearly have $\touch(P)=\ell(\dcomp(P))$, while on the symmetric function side we use \eqref{eq:Enk=sumCalpha}.
\end{remark}
\begin{remark}
	Notice that we have a compositional version only of the Delta conjecture, and not of the generalized Delta conjecture, i.e. only for the number $m$ of zero labels equal to $0$. For $m>0$ the dinv seems to be off at the compositional level. This situation should be compared with the final comments of \cite{DAdderio-Iraci-VandenWyngaerd-GenDeltaSchroeder}.
\end{remark}
Still, we can state the following conjecture.

\begin{conjecture}
	Given $n,k,m\in \mathbb{N}$, $m>0$, $n>k\geq 0$ and $\alpha\vDash n-k$,
	\begin{equation}
	\left.\Delta_{h_m}\Theta_k\nabla C_{\alpha}\right|_{q=1}=\mathop{\sum_{P\in \LD(m,n)^{\ast k}}}_{\dcomp(P)=\alpha}  t^{\area(P)} x^P .
	\end{equation}	
\end{conjecture}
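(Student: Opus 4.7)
The plan is to prove this $q=1$ specialization by splitting off the $\Delta_{h_m}$ action from a $q=1$ version of the compositional Delta conjecture. At $q=1$ the $\dinv$ statistic disappears, so the right-hand side is a pure area generating function that factors well over the diagonal composition. On the symmetric function side, the eigenvalue identities $\Delta_{h_m}\widetilde H_\mu = h_m[B_\mu]\widetilde H_\mu$ and $\Theta_k = \mathbf{\Pi}\, e_k[X/M]\,\mathbf{\Pi}^{-1}$ simplify dramatically once one carefully takes the limit $q\to 1$: the apparent singularities in $M=(1-q)(1-t)$ and in the factors of $\Pi_\mu$ cancel, and one is left with a well-defined operator that can be described in terms of the $t$-content of columns of $\mu$.

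First I would establish the $m=0$ case (the compositional Delta conjecture at $q=1$),
\[
\Theta_k \nabla C_\alpha\big|_{q=1} \;=\; \mathop{\sum_{P\in \LD(0,n)^{\ast k}}}_{\dcomp(P)=\alpha} t^{\area(P)} x^P.
\]
The compositional shuffle theorem of Carlsson--Mellit handles $k=0$. For $k>0$ one should proceed by induction on $\ell(\alpha)$, using $C_{(\alpha_1,\dots,\alpha_\ell)}=\mathbb C_{\alpha_1}C_{(\alpha_2,\dots,\alpha_\ell)}$ together with a commutation identity between $\Theta_k$ at $q=1$ and the plus/minus operators of the Dyck path algebra, exactly mirroring the combinatorial operation of adding a new touch point at the bottom of a path while redistributing the decorated rises.

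Second I would apply $\Delta_{h_m}$ at $q=1$ to this identity. Using the classical specialization $h_m[B_\mu]\big|_{q=1}=h_m\bigl[\sum_j t^{j-1}\mu_j\bigr]$, one shows that multiplication by $\Delta_{h_m}$ at $q=1$ corresponds combinatorially to the operation of inserting $m$ zero-labelled vertical steps (zero valleys) into a given path $P'\in \LD(0,n)^{\ast k}$ at any choice of diagonal positions, each contributing a factor of $t$ according to the diagonal on which it lies. Two features of the definitions make this match exact: zero labels and decorated rises are excluded from $\dcomp$, so inserting zero valleys preserves the constraint $\dcomp=\alpha$; and the convention $x_0=1$ ensures that these insertions do not change the monomial $x^P$ beyond contributing to the area weight. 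Summing over all insertion multisets then reproduces the $h_m$-weight on the symmetric function side.

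The main obstacle will be Step 1, the compositional Delta conjecture at $q=1$ for $k>0$: even without $\dinv$, the interaction of $\Theta_k$ with $\mathbb C_{\alpha_1}$ is subtle because $\Theta_k$ is a conjugate of a plethystic multiplication and does not commute in an obvious way with creation operators. A possible route is to use the Dyck path algebra identity from Section~6 of the paper, $d_-^{\ell(\alpha)} M_\alpha^{*k} = \sum_{P,\,\dcomp(P)=\alpha} q^{\dinv(P)} t^{\area(P)} x^P$, and prove the operator identity $d_-^{\ell(\alpha)} M_\alpha^{*k}|_{q=1} = \Theta_k\nabla C_\alpha|_{q=1}$ by induction on $\ell(\alpha)$ using the action of the algebra generators in the $q=1$ limit. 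Once this bridge is in place, Steps 2 and 3 are essentially a routine bookkeeping of zero valleys against the $h_m$-weight.
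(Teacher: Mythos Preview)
This statement is a \emph{conjecture} in the paper, not a theorem: the authors state it without proof, immediately after remarking that for $m>0$ the $\dinv$ fails to match at the compositional level, so only the $q=1$ specialization survives as a reasonable guess. There is therefore no ``paper's own proof'' to compare against.

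Your proposal is an attempt to settle an open problem, and the obstacle you identify in Step~1 is real and unresolved in the paper. The $m=0$, $q=1$ compositional identity
\[
\Theta_k\nabla C_\alpha\big|_{q=1}=\mathop{\sum_{P\in \LD(0,n)^{\ast k}}}_{\dcomp(P)=\alpha} t^{\area(P)} x^P
\]
is precisely the $q=1$ case of the compositional Delta conjecture, which the paper does \emph{not} prove. What the paper does prove is that the right-hand side equals $d_-^{\ell(\alpha)}M_\alpha^{*k}$ (the Dyck path algebra recursion), but the matching symmetric function identity $\Theta_k\nabla C_\alpha=d_-^{\ell(\alpha)}M_\alpha^{*k}$ is left as the ``operator Delta conjecture''. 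Your sketch of an inductive proof via a commutation relation between $\Theta_k|_{q=1}$ and $\mathbb{C}_{\alpha_1}$ is a plausible line of attack, but no such relation is supplied, and producing one would itself be a new result.

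Your Step~2 also has a gap. The heuristic that $\Delta_{h_m}$ at $q=1$ amounts to inserting $m$ zero valleys weighted by $t^{\text{diagonal}}$ is defensible at the \emph{touching} level (where one can expand in the Macdonald basis and use $h_m[B_\mu]|_{q=1}$), but $C_\alpha$ is not a Macdonald eigenfunction, so the eigenvalue computation you describe does not apply to $\Delta_{h_m}\Theta_k\nabla C_\alpha$ term by term. One would need either an explicit Macdonald expansion of $\Theta_k\nabla C_\alpha$ or a direct combinatorial/operator argument that bypasses the eigenbasis; neither is routine, and the paper offers no tool for this step.
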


\medskip

The following conjecture appeared in our work \cite{DAdderio-Iraci-VandenWyngaerd-Delta-Square}.
\begin{conjecture}[Generalized Delta square] \label{conj:GenDeltaSQ}
	Given $n,k,m\in \mathbb{N}$ with $n>k\geq 0$,
	\begin{equation}
	\frac{[n-k]_q}{[n]_q} \Delta_{h_m} \Delta_{e_{n-k}} \omega(p_n) = \sum_{P\in \mathsf{LSQ}(m,n)^{\ast k}} q^{\dinv(P)}t^{\area(P)} x^P.
	\end{equation}	
\end{conjecture}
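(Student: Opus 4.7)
The plan is to refine Conjecture~\ref{conj:GenDeltaSQ} by the touching number and then sum. Concretely, I would establish the touching generalized Delta square refinement
\[
\frac{[n]_q}{[r]_q}\Delta_{h_m}\Theta_k\nabla E_{n-k,r}=\mathop{\sum_{P\in \LSQ(m,n)^{\ast k}}}_{\touch(P)=r}q^{\dinv(P)}t^{\area(P)} x^P,
\]
and then sum over $r=1,\dots,n-k$. Pulling $\Delta_{h_m}$ past the sum in $r$ and applying Corollary~\ref{cor:DeltaSqSum}, the left-hand side collapses to $\frac{[n-k]_t}{[n]_t}\Delta_{h_m}\Delta_{e_{n-k}}\omega(p_n)$, which matches the symmetric function side of the conjecture (up to the $q\leftrightarrow t$ convention between the displayed statement and the version stated in the introduction). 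The combinatorial side partitions cleanly by $\touch(P)\geq 1$, since every element of $\LSQ(m,n)^{\ast k}$ has at least one nonzero label on the base diagonal by definition of the labelling.

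The task thus reduces to proving the touching refinement displayed above. My approach follows the Carlsson--Mellit paradigm already invoked in this paper for the compositional Delta conjecture: set up a recursion on the combinatorial side encoded by the Dyck path algebra, and match it with a recursion on the symmetric function side obtained by manipulating the $E_{n-k,r}$'s via the identities \eqref{eq:Enk=sumCalpha} and \eqref{eq:pn_Enk}. First I would handle the base case $k=0$, i.e.\ the generalized square theorem for $\Delta_{h_m}\nabla\omega(p_n)$ announced by the authors as an extension of Sergel's square theorem, and then bootstrap to $k>0$ using the Theta operators, whose intended combinatorial meaning is the insertion of $k$ decorated rises.

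The main obstacle I foresee lies in the shift. Square paths differ from Dyck paths in living below the main diagonal, which creates bonus dinv: the factor $[n]_q/[r]_q$ must be interpreted as a weighted count of the $r$ touching points among the $n$ rows, with the $q$-weighting matching the bonus dinv produced in each non-touching row. A bijective strategy extending Sergel's Dyck-to-square argument to the decorated, partially labelled setting is likely needed: peel off each arch from a touching point, reassemble the arches into a decorated Dyck path, and show the total $(q,t)$-weight transforms as expected. The passage from $k=0$ to general $k$ additionally demands a precise path-level understanding of how $\Theta_k$ acts on labelled decorated square paths, which beyond the base case remains conjectural in this paper and is, to my mind, the hardest piece of the argument.
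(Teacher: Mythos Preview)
Your proposal is not so much a proof as a proof \emph{outline}, and you are honest about this at the end. The crucial observation you should make explicit is that the statement in question is a \emph{conjecture}: the paper does not prove it in general, and neither can you with the strategy you describe. What the paper establishes is only the case $k=0$ (Theorem~\ref{thm: gensquare}), and your plan for that case---reduce to the touching refinement and then extend Sergel's Dyck-to-square argument to the partially labelled setting---is exactly what the paper does in Section~7.2, via the schedule-number formula (Theorem~\ref{prop: zeroformula}), the shift identity (Proposition~\ref{prop: dycktosquare-k=0}), and Hicks' quasisymmetric factorisation (Proposition~\ref{lem: qsym factors}).

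For general $k$, your reduction to the touching refinement via Corollary~\ref{cor:DeltaSqSum} is correct and matches the Remark following Conjecture~\ref{conj:touchingGenDeltaSQ}. But the touching refinement itself (Conjecture~\ref{conj:touchingGenDeltaSQ}) is open for $k>0$, and you rightly identify the obstruction: there is no combinatorial description of how $\Theta_k$ inserts decorated rises into labelled \emph{square} paths. This is not a gap in your argument that you could fill with more effort; it is the content of the conjecture. So your proposal should be read as: ``I can prove the case $k=0$ by the paper's own method, and I can explain why the general case would follow from the touching refinement, which remains open.''

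One final remark: you correctly flag the $q\leftrightarrow t$ discrepancy between the displayed statement (with $[n-k]_q/[n]_q$) and the version in the introduction and in Corollary~\ref{cor:DeltaSqSum} (with $[n-k]_t/[n]_t$). This is a typo in the paper; the $t$-version is the one consistent with the rest of the argument.
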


We state our ``touching'' refinement of this conjecture.
\begin{conjecture}[Touching generalized Delta square] \label{conj:touchingGenDeltaSQ}
	Given $n,k,m,r\in \mathbb{N}$, $n>k\geq 0$ and $n-k\geq r\geq 1$,
	\begin{equation}
	\frac{[n]_q}{[r]_q} \Delta_{h_m} \Theta_k\nabla E_{n-k,r}=\mathop{\sum_{P\in \mathsf{LSQ}(m,n)^{\ast k}}}_{\touch(P)=r}q^{\dinv(P)}t^{\area(P)} x^P.
	\end{equation}	
\end{conjecture}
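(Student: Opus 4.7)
The plan is to reduce this conjecture to the (still open) Touching Generalized Delta Conjecture (Conjecture~\ref{conj:touchingGenDelta}) via a classical ``square-to-Dyck'' rotation argument, and then to attack Conjecture~\ref{conj:touchingGenDelta} itself via the Carlsson-Mellit Dyck path algebra machinery.

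First I would try to establish, by a purely combinatorial argument, the identity
\[
\mathop{\sum_{P \in \mathsf{LSQ}(m,n)^{\ast k}}}_{\touch(P) = r} q^{\dinv(P)} t^{\area(P)} x^P \;=\; \frac{[n]_q}{[r]_q} \mathop{\sum_{P' \in \LD(m,n)^{\ast k}}}_{\touch(P') = r} q^{\dinv(P')} t^{\area(P')} x^{P'}.
\]
This should follow from a weight-tracking ``collapse'' map that sends each labelled square path with positive shift to a labelled Dyck path together with a cyclic offset. Concretely, one cyclically rotates the sequence of vertical and horizontal steps (together with their labels and decorations) until the path first becomes a Dyck path; the $q$-weighted enumeration over the valid starting offsets is designed to produce exactly $\frac{[n]_q}{[r]_q}$, and the bonus dinv of Definition~\ref{def: dinv SQ} is precisely the statistic tracking the contribution from the offset. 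Area and the monomial $x^P$ are preserved by construction, and the touching number is stable under rotation because rotation by a non-touching position always produces positive shift.

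Granted this identity, the conjecture reduces to Conjecture~\ref{conj:touchingGenDelta} after multiplying both sides by $\frac{[n]_q}{[r]_q}$. To attack Conjecture~\ref{conj:touchingGenDelta} itself, one follows the Carlsson-Mellit strategy: expand $E_{n-k,r} = \sum_{\alpha\vDash n-k,\,\ell(\alpha)=r} C_\alpha$ via \eqref{eq:Enk=sumCalpha} and reduce to a $\Delta_{h_m}$-decorated version of the Compositional Delta Conjecture (Conjecture~\ref{conj:compDelta}). For the base case $k=0$ one adapts Sergel's and Carlsson-Mellit's recursion on the Dyck path algebra to accommodate the $\Delta_{h_m}$ factor -- this is the ``generalized square theorem'' the authors announce, and it fits the same template as their proof of the generalized shuffle theorem.

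The main obstacle for $k \geq 1$ is twofold. Algebraically, the commutation of $\Theta_k$ with the Carlsson-Mellit operators $d_+, d_-, T_i$ is not understood, so extending their inductive framework to reach $\Theta_k \nabla E_{n-k,r}$ on the symmetric function side is delicate; indeed, even the cleaner compositional statement $\Theta_k \nabla C_\alpha$ of Conjecture~\ref{conj:compDelta} is open for $k\geq 1$. Combinatorially, the cyclic rotation step in the square-to-Dyck reduction must carry the $k$ decorated rises coherently: a decorated rise can fail to remain a rise after rotation, so one must choose a nontrivial convention for re-assigning decorations and then verify that this convention is compatible with both area and dinv, including the interaction with the zero labels (which are excluded from the touching count and thus behave asymmetrically under rotation). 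These are presumably why Conjecture~\ref{conj:touchingGenDelta} itself is open for $k \geq 1$, and they are likely to be the principal obstructions here as well.
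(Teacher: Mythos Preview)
This statement is a \emph{conjecture} in the paper; only the case $k=0$ is proved (Theorem~\ref{thm: gensquare}). So your proposal should be read as a strategy sketch, and compared against what the paper actually does for $k=0$.

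Your high-level plan --- establish the combinatorial identity
\[
\mathop{\sum_{P \in \mathsf{LSQ}(m,n)^{\ast k}}}_{\touch(P) = r} q^{\dinv(P)} t^{\area(P)} x^P \;=\; \frac{[n]_q}{[r]_q} \mathop{\sum_{P' \in \LD(m,n)^{\ast k}}}_{\touch(P') = r} q^{\dinv(P')} t^{\area(P')} x^{P'}
\]
and then reduce to Conjecture~\ref{conj:touchingGenDelta} --- is exactly the route the paper takes for $k=0$. However, the mechanism you propose for the combinatorial identity is \emph{not} the one the paper uses, and yours is underspecified in a way that matters.

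You propose a cycle-lemma style rotation: cyclically shift the step sequence until the path becomes a Dyck path, and argue that the bonus dinv tracks the offset to produce $[n]_q/[r]_q$. The paper does something quite different. Following Sergel and Hicks, it stratifies $\Pref(m,n)$ by \emph{diagonal word} $\tau$ and shift $s$, and proves an explicit product formula for the $(\dinv,\area)$-enumerator of each stratum in terms of \emph{schedule numbers} (Theorem~\ref{prop: zeroformula}). One then compares the enumerators for shift $s$ and shift $s-1$ directly (Proposition~\ref{prop: dycktosquare-k=0}), getting a telescoping product that sums over $s$ to $[n]_q/[r]_q$ (Corollary~\ref{cor: dycktosquare-k=O}). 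The quasisymmetric functions are handled separately by factoring them out of each stratum (Proposition~\ref{lem: qsym factors}). No path is ever rotated.

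Your rotation sketch has a genuine gap even for $k=0$, $m>0$: the labelling conditions on $\LSQ(m,n)$ are not rotation-invariant. A zero label is forbidden on the first step, and at least one nonzero label must sit on the base diagonal; rotating can break either condition, so the ``valid starting offsets'' are not simply the $n$ cyclic shifts modulo the $r$ touching points. You flag this asymmetry yourself, but do not resolve it. The schedule-number approach sidesteps this entirely because it never moves labels between diagonals; it only compares closed-form enumerators.

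Your assessment of the $k\geq 1$ obstacles is accurate: the algebraic side is blocked by Conjecture~\ref{conj:touchingGenDelta} being open, and on the combinatorial side the behaviour of decorated rises under any square-to-Dyck map (rotation or otherwise) is not understood. The paper does not attempt either.
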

\begin{remark}
	It follows immediately from Corollary~\ref{cor:DeltaSqSum} that our touching generalized Delta square conjecture implies the generalized Delta square conjecture.
\end{remark}

\section{About the compositional version}

In this section we prove some results about our compositional Delta conjecture.

\subsection{Relation to the $4$-variable Catalan theorem}

In \cite{Zabrocki-4Catalan-2016} Zabrocki showed that for any composition $\alpha\vDash n-\ell$
\[\<\Delta_{h_{\ell}}\nabla C_\alpha,h_ke_{n-\ell-k}\>\]
is the $(\dinv,\area)$ $q,t$-enumerator of Dyck paths $P$ of size $n$ with $\ell$ decorated rises and $k$ decorated peaks with $\dcomp(P)=\alpha$. So this should match the Schr\"{o}der of our compositional Delta conjecture, i.e.\ it should be equal to
\[\<\Theta_\ell\nabla C_\alpha,h_ke_{n-k}\>.\]
Indeed, this follows immediately from the following lemma, which we prove in Section~\ref{sec:proofs_sec_Delkm}.

\begin{lemma} \label{lem:Schroeder_comp}
	For any $f\in \Lambda^{(n-\ell)}$,
	\begin{equation}
	\< \Delta_{h_{\ell}}f,h_ke_{n-\ell-k} \>= \left\<\Theta_\ell f, h_k e_{n-k} \right\>.
	\end{equation}
\end{lemma}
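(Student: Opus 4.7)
The plan is to reduce the lemma to a single polynomial identity in an auxiliary indeterminate $u$ by using a Schr\"oder-type generating function, and then to verify that identity via a closed-form plethystic evaluation of modified Macdonald polynomials.

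First I would use the power-sum identity
\[
\sum_{k\geq 0} u^k\, h_k[X]\, e_{N-k}[X] = h_N[(\epsilon+u)X],
\]
where $\epsilon$ is the formal ``sign alphabet'' defined by $p_m[\epsilon] = (-1)^{m-1}$, together with the plethystic pairing $\langle F[X], h_N[AX]\rangle_X = F[A]$. This gives $\sum_k u^k \langle F, h_k e_{N-k}\rangle = F[\epsilon+u]$ for any homogeneous $F \in \Lambda^{(N)}$, and applying it on both sides of the lemma transforms the statement into the polynomial identity
\[
(\Delta_{h_\ell} f)[\epsilon+u] = (\Theta_\ell f)[\epsilon+u]
\]
in $u$, for every $f \in \Lambda^{(n-\ell)}$.

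By linearity I can then take $f = \widetilde{H}_\mu$ with $\mu \vdash n-\ell$. Using the plethystic factorisation $\epsilon+u = \epsilon\cdot(1+\epsilon u)$, the basic identity $F[\epsilon Y] = (\omega F)[Y]$, the Macdonald symmetry $\omega\widetilde{H}_\mu[X;q,t] = T_\mu\,\widetilde{H}_\mu[X;1/q,1/t]$, and the classical formula $\widetilde{H}_\mu[1-v;q,t] = \prod_{c\in\mu}(1-v\,q^{a'(c)}t^{l'(c)})$, a direct computation yields the clean closed form
\[
\widetilde{H}_\mu[\epsilon+u] = \prod_{c\in\mu}\bigl(u + q^{a'(c)}t^{l'(c)}\bigr).
\]
The LHS of the reduced identity is then $h_\ell[B_\mu]\prod_{c\in\mu}(u+q^{a'(c)}t^{l'(c)})$.

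For the RHS I would unfold $\Theta_\ell = \mathbf{\Pi}\, e_\ell[X/M]\, \mathbf{\Pi}^{-1}$, expand $e_\ell[X/M]\,\widetilde{H}_\mu$ in the Macdonald basis using the Macdonald Cauchy identity $\Omega[XY/M] = \sum_\nu \widetilde{H}_\nu[X]\widetilde{H}_\nu[Y]/w_\nu$, and combine this with the formula for $\widetilde{H}_\nu[\epsilon+u]$ above to evaluate $(\Theta_\ell \widetilde{H}_\mu)[\epsilon+u]$. The lemma then reduces to a concrete algebraic identity in $\mathbb Q(q,t)[u]$ matching this evaluation to $h_\ell[B_\mu]\prod_{c\in\mu}(u+q^{a'(c)}t^{l'(c)})$. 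The main obstacle will be establishing this final Macdonald identity; because it is of the same flavour as the symmetric-function manipulations used to prove Theorem~\ref{thm:DeltakmGD}, I would naturally carry out the detailed computation in the technical Section~\ref{sec:proofs_sec_Delkm}.
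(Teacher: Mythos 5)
Your reduction step is sound: for homogeneous $F\in\Lambda^{(N)}$ one indeed has $\sum_k u^k\<F,h_ke_{N-k}\>=F[\epsilon+u]$, and the evaluation $\widetilde{H}_\mu[\epsilon+u]=\prod_{c\in\mu}(u+q^{a_\mu'(c)}t^{l_\mu'(c)})$ is correct (it follows, as you say, from $\omega\widetilde{H}_\mu[X;q,t]=T_\mu\widetilde{H}_\mu[X;1/q,1/t]$ and $\widetilde{H}_\mu[1-v]=\prod_c(1-vq^{a'}t^{l'})$; note only that your convention $p_m[\epsilon]=(-1)^{m-1}$ is the paper's $-\epsilon$). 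The genuine gap is that the ``concrete algebraic identity'' you defer is not a side computation: it \emph{is} the lemma. Since $\widetilde{H}_\lambda[\epsilon+u]=\sum_k u^k e_{n-k}[B_\lambda]$ (equivalently $\<\widetilde{H}_\lambda,h_ke_{n-k}\>=e_{n-k}[B_\lambda]$ by \eqref{eq:Mac_hook_coeff}), extracting the coefficient of $u^k$ from your target identity gives exactly
\begin{equation*}
\frac{1}{\Pi_\mu}\sum_{\lambda\supset_\ell\mu}d_{\lambda\mu}^{(\ell)}\,\Pi_\lambda\,e_{n-k}[B_\lambda]\;=\;h_\ell[B_\mu]\,e_{n-\ell-k}[B_\mu],
\end{equation*}
which is verbatim $\<\Theta_\ell\widetilde{H}_\mu,h_ke_{n-k}\>=\<\Delta_{h_\ell}\widetilde{H}_\mu,h_ke_{n-\ell-k}\>$, i.e.\ the statement for $f=\widetilde{H}_\mu$. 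So the generating function in $u$ repackages the claim but removes none of its difficulty.

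Moreover the route you sketch for the right-hand side does not close this gap: the Cauchy identity \eqref{eq:Mac_Cauchy} expands $e_n[XY/M]$, not the product $e_\ell[X/M]\,\widetilde{H}_\mu[X]$; the latter requires the Pieri coefficients $d_{\lambda\mu}^{(\ell)}$ of \eqref{eq:def_dmunu}, for which no closed product formula is available (only the recursion \eqref{eq:cmunu_recursion}), so a ``direct computation'' cannot terminate. To prove the displayed summation one needs genuine summation machinery — e.g.\ Haglund's formula \eqref{eq:HaglundThm} applied with $A=e_\ell^*$ and $F=e_{n-k}^*$, followed by an evaluation of $\Delta_{e_\ell}e_{n-k}^*$ at $X=MB_\mu$ via \eqref{eq:e_h_expansion} — which is essentially the chain the paper runs, there organized through the star scalar product and the adjointness \eqref{eq:hperp_estar_adjoint} rather than through the specialization $X=\epsilon+u$. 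In short: correct and pleasant reformulation, correct closed form for the $\Delta_{h_\ell}$ side, but the heart of the proof is still missing.
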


This shows that the $4$-variable Catalan result of Zarbrocki is really the Schr\"{o}der case of our compositional Delta conjecture.

\subsection{Relation to the Dyck path algebra}

In \cite{Carlsson-Mellit-ShuffleConj-2015} the authors construct the Dyck path algebra by defining combinatorial operators that can be used to compute the $q,t,\underline{x}$-enumerators of specific subsets of labelled Dyck paths. The purpose of this subsection is to use these operators to extend their results to decorated labelled Dyck paths.

\subsubsection{Combinatorial translation}

The goal is to relate our compositional Delta conjecture to the operators of the Dyck path algebra from \cite{Carlsson-Mellit-ShuffleConj-2015}. Following \cite{Carlsson-Mellit-ShuffleConj-2015}, first we need to translate our $q,t,\underline{x}$-enumerator of $(\dinv,\area)$ into one of another bistatistic $(\inv,\bounce)$.

In order to do this we need to extend some definitions and bijections in \cite{Haglund_Loehr_conj_Hilbert} to the decorated setting (cf. also \cite[Chapters~2~and~3]{Haglund-Xin_Lecture-Notes}).

\begin{definition}
	Let $\pi \in \D(n)$. We define its \emph{bounce path} as a lattice path from $(0,0)$ to $(n,n)$ computed in the following way: it starts in $(0,0)$ and travels north until it encounters the beginning of an east step of $\pi$, then it turns east until it hits the main diagonal, then it turns north again, and so on; thus it continues until it reaches $(n,n)$. 
	
	We label the vertical steps of the bounce path starting from $0$ and increasing the labels by $1$ every time the path hits the main diagonal (so the steps in the first vertical segment of the path are labelled with $0$, the ones in the next vertical segment are labelled with $1$, and so on). We define the \emph{bounce word} of $\pi$ to be the string $b(\pi) = b_1(\pi) \cdots b_n(\pi)$ where $b_i(\pi)$ is the label attached to the $i$-th vertical step of the bounce path.
	
	We define the statistic \emph{bounce} on $\D(n)$ and $\LD(n)$ as \[ \bounce(\pi) \coloneqq \sum_{i=1}^n b_i(\pi). \]
\end{definition}

Let us start by describing Haglund's classical bijection (Theorem 3.15 in \cite{Haglund-Book-2008}) \[\zeta_0: \D(n)\rightarrow \D(n)\] that transforms $(\dinv, \area)$ into $(\area, \bounce)$. 

Take $\pi\in\D(n)$ and rearrange its area word in ascending order. This new word, call it $u$, will be the bounce word of $\zeta_0(\pi)$. We construct $\zeta_0(\pi)$ as follows. First draw the bounce path corresponding to $u$. The first vertical stretch and last horizontal stretch of $\zeta_0(\pi)$ are fixed by this path. For the section of the path  between consecutive peaks of the bounce path we apply the following procedure: place a pen on the top of the $i$-th peak of the bounce path and scan the area word of $D$ from left to right. Every time we encounter a letter equal to $i-1$ we draw an east step and when we encounter a letter equal to $i$ we draw a north step. By construction of the bounce path, we end up with our pen on top of the $(i+1)$-th peak of the bounce path. Note that in an area word a letter equal to $i\neq 0$ cannot appear unless it is preceded somewhere by a letter equal to $i-1$. This means that starting from the $i$-th peak, we always start with a horizontal step which explains why $u$ is indeed the bounce word of $\zeta_0(\pi)$. 

We want to extend $\zeta_0$ to $\LD(n)^{\ast k}$. For an element $(\pi,dr,w)\in \LD(n)^{\ast k}$ we apply $\zeta_0$ to $\pi$. We must now specify what happens to the decorated rises and the labelling of $\pi$. 

\begin{definition}
	A \emph{corner} (or \emph{valley}) of a Dyck path $\pi$ is one of the indices \[ c(\pi) \coloneqq \{2\leq i \leq n\mid a_{i}(\pi)\leq a_{i-1}(\pi)\}.\] Corners will often be identified with the vertical steps of $\pi$ that are directly preceded by a horizontal step.
\end{definition}

\begin{proposition}\label{prop: rises to corners}
	If $\pi\in \D(n)$ then there exists a bijection between $r(\pi)$ and $c(\zeta_0(\pi))$.
\end{proposition}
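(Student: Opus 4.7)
The plan is to construct an explicit bijection $\phi\colon r(\pi) \to c(\zeta_0(\pi))$ directly from the recipe defining $\zeta_0$. Both sets admit a description in terms of consecutive entries of $\pi$'s area word: since area words of Dyck paths satisfy $a_{j+1}(\pi) \leq a_j(\pi) + 1$, a rise at position $i$ is exactly a pair $a_{i-1}(\pi) = k$, $a_i(\pi) = k+1$ for some $k \geq 0$; and by the construction of $\zeta_0$, the portion of $\zeta_0(\pi)$ between the $i$-th and $(i+1)$-th peaks of its bounce path is produced by scanning the area word of $\pi$ left-to-right for the letters in $\{i-1, i\}$, drawing east for $i-1$ and north for $i$. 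Hence corners in that portion correspond precisely to scan-consecutive letter pairs of the form $(i-1, i)$.

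Define $\phi$ by sending a rise at position $i$ of $\pi$, with $a_{i-1}(\pi) = k$ and $a_i(\pi) = k+1$, to the corner of $\zeta_0(\pi)$ produced by the scan-consecutive pair $(a_{i-1}(\pi), a_i(\pi))$ inside the $\{k, k+1\}$-section. This pair is automatically scan-consecutive because the positions $i-1$ and $i$ are adjacent in the area word itself. Injectivity is immediate, since distinct rises yield distinct ordered pairs of positions and the scan preserves relative order within each inter-peak section.

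The substantive part is surjectivity. Every corner of $\zeta_0(\pi)$ lies in a unique $\{k, k+1\}$-section and arises from a scan-consecutive pair $(k, k+1)$ at positions $j_1 < j_2$ of $\pi$'s area word with no intermediate position carrying a value in $\{k, k+1\}$. I would show $j_2 = j_1 + 1$, which then identifies the corner with the rise at position $j_2$, giving $\phi^{-1}$. The argument: if $j_2 > j_1 + 1$, then $a_{j_1+1}(\pi) \leq a_{j_1}(\pi) + 1 = k+1$ and by assumption $a_{j_1+1}(\pi) \notin \{k, k+1\}$, forcing $a_{j_1+1}(\pi) \leq k-1$; but then the sequence $a_{j_1+1}(\pi), \ldots, a_{j_2}(\pi)$, having increments bounded by one, cannot reach $k+1$ without first taking the value $k$ at some intermediate position, contradicting the hypothesis. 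One must also briefly observe that the first vertical stretch of $\zeta_0(\pi)$ (all letters equal to $0$, hence consecutive norths) and its final horizontal stretch contribute no corners, so every corner genuinely lies in some inter-peak section.

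The main obstacle is precisely this surjectivity step: it is where the Dyck property of $\pi$ is genuinely used, to force scan-consecutive $\{k, k+1\}$-letters to be adjacent in the original area word. Everything else is a careful translation of the definition of $\zeta_0$ and would naturally extend to decorations, since the \emph{positions} of the decorated rises of $\pi$ can be transferred to the corresponding corners of $\zeta_0(\pi)$ under $\phi$, paving the way for the decorated extension that the rest of the subsection is building toward.
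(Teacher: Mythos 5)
Your proposal is correct and takes essentially the same route as the paper's proof: both define the map by sending the rise at position $j$ (with $a_{j-1}(\pi)=i-1$, $a_j(\pi)=i$) to the corner of $\zeta_0(\pi)$ created when the scan of the $\{i-1,i\}$-section hits the consecutive pair at positions $j-1,j$. The paper dispatches the inverse direction with ``arguing backwards, it is easy to see,'' whereas you actually carry out the surjectivity argument, using the Dyck-path increment bound $a_{j+1}(\pi)\le a_j(\pi)+1$ to show that scan-consecutive letters $i-1,i$ must in fact sit at adjacent positions of the area word; that is precisely the content the paper leaves implicit, and your argument for it is sound.
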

\begin{proof}
	Let $j\in r(\pi) $. It follows that  $a_{j}(\pi) = a_{j-1}(\pi)+1$. Take $i$ such that $a_{j-1}(\pi)=i-1$. While scanning the area word to construct the path between the $i$-th and $(i+1)$-th peak of the bounce path, we will encounter $a_{j-1}(\pi)=i-1$, directly followed by $a_{j}(\pi)=i$. This will correspond to a horizontal step followed by a vertical step in $\zeta_0(\pi)$ and thus to an element of $c(\zeta_0(\pi))$. Arguing backwards, it is easy to see that this gives the desired bijection.
\end{proof}

\begin{definition}
	Given $\pi\in \D(n)$, we define $\W'(\pi)$ to be the elements $(w_1,\dots, w_n) \in \mathbb N^n$ such that for every $i\in c(\pi)$, $w_i>w_{j(i)}$, where $j(i)$ is the index of the column containing the horizontal step preceding the $i$-th vertical step of $\pi$.  
\end{definition}

\begin{definition}
	The set of pairs $(\pi, dc)$  with  $\pi \in \D(n)$, $dc\subseteq c(\pi)$ and $\vert dc \vert = k $ will be denoted $\D'(n)^{\bullet k}$. The indices in $dc$ will be referred to as \emph{decorated corners}: we decorate the $i$-th vertical step of $\pi$ with a $\bullet$ for all $i\in dc$.
	
	The set of triples $(\pi, dc, w')$ with $(\pi,dc)\in \D'(n)^{\bullet k}$ and $w'\in \W'(\pi)$ will be denoted by $\mathsf{LD'} (n)^{\bullet k}$.  In this set, we represent $w'$ inside the squares containing the main diagonal $x=y$, starting from the bottom. See Figure~\ref{fig: zeta} on the right for an example.
\end{definition}

The following result (without decorations) first appeared in \cite{Haglund_Loehr_conj_Hilbert} (see also \cite[Chapter~5]{Haglund-Book-2008}). We sketch its proof for completeness.
\begin{proposition}\label{prop: zeta}
	There exists a bijection \[\zeta \colon \LD(n)^{\ast k} \rightarrow \LD'(n)^{\bullet k}.\]
\end{proposition}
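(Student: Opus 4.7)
The plan is to extend the classical Haglund bijection $\zeta_0 \colon \D(n) \to \D(n)$ described above to the labelled decorated setting by prescribing a canonical transport of each of the three pieces of data. Given $(\pi, dr, w) \in \LD(n)^{\ast k}$, I would set $\zeta(\pi, dr, w) \coloneqq (\zeta_0(\pi), \phi(dr), w')$, where $\phi \colon r(\pi) \to c(\zeta_0(\pi))$ is the bijection from Proposition~\ref{prop: rises to corners} (so that $|\phi(dr)| = k$ automatically), and $w'$ is defined via the natural bijection $\psi_\pi$ between vertical steps of $\pi$ and vertical steps of $\zeta_0(\pi)$ induced by the scanning construction of $\zeta_0$: the $j$-th vertical step of $\pi$, which carries area value $a_j(\pi) = i$, is consumed while scanning the area word left-to-right between the $i$-th and $(i+1)$-th bounce peak of $\zeta_0(\pi)$, and the north step it produces at that moment is declared to be $\psi_\pi(j)$. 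I then transport labels by $w'_{\psi_\pi(j)} \coloneqq w_j$.

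The key verification is that $w' \in \W'(\zeta_0(\pi))$. Unwinding the scanning procedure, a corner $i \in c(\zeta_0(\pi))$ arises exactly when the letter producing its vertical step, namely $a_{\psi_\pi^{-1}(i)}(\pi)$, is immediately preceded in the scan by a letter of value one smaller; but these are adjacent letters of the area word, so $\psi_\pi^{-1}(i)$ is a rise of $\pi$, and the horizontal step preceding the $i$-th vertical step of $\zeta_0(\pi)$ comes from position $\psi_\pi^{-1}(i) - 1$ in the scan. Hence the corner inequality $w'_i > w'_{j(i)}$ is exactly the inequality $w_{\psi_\pi^{-1}(i)} > w_{\psi_\pi^{-1}(i) - 1}$, which is the strictly-increasing-column condition of $\W(\pi)$ at the rise $\psi_\pi^{-1}(i)$ and holds because these two indices label consecutive vertical steps in the same column of $\pi$. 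Moreover, this identification shows that $\phi$ coincides with the restriction of $\psi_\pi$ to rises, so the decoration transport is automatically compatible with the label transport.

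For bijectivity I would construct the inverse explicitly: given $(\pi', dc, w') \in \LD'(n)^{\bullet k}$, put $\pi \coloneqq \zeta_0^{-1}(\pi')$, $dr \coloneqq \phi^{-1}(dc)$, and $w_j \coloneqq w'_{\psi_\pi(j)}$. The same unwinding, read backwards, shows $w \in \W(\pi)$, so $(\pi, dr, w) \in \LD(n)^{\ast k}$ and $\zeta$ is a bijection. The main obstacle is the precise matching of the column-increasing condition of $\W(\pi)$ with the corner condition of $\W'(\zeta_0(\pi))$ via $\psi_\pi$; this is the essential combinatorial content of the classical Haglund--Loehr theorem, and once it is isolated, the decorated version follows with no additional work because $\phi$ is precisely $\psi_\pi$ restricted to rises.
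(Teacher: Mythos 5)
Your proposal is correct and follows essentially the same route as the paper's (sketched) proof: apply $\zeta_0$ to the underlying path, transport decorations via the rise–corner bijection of Proposition~\ref{prop: rises to corners}, and transport labels via the scanning construction (your $\psi_\pi$ reproduces the diagonal/dinv reading word used in the paper), with the corner condition of $\W'(\zeta_0(\pi))$ matching the column-increasing condition of $\W(\pi)$ at rises. The explicit inverse you give is the same "argue backwards" step the paper invokes, so there is no substantive difference.
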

\begin{proof}[Sketch of the proof]
	Take $(\pi, dr, w)\in \LD(n)^{\ast k}$. We want to define $\zeta(\pi, dr, w) \coloneqq (\pi',dc,w')\in \LD'(n)^{\bullet k}$. Naturally, we set $\pi'=\zeta_0(\pi)$. The decorated corners $dc$ are given by the bijection described in Proposition~\ref{prop: rises to corners}. Lastly, we set $w'$ to be the dinv reading word of $w$. We claim that  $w'\in \W'(\pi)$. Indeed,  $w\in \mathbb N^n$ is an element of $\W(\pi)$ if and only if for every $i\in r(\pi)$, $w_{i-1} < w_i$.  These rises correspond to the corners of $\zeta_0(\pi)$ by the bijection of Proposition~\ref{prop: rises to corners}, and the condition on the labels translates into the condition that the dinv reading word of $\pi$ is in $\W'(\pi)$: indeed, when representing $w'$ in the squares of the main diagonal, we ensure that the label contained in the column (respectively row) of a horizontal step (respectively vertical step) $s$ is the label of the step of $\pi$ encountered in the construction of $\zeta_0(\pi)$ at the moment of drawing $s$. It follows that a corner $c$ of $\zeta(\pi, dr, w)=(\pi',dc,w')$ is constructed by reading the two consecutive vertical steps of $\pi$ whose labels are the ones contained in the column and the row of the horizontal and vertical step of $c$, respectively. 
	
	The previous argument works also backwards, ensuring the bijectivity of $\zeta$.
	
	See Example~\ref{ex: zeta} for an illustration of this map. 
\end{proof}

We now extend the statistics on (labelled) Dyck paths to the decorated setting.

\begin{definition}
	Take $(\pi, dc)\in \D'(n)^{\bullet k}$ or $(\pi, dc,w')\in \LD'(n)^{\bullet k}$  and let $b_1(\pi)\cdots b_n(\pi)$ be the bounce word of $\pi$. Then we set
	\[\bounce(\pi, dc)=\bounce(\pi,dc, w')\coloneqq \sum_{i\not\in dc }b_i(\pi).\]
\end{definition}

\begin{definition}
	Consider $(\pi,dc,w')\in \LD'(n)^{\bullet k}$. An \emph{inversion} of $(\pi,w')$ is a pair $(i,j)$ with $i<j$ and $w'_i<w'_j$ such that the square whose upper right corner is the intersection of the lines $x=i$ and $y=j$ (i.e.\ the square lying in the same column as the label $w'_i$ and the same row as $w'_j$) lies under the path $\pi$. We set $\inv(\pi, w')=\inv(\pi, dc, w')$ to be the total number of inversions of $(\pi, w')$ (the $\inv$ statistic is also called $\area '$ in the literature).
\end{definition}

\begin{proposition}\label{prop:zetastats}
	The $\zeta$ map of Proposition~\ref{prop: zeta} is such that for $(\pi,dr,w)\in \LD(n)^{\ast k}$
	\begin{align*}
	&\dinv((\pi,dr,w))=\inv(\zeta(\pi,dr,w))\\
	&\area((\pi,dr,w))=\bounce(\zeta(\pi,dr,w)).
	\end{align*}
\end{proposition}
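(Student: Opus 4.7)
The plan is to reduce to the classical (undecorated) theorem of Haglund--Loehr and then to extract what is new by carefully tracking how the bijection of Proposition~\ref{prop: rises to corners} matches area-word letters with bounce-word letters. Observe first that neither $\dinv$ nor $\inv$ depends on the decoration set: $\dinv((\pi,dr,w))$ is computed from pairs of rows of $\pi$ together with their labels $w$ (see Definition~\ref{def: dinv SQ}), with no reference to $dr$, and similarly $\inv((\pi',dc,w'))$ is read off the labels $w'$ and the shape $\pi'$ alone. Hence the first identity
\[ \dinv((\pi,dr,w)) = \inv(\zeta(\pi,dr,w)) \]
follows immediately from the classical labelled statement (\cite{Haglund_Loehr_conj_Hilbert}, cf. \cite[Chapter~5]{Haglund-Book-2008}), since the construction of $\zeta$ restricted to the underlying labelled Dyck path $(\pi,w)$ is exactly Haglund's $\zeta$ map and the labelling output $w'$ is the dinv reading word of $w$.

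For the second identity, the undecorated case gives $\sum_i a_i(\pi) = \sum_j b_j(\zeta_0(\pi))$ because the bounce word of $\zeta_0(\pi)$ is, by construction, the non-decreasing rearrangement of the area word of $\pi$. To extend this to the decorated setting I need to show the \emph{refined} identity
\[ \sum_{i\in dr} a_i(\pi) \;=\; \sum_{j\in dc} b_j(\zeta_0(\pi)), \]
after which subtracting from the total yields $\area(P) = \bounce(\zeta(P))$ exactly.

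The key step — and the only genuinely new verification — is the following matching property. Suppose $i\in r(\pi)$ with $a_{i-1}(\pi)=h-1$ and $a_i(\pi)=h$; let $j\in c(\zeta_0(\pi))$ be the corner assigned to $i$ by Proposition~\ref{prop: rises to corners}. Then $b_j(\zeta_0(\pi)) = h$. This is read directly off the construction of $\zeta_0$: while drawing the portion of $\zeta_0(\pi)$ between the $h$-th and $(h+1)$-th peak of the bounce path, we scan the area word and convert each letter equal to $h-1$ to an east step and each letter equal to $h$ to a north step; the pair $(a_{i-1},a_i)=(h-1,h)$ therefore produces an east-then-north pattern, i.e.\ precisely a corner, and this corner's vertical step lies in the $h$-th vertical segment of the bounce path, which by definition has bounce label $h = a_i(\pi)$. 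Summing over $i\in dr$ gives the refined identity.

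I expect the main obstacle to be purely expositional: laying out the bookkeeping of the construction of $\zeta_0$ carefully enough that the equality $b_j(\zeta_0(\pi)) = a_i(\pi)$ for matched pairs $(i,j)$ is clear; no genuinely new combinatorial idea seems to be required beyond this. Once that is in place, the two claims assemble as above.
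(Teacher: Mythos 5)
Your proposal is correct and follows essentially the same route as the paper: reduce both identities to the classical $k=0$ statement, then observe that the area-word letter $a_i(\pi)$ of any row (in particular of a decorated rise) equals the bounce label of the row of the corresponding vertical step of $\zeta_0(\pi)$, which is exactly the "clear by construction" matching the paper invokes; you merely spell out the bookkeeping more explicitly. (Only a trivial indexing slip: the vertical segment of the bounce path carrying label $h$ is the $(h+1)$-th when segments are counted from $1$, since the first segment is labelled $0$.)
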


\begin{proof}
	For $k=0$, this result is exactly Remark~2.3 in \cite{Carlsson-Mellit-ShuffleConj-2015}. For $k>0$, it is enough to show that the number of squares in a row containing a decorated rise is equal to the label attached to the step of the bounce path that is in the same row of the vertical step that is part of the corresponding valley. But this is clear as it holds by construction for any vertical step of the preimage (not only for decorated rises).
\end{proof}

\begin{example} \label{ex: zeta}
	The left path of Figure~\ref{fig: zeta}, is an element of $\LD(8)^{\ast 3}$ with area word $(0,1,2,2,3,1,0,1)$, $dr=\{2,5,8\}$ and $w=(2,4,5,5,6,5,1,3)$. It follows that its area is $5$. Its primary inversions $\{(2,6)\}$, its secondary inversions $\{(2,7),(6,7),(3,8),(4,8)\}$, so its dinv equals $5$.
	
	The path on the right is an element of $\LD'(8)^{\bullet 3}$ and the image by $\zeta$ of the path on the left. It has $w'=(2,1,4,5,3,5,5,6)$. Its bounce word is $00111223$ and $dc=\{3,5,8\}$, so its bounce is $5$. Its inversions are $\{(2,3),(2,4),(3,4),(5,6), (5,7)\}$ so its inv equals $5$.

	\begin{figure*}[!ht]
		\begin{minipage}{.5\textwidth}
			\centering
			\begin{tikzpicture}[scale=.6]			
			\draw[gray!60, thin](0,0) grid (8,8) (0,0)--(8,8);
			\draw[blue!60, line width=1.6pt](0,0)--(0,3)--(1,3)--(1,5)--(4,5)|-(6,6)|-(8,8);
			
			\draw
			(.5,0.5) circle(0.4 cm) node {2}
			(.5,1.5) circle(0.4 cm) node {4}
			(.5,2.5) circle(0.4 cm) node {5}
			(1.5,3.5) circle(0.4 cm) node {5}
			(1.5,4.5) circle(0.4 cm) node {6}
			(4.5,5.5) circle(0.4 cm) node {5}
			(6.5,6.5) circle(0.4 cm) node {1}
			(6.5,7.5) circle(0.4 cm) node {3};
			
			\draw
			(-0.5,1.5) node {$\ast$}
			(0.5,4.5) node {$\ast$}
			(5.5,7.5) node {$\ast$};			
			\end{tikzpicture}
			
		\end{minipage}%
		\begin{minipage}{.5 \textwidth}
			\centering
			\begin{tikzpicture}[scale=0.6]
			\draw[gray!60, thin](0,0) grid (8,8);
			
			\draw[blue!60, line width=1.6pt] (0,0)|-(1,2)|-(2,4)|-(3,5)|-(7,7)|-(8,8);
			
			\draw[dashed, opacity=0.6, ultra thick] (0,0)|-(2,2)|-(5,5)|-(7,7)|-(8,8);
			
			\draw
			(.5,0.5) circle(0.4 cm) node {2}
			(1.5,1.5) circle(0.4 cm) node {1}
			(2.5,2.5) circle(0.4 cm) node {4}
			(3.5,3.5) circle(0.4 cm) node {5}
			(4.5,4.5) circle(0.4 cm) node {3}
			(5.5,5.5) circle(0.4 cm) node {5}
			(6.5,6.5) circle(0.4 cm) node {5}
			(7.5,7.5) circle(0.4 cm) node {6};
			
			\filldraw
			(.5,2.5) node {$\bullet$}
			(1.5,4.5) node {$\bullet$}
			(6.5,7.5) node{$\bullet$};			
			\end{tikzpicture}
		\end{minipage}
		\caption{An element in $\LD(n)^{\ast k}$ (left) and its image by the zeta map (right).}\label{fig: zeta}
	\end{figure*}
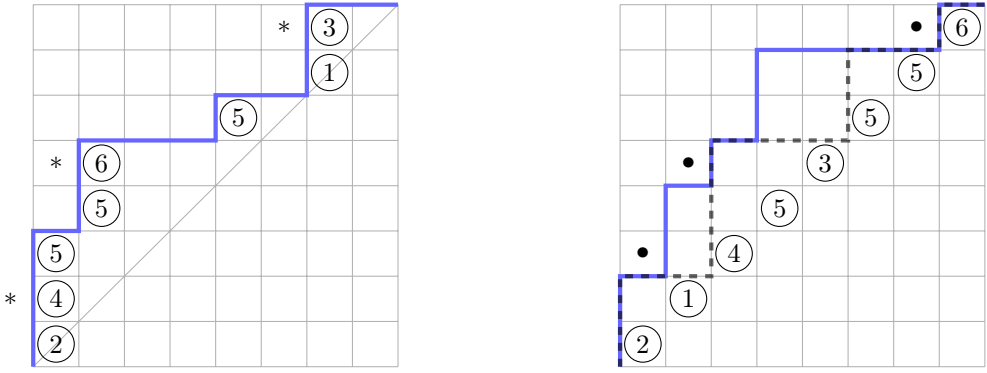
\end{example}

In order to complete our translation, we need to refine our sets according to the compositions.

\begin{definition}
	For $P \in \D'(n)^{\bullet k}\cup \LD'(n)^{\bullet k}$, let $\dcomp'(P) \coloneqq \dcomp(\zeta^{-1}(P))$.	We will see in Lemma~\ref{lem:decomp'} a way to compute $\dcomp'(P)$ directly on $P$. 
\end{definition}

Given a composition $\alpha\vDash n-k$, we set 
\begin{align*}
&\D(\alpha)^{\ast k} \coloneqq \{ P \in \D(n)^{\ast k} \mid \dcomp(P) = \alpha \}  
&&\D'(\alpha)^{\bullet k} \coloneqq \{ P\in \D'(n)^{\bullet k} \mid \dcomp'(P) = \alpha \}  \\
&\LD(\alpha)^{\ast k} \coloneqq \{ P \in \LD(n)^{\ast k} \mid \dcomp(P)=\alpha \} 
&&\LD'(\alpha)^{\bullet k} \coloneqq \{P\in \LD'(n)^{\bullet k} \mid \dcomp'(P)=\alpha \}. 
\end{align*}

\begin{remark} \label{rem:northsteps}
	Observe that by definition of $\dcomp'$, if $(\pi,dc)\in \D'(\alpha)^{\bullet k}$, then $\pi$ starts with $\ell \coloneqq \ell(\alpha)$ many north steps followed by an east step, i.e. $\pi$ is of the form $\pi=N^\ell\tilde{\pi}$ for a unique path $\tilde{\pi}$ from $(0,\ell)$ to $(|\alpha|+k,|\alpha|+k)$ that starts with an east step and stays weakly above the diagonal $x=y$.
\end{remark}

We define the $q,t,\underline{x}$-enumerator of this set as \[ \LD'_{q,t,\underline{x}}(\alpha)^{\bullet k} \coloneqq \sum_{(\pi, dc, w)\in \LD'(\alpha)^{\bullet k}} t^{\bounce(\pi,dc)} q^{\inv(\pi,w)} x^w , \]
where $x^w=\prod_ix_{w_i}$.

From the preceding discussion, the following identity is now clear:
\begin{equation} \label{eq:translation}
\LD'_{q,t,\underline{x}}(\alpha)^{\bullet k} = \mathop{\sum_{P\in \LD(0,n)^{\ast k}}}_{\dcomp(P)=\alpha}q^{\dinv(P)}t^{\area(P)} x^P,
\end{equation}
where the right hand side is precisely what appears in our compositional Delta conjecture. This completes our translation. Now we indicate a decomposition of these enumerators.

The following definitions and statements are from \cite{Carlsson-Mellit-ShuffleConj-2015}.

\begin{definition}
	For $\pi \in \D(n)$, we define the \emph{zero-weight characteristic function} of $\pi$ as \[ \overline{\chi}(\pi) \coloneqq \sum_{w \in \W'(\pi)} q^{\inv(\pi, w)} x^w. \]
\end{definition}

The zero-weight characteristic function is a special case of a more general weighted characteristic function $\chi(\pi, wt)$ (see \cite{Carlsson-Mellit-ShuffleConj-2015}*{Subsection~3.2}) where $wt \colon c(\pi) \rightarrow \mathbb{Q}(q,t)$ is any function: it corresponds to the case $wt = 0$. It is proved in \cite{Carlsson-Mellit-ShuffleConj-2015}*{Proposition~3.7} that the $\chi(\pi, wt)$'s are symmetric functions.

The following proposition follows immediately from the definitions.
\begin{proposition}
	We have
	\begin{equation}\label{eq:prop_decomp} \LD'_{q,t,\underline{x}}(\alpha)^{\bullet k} =\sum_{(\pi,dc,w)\in \LD'(\alpha)^{\bullet k}} t^{\bounce(\pi,dc)} q^{\inv(\pi,w)} x^w = \sum_{(\pi,dc)\in \D'(\alpha)^{\bullet k}} t^{\bounce(\pi,dc)} \overline{\chi}(\pi). 
	\end{equation}
\end{proposition}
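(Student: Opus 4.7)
The plan is to observe that the proposition is essentially a regrouping identity: the first equality is the definition of $\LD'_{q,t,\underline{x}}(\alpha)^{\bullet k}$ (unpacked), and the second equality follows from pulling out the factor $t^{\bounce(\pi,dc)}$ from the inner sum and recognizing the remainder as $\overline{\chi}(\pi)$.

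More concretely, I would first verify that for a triple $(\pi,dc,w)\in \LD'(\alpha)^{\bullet k}$, the condition $\dcomp'(\pi,dc,w)=\alpha$ depends only on the pair $(\pi,dc)$ and not on $w$. Since $\zeta$ is a bijection and by the defining formula $\dcomp'(P)\coloneqq\dcomp(\zeta^{-1}(P))$, it suffices to observe that for elements of $\LD(0,n)^{\ast k}$ (i.e.\ when $m=0$, so there are no zero labels in the preimage) the $\dcomp$ statistic is determined by the underlying decorated path. This allows us to re-index the sum as
\[
\sum_{(\pi,dc,w)\in \LD'(\alpha)^{\bullet k}} t^{\bounce(\pi,dc)} q^{\inv(\pi,w)} x^w
=\sum_{(\pi,dc)\in \D'(\alpha)^{\bullet k}}\;\sum_{w\in \W'(\pi)} t^{\bounce(\pi,dc)} q^{\inv(\pi,w)} x^w,
\]
where we used that the labellings $w$ in $\LD'(\alpha)^{\bullet k}$ attached to a fixed $(\pi,dc)$ range exactly over $\W'(\pi)$ (the constraint on labels being a condition about corners of $\pi$, independent of the decoration $dc$).

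Next, I would pull the factor $t^{\bounce(\pi,dc)}$ out of the inner sum, since the bounce statistic depends only on $\pi$ and $dc$ and not on the labelling $w$. The remaining inner sum $\sum_{w\in \W'(\pi)} q^{\inv(\pi,w)} x^w$ is by definition the zero-weight characteristic function $\overline{\chi}(\pi)$. Putting everything together yields
\[
\sum_{(\pi,dc)\in \D'(\alpha)^{\bullet k}} t^{\bounce(\pi,dc)} \overline{\chi}(\pi),
\]
which is the desired right-hand side.

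There is no real obstacle here: the proposition is a straightforward bookkeeping identity, and the only point requiring a small justification is that the set of admissible labellings for a fixed decorated path $(\pi,dc)\in \D'(\alpha)^{\bullet k}$ is precisely $\W'(\pi)$, which is immediate from the definition of $\LD'(n)^{\bullet k}$ together with the remark that both $\dcomp'$ and $\bounce$ are functions of $(\pi,dc)$ only.
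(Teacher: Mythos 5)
Your proof is correct and matches the paper's treatment: the paper simply states that the proposition "follows immediately from the definitions," and your argument is exactly the bookkeeping that justifies this, including the one genuinely relevant observation that $\dcomp'$, $\bounce$, and the admissible labellings $\W'(\pi)$ all depend only on $(\pi,dc)$, so the sum factors and the inner sum over $w$ is $\overline{\chi}(\pi)$ by definition.
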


\subsubsection{Dyck path algebra operators}

Again following \cite{Carlsson-Mellit-ShuffleConj-2015}, we now introduce the operators of the \emph{Dyck path algebra} in order to give an expression of $\LD'_{q,t,\underline{x}}(\alpha)^{\bullet k}$ in terms of them.

Given a polynomial $P$ depending on variables $u,v$, define the operator $\Upsilon_{uv}$ as

\begin{align*}
(\Upsilon_{uv} P)(u,v) & \coloneqq \frac{(q-1)vP(u,v) + (v-qu)P(v,u)}{v-u}
\end{align*}

In \cite{Carlsson-Mellit-ShuffleConj-2015} this operator is called $\Delta_{uv}$, but we changed the notation in order to avoid confusion with the $\Delta_f$ operator defined on $\Lambda$.

\begin{definition}[\cite{Carlsson-Mellit-ShuffleConj-2015}*{Definition~4.2}]
	For $k \in \mathbb{N}$, define $V_k \coloneqq \Lambda[y_1, \dots, y_k]=\Lambda\otimes \mathbb{Q}[y_1,\dots,y_k]$. Let 
	\[T_i \coloneqq \Upsilon_{y_i y_{i+1}} \colon V_k \rightarrow V_k\text{ for }1 \leq i \leq k-1. \]
	
	We define the operators $d_+ \colon V_k \rightarrow V_{k+1}$ and $d_- \colon V_k \rightarrow V_{k-1}$: for $F[X]\in V_k$
	\begin{align*}
	(d_+ F)[X] & \coloneqq T_1 T_2 \cdots T_k (F[X + (q-1) y_{k+1}]) \\
	(d_- F)[X] & \coloneqq -F[X - (q-1)y_k] \sum_{i\geq 0} \left.  (-1/y_k )^{i}e_i[X]  \right|_{{y_k}^{-1}}.
	\end{align*}
	%
\end{definition}

The idea is to use these operators to get the zero-weight characteristic functions $\bar{\chi}(\pi)$. In fact for our purposes it will be convenient to define the corresponding operators for \emph{partial Dyck paths}.
\begin{definition}
	Let $\ED^\ell(n)$ be the set of paths from $(0,\ell)$ to $(n,n)$ consisting of east or north unit steps, starting with an east step, and staying weakly above the diagonal $x=y$, together with their translates by vectors of the form $(v,v)$ with $v\in \mathbb{N}$. 
	
	Set $\ED^\ell = \bigsqcup_{n} \ED^\ell(n)$ and $\ED = \bigsqcup_{\ell } \ED^\ell $, and let $d \colon \ED \rightarrow V$ be defined as \[ d(\varnothing) = 1, \qquad d(E\tilde{\pi}) = d_+ d(\tilde{\pi}), \qquad d(EN^i \tilde{\pi}) = \frac{1}{q-1} [d_-, d_+] d_-^{i-1} d(\tilde{\pi}) \] for $\tilde{\pi} \in \ED$, where $[d_-,d_+]=d_-d_+-d_+d_-$ is the usual Lie bracket.
\end{definition}

\begin{definition}
	Given a Dyck path $\pi \in D(n)$, let $\tilde{\pi} \in \ED$ be the unique element such that $\pi = N^\ell \tilde{\pi}$ for some $\ell$. Then we define $d(\pi) \coloneqq d_-^\ell d(\tilde{\pi})$.
\end{definition}

We have the following fundamental theorem.
\begin{theorem}[\cite{Carlsson-Mellit-ShuffleConj-2015}*{Corollary~4.6}]
	For any Dyck path $\pi$,
	\begin{equation} \label{e:thm_CM}
	\overline{\chi}(\pi) = d(\pi).
	\end{equation}
\end{theorem}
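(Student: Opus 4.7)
The plan is to prove $\overline{\chi}(\pi) = d(\pi)$ by induction on the length of $\pi$, matching the three-case recursion that defines $d$ to analogous recursions satisfied by $\overline{\chi}$. The right framework is to extend $\overline{\chi}$ from Dyck paths to partial paths in $\ED^\ell$ so that it takes values in $V_\ell = \Lambda[y_1,\ldots,y_\ell]$, viewing the variables $y_1,\ldots,y_\ell$ as placeholder labels attached to the initial east steps of the partial path. The base case $\pi = \emptyset$ gives $\overline{\chi}(\emptyset) = 1 = d(\emptyset)$.

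For the inductive step, three cases must be checked. First, prepending a new initial east step to $\tilde\pi$ enlarges the set of admissible labellings by a fresh auxiliary variable $y_{k+1}$; on the operator side this is realized by the plethystic substitution $X \mapsto X + (q-1)y_{k+1}$ followed by the Hecke-type symmetrization $T_1 \cdots T_k$, which together are precisely $d_+$. Second, prepending a north step to a path beginning with an east step absorbs the auxiliary variable $y_k$ into a genuine label of the newly created vertical step; this matches $d_-$, whose defining formula combines the plethystic shift $X \mapsto X - (q-1)y_k$ with an extraction of the $y_k^{-1}$ coefficient encoding the symmetrization in the presence of this new label.

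The third and most delicate recursion handles the block $EN^i$, which creates a corner and imposes the constraint that the label at the new corner strictly exceeds the one in the column to its left. This translates into the commutator $\tfrac{1}{q-1}[d_-,d_+]$: the operator $d_-d_+$ first adds and then removes an auxiliary variable (with full $T_i$-symmetrization), while $d_+d_-$ does the opposite, and their difference isolates exactly those terms where the new label interacts with an existing one through the strict-inequality regime forced at a corner; the normalization $(q-1)^{-1}$ cancels the leading factor introduced by the plethystic shifts. The repeated application of $d_-^{i-1}$ then accounts for the remaining $i-1$ north steps above the corner.

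The main obstacle is verifying this third recursion rigorously: one needs an explicit identity expressing $\tfrac{1}{q-1}[d_-,d_+]F$ on $V_k$ as a sum over insertions of a new strictly-larger corner label into $F$, and one must then match the resulting $q$-weights against the single extra primary inversion that the corner creates. The operators $\Upsilon_{uv}$ are exactly the Demazure--Lusztig-type operators that implement the strict-inequality condition on two adjacent variables, so the identity should follow from braid and quadratic relations for the $T_i$'s combined with the explicit formulas for $d_\pm$ and a careful bookkeeping of plethystic shifts. Once this relation is in place, the three recursions together with the trivial base case close the induction and yield $\overline{\chi}(\pi) = d(\pi)$.
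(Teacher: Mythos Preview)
The paper does not prove this theorem at all: it is quoted verbatim as \cite{Carlsson-Mellit-ShuffleConj-2015}*{Corollary~4.6} and used as a black box, with no argument given. So there is no ``paper's own proof'' to compare your proposal against.

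As for your proposal on its own merits: it is a reasonable outline of the strategy that Carlsson and Mellit actually use in \cite{Carlsson-Mellit-ShuffleConj-2015}, namely extending $\overline{\chi}$ to partial paths with auxiliary variables $y_1,\ldots,y_\ell$ and matching the three-case recursion for $d$. However, what you have written is a plan, not a proof. You correctly identify the crux---the commutator identity for $\tfrac{1}{q-1}[d_-,d_+]$ and its combinatorial meaning at a corner---but you do not carry it out; you say only that ``the identity should follow'' from Hecke-type relations and ``careful bookkeeping.'' In Carlsson--Mellit this step is the content of their Lemma~4.4 and the surrounding computations, and it requires genuine work: one must explicitly manipulate the plethystic expressions and the $T_i$ operators to see that the commutator isolates exactly the strict-inequality contribution at the new corner with the correct $q$-weight. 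Your sketch also glosses over how $d_-$ matches the combinatorics of absorbing $y_k$ into a real label, which in \cite{Carlsson-Mellit-ShuffleConj-2015} is handled via the more general weighted characteristic function $\chi(\pi,wt)$ rather than directly via $\overline{\chi}$. If you intend this as a self-contained proof, those two computations must actually be written out; as it stands, the proposal names the right ingredients but does not assemble them.
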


The following corollary is now immediate.
\begin{corollary}
	We have
	\begin{equation} \label{eq:cor_decomp}
	\LD'_{q,t,\underline{x}}(\alpha)^{\bullet k} = \sum_{(\pi,dc)\in \D'(\alpha)^{\bullet k}} t^{\bounce(\pi,dc)} d(\pi). 
	\end{equation}
\end{corollary}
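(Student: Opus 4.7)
The plan is straightforward, since this corollary is essentially a direct concatenation of the two results stated immediately above it. First I would invoke the proposition giving the decomposition
\[\LD'_{q,t,\underline{x}}(\alpha)^{\bullet k} = \sum_{(\pi,dc)\in \D'(\alpha)^{\bullet k}} t^{\bounce(\pi,dc)} \overline{\chi}(\pi),\]
whose content is really that the labels $w \in \W'(\pi)$ can be summed over separately from the data $(\pi, dc)$: the bounce statistic depends only on $(\pi, dc)$, while the $q$-weighted $\inv$-enumerator over all valid labellings is, by definition, $\overline{\chi}(\pi)$. Crucially, $\overline{\chi}(\pi)$ depends only on $\pi$ (not on $dc$), so no reindexing is needed when pulling it out of the inner sum.

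Next I would apply the Carlsson--Mellit identity $\overline{\chi}(\pi) = d(\pi)$ from \eqref{e:thm_CM}, substituting it termwise into the previous display. This immediately yields
\[\LD'_{q,t,\underline{x}}(\alpha)^{\bullet k} = \sum_{(\pi,dc)\in \D'(\alpha)^{\bullet k}} t^{\bounce(\pi,dc)} d(\pi),\]
which is precisely the assertion.

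There is no genuine obstacle: all the substantive work sits in the two preceding inputs, namely the combinatorial factorisation producing $\overline{\chi}(\pi)$ (which is a bookkeeping argument following from the definitions of $\W'(\pi)$, $\inv$, and $\overline{\chi}$) and the Carlsson--Mellit theorem expressing $\overline{\chi}(\pi)$ in the Dyck path algebra via the operators $d_\pm$ and $T_i$. The role of the corollary is simply to record this identity in the form we will need: a weighted sum over $\D'(\alpha)^{\bullet k}$ of elements $d(\pi) \in V_0 = \Lambda$, which reduces our refined statement \eqref{eq:translation} to an identity expressible purely in terms of Dyck path algebra operators. This will then be ripe for comparison with a recursive expression for $\Theta_k \nabla C_\alpha$ on the symmetric function side.
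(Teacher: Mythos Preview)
Your proposal is correct and matches the paper's own proof exactly: the paper simply says ``Just combine \eqref{eq:prop_decomp} with \eqref{e:thm_CM},'' which is precisely the two-step substitution you describe. Your additional remarks about $\overline{\chi}(\pi)$ depending only on $\pi$ and not on $dc$ are accurate and explain why the termwise substitution is valid, though the paper does not spell this out.
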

\begin{proof}
	Just combine \eqref{eq:prop_decomp} with \eqref{e:thm_CM}.
\end{proof}
\begin{remark} \label{rem:useful}
	Observe that, from \eqref{eq:cor_decomp}, the Remark~\ref{rem:northsteps}, and the definition of the function $d$, for $\ell \coloneqq \ell(\alpha)$ we have
	\begin{align}
	\notag \LD'_{q,t,\underline{x}}(\alpha)^{\bullet k} & = \sum_{(\pi,dc)\in \D'(\alpha)^{\bullet k}} t^{\bounce(\pi,dc)} d(\pi) \\
	\label{eq:reduction}& = \mathop{\sum_{(\pi,dc)\in \D'(\alpha)^{\bullet k}}}_{\pi=N^\ell \tilde{\pi}:\,  \tilde{\pi}\in \ED^\ell} t^{\bounce(\pi,dc)} d(N^\ell \tilde{\pi})\\
	\notag & = d_-^\ell \mathop{\sum_{(\pi,dc)\in \D'(\alpha)^{\bullet k}}}_{\pi=N^\ell \tilde{\pi}:\,  \tilde{\pi}\in \ED^\ell} t^{\bounce(\pi,dc)} d(\tilde{\pi})
	\end{align}
\end{remark}


We need one more definition: given two compositions $\alpha=(\alpha_1,\dots,\alpha_r)$ and $\beta=(\beta_1,\dots,\beta_s)$, we define their \emph{concatenation} as $\alpha\beta \coloneqq (\alpha_1,\dots,\alpha_r,\beta_1,\dots,\beta_s)$, which is obviously also a composition.

We are now ready to express $\LD'_{q,t,\underline{x}}(\alpha)^{\bullet k}$ in terms of the operators of the Dyck path algebra. This is the main theorem of this section, and it extends \cite[Theorem~4.1]{Carlsson-Mellit-ShuffleConj-2015} to the decorated setting.

\begin{theorem}\label{thm: comp-recursion}
	If $\alpha$ is a composition of length $\ell$, then we have 
	\begin{equation} \label{eq:thm_Dyck_path_rel}
	\LD'_{q,t,\underline{x}}(\alpha)^{\bullet k} = d_-^\ell M_\alpha^{\ast k}
	\end{equation}
	where $M_\alpha^{\ast k} \in V_\ell$ is defined by the recursive relations 
	\begin{equation} \label{eq:recursion_1}
	M_{(1)\alpha}^{\ast k} = d_+ M_\alpha^{\ast k} + \frac{1}{q-1} [d_-, d_+] M_{\alpha(1)}^{\ast k-1},
	\end{equation}
	and for $a > 1$
	\begin{equation} \label{eq:recursion_a}
	M_{(a)\alpha}^{\ast k} = \frac{t^{a-1}}{q-1} [d_-, d_+] \left( \sum_{\beta \vDash a-1} d_-^{\ell(\beta)-1} M_{\alpha \beta}^{\ast k} + \sum_{\beta \vDash a} d_-^{\ell(\beta)-1} M_{\alpha \beta}^{\ast k-1} \right) ,
	\end{equation} 
	with initial conditions $M_\varnothing^{\ast k} = \delta_{k,0}$.
\end{theorem}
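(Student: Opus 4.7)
The plan is to define
\[ \tilde M_\alpha^{\ast k} := \mathop{\sum_{(\pi,dc)\in \D'(\alpha)^{\bullet k}}}_{\pi=N^{\ell(\alpha)} \tilde\pi:\, \tilde\pi\in\ED^{\ell(\alpha)}} t^{\bounce(\pi,dc)} d(\tilde\pi), \]
so that by \eqref{eq:reduction} of Remark~\ref{rem:useful} one immediately has $\LD'_{q,t,\underline{x}}(\alpha)^{\bullet k} = d_-^{\ell(\alpha)}\tilde M_\alpha^{\ast k}$. It then suffices to prove the operator identity $\tilde M_\alpha^{\ast k} = M_\alpha^{\ast k}$, and I would do this by verifying that $\tilde M$ satisfies the initial condition and the two recursions \eqref{eq:recursion_1} and \eqref{eq:recursion_a}, concluding by strong induction on $|\alpha|+k$. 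The initial condition is immediate: $\dcomp' = \varnothing$ forces $\pi$ to have no diagonal touches, so $\pi = \varnothing$, which in turn forces $k=0$ and gives $\tilde M_\varnothing^{\ast 0} = d(\varnothing) = 1$.

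For the recursions, the strategy is a bijective decomposition of $\D'((a)\alpha)^{\bullet k}$ driven by what appears immediately after the first east step of $\tilde\pi$. Writing $\tilde\pi = E\hat\pi$, the recursive definition of $d$ on $\ED$ splits the sum into two types of contributions: when $\hat\pi$ begins with another east step, $d(\tilde\pi)=d_+ d(\hat\pi)$; and when $\hat\pi = N^i\hat\pi'$ with $i\geq 1$, $d(\tilde\pi) = \tfrac{1}{q-1}[d_-,d_+]d_-^{i-1}d(\hat\pi')$. Each combinatorial sub-case has to be matched with a suitable $\tilde M_\beta^{\ast j}$ for a smaller composition $\beta$ and either $j=k$ (no decoration consumed in the match) or $j=k-1$ (exactly one decoration consumed). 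The bounce contribution is tracked in parallel: the initial bounce peak contributes $t^0$ or $t^{a-1}$ depending on whether the first part of $\dcomp'$ is $1$ or $a>1$, and the remaining bounce is inherited from the induced sub-path.

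For \eqref{eq:recursion_1} (the case $a=1$), the first peak of the bounce path of $\pi$ must have total height equal to one plus the number of decorated corners within it. The two resulting sub-cases are (A) no decorated corner in the first peak, in which case $\hat\pi$ begins with $E$, the remaining path has $\dcomp' = \alpha$, and the contribution is $d_+ \tilde M_\alpha^{\ast k}$; and (B) one decorated corner in the first peak, in which case $\hat\pi$ begins with $N$ and the first vertical step of that run carries a $\bullet$, so that the decoration is consumed, the remaining path has $\dcomp' = \alpha(1)$, and the contribution is $\tfrac{1}{q-1}[d_-,d_+]\tilde M_{\alpha(1)}^{\ast k-1}$. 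For \eqref{eq:recursion_a} (the case $a>1$), the $t^{a-1}$ factor records the $a-1$ extra bounce-level-$1$ rows between the first and second touches, and the sum over $\beta$ enumerates how the "rest" of the path organises itself into further touches; again splitting according to whether a decorated corner sits in the first peak produces the $\beta\vDash a-1$ sum at level $k$ and the $\beta\vDash a$ sum at level $k-1$.

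The main obstacle will be the careful bookkeeping in this decomposition, namely verifying that the combinatorial matching genuinely reproduces the two recursions on the operator side. In particular, one needs the forthcoming Lemma~\ref{lem:decomp'} giving a direct reading of $\dcomp'(\pi,dc)$ from the lengths of the north-runs of the bounce path of $\pi$ corrected by the number of decorated corners they contain; without this translation one cannot cleanly identify the two case splits $(A)/(B)$ with the two summands of \eqref{eq:recursion_1} and \eqref{eq:recursion_a}. Granted this lemma, the rest of the argument is a decorated refinement of the Carlsson--Mellit proof of \cite[Theorem~4.1]{Carlsson-Mellit-ShuffleConj-2015}, recovering their recursion in the $k=0$ specialization and adding the "$k-1$" terms from decorations consumed within the first peak.
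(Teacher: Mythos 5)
Your proposal follows essentially the same route as the paper: reduce via Remark~\ref{rem:useful} to showing that the combinatorial sum $\sum t^{\bounce(\pi,dc)}d(\tilde\pi)$ over $\D'(\alpha)^{\bullet k}$ satisfies the defining recursion of $M_\alpha^{\ast k}$, then induct on $\lvert\alpha\rvert+k$, decomposing $\D'((a)\alpha)^{\bullet k}$ according to the first east step (followed by an east step, or by a north run whose corner is decorated or not), which yields the $d_+$, the $\tfrac{1}{q-1}[d_-,d_+]d_-^{\ell(\beta)-1}$, and the $k$ versus $k-1$ contributions; this is exactly the paper's argument via the maps $\gamma$, $\gamma_r$, $\gamma_r^\bullet$. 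One structural claim in your sketch is wrong as stated: the first peak of the bounce path of $\pi$ has height $\ell((a)\alpha)$ (the number of initial north steps, i.e.\ the number of touches of the preimage) and contains no corners at all, so it is never ``one plus the number of decorated corners within it.'' The correct local facts you need are (i) that an element of $\D'((a)\alpha)^{\bullet k}$ whose first east step is followed by north steps has its stripped image of composition $\alpha\beta$ with $\beta\vDash a-1$ (undecorated corner) or $\beta\vDash a$ (decorated corner), giving the disjoint decomposition, and (ii) that removing the first $NE$ pair drops the bounce by exactly $a-1$; these are precisely Proposition~\ref{prop:disjoint-union} and Lemma~\ref{lem:decomp'} (whose statement is in terms of bounce differences of the $\gamma_i$-modified paths, not a direct reading off north-runs), which you correctly identify as the crux but leave unproved. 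With those two ingredients supplied, your induction is the paper's proof.
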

In order to prove this theorem, we need to define some combinatorial maps, and prove some lemmas concerning them.

\subsubsection{Combinatorial recursion}

\begin{definition}
	We define $\psi \colon \D(n)^{\ast k} \rightarrow \D(n-1)^{\ast k} \sqcup \D(n-1)^{\ast k-1}$ as follows: given $(\pi, dr)\in \D(n)^{\ast k}$ take the portion of $\pi$ between the first two touching points (or the whole path if there is only one touching point), remove its first (north) step and its last (east) step, and attach it to the end of the path. If the first rise was decorated we remove the decoration since it is no longer a rise. See Figure~\ref{fig:psi}. 
\end{definition}
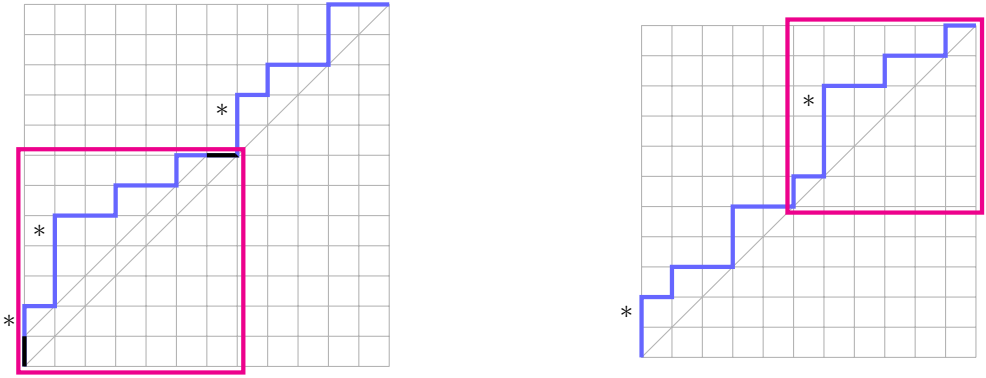
\begin{figure}[!ht]
	\centering
	\begin{minipage}{.5\textwidth}
		\centering
		\begin{tikzpicture}[scale=.4]
		\draw[step=1.0, gray!60, thin] (0,0) grid (12,12);	
		\draw[gray!60, thin] (0,0) -- (12,12);	
		\draw[gray!60, thin] (0,1) -- (6,7);
		
		\draw[blue!60, line width=1.6pt] (0,0) -- (0,1) -- (0,2) -- (1,2) -- (1,3) -- (1,4) -- (1,5) -- (2,5) -- (3,5) -- (3,6) -- (4,6) -- (5,6) -- (5,7) -- (6,7) -- (7,7) -- (7,8) -- (7,9) -- (8,9) -- (8,10) -- (9,10) -- (10,10) -- (10,11) -- (10,12) -- (11,12) -- (12,12);
		\draw[black, line width=1.6pt] (0,0) -- (0,1);
		\draw[black, line width=1.6pt, sharp <-sharp >, sharp > angle = 45] (6,7) -- (7,7);
		\draw[magenta, line width=1.6pt] (-0.2,-0.2) rectangle (7.2,7.2);
		
		\node at (-0.5,1.5) {$\ast$};
		\node at (0.5,4.5) {$\ast$};
		\node at (6.5,8.5) {$\ast$};	
		\end{tikzpicture}
	\end{minipage}%
	\begin{minipage}{.5\textwidth}
		\centering
		\begin{tikzpicture}[scale=.4]
		\draw[step=1.0, gray!60, thin] (0,0) grid (11,11);
		\draw[gray!60, thin] (0,0) -- (11,11);
		
		\draw[blue!60, line width=1.6pt] (0,0) -- (0,1) -- (0,2) -- (1,2) -- (1,3) -- (2,3) -- (3,3) -- (3,4) -- (3,5) -- (4,5) -- (5,5) -- (5,6) -- (6,6) -- (6,7) -- (6,8) -- (6,9) -- (7,9) -- (8,9) -- (8,10) -- (9,10) -- (10,10) -- (10,11) -- (11,11);
		\draw[magenta, line width=1.6pt] (4.8,4.8) rectangle (11.2,11.2);
		
		\node at (-0.5,1.5) {$\ast$};
		\node at (5.5,8.5) {$\ast$};
		\end{tikzpicture}
	\end{minipage}
	
	\caption{A path $(\pi,dr) \in \D((5,2,2))^{\ast 3}$ and its image $\psi(\pi, dr) \in \D((2,2,1,3,1))^{\ast 2}$. The first (north) and the last (east) steps (in black) in the highlighted section of the path are removed, and then the whole section is moved to the end. Since the first step of the section does not form a rise in the image, the corresponding decoration is removed.}
	\label{fig:psi}
\end{figure}

This map is linked to a family of maps, all of which are essentially its right inverses. 

For a composition $\alpha=(\alpha_1,\alpha_2,\dots,\alpha_\ell)$ and an integer $0\leq r\leq \ell$ we set    
\[ \alpha^r \coloneqq \left(\left( 1 + \sum_{i > r} \alpha_i \right) , \alpha_1, \alpha_2, \dots, \alpha_r \right), \qquad \alpha^{r,\ast}=\alpha^{r,\bullet} \coloneqq \left(\left( \sum_{i > r} \alpha_i \right) , \alpha_1, \alpha_2, \dots, \alpha_r \right). \]
We define two similar maps:
\begin{align*} 
&\psi_{r} \colon \D(\alpha)^{\ast k} \rightarrow \D(\alpha^r)^{\ast k} \\
&\psi^{\ast}_{r} \colon \D(\alpha)^{\ast k} \rightarrow \D(\alpha^{r,\ast})^{\ast k+1}. 
\end{align*} 
Given $(\pi,dr)\in \D(\alpha)^{\ast k}$ and $0\leq r \leq \ell(\alpha)$, call $\pi_1$ and $\pi_2$ the portions of $\pi$ below and above its $(r+1)$-th touching point, respectively, if $r\neq \ell(\alpha)$, while $\pi_1=\pi$ and $\pi_2=\emptyset$ if $r=\ell(\alpha)$. Notice that if $\pi_2\neq \emptyset$ then it necessarily starts with a north step. To define $\psi_{r}(\pi,dr)=(\pi',dr')$ we set \[ \pi'\coloneqq N\pi_2E\pi_1\] i.e. the path that starts at $(0,0)$ with a north step, followed by $\pi_2$, an east step and finally $\pi_1$. We use the same definition for $\psi^\ast_{r}(\pi,dr)$. For the decorations, we keep the decorations on the rises in the same place, relative to $\pi_1$ and $\pi_2$. When $\pi_1\neq \emptyset$, i.e. $r\neq \ell(\alpha)$, $\pi_1$ starts with a north step, and so $\pi'$ must start with two north steps, so the second step of $\pi'$ is a newly created rise, which we can choose to decorate or not. This choice is the difference between $\psi_{r}$ and $\psi^\ast_{r}$: for the former we do not decorate the new rise while for the latter we do.  It is clear from the definitions that $\dcomp(\psi_{r}(\pi,dr))=\alpha^r$ and $\dcomp(\psi^\ast_{r}(\pi,dr))=\alpha^{r,\ast}$.

\begin{definition}
	We define $\gamma: \D'(n)^{\bullet k} \rightarrow \D'(n-1)^{\bullet k} \sqcup \D'(n-1)^{\bullet k-1}$ which takes $(\pi, dc)\in \D'(n)^{\bullet k}$ and deletes the first $NE$ sequence of the path and if this $E$ step was part of a decorated corner, it removes its decoration since it is no longer a corner. See Figure~\ref{fig:gamma}. 
\end{definition}
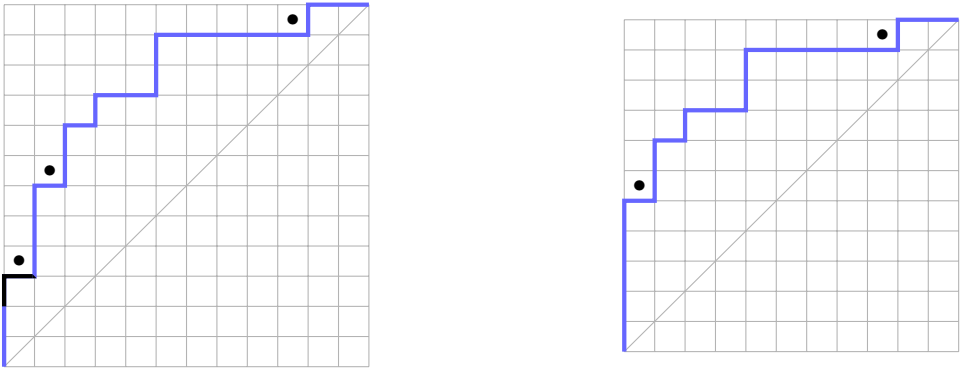
\begin{figure}[!ht]
	\centering
	\begin{minipage}{.5\textwidth}		
		\centering
		\begin{tikzpicture}[scale=.4]
		\draw[gray!60, thin] (0,0) grid (12,12);		
		\draw[gray!60, thin] (0,0) -- (12,12);
		
		\draw[blue!60, line width=1.6pt] (0,0) -- (0,1) -- (0,2) -- (0,3) -- (1,3) -- (1,4) -- (1,5) -- (1,6) -- (2,6) -- (2,7) -- (2,8) -- (3,8) -- (3,9) -- (4,9) -- (5,9) -- (5,10) -- (5,11) -- (6,11) -- (7,11) -- (8,11) -- (9,11) -- (10,11) -- (10,12) -- (11,12) -- (12,12);
		\draw[black, line width=1.6pt, sharp <-sharp >, sharp > angle = 45] (0,2) -- (0,3) -- (1,3);
		
		\node at (0.5,3.5) {$\bullet$};
		\node at (1.5,6.5) {$\bullet$};
		\node at (9.5,11.5) {$\bullet$};	
		\end{tikzpicture}
	\end{minipage}%
	\begin{minipage}{.5\textwidth}		
		\centering
		\begin{tikzpicture}[scale=.4]
		\draw[gray!60, thin] (0,0) grid (11,11);
		
		\draw[gray!60, thin] (0,0) -- (11,11);
		
		\draw[blue!60, line width=1.6pt] (0,0) -- (0,1) -- (0,2) -- (0,3) -- (0,4) -- (0,5) -- (1,5) -- (1,6) -- (1,7) -- (2,7) -- (2,8) -- (3,8) -- (4,8) -- (4,9) -- (4,10) -- (5,10) -- (6,10) -- (7,10) -- (8,10) -- (9,10) -- (9,11) -- (10,11) -- (11,11);
		
		\node at (0.5,5.5) {$\bullet$};
		\node at (8.5,10.5) {$\bullet$};
		\end{tikzpicture}
	\end{minipage}
	
	\caption{A path $(\pi,dc) \in \D'((5,2,2))^{\bullet 3}$ and its image $\gamma(\pi,dc) \in \D'((2,2,1,4))^{\bullet 2}$. The first NE pair (in black) is removed. Since the removed east step was part of a decorated valley, the corresponding decoration is removed. These two paths are actually the images of the paths in Figure~\ref{fig:psi} via $\zeta$.}
	\label{fig:gamma}
\end{figure}

Again, we have a family of right inverses. 
\begin{align*}
&\gamma_{r} \colon \D'(\alpha)^{\bullet k} \rightarrow \D'(\alpha^r)^{\bullet k}\\
&\gamma^{\bullet}_{r} \colon \D'(\alpha)^{\bullet k} \rightarrow \D'(\alpha^{r,\bullet})^{\bullet k+1}
\end{align*}

Take $(\pi, dc)\in \D'(\alpha)^{\bullet k}$. Set $\ell$ to be the number such that $\pi$ starts from the bottom with $\ell$ north steps followed by an east step. Define $\tilde \pi$ to be the portion of $\pi$ following its $\ell$ first vertical steps. Notice that by definition of the map $\zeta$ and $\dcomp'$, we have $\ell=\ell(\alpha)$.  Set $dc^{+j} \coloneqq \{i+j \mid i \in dc\}$. For $0\leq r\leq \ell$, we define
\[\gamma_{r}(\pi,dc) \coloneqq \left( N^{r+1}EN^{\ell-r}\tilde{\pi}, dc^{+1} \right), \]
i.e.\ we add one $NE$ sequence after the first $r$ North steps and we keep the decorated corners as they are, relative to $\pi$.

If $r \neq \ell$ we also have a map $\gamma^\bullet_{r} \colon \D'(\alpha)^{\bullet k} \rightarrow \D'(\alpha^{r,\bullet})^{\bullet k+1}$ defined as \[ \gamma^\bullet_{r}(\pi,dc) \coloneqq \left( N^{r+1}EN^{\ell-r}\tilde{\pi}, \{r+2\} \cup dc^{+1} \right), \] i.e. the path is defined in the same way as before, and we decorate the only new corner. 

It is not immediately clear that for $(\pi, dc)\in \D'(\alpha)^{\bullet k}$ we have $\gamma_{r}(\pi, dc)\in \D'(\alpha^r)^{\bullet k}$ and $\gamma^\bullet_{r}(\pi, dc)\in \D'(\alpha^{r,\bullet})^{\bullet k}$, but it follows from the following lemma.

\begin{lemma}
	\label{lem:mapproperties}
	\hfill
	\begin{enumerate}[(i)]
		\item	For all $\alpha\vDash n$ and $0\leq r \leq \ell(\alpha)$ we have 
		\begin{align*}
		\psi\circ \psi_{r}=\mathop{Id}\rvert_{\D(\alpha)^{\ast k}} && 
		\text{ and if } r\neq\ell(\alpha) \text{ then } && 
		\psi\circ \psi^\ast_{r}=\mathop{Id}\rvert_{\D(\alpha)^{\ast k}}\\ 
		\gamma\circ \gamma_{r}=\mathop{Id}\rvert_{\D'(\alpha)^{\bullet k}} &&&& \gamma\circ \gamma^\bullet_{r}=\mathop{Id}\rvert_{\D'(\alpha)^{\bullet k}}
		\end{align*}
		\item We have $\zeta \circ \psi= \gamma \circ \zeta: \D(n)^{\ast k} \rightarrow \D'(n-1)^{\bullet k} \sqcup \D'(n-1)^{\bullet k-1} $. 
		\item The following diagrams commute \[
		\begin{tikzcd}
		\D(\alpha)^{\ast k} \rar["\zeta"]\dar["\psi_{r}",swap] & \D'(\alpha)^{\bullet k}\dar["\gamma_{r}"] &&&& \D(\alpha)^{\ast k} \rar["\zeta"]\dar["\psi_{r}^\ast",swap] & \D'(\alpha)^{\bullet k}\dar["\gamma_{r}^\bullet"] \\
		\D(\alpha^r)^{\ast k} \rar["\zeta"] &\D'(\alpha^r)^{\bullet k} &&&& \D(\alpha^{r,\ast})^{\ast k+1} \rar["\zeta"] &\D'(\alpha^{r,\bullet})^{\bullet k+1} .
		\end{tikzcd}
		\]
	\end{enumerate}
\end{lemma}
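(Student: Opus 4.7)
The proof is by direct verification, with parts (i) and (iii) reducing to careful bookkeeping, and part (ii) being the main technical content.

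For part (i), the plan is to unpack the definitions. The map $\psi_r$ sends $(\pi, dr)$ with $\pi = \pi_1 \pi_2$ (split at the $(r+1)$-th touching point) to $(N \pi_2 E \pi_1, dr')$, and by construction the first portion between the first two touching points of $\psi_r(\pi)$ is precisely $N \pi_2 E$: the initial $N$ lifts $\pi_2$ off the diagonal, and the subsequent $E$ brings it back. Applying $\psi$ strips this leading $N$ and trailing $E$ and moves the remainder to the end, producing $\pi_1 \pi_2 = \pi$. The new rise at the second step of $\psi_r(\pi)$ (when $\pi_1 \neq \varnothing$) is undecorated for $\psi_r$ and decorated for $\psi_r^\ast$; in the latter case $\psi$ removes this decoration because the step is no longer a rise after the leading $N$ is dropped. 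The identities $\gamma \circ \gamma_r = \mathrm{Id}$ and $\gamma \circ \gamma_r^\bullet = \mathrm{Id}$ are proved analogously: $\gamma_r$ inserts a fresh $NE$ pair after the initial $r+1$ north steps, which becomes the first $NE$ pair of the resulting path, and $\gamma$ removes precisely this pair (together with the decoration on the newly created corner in the $\gamma_r^\bullet$ case).

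For part (ii), the plan is to trace how $\psi$ and $\gamma$ act on the data that $\zeta$ interconverts, namely the sequence of $N$ and $E$ steps together with the area word information used by Haglund's $\zeta_0$ reconstruction. Let $\pi \in \D(n)^{\ast k}$ have area word $(a_1,\dots,a_n)$ and suppose its second touching point is at $(j,j)$; a coordinate computation shows that $\psi(\pi)$ has area word $(a_{j+1},\dots,a_n,a_2-1,\dots,a_j-1)$. On the other hand, $\zeta(\pi)$ starts with a block $N^{c_0} E$, where $c_0$ is the multiplicity of $0$ in the area word of $\pi$, and $\gamma$ deletes the last $N$ of this block together with the immediately following $E$. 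Using Haglund's explicit scanning description of $\zeta_0$ between consecutive bounce peaks, one checks that $\zeta_0(\psi(\pi))$ is obtained from $\zeta_0(\pi)$ by exactly this deletion. The decoration part follows from the bijection between rises of $\pi$ and corners of $\zeta(\pi)$ in Proposition~\ref{prop: rises to corners}: the only decoration possibly removed by $\psi$ is one sitting on the first rise when that rise lies in the first portion, and this corresponds under the bijection to the decoration that $\gamma$ removes on the first corner. The main obstacle here is the careful bookkeeping of how the scanning procedure of $\zeta_0$ behaves when the first touching block of $\pi$ is dismantled and reattached to the end.

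For part (iii), the plan is to combine (i), (ii) and a uniqueness argument. Given $P \in \D(\alpha)^{\ast k}$, parts (i) and (ii) give
\[ \gamma(\zeta(\psi_r(P))) = \zeta(\psi(\psi_r(P))) = \zeta(P) = \gamma(\gamma_r(\zeta(P))), \]
so both $\zeta(\psi_r(P))$ and $\gamma_r(\zeta(P))$ lie in $\gamma^{-1}(\zeta(P))$. Moreover both belong to $\D'(\alpha^r)^{\bullet k}$: the former because $\dcomp'(\zeta(\cdot)) = \dcomp(\cdot)$ by definition and $\zeta$ preserves the number of decorations by Proposition~\ref{prop: zeta}, the latter by construction. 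By Remark~\ref{rem:northsteps}, every element of $\D'(\alpha^r)^{\bullet k}$ begins with exactly $\ell(\alpha^r) = r+1$ north steps followed by an east step; writing $\zeta(P) = N^{\ell(\alpha)} \tilde{\pi}$, any preimage of $\zeta(P)$ under $\gamma$ lying in $\D'(\alpha^r)^{\bullet k}$ is therefore forced to be $N^{r+1} E N^{\ell(\alpha)-r} \tilde{\pi}$ as a path, with the newly created corner undecorated (decorating it would push the count above $k$). This forces uniqueness and gives $\zeta(\psi_r(P)) = \gamma_r(\zeta(P))$. The same argument establishes $\zeta(\psi_r^\ast(P)) = \gamma_r^\bullet(\zeta(P))$ inside $\D'(\alpha^{r,\bullet})^{\bullet k+1}$, where the unique preimage now has the newly created corner decorated.
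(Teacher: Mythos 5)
Your proposal is correct, but it follows a genuinely different route from the paper: the paper gives no argument at all, observing that the undecorated statement is exactly \cite{Haglund-Xin_Lecture-Notes}*{Corollary~2.7} and asserting that the same argument goes through while keeping track of decorations. You instead prove everything from scratch: (i) by unwinding the definitions, (ii) by computing the area word of $\psi(\pi)$ and comparing Haglund's scanning construction of $\zeta_0$, and (iii) by a forcing/uniqueness argument based on (i), (ii), Remark~\ref{rem:northsteps} and a decoration count. The (iii) argument is a real gain over a further direct computation: it needs no new analysis of $\zeta$, and it yields as a byproduct the fact, explicitly flagged in the paper as not immediate, that $\gamma_r$ and $\gamma_r^\bullet$ land in $\D'(\alpha^r)^{\bullet k}$ and $\D'(\alpha^{r,\bullet})^{\bullet k+1}$. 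On that point, your clause ``the latter by construction'' asserts precisely this non-immediate fact; fortunately your argument never uses it, since it suffices that $\zeta(\psi_r(P))$ lies in $\D'(\alpha^r)^{\bullet k}$ and is a $\gamma$-preimage of $\zeta(P)$, after which the forcing of the path shape and the count of decorations identify it with the explicit object defining $\gamma_r(\zeta(P))$ (and likewise for $\gamma_r^\bullet$); you should drop that clause rather than lean on it. The only place where your ``one checks'' in (ii) hides real work is that the bounce word of $\psi(\pi)$ is \emph{not} obtained from that of $\pi$ by deleting one $0$, so the comparison cannot be made peak-section by peak-section with the same index: writing the area word of $\pi$ as $(0,b,c)$ with $b=(a_2,\dots,a_j)$ and $c=(a_{j+1},\dots,a_n)$, the key observations are that the scan of $b-1$ at levels $(i-1,i)$ equals the scan of $b$ at levels $(i,i+1)$, and that the level-$(0,1)$ scan of $b$ consists only of north steps and hence merges into the initial north run of $\zeta_0(\psi(\pi))$; with these two facts the words $\gamma(\zeta_0(\pi))$ and $\zeta_0(\psi(\pi))$ become the same concatenation, and the decoration transport is exactly the one you describe via Proposition~\ref{prop: rises to corners}. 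So the outline is sound and, once that regrouping is spelled out, complete.
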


\begin{proof}
	For no decorations, this is exactly \cite[Corollary~2.7]{Haglund-Xin_Lecture-Notes}. The exact same argument generalizes to our case, keeping track of the decorations, so we omit it.
\end{proof}

\begin{lemma}\label{lem:decomp'}
	Given $(\pi',dc)\in \D'(\alpha)^{\bullet k}$ we have
	\begin{align*}
	\alpha_i
	&=\bounce(\gamma_{i-1}(\pi, dc)))-\bounce(\gamma_{i}(\pi, dc))),\\
	&=\bounce(\gamma^\bullet_{i-1}(\pi, dc)))-\bounce(\gamma^\bullet_{i}(\pi, dc))).
	\end{align*}
\end{lemma}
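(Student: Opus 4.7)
The plan is to transport the question to the $\D(\alpha)^{\ast k}$ side via the $\zeta$ bijection, where it becomes a transparent computation of area differences. Set $\pi \coloneqq \zeta^{-1}(\pi',dc) \in \D(\alpha)^{\ast k}$. By the commutative diagrams in Lemma~\ref{lem:mapproperties}(iii) together with the statistic match in Proposition~\ref{prop:zetastats}, we have $\bounce(\gamma_r(\pi',dc)) = \area(\psi_r(\pi))$ and $\bounce(\gamma_r^\bullet(\pi',dc)) = \area(\psi_r^\ast(\pi))$. So it suffices to prove
\[
\alpha_i \;=\; \area(\psi_{i-1}(\pi)) - \area(\psi_i(\pi)) \;=\; \area(\psi_{i-1}^\ast(\pi)) - \area(\psi_i^\ast(\pi)),
\]
where the second equality is understood for $i$ such that both $\psi_{i-1}^\ast$ and $\psi_i^\ast$ are defined.

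To compute $\area(\psi_r(\pi))$, recall that $\psi_r(\pi) = N\pi_2 E \pi_1$, with $\pi_1, \pi_2$ the portions of $\pi$ below and above its $(r+1)$-th touching point. In the rearranged path, $\pi_1$ lives on the main diagonal starting at $(n_2+1, n_2+1)$, so each of its north steps keeps the same height above the diagonal; the block $\pi_2$ is translated by $(0,1)$, so every one of its north steps gains one unit of height; and the initial brand-new north step sits at height $0$. Since $\psi_r$ preserves the decoration on each original north step, only the undecorated north steps of $\pi_2$ add one unit to the area each, giving
\[
\area(\psi_r(\pi)) - \area(\pi) \;=\; n_2 - k_2,
\]
where $n_2$ and $k_2$ denote the total number of north steps and the number of decorated rises in $\pi_2$, respectively.

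Now $\pi_2$ is the union of the segments of $\pi$ between consecutive touching points starting from the $(r+1)$-th one, and segment $j$ contains exactly $\alpha_j$ undecorated north steps by the definition of $\dcomp$. Hence $n_2 - k_2 = \sum_{j > r} \alpha_j$, and a telescoping subtraction yields
\[
\area(\psi_{i-1}(\pi)) - \area(\psi_i(\pi)) \;=\; \sum_{j \geq i}\alpha_j - \sum_{j > i}\alpha_j \;=\; \alpha_i.
\]
For the starred version, $\psi_r^\ast$ differs from $\psi_r$ only by decorating the first north step of the lifted $\pi_2$, which lies at height $1$; this extra decoration subtracts exactly $1$ from the area, so $\area(\psi_r^\ast(\pi)) = \area(\psi_r(\pi)) - 1$, and the $-1$ cancels in the difference, again giving $\alpha_i$.

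The main subtlety I foresee is the strict bookkeeping of which north steps carry decorations after the $\psi_r$ (or $\psi_r^\ast$) rearrangement, so as to check that only the lifting of $\pi_2$ (and, in the starred case, the single newly decorated rise at height $1$) contributes to the area change; once this is made precise, the identity reduces to the telescoping computation above.
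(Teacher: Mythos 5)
Your proof is correct and follows essentially the same route as the paper: pull back through $\zeta$ via Lemma~\ref{lem:mapproperties}~(iii) and Proposition~\ref{prop:zetastats}, establish $\area(\psi_r(\pi,dr))=\area(\pi,dr)+\sum_{j>r}\alpha_j$ (you spell out the height bookkeeping the paper leaves as ``easy to see''), and telescope. The only cosmetic difference is that you handle the second equality on the area side via $\area(\psi_r^\ast(\pi))=\area(\psi_r(\pi))-1$, whereas the paper uses the mirror identity $\bounce(\gamma_r(\pi,dc))=\bounce(\gamma_r^\bullet(\pi,dc))+1$; these are equivalent through $\zeta$.
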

\begin{proof}
	Consider $(\pi, dr)=\zeta^{-1}(\pi', dc)\in \D(\alpha)^{\ast k}$. It is easy to see from the definition of $\psi_{r}$ that \[\area(\psi_{r} (\pi, dr))=\area(\pi, dr)+\sum_{j> r} \alpha_j.\] It follows  that
	\begin{align*}
	\alpha_i &=\area(\psi_{i-1}(\pi, dr)))-\area(\psi_{i}(\pi, dr))).
	\end{align*} The result now follows from Lemma~\ref{lem:mapproperties}~(iii) and Proposition~\ref{prop:zetastats}. This proves the first equality. The second one follows from the first one together with the obvious
	\[ \bounce(\gamma_{r}(\pi,dc)) = \bounce(\gamma^\bullet_{r}(\pi,dc)) + 1 .\]
\end{proof}

\begin{proposition}
	\label{prop:disjoint-union}
	Let $a\in \mathbb{N}\setminus \{0\}$, and consider $(\pi, dc) \in \D'((a)\alpha)^{\bullet k}$. Then either $\gamma(\pi,dc) \in \D'(\alpha \beta)^{\bullet k}$ for some $\beta \vDash a-1$ or  $\gamma(\pi,dc) \in \D'(\alpha \beta)^{\bullet k-1}$ for some $\beta \vDash a$.
\end{proposition}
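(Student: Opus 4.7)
The plan is to decompose $\D'((a)\alpha)^{\bullet k}$ along the two explicit right inverses of $\gamma$ supplied by Lemma~\ref{lem:mapproperties}(i), according to whether or not the east step that $\gamma$ removes lies immediately below a decorated corner. The case division is exhaustive and gives exactly the two alternatives in the statement.

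First I would invoke Remark~\ref{rem:northsteps}: the hypothesis $\dcomp'(\pi,dc)=(a)\alpha$, with $\ell\coloneqq\ell(\alpha)$, forces $\pi=N^{\ell+1}E\rho$, so the first $NE$ pair removed by $\gamma$ is the $(\ell+1)$-st north step together with the east step directly after it. Since decorations can only sit on corners and the first $\ell+1$ vertical steps of $\pi$ are never corners, every element of $dc$ has index $\geq\ell+2$; in particular, the only decoration that $\gamma$ can strip is one sitting on the vertical step at position $\ell+2$, which occurs precisely when $\rho$ begins with a north step whose index belongs to $dc$. These two mutually exclusive possibilities are the two cases of the proposition.

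In Case A (no decoration removed), I would check directly that $\gamma_\ell(\gamma(\pi,dc))=(\pi,dc)$ by unwinding the formula for $\gamma_\ell$: it inserts an $NE$ pair after the first $\ell$ initial north steps and shifts every decoration index up by one, which is exactly the reverse of $\gamma$'s action on $\pi=N^{\ell+1}E\rho$. Setting $\beta\coloneqq\dcomp'(\gamma(\pi,dc))$, the codomain statement of Lemma~\ref{lem:mapproperties} places $(\pi,dc)\in\gamma_\ell(\D'(\beta)^{\bullet k})\subseteq\D'(\beta^\ell)^{\bullet k}$, so
\[
(a)\alpha=\beta^\ell=\bigl(1+\textstyle\sum_{i>\ell}\beta_i,\,\beta_1,\ldots,\beta_\ell\bigr).
\]
Matching components forces $\beta_i=\alpha_i$ for $i\leq\ell$ and $\sum_{i>\ell}\beta_i=a-1$, whence $\beta=\alpha\beta'$ with $\beta'\vDash a-1$. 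Case B is handled by the same argument with $\gamma_\ell^\bullet$ in place of $\gamma_\ell$: the newly inserted corner naturally receives the decoration that $\gamma$ had stripped, and the analogous comparison of entries in $\beta^{\ell,\bullet}=(\sum_{i>\ell}\beta_i,\beta_1,\ldots,\beta_\ell)$ yields $\beta=\alpha\beta'$ with $\beta'\vDash a$.

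The main obstacle is really just the bookkeeping behind this identification: one has to confirm that the index $r=\ell$ is the correct one to use in each right inverse (this is forced by the requirement that the output of $\gamma_r$ or $\gamma_r^\bullet$ have exactly $\ell+1$ initial north steps), that $\gamma_\ell$ is well-defined on $\gamma(\pi,dc)$ because $\gamma(\pi,dc)$ has at least $\ell$ initial norths, and that in Case B one in fact has strictly more than $\ell$ initial norths, so that $\gamma_\ell^\bullet$ is also defined. Once these verifications are carried out, the algebraic identities $(a)\alpha=\beta^\ell$ and $(a)\alpha=\beta^{\ell,\bullet}$ unpack instantly into the two stated alternatives.
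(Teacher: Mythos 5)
Your proof is correct, but it follows a genuinely different route from the paper's. You explicitly invert $\gamma$ on the given element: writing $\pi=N^{\ell+1}E\rho$ via Remark~\ref{rem:northsteps}, noting that all decorations sit at indices $\geq\ell+2$, splitting according to whether the decoration at index $\ell+2$ is stripped, verifying by direct bookkeeping that $\gamma_\ell(\gamma(\pi,dc))=(\pi,dc)$ (resp.\ $\gamma^\bullet_\ell(\gamma(\pi,dc))=(\pi,dc)$), and then reading $\dcomp'$ off the codomains $\D'(\beta^\ell)^{\bullet k}$ and $\D'(\beta^{\ell,\bullet})^{\bullet k}$, i.e.\ off the fact that $\dcomp'(\gamma_r(\cdot))=\beta^r$ and $\dcomp'(\gamma_r^\bullet(\cdot))=\beta^{r,\bullet}$. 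That codomain fact is established in the paper before the proposition, so there is no circularity, but note that it is carried by the commuting diagrams of Lemma~\ref{lem:mapproperties}(iii) (together with the evident effect of $\psi_r$, $\psi_r^\ast$ on $\dcomp$), not by part (i) as you cite; part (i) gives the composition in the opposite order, and the identity you actually need, $\gamma_\ell\circ\gamma=\mathrm{Id}$ on your element, you correctly re-derive by hand. The paper never inverts $\gamma$: it computes the relevant parts of $\dcomp'(\gamma(\pi,dc))$ through the bounce-difference formula of Lemma~\ref{lem:decomp'}, showing $\bounce(\gamma_{i}(\pi,dc))=\bounce(\gamma_{i-1}(\gamma(\pi,dc)))$ because the two bounce paths coincide beyond the first bounce point, and then determines that the remaining parts form $\beta\vDash a-1$ or $\beta\vDash a$ by a size count according to whether the first corner was decorated. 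Your approach buys an explicit identification of which right inverse recovers each path (which also makes the later decomposition \eqref{eq:decompD'} transparent) at the cost of leaning on the codomain statements for $\gamma_\ell$, $\gamma^\bullet_\ell$ and on the definitional unwinding you flag as bookkeeping (in particular the checks that $\ell\leq\ell(\beta)$ always, and $\ell<\ell(\beta)$ in the decorated case, which you do address); the paper's argument stays entirely within the bounce machinery already set up and avoids the case analysis.
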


\begin{proof}
	Let $\dcomp'(\gamma(\pi,dc)) = \alpha'$. We have to prove that ${\alpha}_i' = \alpha_{i+1}$ for $1 \leq i < \ell(\alpha)$; in fact, if this is true, then necessarily $\alpha' = \alpha \beta$ for some $\beta \vDash a-1$ if the first corner in $\pi$ is decorated, or $\beta \vDash a$ if it is not (as the total size is the size of the composition plus the number of decorations, and applying $\gamma$ decreases the size by exactly one unit).
	
	Recall that \[ \alpha_i = \bounce(\gamma_{i-1}(\pi,dc)) - \bounce(\gamma_{i}(\pi,dc)). \] 
	So it will be sufficient to show that \[ \bounce(\gamma_{i}(\pi,dc)) = \bounce(\gamma_{i-1}(\gamma((\pi,dc))) \] for $1 \leq i \leq \ell(\alpha)$, as it implies our thesis by simply taking the relevant differences. But notice that the two bounce paths are identical from the first bouncing point onwards by construction (see Figure~\ref{fig:gamma-i}), as after the first bouncing point they belong to two identical regions (and whatever happens before is irrelevant, because the labels are all $0$'s). The thesis follows.
\end{proof}

\begin{figure}[!ht]
	\centering
	\begin{minipage}{.5\textwidth}		
		\centering
		\begin{tikzpicture}[scale=.5]
		\draw[gray!60, thin] (0,0) grid (13,13);		
		\draw[gray!60, thin] (0,0) -- (13,13);
		
		\draw[blue!60, line width=1.6pt] (0,0) -- (0,1) -- (0,2) -- (0,3) -- (1,3) -- (1,4) -- (2,4) -- (2,5) -- (2,6) -- (2,7) -- (3,7) -- (3,8) -- (3,9) -- (4,9) -- (4,10) -- (5,10) -- (6,10) -- (6,11) -- (6,12) -- (7,12) -- (8,12) -- (9,12) -- (10,12) -- (11,12) -- (11,13) -- (12,13) -- (13,13);
		\draw[green, line width=1.6pt, sharp <-sharp >, sharp > angle = 45] (0,2) -- (0,3) -- (1,3);
		\draw[black, line width=1.6pt, sharp <-sharp >, sharp > angle = 45] (1,3) -- (1,4) -- (2,4);
		\draw[dashed, opacity=0.6, line width=1.6pt] (0,0) -- (0,1) -- (0,2) -- (0,3) -- (3,3) -- (3,4) -- (3,5) -- (3,6) -- (3,7) -- (3,8) -- (3,9) -- (9,9) -- (9,10) -- (9,11) -- (9,12) -- (12,12) -- (12,13) -- (13,13);
		\draw[magenta, line width=1.6pt] (1.8,1.8) rectangle (13.2,13.2);
		
		\node at (1.5,4.5) {$\bullet$};
		\node at (2.5,7.5) {$\bullet$};
		\node at (10.5,12.5) {$\bullet$};
		\end{tikzpicture}
	\end{minipage}%
	\begin{minipage}{.5\textwidth}		
		\centering
		\begin{tikzpicture}[scale=.5]
		\draw[gray!60, thin] (0,0) grid (12,12);		
		\draw[gray!60, thin] (0,0) -- (12,12);
		
		\draw[blue!60, line width=1.6pt] (0,0) -- (0,1) -- (0,2) -- (1,2) -- (1,3) -- (1,4) -- (1,5) -- (1,6) -- (2,6) -- (2,7) -- (2,8) -- (3,8) -- (3,9) -- (4,9) -- (5,9) -- (5,10) -- (5,11) -- (6,11) -- (7,11) -- (8,11) -- (9,11) -- (10,11) -- (10,12) -- (11,12) -- (12,12);
		\draw[green, line width=1.6pt, sharp <-sharp >, sharp > angle = 45] (0,1) -- (0,2) -- (1,2);
		\draw[dashed, opacity=0.6, line width=1.6pt] (0,0) -- (0,1) -- (0,2) -- (2,2) -- (2,3) -- (2,4) -- (2,5) -- (2,6) -- (2,7) -- (2,8) -- (8,8) -- (8,9) -- (8,10) -- (8,11) -- (11,11) -- (11,12) -- (12,12);			
		\draw[magenta, line width=1.6pt] (0.8,0.8) rectangle (12.2,12.2);
		
		\node at (1.5,6.5) {$\bullet$};
		\node at (9.5,11.5) {$\bullet$};
		\end{tikzpicture}
	\end{minipage}
	
	\caption{The bounce paths of $\gamma_{2}(\pi,dc)$ and $\gamma_{1}(\gamma(\pi,dc))$, where $(\pi, dc)$ is the decorated Dyck path in Figure~\ref{fig:gamma}. The extra NE pairs are highlighted in green. Notice that the highlighted regions trivially coincide (since they do not contain the black NE pair removed by $\gamma$), and so does the bounce path within it.}
	\label{fig:gamma-i}
\end{figure}

\begin{remark}
	The strategy for the recursion in terms of $(\dinv, \area)$ is as follows: if $(\pi, dr)\in \D((a)\alpha)^{\ast k}$ then $\psi(\pi, dr)$ is a path whose diagonal composition is $\alpha\beta$ with either $\beta\vDash a-1$ (no decoration removed) or $\beta \vDash a$ (decoration removed). During this procedure the area always decreases by $a-1$. Except for the contribution to the dinv of the first step of $\pi$, which gets deleted, the dinv does not change overall: indeed for the steps that are moved, the primary dinv becomes secondary and vice versa.	
\end{remark}

\subsubsection{Proof of Theorem~\ref{thm: comp-recursion}}

We are now in a position to prove Theorem~\ref{thm: comp-recursion}: this subsection is dedicated to its proof.

\medskip

Thanks to Remark~\ref{rem:useful}, it will be enough to prove that
\begin{equation} \label{eq:the_key}
M_{\alpha}^{\ast k} = \mathop{\sum_{(\pi,dc)\in \D'(\alpha)^{\bullet k}}}_{\pi=N^\ell \tilde{\pi}:\,  \tilde{\pi}\in \ED^\ell} t^{\bounce(\pi,dc)} d(\tilde{\pi}).
\end{equation}

Let $\alpha$ be a composition with $\ell(\alpha) = \ell$, and let $k\in \mathbb{N}$. We want to prove  \eqref{eq:the_key} by induction on $\lvert \alpha \rvert + k$ (i.e.\ the size of the paths).

If $\alpha = \varnothing$, then $\ell=0$, and the only path with the empty composition is the empty path, which has no decorations and only admits the empty labelling. It follows that the sum is nonempty if and only if $k=0$. Also, $\bounce(\varnothing) = 0$, so that \[ \sum_{(\pi,dc)\in \D'(\varnothing)^{\bullet k}} t^{\bounce(\pi,dc)} d(\tilde{\pi}) = \delta_{k,0} d(\varnothing) = \delta_{k,0} = M_{\varnothing}^{\ast k}, \] hence the initial conditions match.

Consider now a nonempty composition, say $(a)\alpha$ with $a\in \mathbb{N}\setminus\{0\}$ and $\alpha$ a (possibly empty) composition of length $r$. Combining Lemma~\ref{lem:mapproperties} and Proposition~\ref{prop:disjoint-union}, we get the decomposition
\begin{equation}\label{eq:decompD'} \D'((a)\alpha)^{\bullet k} = \bigsqcup_{\beta \vDash a-1} \gamma_{r}(\D'(\alpha\beta)^{\bullet k}) \sqcup \bigsqcup_{\beta \vDash a} \gamma_{r}^\bullet (\D'(\alpha\beta)^{\bullet k-1}).
\end{equation}

So by the induction hypothesis we know that 
\begin{align}
M_{\alpha\beta}^{\ast k} & = \mathop{\sum_{(\pi,dc)\in \D'(\alpha\beta)^{\bullet k}}}_{\pi=N^{\ell(\alpha\beta)} \tilde{\pi}:\, \tilde{\pi}\in \ED^{\ell(\alpha\beta)}} t^{\bounce(\pi,dc)} d(\tilde{\pi})\quad \text{ for }\beta \vDash a-1,\text{ and}
\label{eq:recstep1}
\\ 
M_{\alpha\beta}^{\ast k-1} & = \mathop{\sum_{(\pi,dc)\in \D'(\alpha\beta)^{\bullet k-1}}}_{\pi=N^{\ell(\alpha\beta)} \tilde{\pi}:\,  \tilde{\pi}\in \ED^{\ell(\alpha\beta)}} t^{\bounce(\pi,dc)} d(\tilde{\pi})\quad \text{ for }\beta \vDash a \, .
\label{eq:recstep2}
\end{align}

Consider $(\pi, dc)\in \D'(\alpha\beta)^{\bullet k}$. Provided that $a>1$, we get 
\begin{equation} \label{eq:pitilde}
\gamma_{r}(\pi, dc)=(N^{\ell(\alpha)+1}EN N^{\ell(\beta)-1}\tilde \pi, dc^{+1}) 
\end{equation}
with $EN N^{\ell(\beta)-1}\tilde \pi\in \ED^{\ell(\alpha)+1}$ and  $\tilde \pi \in \ED^{\ell(\alpha \beta)}$, so that 
\begin{equation}\label{eq:chibargamma}
d(EN N^{\ell(\beta)-1}\tilde \pi)= \frac{[d_-,d_+]}{q-1} d_-^{\ell(\beta)-1}d (\tilde \pi).
\end{equation}
Notice that if $(\pi, dc)\in \D'(\alpha\beta)^{\bullet k-1}$ and we apply $\gamma_{r}^\bullet$, then we obtain the exact same identity.

If $a=1$ instead, in the first case, i.e.\ when we apply $\gamma_{r}$, we have $\beta \vDash 0$, so
\begin{equation} \label{eq:pitilde1}
\gamma_{r}(\pi, dc)=(N^{\ell(\alpha)+1} E \tilde \pi, dc^{+1}),
\end{equation}
with $E \tilde \pi\in \ED^{\ell(\alpha)+1}$ and  $\tilde \pi \in \ED^{\ell(\alpha)}$, from which we get 
\begin{equation}
\label{eq:chibara=1} d( E \tilde \pi)= d_+ d(\tilde \pi) 
\end{equation}

In the other case, i.e.\ when we apply $\gamma_{r}^\bullet$, we have $\beta \vDash 1$ instead, and it works as before.

Now we look at what happens to the bounce. By Lemma~\ref{lem:decomp'}, we have
\begin{align*}
\bounce(\gamma_{r}(\pi, dc))&= \bounce(\gamma_{r-1}(\pi, dc))-\alpha_r  \\
& = \bounce(\gamma_{r-2}(\pi, dc)) -\alpha_{r-1} -\alpha_r  \\
& \ \, \vdots \\
& = \bounce(\gamma_{0}(\pi, dc))  -\sum_{i=1}^r \alpha_i \\
& = \bounce(\gamma_{0}(\pi, dc)) - \vert \alpha \vert 
\end{align*} The same identity holds when replacing $\gamma_{r}$ with $\gamma^\bullet_{r}$ and $\gamma_{0}$ with $\gamma^\bullet_{0}$.  

Since $\bounce(\gamma_{0}(\pi, dc))= \bounce(\pi, dc)+\vert\alpha\beta\vert$, we get for $(\pi, dc)\in \D'(\alpha\beta)^{\bullet k}$
\begin{align}\label{eq:bounce}
\bounce(\gamma_{r}(\pi, dc))=\bounce(\pi, dc)+\vert \beta \vert=\bounce(\pi, dc)+a-1
\end{align}

Similarly, we have $\bounce(\gamma_{0}^\bullet(\pi, dc)) = \bounce(\pi, dc)+\vert\alpha\beta\vert-1$ and so for $(\pi, dc)\in \D'(\alpha\beta)^{\bullet k-1}$ we get 
\begin{align}\label{eq:bouncebullet}
\bounce(\gamma_{r}^\bullet(\pi, dc))= \bounce(\pi, dc) -1 +\vert \beta \vert=\bounce(\pi, dc) +a-1. 
\end{align}

We can conclude that when $a>1$, by applying \eqref{eq:recursion_a}, we get 	
\begin{align*}
M_{(a)\alpha}^{\ast k} & = \frac{t^{a-1}}{q-1} [d_-, d_+] \left( \sum_{\beta \vDash a-1} d_-^{\ell(\beta)-1} M_{\alpha \beta}^{\ast k} + \sum_{\beta \vDash a} d_-^{\ell(\beta)-1} M_{\alpha \beta}^{\ast k-1} \right) \\
\text{(using \eqref{eq:recstep1}, \eqref{eq:recstep2})}& =\sum_{\beta\vDash a-1}\mathop{\sum_{(\pi,dc)\in \D'(\alpha\beta)^{\bullet k}}}_{\pi=N^{\ell(\alpha\beta)} \tilde{\pi}:\, \tilde{\pi}\in \ED^{\ell(\alpha\beta)}} t^{a-1}t^{\bounce(\pi,dc)}\frac{[d_-, d_+]}{q-1}   d_-^{\ell(\beta)-1} d(\tilde{\pi})\\
& \quad + \sum_{\beta\vDash a}\mathop{\sum_{(\pi,dc)\in \D'(\alpha\beta)^{\bullet k-1}}}_{\pi=N^{\ell(\alpha\beta)} \tilde{\pi}:\,  \tilde{\pi}\in \ED^{\ell(\alpha\beta)}} t^{a-1}t^{\bounce(\pi,dc)}\frac{[d_-, d_+]}{q-1}  d_-^{\ell(\beta)-1} d(\tilde{\pi})\\
\text{(using \eqref{eq:bounce}, \eqref{eq:bouncebullet} and \eqref{eq:chibargamma})}& =\sum_{\beta\vDash a-1}\mathop{\sum_{(\pi,dc)\in \D'(\alpha\beta)^{\bullet k}}}_{\pi=N^{\ell(\alpha\beta)} \tilde{\pi}:\, \tilde{\pi}\in \ED^{\ell(\alpha\beta)}} t^{\bounce(\gamma_{r} (\pi, dc))} d(ENN^{\ell(\beta)-1}\tilde{\pi})\\
& \quad + \sum_{\beta\vDash a}\mathop{\sum_{(\pi,dc)\in \D'(\alpha\beta)^{\bullet k-1}}}_{\pi=N^{\ell(\alpha\beta)} \tilde{\pi}:\,  \tilde{\pi}\in \ED^{\ell(\alpha\beta)}} t^{\bounce(\gamma_{r}^\bullet(\pi, dc))}  d(ENN^{\ell(\beta)-1}\tilde{\pi})\\
\text{(using \eqref{eq:decompD'} and \eqref{eq:pitilde})}& =\mathop{\sum_{(\pi,dc)\in \D'((a)\alpha )^{\bullet k}}}_{\pi=N^{\ell(\alpha)+1} \tilde{\pi}:\, \tilde{\pi}\in \ED^{\ell(\alpha)+1}} t^{\bounce(\pi, dc)} d(\tilde{\pi})
\end{align*}
which is precisely \eqref{eq:reduction}.

The argument for $a=1$ is very similar but it uses \eqref{eq:recursion_1}, \eqref{eq:pitilde1} and \eqref{eq:chibara=1} instead of \eqref{eq:recursion_a}, \eqref{eq:pitilde} and \eqref{eq:chibargamma}. This completes the proof of Theorem~\ref{thm: comp-recursion}.

\subsubsection{Operator version of the compositional Delta conjecture}

We can state our \emph{operator version} of the compositional Delta conjecture.
\begin{conjecture}[Operator Delta conjecture] \label{conj:operatDelta}
	If $\alpha$ is a composition of length $\ell$, then
	\begin{equation}
	\Theta_k\nabla C_\alpha = d_-^\ell M_\alpha^{\ast k},
	\end{equation}
	with $M_\alpha^{\ast k}$ defined as in Theorem~\ref{thm: comp-recursion}.
\end{conjecture}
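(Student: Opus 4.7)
By Theorem~\ref{thm: comp-recursion} combined with the translation identity \eqref{eq:translation}, this operator conjecture is equivalent to the compositional Delta conjecture (Conjecture~\ref{conj:compDelta}): the right-hand side $d_-^\ell M_\alpha^{\ast k}$ equals $\LD'_{q,t,\underline{x}}(\alpha)^{\bullet k}$, which in turn is the $(\dinv,\area)$ generating function over $\LD(0,n)^{\ast k}$ with $\dcomp = \alpha$. So any proof of either conjecture settles the other. My plan is to prove the operator identity directly by showing that $\Theta_k\nabla C_\alpha$ satisfies the same recursion \eqref{eq:recursion_1}--\eqref{eq:recursion_a} as $d_-^\ell M_\alpha^{\ast k}$, mirroring the Carlsson--Mellit argument for the undecorated ($k=0$) case.

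First, I would recast the recursive structure on the symmetric-function side. The Carlsson--Mellit proof for $k=0$ rewrites the compositional operator $\mathbb{C}_a$ in terms of $d_-, d_+$ acting in $V_\ell$, giving $\nabla C_\alpha = d_-^\ell N_\alpha$. The new ingredient needed for $k \geq 1$ is an identity describing how $\Theta_k$ interacts with the operators $\mathbb{C}_a$. Concretely, I would aim to establish commutation/transport identities of the form
\[ \Theta_k\nabla \mathbb{C}_a F \;=\; \nabla \mathbb{C}_a'(\Theta_k F) \;+\; (\text{correction involving } \Theta_{k-1}), \]
for $F \in \Lambda$, where the correction term, once translated through $\zeta$ and the Dyck path algebra, should precisely reproduce the second sum $\sum_\beta d_-^{\ell(\beta)-1}M_{\alpha\beta}^{\ast k-1}$ in \eqref{eq:recursion_a} (respectively the term $\frac{1}{q-1}[d_-,d_+]M_{\alpha(1)}^{\ast k-1}$ in \eqref{eq:recursion_1}). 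To find such identities I would unpack $\Theta_k = \mathbf{\Pi} e_k^\ast \mathbf{\Pi}^{-1}$ in terms of its action on $\widetilde{H}_\mu$, and combine it with the plethystic formula for $\mathbb{C}_a$ to produce an explicit expression as a sum of terms that, after cancellation, has the claimed shape. If this succeeds, an induction on $\lvert \alpha \rvert + k$ running in parallel with the induction in the proof of Theorem~\ref{thm: comp-recursion} closes the argument.

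The main obstacle is precisely this commutator analysis. Unlike $\nabla$ and $\Delta_f$, which act diagonally on $\{\widetilde{H}_\mu\}$ with simple eigenvalues, the operator $\Theta_k$ involves the twisting factor $\Pi_\mu$ and does not commute in any obvious way with the plethystic ingredients $h_{m+r}[X]$ and $h_r[X(1-q)]^\perp$ that build $\mathbb{C}_a$. The desired identity will likely come out as a long sum of plethystic terms whose combinatorial meaning only becomes visible after collecting corrections controlled by $\mathbf{\Pi}^{-1}$-conjugation. A separate challenge is to make sure that the $\Theta_{k-1}$ correction is naturally indexed by compositions $\beta \vDash a$, so that it matches the decomposition \eqref{eq:decompD'} used on the combinatorial side.

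As a partial sanity check and a possible stepping stone, I would first test the identity after pairing against the basis $\{h_j e_{n-j}\}$: by Lemma~\ref{lem:Schroeder_comp} this reduces the Schröder specialization of the conjecture to Zabrocki's $4$-variable Catalan theorem \cite{Zabrocki-4Catalan-2016}, which is already proved. More generally, I would test against families such as $e_\lambda$ and $s_\lambda$ where the action of $\Theta_k$ can be computed cleanly, in order to pin down the precise form of the needed commutator identity before attempting the general recursion.
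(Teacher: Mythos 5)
The statement you were asked to prove is presented in the paper as a \emph{conjecture}; the paper offers no proof of it, and none should be expected. What the paper does establish, in the Proposition immediately following, is that the operator Delta conjecture is equivalent to the compositional Delta conjecture, as an immediate consequence of Theorem~\ref{thm: comp-recursion} and the translation identity \eqref{eq:translation}. Your opening paragraph identifies precisely this equivalence, with the correct ingredients; that part matches the paper and is the only thing the paper actually claims about this statement.

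The remainder of your writeup is a research plan rather than a proof, and you acknowledge this yourself. Its central missing ingredient — a commutation-type identity that transports $\Theta_k$ past the compositional operators $\mathbb{C}_a$ and produces a correction term in $\Theta_{k-1}$ of exactly the shape dictated by \eqref{eq:recursion_1}--\eqref{eq:recursion_a} — is not established, and there is no indication in the paper that such an identity is available. Indeed, the paper only proves the $k=1$ case $\Theta_1 = \frac{1}{M}(\underline{e}_1 + D_1)$ (Proposition~\ref{prop:propDel10}), while the analogous relations for general $k$, namely \eqref{eq:conjDelk0_e1} and \eqref{eq:conjDelk0_D1}, are themselves stated only as conjectures. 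So the commutator analysis on which your plan hinges rests on material that is open in the paper. This is not a gap you could have closed: the compositional Delta conjecture, and hence this operator version, is genuinely open, which is why the paper labels the statement as a conjecture. Your sanity check against the Schröder case via Lemma~\ref{lem:Schroeder_comp} and Zabrocki's theorem is a sound and useful consistency check, also discussed in the paper, but it does not constitute progress toward a full proof.
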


The following proposition is an immediate consequence of Theorem~\ref{thm: comp-recursion} and \eqref{eq:translation}.
\begin{proposition}
	The compositional Delta conjecture, i.e.\ Conjecture~\ref{conj:compDelta}, is equivalent to the operator Delta conjecture, i.e.\ to Conjecture~\ref{conj:operatDelta}.
\end{proposition}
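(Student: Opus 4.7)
The plan is essentially trivial: both conjectures have the same left-hand side $\Theta_k\nabla C_\alpha$, so the equivalence reduces to showing that their right-hand sides coincide. Concretely, I would chain together the two identities already established in this section.

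First I would invoke equation~\eqref{eq:translation}, which is the ``combinatorial translation'' obtained by pushing a decorated labelled Dyck path of size $n$ through the bijection $\zeta$ of Proposition~\ref{prop: zeta} and applying Proposition~\ref{prop:zetastats} to convert the $(\dinv,\area)$ bistatistic into $(\inv,\bounce)$; it reads
\[ \LD'_{q,t,\underline{x}}(\alpha)^{\bullet k} = \mathop{\sum_{P\in \LD(0,n)^{\ast k}}}_{\dcomp(P)=\alpha}q^{\dinv(P)}t^{\area(P)} x^P. \]
Next I would apply Theorem~\ref{thm: comp-recursion}, which expresses the same enumerator through the Dyck path algebra:
\[ \LD'_{q,t,\underline{x}}(\alpha)^{\bullet k} = d_-^\ell M_\alpha^{\ast k} . \]
Combining these two equalities yields
\[ d_-^\ell M_\alpha^{\ast k} = \mathop{\sum_{P\in \LD(0,n)^{\ast k}}}_{\dcomp(P)=\alpha}q^{\dinv(P)}t^{\area(P)} x^P, \]
so Conjecture~\ref{conj:compDelta} and Conjecture~\ref{conj:operatDelta} are two formulations of the very same identity with $\Theta_k\nabla C_\alpha$ on the left, and are therefore logically equivalent.

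There is no real obstacle: all the substantial combinatorial and algebraic work (the construction of $\zeta$ and the matching of statistics, together with the recursive description of the enumerators via the Dyck path algebra operators $d_+, d_-$) has already been carried out in the preceding subsections. This proposition merely packages the statement of that work into a direct equivalence of the two conjectures.
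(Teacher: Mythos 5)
Your argument is exactly the paper's: the proposition is stated there as an immediate consequence of Theorem~\ref{thm: comp-recursion} together with \eqref{eq:translation}, which is precisely the chain of identities you write down. Correct, and no further comment is needed.
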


\section{About the touching version}

\subsection{Proof of the generalized shuffle conjecture}

In this section we prove the \emph{generalized shuffle conjecture}, i.e.\ the special case $k=0$ of Conjecture~\ref{conj:touchingGenDelta}, which is also Conjecture~7.5 in \cite{Haglund-Remmel-Wilson-2015}.

\begin{theorem}[Touching generalized shuffle] \label{thm:GenShuffle}
	Given $n,m,r\in \mathbb{N}$ with $n\geq r\geq 1$,
	\begin{equation} \label{eq:GenShuffle}
	\Delta_{h_m}\nabla E_{n,r}=\mathop{\sum_{P\in \LD(m,n)^{\ast 0}}}_{\touch(P)=r}q^{\dinv(P)}t^{\area(P)} x^P.
	\end{equation}
\end{theorem}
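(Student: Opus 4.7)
The strategy is to follow the Carlsson--Mellit Dyck path algebra approach, adapting it to incorporate $m$ zero labels on the combinatorial side and $\Delta_{h_m}$ on the symmetric function side. Since $k = 0$ means no decorated rises, the bookkeeping is cleaner than in the fully general Section~6.

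First I would set up an analog of Theorem~\ref{thm: comp-recursion} by extending the $\zeta$ bijection and the maps $\psi$, $\gamma$ to handle paths carrying $m$ zero labels. Under $\zeta$, each zero valley corresponds to a specific marked vertical step on the bounce side; the bounce statistic, the inv statistic, and the zero-weight characteristic function $\overline{\chi}(\pi)$ must each be tracked through the inductive construction. The goal would be to produce a recursion for
\[
F_{n,r}^{(m)} \coloneqq \mathop{\sum_{P \in \LD(m,n)}}_{\touch(P) = r} q^{\dinv(P)} t^{\area(P)} x^P,
\]
leading to an operator expression $F_{n,r}^{(m)} = d_-^r \mathcal{N}_{n,r}^{(m)}$ inside an extension of the Dyck path algebra by a new operator encoding zero-valley insertion; for $m = 0$ this should reduce, via \eqref{eq:Enk=sumCalpha} and Theorem~\ref{thm: comp-recursion}, to the Carlsson--Mellit expression for $\nabla E_{n,r}$.

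Next I would establish the symmetric function identity $\Delta_{h_m}\nabla E_{n,r} = d_-^r \mathcal{N}_{n,r}^{(m)}$. Using the Carlsson--Mellit identification for $\nabla E_{n,r}$, this reduces to an operator identity expressing $\Delta_{h_m}$ through the zero-valley operator. One natural route is to evaluate both sides against the modified Macdonald basis $\widetilde{H}_\mu$: the left hand side acts by the scalar $h_m[B_\mu]$, while the right hand side produces a plethystic evaluation that must be identified with $h_m[B_\mu]$ through a symmetric function computation.

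The main obstacle is precisely this symmetric function matching. As observed in the remark after Conjecture~\ref{conj:compDelta}, the dinv does not refine cleanly over compositions for $m > 0$, so the proof cannot be carried out composition-by-composition: it must take place at the level of $E_{n,r}$, where the compositional mismatches telescope away under summation. Establishing the operator identity therefore likely requires either a direct global plethystic computation or an induction on $m$ combined with a Pieri-type argument compatible with the $d_\pm$ and $T_i$ operators, rather than a routine adaptation of the Carlsson--Mellit strategy.
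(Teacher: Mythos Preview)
Your proposal is a research outline, not a proof: you explicitly identify the ``main obstacle'' (matching $\Delta_{h_m}$ with a zero-valley insertion operator on the Dyck path algebra side) and then leave it unresolved, saying only that it ``likely requires'' some further argument. That is the whole content of the theorem, so as written there is a genuine gap. Extending the Carlsson--Mellit machinery with a new operator encoding zero valleys is a reasonable aspiration, but nothing in your sketch indicates how the required operator identity would be established, and you correctly note that the compositional refinement is unavailable for $m>0$, which removes the most natural inductive handle the Dyck path algebra would give you.

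The paper's proof takes a completely different route and avoids the Dyck path algebra altogether. The key idea is to introduce \emph{augmented Dyck paths} $\LD(m,n,s)$ carrying $s$ extra labels $\infty$ forced to sit on the main diagonal, and to work with the plethystically modified family $\Delta_{h_m}\nabla e_n[X[s+1]_q]$ rather than with the individual $\Delta_{h_m}\nabla E_{n,r}$. Two ingredients then do all the work. First, a symmetric function identity (Theorem~\ref{thm:SF_GenShuffle})
\[
h_j^\perp \nabla e_n[X[s+1]_q] = \sum_{p=0}^{j} t^{j-p}\qbinom{s+p}{p}_q \Delta_{h_{j-p}}\nabla e_{n-j}[X[s+p+1]_q]
\]
is matched combinatorially by a ``push in the peaks'' bijection that converts the $j$ largest labels into either $\infty$'s (if on the diagonal) or $0$'s (if not); this yields $\Delta_{h_m}\nabla e_n[X[s+1]_q] = \LD_{q,t,\underline{x}}(m,n,s)$ by induction on $m$, with the base case $m=0$ being the Carlsson--Mellit shuffle theorem used as a black box. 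Second, the passage from this ``$s$-family'' to the touching statement for $\Delta_{h_m}\nabla E_{n,r}$ is a linear algebra step: both the symmetric function side and the combinatorial side satisfy the same triangular system with coefficient matrix $\left\lVert \qbinom{r+s}{r}_q \right\rVert$, whose determinant is $1$, so the touching enumerators are uniquely determined. This bypasses precisely the obstacle you flagged, since no compositional refinement and no new Dyck path algebra operator are needed.
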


This theorem extends the shuffle theorem of Carlsson and Mellit \cite{Carlsson-Mellit-ShuffleConj-2015}.

\medskip

The rest of this subsection is devoted to the proof of this result. We start by introducing some convenient combinatorial objects.

\begin{definition}
	An \emph{augmented Dyck path} is a labelled Dyck path with labels in $\mathbb{N} \cup \{\infty\}$, such that all the labels $\infty$ lie on the main diagonal and they are the only labels in their own columns. See Figure~\ref{fig:pushing} for two examples.
\end{definition}

We denote by $\LD(m,n,s)$ the set of augmented Dyck paths of size $m+n+s$ with $m$ labels $0$ and $s$ labels $\infty$. We compute the statistics as follows: the area is not influenced by the labels. The definition of the dinv stays the same, using the following conventions: $\ell < \infty$ for each positive label $\ell$, but $0 \not < \infty$ and $0\not > \infty$. Finally, to compute the relevant monomials, we set $x_\infty = 1$. For example, the augmented Dyck path to the right of Figure~\ref{fig:pushing} has area $5$, dinv $16$, and the corresponding monomial equals $x_1 x_2 x_3 x_4 x_5 x_6 x_7$.

We have the following combinatorial identity.
\begin{proposition}
	Given $n,s\in \mathbb{N}$ with $n \geq 1$
	\begin{equation} \label{eq:prop_combId}
	\sum_{P \in \LD(0,n,s)} q^{\dinv(P)} t^{\area(P)} x^P = \sum_{r=1}^n \qbinom{r+s}{r}_q \mathop{\sum_{P\in \LD(0,n)}}_{\touch(P) = r} q^{\dinv(P)} t^{\area(P)} x^P.
	\end{equation}
\end{proposition}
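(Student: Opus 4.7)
The plan is to construct a weight-preserving bijection between $\LD(0,n,s)$ and pairs $(P, (a_1,\ldots,a_{r+1}))$, where $P \in \LD(0,n)$ with $\touch(P) = r$ and $(a_i)$ is a weak composition of $s$ into $r+1$ nonnegative parts. The key observation is that the constraint ``each $\infty$-label is the only label in its own column'' forces every $\infty$-labelled vertical step to occupy a column by itself; combined with the requirement that $\infty$'s lie on the main diagonal, this means every $\infty$-step is realised as an isolated $NE$-bump based at some diagonal-touching point of the underlying path.

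Given $\tilde{P} \in \LD(0,n,s)$, remove all $s$ such $NE$-bumps to recover a labelled Dyck path $P \in \LD(0,n)$ of size $n$, keeping every positive label where it was. If $\touch(P) = r$, the diagonal points visited by $P$ are $(t_0,t_0) = (0,0),\ (t_1,t_1),\ \ldots,\ (t_{r-1},t_{r-1}),\ (t_r,t_r) = (n,n)$, giving $r+1$ insertion slots; let $a_i$ be the number of $\infty$-bumps that were stacked at the $i$-th slot in $\tilde{P}$. The inverse map reinserts $a_i$ consecutive $NE$-bumps (each carrying an $\infty$ label) at the $i$-th slot; this is unambiguous and visibly a bijection.

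Next, I would compare the three statistics across the bijection. Since $x_\infty = 1$, we have $x^{\tilde{P}} = x^P$. Inserting an $NE$-bump at a diagonal point translates the subsequent portion of the path by $(1,1)$, so the diagonal distance $a_j$ of every original row is preserved, and the new $\infty$-row itself satisfies $a=0$; hence $\area(\tilde{P}) = \area(P)$. For the dinv, a direct inspection shows that two $\infty$'s never form an inversion (both sit on diagonal $0$, while $\infty \not< \infty$ rules out primary and $a_i = a_j + 1$ is impossible), and that an $\infty$ with a positive label never forms a secondary inversion (which would require either a step at $a = -1$ or a label strictly greater than $\infty$). The only new inversions are primary ones: an $\infty$ at slot $i$ pairs with each of the $i-1$ positive labels sitting at the earlier diagonal touching points, contributing exactly $i-1$. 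Therefore $\dinv(\tilde{P}) = \dinv(P) + \sum_{i=1}^{r+1}(i-1)\,a_i$.

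Finally, summing first over the $(a_i)$-fibre of each $P$ and using the standard $q$-binomial identity
\[
\sum_{\substack{(a_1,\ldots,a_{r+1})\in \mathbb{N}^{r+1} \\ a_1+\cdots+a_{r+1}=s}} q^{\sum_{i=1}^{r+1}(i-1)a_i} = \qbinom{r+s}{r}_q,
\]
and then summing over $P$ grouped by $r = \touch(P)$, we recover the right-hand side of \eqref{eq:prop_combId}. The main obstacle will be translating the ``only label in its own column'' condition precisely into the isolated-$NE$-bump structure, and the bookkeeping needed when several bumps are stacked at a single slot; once these are made rigorous, the statistic comparison reduces to the routine case check sketched above.
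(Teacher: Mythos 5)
Your proposal is correct and is essentially the paper's own argument: the paper also proves the identity by inserting the $s$ labels $\infty$ as isolated north--east bumps on the main diagonal of a path $P$ with $\touch(P)=r$, observing that area and the monomial are unchanged and that the only new dinv comes from primary inversions with the positive diagonal labels preceding each $\infty$, so that the interlacing (equivalently, your weak composition into $r+1$ slots) contributes exactly $\qbinom{r+s}{r}_q$. Your write-up just spells out the slot/weak-composition bookkeeping and the case check on inversions in more detail than the paper does.
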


\begin{proof}
	Given any path $P \in \LD(0,n)$ with $\touch(P) = r$, we can insert $s$ labels $\infty$ on the diagonal, and the contribution to the dinv given by those labels is exactly $\qbinom{r+s}{r}_q$: choose an interlacing between the $s$ labels $\infty$ and the $r$ labels on the diagonal; each time one of the latters precedes one of the formers, a unit of dinv is created. Summing over all the possible values of $r$ we get the desired identity.
\end{proof}

Applying nabla to \eqref{eq:en_q_sum_Enk}, we get
\begin{equation}
\label{eq:shuffle_syst}
\nabla e_n\left[X[s+1]_q\right] =\sum_{r=1}^n \qbinom{r+s}{r}_q \nabla E_{n,r}.
\end{equation}

Recall that Carlsson and Mellit in \cite{Carlsson-Mellit-ShuffleConj-2015} showed in particular that
\begin{equation}
\nabla E_{n,r} = \mathop{\sum_{P\in \LD(0,n)}}_{\touch(P) = r} q^{\dinv(P)} t^{\area(P)} x^P.
\end{equation}

Combining all this with \eqref{eq:prop_combId} we get immediately the following corollary.
\begin{corollary}
	For $n, s \in \mathbb{N}$ with $n\geq 1$, we have
	\begin{equation}
	\label{eq:s_shuffle_thm}
	\nabla e_n\left[X[s+1]_q\right]=\sum_{P \in \LD(0,n,s)} q^{\dinv(P)} t^{\area(P)}x^P.
	\end{equation}
\end{corollary}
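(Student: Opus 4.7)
The plan is to assemble the corollary out of the three ingredients already on the page, so there is really no new content: the Carlsson--Mellit shuffle theorem, the symmetric-function identity \eqref{eq:shuffle_syst}, and the combinatorial identity \eqref{eq:prop_combId}. I would make the chain of equalities explicit so that the reader sees exactly which piece plays which role.

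First, I would start from the symmetric function side. Specializing $z = q^{s+1}$ in \eqref{eq:def_Enk} (or, equivalently, quoting \eqref{eq:en_q_sum_Enk} with $j = s+1$) and then applying $\nabla$ gives
\begin{equation*}
\nabla e_n\bigl[X[s+1]_q\bigr] \;=\; \sum_{r=1}^n \qbinom{r+s}{r}_q \nabla E_{n,r},
\end{equation*}
which is precisely \eqref{eq:shuffle_syst}. This converts the specialization at $X[s+1]_q$ into a weighted sum over touching numbers $r$.

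Next, I would substitute the Carlsson--Mellit shuffle theorem, which says that each $\nabla E_{n,r}$ is exactly the $(\dinv,\area)$-generating function of labelled Dyck paths in $\LD(0,n)$ with touching number $r$. This replaces the right-hand side with
\begin{equation*}
\sum_{r=1}^n \qbinom{r+s}{r}_q \mathop{\sum_{P\in \LD(0,n)}}_{\touch(P)=r} q^{\dinv(P)} t^{\area(P)} x^P.
\end{equation*}

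Finally, I would invoke the combinatorial identity \eqref{eq:prop_combId} in the reverse direction: its right-hand side is exactly the expression just above, and its left-hand side is the desired $\sum_{P \in \LD(0,n,s)} q^{\dinv(P)} t^{\area(P)} x^P$. The only thing worth emphasizing here is that the $q$-binomial $\qbinom{r+s}{r}_q$ is precisely the generating function for the dinv contributions created by interlacing the $s$ labels $\infty$ (sitting on the diagonal and contributing nothing to $\area$ or to $x^P$) with the $r$ nonzero labels at touching points. There is no genuine obstacle: the whole statement is a three-line identification of the three sides, and the only part that requires any care is re-reading \eqref{eq:prop_combId} in the right direction to conclude.
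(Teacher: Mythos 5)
Your proof is correct and matches the paper's argument line by line: apply $\nabla$ to \eqref{eq:en_q_sum_Enk} at $j=s+1$ to get \eqref{eq:shuffle_syst}, substitute the Carlsson--Mellit shuffle theorem for each $\nabla E_{n,r}$, and then read \eqref{eq:prop_combId} from right to left.
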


Our goal is to show the following more general theorem.
\begin{theorem}
	\label{thm:comb_GenShuffle}
	For $m, n, s \in \mathbb{N}$ with $n\geq 1$, we have
	\begin{equation}
	\label{eq:gen_shuffle_aux}
	\Delta_{h_m} \nabla e_n\left[X[s+1]_q\right] = \sum_{P \in \LD(m,n,s)} q^{\dinv(P)} t^{\area(P)} x^P.
	\end{equation}
\end{theorem}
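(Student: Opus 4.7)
My plan is to reduce Theorem~\ref{thm:comb_GenShuffle} to the already-established $m=0$ case \eqref{eq:s_shuffle_thm} via a two-sided strategy: a combinatorial ``pushing'' bijection on the labelled-path side, matched to a symmetric-function expansion of $\Delta_{h_m}\nabla e_n[X[s+1]_q]$ in shifted alphabets. The guiding intuition is that zero labels and infinity labels both have trivial $x$-weight ($x_0=x_\infty=1$), so one should be able to trade the former for the latter at the cost of an explicit $q$-weight.

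\medskip

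\emph{Combinatorial side.} I would define a pushing map on $P\in\LD(m,n,s)$ which processes the zero labels one at a time (say from top to bottom along the reading order) and slides each zero label leftwards in its row until the vertical step it labels lies on the main diagonal, at which point the $0$ becomes an $\infty$. Each elementary slide either converts a primary inversion into a secondary one or eliminates it, producing a controlled $q$-power. The area is unaffected because zero-labelled rows and $\infty$-labelled rows both lie on the diagonal. The net outcome should be a decomposition
\begin{equation*}
\sum_{P\in\LD(m,n,s)} q^{\dinv(P)} t^{\area(P)} x^P
\;=\;\sum_{j=0}^{m} c_j(q,s,m) \sum_{P'\in \LD(0,n,s+j)} q^{\dinv(P')} t^{\area(P')} x^{P'}
\end{equation*}
with explicit $q$-polynomial coefficients $c_j(q,s,m)$ counting the various landing configurations. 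Then invoking \eqref{eq:s_shuffle_thm} for each inner sum rewrites this as $\sum_j c_j(q,s,m)\,\nabla e_n[X[s+j+1]_q]$.

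\medskip

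\emph{Symmetric function side.} I would establish the matching identity
\begin{equation*}
\Delta_{h_m}\nabla e_n[X[s+1]_q]
\;=\;\sum_{j=0}^{m} c_j(q,s,m)\,\nabla e_n[X[s+j+1]_q].
\end{equation*}
Applied to $\widetilde H_\mu$, the operators $\Delta_{h_m}$ and $\nabla$ act diagonally with eigenvalues $h_m[B_\mu]$ and $T_\mu$, so the identity reduces, after expanding $e_n[X[s+1]_q]$ in the Macdonald basis and dualising via the Hall pairing, to the scalar identity
\begin{equation*}
h_m[B_\mu]\cdot\langle e_n[X[s+1]_q],\widetilde H_\mu\rangle_\ast
\;=\;\sum_{j=0}^{m} c_j(q,s,m)\,\langle e_n[X[s+j+1]_q],\widetilde H_\mu\rangle_\ast
\end{equation*}
holding $\mu$-by-$\mu$. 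This should be a consequence of a $q$-Vandermonde or Pieri-type identity applied to $h_m[B_\mu]$ and the $q$-series $(1-q^{s+j+1})/(1-q)$ in the plethystic argument.

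\medskip

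\emph{Main obstacle.} The hardest part is constructing the pushing map and verifying that the $q$-powers accumulated during a push, summed over all paths in a fibre, assemble into the coefficients $c_j(q,s,m)$ prescribed by the symmetric-function identity. Because zero labels interact nontrivially with surrounding positive labels (each slide can flip a primary inversion into a secondary one and vice versa), the order in which the zero labels are pushed matters, and one must show that a canonical processing order (e.g.\ reverse reading order) produces a well-defined map with the right $q$-weights. A convenient alternative, should the direct bijection prove too delicate, would be to replace the bijection by a recursion on $m$: peel off a single zero label from the topmost diagonal, obtain a path in $\LD(m-1,n,s)$ or $\LD(m-1,n,s+1)$ together with an index recording the original position, and match this with the recursion for $\Delta_{h_m}\nabla e_n[X[s+1]_q]$ obtained by writing $h_m = h_1\,h_{m-1}-(\text{lower})$ in the Macdonald eigenbasis.
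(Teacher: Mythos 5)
Your central reduction is not available: neither the symmetric-function identity nor the combinatorial decomposition you posit is true. Combinatorially, the premise ``zero-labelled rows and $\infty$-labelled rows both lie on the diagonal'' is false: a zero label is a valley, but it may sit at any height above the main diagonal (e.g.\ the path in Figure~\ref{fig:pldExample1} has a zero label one unit above the diagonal), and its row \emph{does} contribute to $\area$; so zeros cannot be traded for $\infty$'s at a pure $q$-cost, and any honest trade must carry $t$-weights (also, sliding a vertical step leftwards in its row moves it \emph{away} from the diagonal of a Dyck path, not toward it). On the symmetric-function side, expanding both sides in the Macdonald basis via \eqref{eq:qn_q_Macexp}, your proposed identity $\Delta_{h_m}\nabla e_n[X[s+1]_q]=\sum_{j=0}^m c_j(q,s,m)\,\nabla e_n[X[s+j+1]_q]$ is equivalent to requiring, for every partition $\mu$,
\begin{equation*}
(1-q^{s+1})\,h_m[B_\mu]\,h_{s+1}[(1-t)B_\mu]\;=\;\sum_{j=0}^{m}c_j(q,s,m)\,(1-q^{s+j+1})\,h_{s+j+1}[(1-t)B_\mu].
\end{equation*}
Already for $m=1$, $s=0$ this fails: $\mu=(1)$ forces $c_0+(1+q)c_1=1$, while $\mu=(1,1)$ forces $c_0+(1+q)c_1=1+t$ (using $h_2[(1-t)(1+t)]=1-t^2$). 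So no coefficients $c_j(q,s,m)$ exist, not even $t$-dependent ones, since they may not depend on $n$; and even allowing $n$-dependent coefficients, the right-hand side always lies in the $n$-dimensional span of $\{\nabla E_{n,r}\}_{r=1}^n$ (by \eqref{eq:shuffle_syst}), a span which is not preserved by $\Delta_{h_m}$, so the coefficientwise system ($p(n)$ equations, $m+1$ unknowns) is inconsistent in general.

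What makes the actual argument work is that the recursion is run in a different direction and with a degree drop. The paper inducts on $m$ using Theorem~\ref{thm:SF_GenShuffle}: $h_j^\perp\nabla e_n[X[s+1]_q]=\sum_{p=0}^j t^{j-p}\qbinom{s+p}{p}_q\Delta_{h_{j-p}}\nabla e_{n-j}[X[s+p+1]_q]$, where the skewing $h_j^\perp$ (not a decomposition of $\Delta_{h_m}\nabla e_n[X[s+1]_q]$ into $\nabla e_n[X[\cdot]_q]$'s) lowers the degree to $n-j$, reintroduces smaller Delta operators, and carries explicit $t$-powers. Combinatorially, $h_j^\perp$ singles out the $j$ largest labels, which are necessarily peaks: those on the diagonal become $\infty$'s (cost $\qbinom{s+p}{p}_q$), and the remaining ones are pushed past the next east step to become zero valleys, which \emph{decreases} the area by exactly $j-p$ (hence the factor $t^{j-p}$) while preserving the dinv. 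So the trade is big labels/peaks $\to$ zeros with a $t$-cost, the reverse of your zeros $\to$ infinities at $q$-cost. Your fallback idea of peeling off one zero and writing $h_m=h_1h_{m-1}-s_{(m-1,1)}$ runs into the same wall: it introduces $\Delta_{s_{(m-1,1)}}$ terms with no combinatorial handle, whereas the workable recursion goes through $h_j^\perp$ applied to the already-proved $m=0$ statement with extra $\infty$ labels.
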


On the symmetric function side, the key identity is provided by the following theorem, that we prove in Section~\ref{sec:proofs_sec_Delkm}.
\begin{theorem}
	\label{thm:SF_GenShuffle}
	For $n, j, s \in \mathbb{N}$ with $n\geq 1$, we have
	\begin{equation} \label{eq:SF_GenShuffle}
	h_j^\perp \nabla e_n[X[s+1]_q] = \sum_{p=0}^j t^{j-p} \qbinom{s+p}{p}_q \Delta_{h_{j-p}} \nabla e_{n-j}[X[s+p+1]_q].
	\end{equation}
\end{theorem}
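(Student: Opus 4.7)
The plan is to reformulate the statement as a single plethystic generating-function identity. Multiplying \eqref{eq:SF_GenShuffle} by $z^j$ and summing over $j\geq 0$, the standard identity $\sum_{j\geq 0}z^j(h_j^\perp F)[X]=F[X+z]$ applied to $F[X]=(\nabla e_n)[X[s+1]_q]$ rewrites the left-hand side as $(\nabla e_n)[(X+z)[s+1]_q]$. After the change of variables $r=j-p$, the right-hand side becomes $\sum_{p,r\geq 0}z^{p+r}t^r\qbinom{s+p}{p}_q\Delta_{h_r}\bigl((\nabla e_{n-p-r})[X[s+p+1]_q]\bigr)$, where $\Delta_{h_r}$ acts on the substituted symmetric function of $X$. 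Hence the task reduces to establishing a single plethystic identity in the two symmetric-function variables $X$ and $z$.

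I would then expand both sides in the modified Macdonald basis $\{\widetilde{H}_\mu\}$. On the left, apply the Cauchy-type decomposition $F[Y+Z]=\sum_\lambda s_\lambda[Z]\,(s_\lambda^\perp F)[Y]$ with $Y=X[s+1]_q$ and $Z=z[s+1]_q$, together with the homogeneity $s_\lambda[z[s+1]_q]=z^{|\lambda|}s_\lambda[[s+1]_q]$ and the explicit principal specialization of Schur functions. On the right, use the eigenvalue actions $\nabla\widetilde{H}_\mu=T_\mu\widetilde{H}_\mu$ and $\Delta_{h_r}\widetilde{H}_\mu=h_r[B_\mu]\widetilde{H}_\mu$ (which still apply after plethystic substitution, provided one works in $\Lambda$ in the variable $X$) to make every term explicit. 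Matching coefficients of each $\widetilde{H}_\mu[X(\ldots)]$ then reduces the theorem to an arithmetic identity involving $B_\mu$, $\Pi_\mu$, $T_\mu$, $w_\mu$, and the $q$-binomial $\qbinom{s+p}{p}_q$. The latter arises naturally from the $q$-multinomial identity
\[
\sum_{\substack{a_0,\ldots,a_s\geq 0\\a_0+\cdots+a_s=p}}q^{a_1+2a_2+\cdots+sa_s}=\qbinom{s+p}{p}_q,
\]
which records the distribution of $p$ applications of $h_\ast^\perp$ among the $s+1$ ``colours'' $1,q,\ldots,q^s$ making up $z[s+1]_q$.

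The main obstacle will be that plethystic substitution at $X[s+1]_q$ does not commute with either $h_j^\perp$ or $\Delta_{h_r}$ in a simple way, so one cannot reason purely at the level of $\nabla e_n$; the substituted symmetric functions must be handled as elements of $\Lambda$ in $X$ in their own right, and the expansion of $\widetilde{H}_\mu[X[s+1]_q]$ in the $\widetilde{H}$-basis of $\Lambda$ is non-elementary. To circumvent this, I would keep the generating-function reformulation (which handles $h_j^\perp$ through the trivial identity $F[X+z]=\sum_j z^j h_j^\perp F[X]$) and the modified-Macdonald expansion (which handles the eigenvalues of $\nabla$ and $\Delta_{h_r}$) in separate stages, and close out the residual $q$-series identity by appealing to the $q$-Pochhammer identity $\sum_{p\geq 0}\qbinom{s+p}{p}_q y^p=1/(y;q)_{s+1}$, matched against the alphabet factorisation $(X+z)[s+1]_q$ appearing on the left.
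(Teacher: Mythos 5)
Your proposal hinges on a misreading of the left-hand side. In the paper's notation, $\nabla e_n[X[s+1]_q]$ means the operator $\nabla$ applied to the symmetric function $e_n[X[s+1]_q]\in\Lambda^{(n)}$ (expand $e_n[X[s+1]_q]$ in the $\widetilde{H}_\mu[X]$ basis using \eqref{eq:qn_q_Macexp}, then multiply by $T_\mu$), and likewise $\Delta_{h_{j-p}}\nabla e_{n-j}[X[s+p+1]_q]$ means both operators applied after the plethystic substitution. You instead set $F[X]=(\nabla e_n)[X[s+1]_q]$, i.e.\ you evaluate the symmetric function $\nabla e_n$ at the alphabet $X[s+1]_q$, and then conclude $\sum_j z^j h_j^\perp F=F[X+z]=(\nabla e_n)[(X+z)[s+1]_q]$. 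These are different objects: $\nabla$ does not commute with the substitution $X\mapsto X[s+1]_q$, so $\nabla\bigl(e_n[X[s+1]_q]\bigr)\neq(\nabla e_n)[X[s+1]_q]$ (in one case the $T_\mu$ is attached to $\widetilde{H}_\mu[X]$, in the other to $\widetilde{H}_\mu[X[s+1]_q]$, and these expand differently in the Macdonald basis). Your reformulated identity in $X$ and $z$ is therefore not equivalent to the theorem.

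With the correct reading, the generating-function step is still legitimate, but it only gives $\sum_j z^j\cdot\text{LHS}=G[X+z]$ with $G[X]=\nabla\bigl(e_n[X[s+1]_q]\bigr)=(1-q^{s+1})\sum_{\mu\vdash n}\frac{\Pi_\mu T_\mu h_{s+1}[(1-t)B_\mu]}{w_\mu}\widetilde{H}_\mu[X]$, and expanding $\widetilde{H}_\mu[X+z]$ reintroduces $\sum_j z^j h_j^\perp\widetilde{H}_\mu$, i.e.\ precisely the Pieri coefficients $c^{(j)}_{\mu\nu}$ of \eqref{eq:def_cmunu}. So the translation trick does not actually remove the Pieri expansion; it just hides it. The paper's proof handles exactly this Pieri sum by passing through \eqref{eq:HaglundThm}, \eqref{eq:Garsia_Hicks_Stout}, and the summation identity \eqref{eq:mastereq}, and these three non-trivial lemmas carry essentially all of the content of the theorem. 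Your proposal does not supply a replacement for them: the phrase ``close out the residual $q$-series identity by appealing to $\sum_p\qbinom{s+p}{p}_qy^p=1/(y;q)_{s+1}$'' is not developed into a step that could plausibly discharge what those lemmas do, and you have already (correctly) flagged that the expansion of $\widetilde{H}_\mu[X[s+1]_q]$ in the $\widetilde{H}$-basis is not elementary. As it stands, even after fixing the misreading, the argument is a plan with its crucial middle step missing rather than a proof.
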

\begin{proof}[Proof of Theorem~\ref{thm:comb_GenShuffle}]
	We proceed by induction on $m$. The base case $m=0$ is exactly \eqref{eq:s_shuffle_thm}, which is proved. Now suppose that the thesis is true for $m < j$. To show that the thesis holds for $j$, we need a combinatorial interpretation of \eqref{eq:SF_GenShuffle}. Recall that, as by definition $\< h_j^\perp f, h_\mu \> = \< f, h_j h_\mu \>$, a combinatorial interpretation of $h_j^\perp \nabla e_n[X[s+1]_q]$ is given by the $q,t,\underline{x}$-enumerator of the elements in $P \in \LD(0,n,s)$ whose reading word is a $(j, \mu_1, \dots, \mu_{\ell(\mu)})$-shuffle, where the $j$ biggest labels (which we call \emph{big labels}) do not contribute to the monomial (see \cite[Chapter~6]{Haglund-Book-2008} for the theory of shuffle).
	
	Notice that each occurrence of a big label must be a peak, since all the $\infty$ labels lie on the diagonal. Suppose that exactly $p$ of these labels lie on the diagonal, while $j-p$ do not. First we replace the $p$ big labels on the diagonal by $\infty$ labels: this gives a contribution of $\qbinom{s+p}{p}_q$ to the dinv depending on the interlacing of these $p$ labels with the $s$ many $\infty$ labels that were already on the diagonal. Then we ``push in the peaks'', namely we move the vertical step in the rows containing a big label right after the following horizontal step, and we replace the label with a $0$: see Figure~\ref{fig:pushing} for an example.
	
	\begin{figure}
		\centering
		\begin{minipage}{.5\textwidth}
			\centering
			\begin{tikzpicture}[scale = 0.55]
			\draw[gray!60, thin] (0,0) grid (12,12);
			\draw[gray!60, thin] (0,0) -- (12,12);
			
			\draw[blue!60, line width=1.6pt] (0,0) -- (0,1) -- (0,2) -- (1,2) -- (2,2) -- (2,3) -- (3,3) -- (3,4) -- (3,5) -- (4,5) -- (4,6) -- (5,6) -- (5,7) -- (5,8) -- (6,8) -- (7,8) -- (8,8) -- (8,9) -- (9,9) -- (9,10) -- (9,11) -- (10,11) -- (11,11) -- (11,12) -- (12,12);
			
			\draw
			(0.5,0.5) circle(0.4 cm) node {$2$}
			(0.5,1.5) circle(0.4 cm) node {$6$}
			(2.5,2.5) circle(0.4 cm) node {$\infty$}
			(3.5,3.5) circle(0.4 cm) node {$7$}
			(3.5,4.5) circle(0.4 cm) node[red] {$9$}
			(4.5,5.5) circle(0.4 cm) node {$1$}
			(5.5,6.5) circle(0.4 cm) node {$4$}
			(5.5,7.5) circle(0.4 cm) node[red] {$8$}
			(8.5,8.5) circle(0.4 cm) node[red] {$10$}
			(9.5,9.5) circle(0.4 cm) node {$3$}
			(9.5,10.5) circle(0.4 cm) node {$5$}
			(11.5,11.5) circle(0.4 cm) node {$\infty$};
			\end{tikzpicture}
		\end{minipage}%
		\begin{minipage}{.5\textwidth}
			\centering
			\begin{tikzpicture}[scale = 0.55]
			\draw[gray!60, thin] (0,0) grid (12,12);
			\draw[gray!60, thin] (0,0) -- (12,12);
			
			\draw[blue!60, line width=1.6pt] (0,0) -- (0,1) -- (0,2) -- (1,2) -- (2,2) -- (2,3) -- (3,3) -- (3,4) -- (4,4) -- (4,5) -- (4,6) -- (5,6) -- (5,7) -- (6,7) -- (6,8) -- (7,8) -- (8,8) -- (8,9) -- (9,9) -- (9,10) -- (9,11) -- (10,11) -- (11,11) -- (11,12) -- (12,12);
			
			\draw
			(0.5,0.5) circle(0.4 cm) node {$2$}
			(0.5,1.5) circle(0.4 cm) node {$6$}
			(2.5,2.5) circle(0.4 cm) node {$\infty$}
			(3.5,3.5) circle(0.4 cm) node {$7$}
			(4.5,4.5) circle(0.4 cm) node[red] {$0$}
			(4.5,5.5) circle(0.4 cm) node {$1$}
			(5.5,6.5) circle(0.4 cm) node {$4$}
			(6.5,7.5) circle(0.4 cm) node[red] {$0$}
			(8.5,8.5) circle(0.4 cm) node[red] {$\infty$}
			(9.5,9.5) circle(0.4 cm) node {$3$}
			(9.5,10.5) circle(0.4 cm) node {$5$}
			(11.5,11.5) circle(0.4 cm) node {$\infty$}
			(3.5,4.5) node {$\rightarrow$}
			(5.5,7.5) node {$\rightarrow$};
			\end{tikzpicture}
		\end{minipage}
		\caption{The pushing algorithm for a path of size $12$ with $3$ big labels and $2$ $\infty$ labels. On the left, the big labels appear in red.}
		\label{fig:pushing}
	\end{figure}
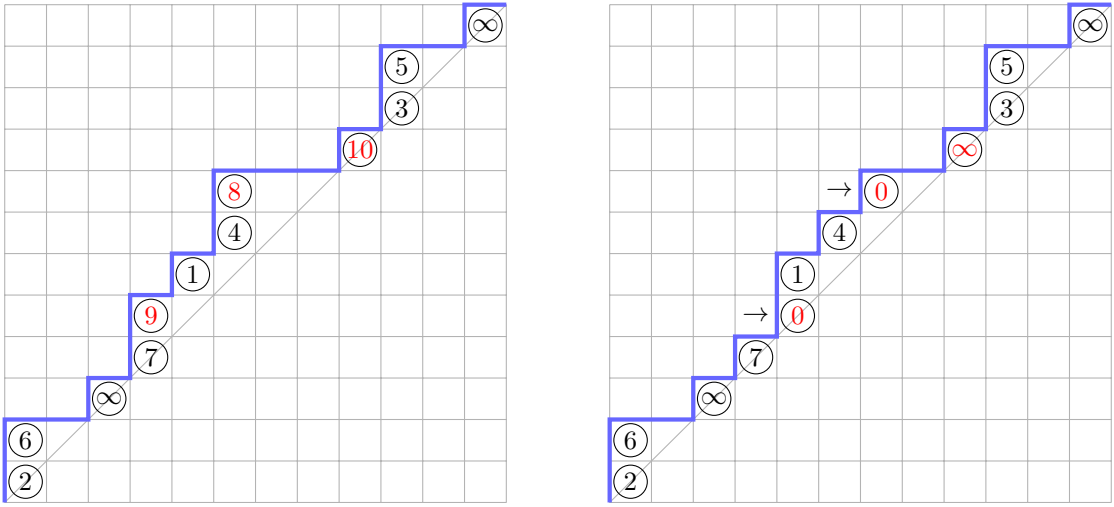
	
	It is easy to check that this operation decreases the area by $j-p$ (taken into account by the factor $t^{j-p}$) and does not change the dinv (as it sends the primary inversions involving these labels to secondary inversions and vice versa). This way we get a path in $\LD(j-p, n, s+p)$. By inductive hypothesis, for $p>0$ we have that $\Delta_{h_{j-p}} \nabla e_{n-j}[X[s+p+1]_q]$ is the $q,t,\underline{x}$-enumerator  of this set. Replacing this in \eqref{eq:SF_GenShuffle}, and taking into account the combinatorial interpretation of the left hand side, we get
	
	\begin{align*}
	& \hspace{-0.5cm}\sum_{p=0}^j t^{j-p} \qbinom{s+p}{p}_q \LD_{q,t,\underline{x}}(j-p, n, s+p)= \\
	& = t^j  \Delta_{h_j} \nabla e_n[X[s+1]_q] + \sum_{p=1}^j t^{j-p} \qbinom{s+p}{p}_q \LD_{q,t,\underline{x}}(j-p, n, s+p)
	\end{align*}
	
	which simplifies to 
	
	\[ \LD_{q,t,\underline{x}}(j, n, s) =  \Delta_{h_j} \nabla e_n[X[s+1]_q] \]
	
	which is what we wanted to prove.
\end{proof}

Applying $\Delta_{h_m}$ to \eqref{eq:shuffle_syst} we get the symmetric function counter part of \eqref{eq:prop_combId}, i.e.
\begin{equation} \label{eq:gen_shuffle_syst}
\Delta_{h_m}\nabla e_n\left[X[s+1]_q\right] =\sum_{r=1}^n \qbinom{r+s}{r}_q \Delta_{h_m}\nabla E_{n,r} .
\end{equation}

Now to see that  \eqref{eq:gen_shuffle_aux} implies our touching generalized shuffle conjecture, observe that \eqref{eq:gen_shuffle_syst} for $s=0,1,\dots,n-1$ gives a system of linear equations relating the symmetric functions \[\{\Delta_{h_m}\nabla e_{n}[X[s+1]_q]\}_{0\leq s\leq n-1}\] with $\{\Delta_{h_m}\nabla E_{n,r}\}_{1\leq r\leq n}$, whose coefficient matrix is $\left\|\qbinom{r+s}{r}_q \right\|_{0< r\leq n,0\leq s<n}\in M_{n\times n}(\mathbb{Q}(q))$. 
Clearly the determinant of this matrix is in $\mathbb{Z}[q]$, and in fact setting $q=1$ in it we get $\det \left\|\binom{r+s}{r}\right\|_{0< r\leq n,0\leq s<n}$. 
The following lemma shows that this determinant is $1$.
\begin{lemma}
	For $n\geq 1$, \[\det \left\|\binom{r+s}{r}\right\|_{0< r\leq n,0\leq s<n}=1.\]
\end{lemma}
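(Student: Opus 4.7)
The plan is to prove the identity by induction on $n$, using Pascal's rule in two stages to reduce the $n\times n$ determinant to the $(n-1)\times(n-1)$ analogue. Denote by $N^{(n)}$ the matrix in the statement, so that $N^{(n)}_{r,s}=\binom{r+s}{r}$ for $1\leq r\leq n$ and $0\leq s\leq n-1$; the base case $n=1$ is immediate since $N^{(1)}=(1)$.

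For the inductive step, I would first perform the column operations $C_s \leftarrow C_s - C_{s-1}$ for $s=n-1,n-2,\dots,1$; by Pascal's identity $\binom{r+s}{r}-\binom{r+s-1}{r}=\binom{r+s-1}{r-1}$, each entry in position $(r,s)$ with $s\geq 1$ becomes $\binom{r+s-1}{r-1}$, while column $0$ remains identically $1$. I would then perform the row operations $R_r \leftarrow R_r - R_{r-1}$ for $r=n,n-1,\dots,2$; applying Pascal once more, each entry at $(r,s)$ with $r\geq 2$ and $s\geq 1$ becomes $\binom{r+s-1}{r-1}-\binom{r+s-2}{r-2}=\binom{r+s-2}{r-1}$, whereas column $0$ collapses to $(1,0,\dots,0)^T$. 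Expanding along column $0$ now gives $\det N^{(n)} = \det M'$, where $M'$ is the $(n-1)\times(n-1)$ matrix with entries $\binom{r+s-2}{r-1}$ for $r=2,\dots,n$ and $s=1,\dots,n-1$; reindexing by $r'=r-1$ and $s'=s-1$ yields exactly $N^{(n-1)}$, whose determinant equals $1$ by the inductive hypothesis.

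There is no real obstacle here; the only thing requiring care is applying Pascal's identity in the correct direction at each of the two stages and matching the reindexed minor to $N^{(n-1)}$. A slicker alternative would invoke Vandermonde's identity $\binom{r+s}{r}=\sum_{k}\binom{r}{k}\binom{s}{k}$ together with the Cauchy--Binet formula, factoring the determinant through contributions from the (triangular) Pascal matrices, but the row/column reduction above is the most elementary and self-contained route.
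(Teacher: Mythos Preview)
Your proof is correct and follows essentially the same approach as the paper: induction on $n$ via Pascal's identity, using row and column subtractions to reduce to the $(n-1)\times(n-1)$ case. The only cosmetic difference is that you perform the column operations before the row operations, whereas the paper does rows first and columns second; both orderings yield the same reduced matrix with first column $(1,0,\dots,0)^T$ and lower-right block equal to $N^{(n-1)}$.
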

\begin{proof}
	Set $A_n \coloneqq \left\|\binom{r+s}{r}\right\|_{0< r\leq n,0\leq s<n}$, and call $R(i,j)$ (resp. $C(i,j)$) the row (resp. column) operator that replaces the $i$-th row (resp. column) of a matrix with itself minus the $j$-th one. Notice that these operators do not change the determinant. 
	
	Let us consider the matrix
	\[B_n \coloneqq C(2,1)C(3,2)\cdots C(n-1,n-2)C(n,n-1)R(2,1)R(3,2)\cdots R(n-1,n-2)R(n,n-1) A_n.\]
	By construction, it is immediate to check that in the first row of $B_n$ all coefficients are $0$, except the leftmost one, which is equal to $1$.
	
	On the other hand, using the well-known identity $\binom{n}{k}=\binom{n-1}{k}+\binom{n-1}{k-1}$, we can check that first for $r'>0$ the row operators changed the coefficient $\binom{r'+s}{r'}$ into $\binom{r'+s-1}{r'}$, and then for $s>0$ the column operators changed the latter into $\binom{r'-1+s-1}{r'-1}$. So the submatrix of $B_n$ corresponding to the indices $1\leq r',s < n$ is actually equal to $A_{n-1}$. By induction on $n$ (the case $n=1$ being trivial), we conclude that $\det A_n=\det B_n=\det A_{n-1}=1$, as we wanted.
\end{proof}

From this it follows that the system \eqref{eq:gen_shuffle_syst} for $s=0,1,\dots,n-1$ has a unique solution, so that the $\{\Delta_{h_m}\nabla E_{n,r}\}_{1\leq r\leq n}$ are uniquely determined by the $\{\Delta_{h_m}\nabla e_{n}[X[s+1]_q]\}_{0\leq s\leq n-1}$ and by the system.

Now the combinatorial counterpart of \eqref{eq:gen_shuffle_syst} is \eqref{eq:prop_combId}, so that also the combinatorial interpretations of $\{\Delta_{h_m}\nabla e_{n}[X[s+1]_q]\}_{0\leq s\leq n-1}$ and $\{\Delta_{h_m}\nabla E_{n,r}\}_{1\leq r\leq n}$ are in the same relation. From this we can conclude that the proof of the combinatorial interpretation for $\Delta_{h_m}\nabla e_{n}[X[s+1]_q]$ implies the one for $\Delta_{h_m}\nabla E_{n,r}$, which is what we wanted. This completes the proof of Theorem~\ref{thm:GenShuffle}.

\subsection{Proof of the generalized square conjecture}
In this section we show that the touching generalized shuffle conjecture (Theorem~\ref{thm:GenShuffle}) implies the touching generalized square conjecture: 
\begin{theorem}[Touching generalized square] \label{thm: gensquare}
	Given $n,m,r\in \mathbb{N}$ with $n\geq r\geq 1$,
	\begin{equation} 
	\frac{[n]_q}{[r]_q}\Delta_{h_m}\nabla E_{n,r}=\mathop{\sum_{P\in \LSQ(m,n)^{\ast 0} }}_{\touch(P)=r}q^{\dinv(P)}t^{\area(P)} x^P.
	\end{equation}
\end{theorem}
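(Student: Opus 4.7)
The plan is to deduce Theorem~\ref{thm: gensquare} from the touching generalized shuffle theorem (Theorem~\ref{thm:GenShuffle}) by a cyclic rotation argument on labeled paths, extending the technique used by Sergel in \cite{Leven-2016} to derive the classical square theorem from the shuffle theorem. By Theorem~\ref{thm:GenShuffle}, it suffices to establish the purely combinatorial identity
\begin{equation*}
[n]_q \mathop{\sum_{D\in \LD(m,n)}}_{\touch(D)=r}q^{\dinv(D)}t^{\area(D)} x^D = [r]_q \mathop{\sum_{P\in \LSQ(m,n)}}_{\touch(P)=r}q^{\dinv(P)}t^{\area(P)} x^P.
\end{equation*}

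First, I would introduce a cyclic rotation on labeled square paths ending east: given $P \in \LSQ(m,n)$ with step sequence $s_1 s_2 \cdots s_{2(m+n)}$ (and labels attached to the vertical steps), let $\sigma(P)$ be the path with step sequence $s_2 s_3 \cdots s_{2(m+n)} s_1$, keeping the labels attached to the corresponding vertical steps; accept $\sigma(P)$ only when $s_1 = E$, so that the rotated path still ends east. Iterating $\sigma$ generates an orbit of labeled square paths in $\LSQ(m,n)$ all sharing the monomial $x^P$, the multiset of labels, and the touching number. The orbit contains exactly $m+n$ elements counted with multiplicity; among them exactly $r$ are labeled Dyck paths, corresponding to the $r$ cyclic rotations that begin at a touching point of the base diagonal.

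Second, the key claim is that for any Dyck representative $D$ of an orbit $\mathcal{O}$,
\begin{equation*}
\sum_{P' \in \mathcal{O}} q^{\dinv(P')} t^{\area(P')} = [n]_q\, q^{\dinv(D)} t^{\area(D)} \quad\text{and}\quad \sum_{D' \in \mathcal{O} \cap \LD(m,n)} q^{\dinv(D')} t^{\area(D')} = [r]_q\, q^{\dinv(D)} t^{\area(D)}.
\end{equation*}
Once this is proved, summing the orbit-level identity across all orbits partitioning labeled square paths of touching number $r$ immediately yields the desired combinatorial identity.

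The main obstacle is verifying this orbit-level $q$-weighting. The delicate point is to track how each single rotation $\sigma$ affects primary inversions, secondary inversions, bonus $\dinv$ (from negative area-word entries), and $\area$ (which depends on the shift of the base diagonal). Sergel's original argument for the case $m=0$ shows that a unit of $\area$ lost when a rotation drops the shift is compensated by a unit of bonus $\dinv$ gained, producing the $[n]_q$-grading along a full orbit. Adapting this bookkeeping to the present setting requires care because of the zero labels coming from $\Delta_{h_m}$: these labels never create bonus $\dinv$ (by Definition~\ref{def: dinv SQ}) and the labelling constraints force them never to sit on the base diagonal, which in turn constrains the cyclic positions at which Dyck rotations occur. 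Once these accountings are carried out, the identity follows by purely combinatorial means.
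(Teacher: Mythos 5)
Your proposal takes a genuinely different route from the paper's proof, but it has a serious gap: the orbit-level identities you state in the middle of the proposal are asserted, not proved, and they are essentially the entire content of the theorem. The paper's proof of Theorem~\ref{thm: gensquare} does \emph{not} proceed by cyclic rotation; it follows Sergel's actual method from \cite{Leven-2016}, namely (i) a schedule formula (Theorem~\ref{prop: zeroformula}) computing the $q,t$-enumerator of preference functions grouped by shift and diagonal word, (ii) a shift-by-shift telescoping identity (Proposition~\ref{prop: dycktosquare-k=0}) whose key step is the skew-staircase partition lemma inside Lemma~\ref{lem: one-deviation}, (iii) Hicks' factorization (Proposition~\ref{lem: qsym factors}) to pass from $q,t$-enumerators to the full quasisymmetric expansion, and (iv) a final sum over diagonal words. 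Your attribution of a ``cyclic rotation'' technique to Sergel is inaccurate: her argument and the paper's argument both group paths by diagonal word and compute explicit schedule products, never by rotating step sequences.

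On the substance of the rotation plan, several points do not survive scrutiny. First, the orbit bookkeeping is inconsistent with the target formula: the quantity you need is $[n]_q/[r]_q$ with $n$ the number of \emph{nonzero} labels, while the natural rotation orbit of a step word of length $2(m+n)$ has $m+n$ valid east-ending representatives, which would point towards a factor $[m+n]_q$. The paper extracts exactly $[n]_q$ because the schedule argument telescopes $\sum_{s} \frac{[p_s]_q}{[p_0]_q}\, q^{p_0+\cdots+p_{s-1}}=\frac{[n]_q}{[p_0]_q}$ over the number of \emph{nonzero} labels in each run; it is not a priori clear that a rotation argument reproduces this without essentially rediscovering the schedule decomposition. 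Second, rotation does not preserve the labelling constraints of $\LSQ(m,n)$ (zero labels must not sit at the start of the path nor be the only labels on the base diagonal), nor does it manifestly preserve $\dcomp$ or $\ides$; the latter matters because the statement you actually need is the quasisymmetric refinement \eqref{eq:touchgensquare}, and the paper has to invoke Hicks' factorization to handle $Q_{\ides(P),n}$, a step your orbit argument does not address at all. Third, the claim that ``a unit of area lost\dots is compensated by a unit of bonus dinv gained'' is precisely the delicate $(q,t)$-interaction that the schedule formula was built to control; in the decorated and partially labelled setting, rows with $0$ labels create neither bonus dinv nor contribute to the $x$-monomial, so the accounting is not a direct generalization of any clean $m=0$ rotation statement, and you have not carried it out. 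As written, the proposal identifies the right target identities but leaves their proofs as unresolved obstacles rather than supplying an argument.
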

The rest of this subsection is dedicated to the proof of this theorem.

\medskip

We begin by giving equivalent well-known formulations of Theorem~\ref{thm:GenShuffle} and Theorem~\ref{thm: gensquare}. In order to do this, we need some definitions. 
\begin{definition}
	A \emph{partial preference function} is a  partially labelled square path in $\LSQ(m,n)^{\ast 0}$ whose nonzero labels are exactly the numbers $1,2,\dots, n$. The subset of these paths is denoted by $\Pref(m,n)$.
	
	A \emph{partial parking function} is a partial preference function whose shift is $0$. The set of such paths is denoted by $\Park(m,n)$.
\end{definition}
\begin{definition}
	For every $S\subseteq \{1,2,\dots,n-1\}$, let $Q_{S,n}$ denote the \emph{Gessel fundamental quasisymmetric function} of degree $n$ indexed by $S$ , i.e.
	\begin{equation}
	Q_{S,n} \coloneqq \mathop{\sum_{i_1\leq i_2\leq \cdots \leq i_n}}_{i_j<i_{j+1}\text{ if }j\in S}x_{i_1} x_{i_2}  \cdots   x_{i_n}.
	\end{equation}
\end{definition}

\begin{definition}
	\label{def: ides}
	The \emph{descent set} of a given permutation $\tau$ is the set of indices $i$ such that $\tau_i>\tau_{i+1}$, denoted by $\mathsf{Des}(\tau)$. We define the \emph{inverse descent set  of a permutation} $\tau$ as \[\ides(\tau)\coloneqq \mathsf{Des}(\tau^{-1}). \]
	
	Take $P\in \Pref(m,n)$. We set $r(P)$ to be the reverse reading word of $P$ (see Definition~\ref{def: reading word}). The \emph{inverse descent set of a preference function} $P$ is defined by \[\ides(P)\coloneqq \ides(r(P)) \]  
\end{definition}

Theorem~\ref{thm:GenShuffle} and Theorem~\ref{thm: gensquare} are equivalent to the following equations, respectively 
\begin{align}
\Delta_{h_m} \nabla E_{n,r} &=\sum_{\substack{P\in \Park(m,n)\\ \touch(P)=r}}q^{\dinv(D)} t^{\area(P)} Q_{\ides(P), n} \\
\label{eq:touchgensquare}	\frac{[n]_q}{[r]_q}\Delta_{h_m} \nabla E_{n,r} &= \sum_{\substack{P\in \Pref(m,n)\\ \touch(P)=r}} q^{\dinv(P)} t^{\area(P)} Q_{\ides(P), n}.
\end{align}

The argument is essentially identical to the one given for the shuffle conjecture in \cite[Chapter~6]{Haglund-Book-2008}, so we omit it.

We will show here that the first of these equations implies the second one. We will use the same strategy that Sergel used in \cite{Leven-2016}, where she proved that the shuffle conjecture implies the square conjecture. Notice that our notation will be slightly different from the one in \cite{Leven-2016}.

\begin{definition}
	Take $P\in \Pref(m,n)$ with shift $s$ and set $l\coloneqq \max \{a_i(P) \mid 1\leq i \leq m+n \}$. 
	For $i=l,l-1,\dots, -s$ define $\rho_i$ to be the labels of $P$ contained in the diagonal $y=x+i$, in increasing order. Define the \emph{diagonal word} of $P$ as
	\[\diagword(P)\coloneqq \rho_l\rho_{l-1}\cdots \rho_0 \rho_{-1} \cdots \rho_{-s}.\] It is clear from the definition of a preference function that the $\rho_i$ are the maximal substrings of consecutive weakly increasing numbers of $\diagword(P)$: we will call them \emph{runs} of $\diagword(P)$. We set $\vert \rho_i\vert$ to be the number of elements in $\rho_i$. A run $\rho_i$ will be referred to as \emph{positive, zero} or \emph{negative}, referring to the sign of its index $i$.
\end{definition}

\begin{remark} \label{rem:zeros}
	Notice that a run of a diagonal word never consists of only zeros. 
\end{remark}

\begin{definition}
	Let $P\in \Pref(m,n)$ be a preference function with shift $s$ and diagonal word $\tau=\diagword(P)= \rho_l\rho_{l-1}\cdots \rho_0 \rho_{-1} \cdots \rho_{-s}$, where the $\rho_i$'s are the runs of $\tau$. Let $c$ be a nonzero element of $\tau$. We define the \emph{schedule number} $w^s(c)$ of $c$ as follows:
	\begin{enumerate}
		\item if $c$ is in a positive run, then $w^s(c)$ equals the number of elements bigger than $c$ in its run plus the number of elements smaller than $c$ in the next run;
		\item if $c$ is the zero run, then $w^s(c)$ equals $1$ plus the number of elements bigger than $c$ in its own run; 
		\item if $c$ is in a negative run, then $w^s(c)$ equals the number of elements smaller that $c$ in its run plus the number of elements bigger that $c$ in the previous run.
	\end{enumerate} Note that these schedule numbers do depend on the shift $s$ to determine which is the zero run. 
\end{definition}

\begin{definition}
	For a $P\in \Pref(m,n)$ whose diagonal word is $\tau$ we define its \emph{reduced diagonal word}, $\rdiagword(P)$ or $\tilde \tau$ to be the word obtained from $\tau$ by deleting all the zeros.  	
\end{definition}

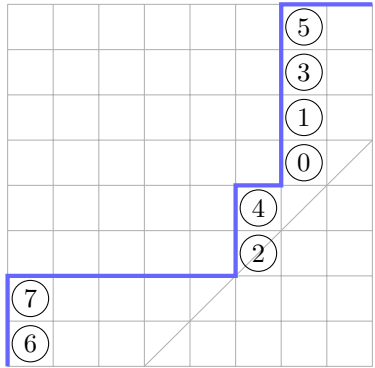
\begin{figure}[!ht]
	\centering
	\begin{tikzpicture}[scale = 0.6]
	\draw[step=1.0, gray!60, thin] (0,0) grid (8,8);
	
	\draw[gray!60, thin] (3,0) -- (8,5);
	
	\draw[blue!60, line width=1.6pt] (0,0) -- (0,2) -- (4,2) -- (4,2) -- (5,2) -- (5,3) -- (5,4) -- (6,4) -- (6,5) -- (6,6) -- (6,7) -- (6,7) -- (6,8) -- (8,8);
	
	\node at (0.5,0.5) {$6$};
	\draw (0.5,0.5) circle (.4cm); 
	\node at (.5,1.5) {$7$};
	\draw (.5,1.5) circle (.4cm); 
	\node at (5.5,2.5) {$2$};
	\draw (5.5,2.5) circle (.4cm); 
	\node at (5.5,3.5) {$4$};
	\draw (5.5,3.5) circle (.4cm); 
	\node at (6.5,4.5) {$0$};
	\draw (6.5,4.5) circle (.4cm); 
	\node at (6.5,5.5) {$1$};
	\draw (6.5,5.5) circle (.4cm); 
	\node at (6.5,6.5) {$3$};
	\draw (6.5,6.5) circle (.4cm); 
	\node at (6.5,7.5) {$5$};
	\draw (6.5,7.5) circle (.4cm);
	\end{tikzpicture}
	\caption{Example of an element in $\Pref(1,7)$.}
	\label{fig: labelled pref}
\end{figure}

\begin{example}
	Consider the path $P$ represented in Figure~\ref{fig: labelled pref}. Its shift is $3$ and its diagonal word is $57\;36\;1\;04\;2$. There is one positive run which is $57$, the zero run is $36$ and there are three negative runs: $1$, $04$ and $2$.  The schedule numbers are 
	\begin{multicols}{5}
		\begin{center}
			\noindent
			$w^3(5)=2 $\\ 
			$w^3(7)=2 $\\
			$w^3(3)=2 $\\
			$w^3(6)=1 $\\
			$w^3(1)=2 $\\
			\columnbreak
			$w^3(4)=1 $\\	
			\columnbreak
			$w^3(2)=1 $\\	
		\end{center}
	\end{multicols}
	Its reduced diagonal word is $57\;36\;1\;4\;2$

\end{example}

The following theorem extends \cite[Theorem~2.5]{Leven-2016}.

\begin{theorem}\label{prop: zeroformula} Let $S\subseteq \Pref(m,n)$ be the set of preference functions $P$ such that 
	\begin{align*}
	\shift(P)=s \qquad \text{ and }\qquad  \diagword(P)=\tau= \rho_l \cdots \rho_0 \cdots \rho_{-s},
	\end{align*} where the $\rho_i$'s are the runs of $\tau$. Let $\rdiagword(P)=\tilde \tau =  \pi_l \cdots \pi_0 \cdots \pi_{-s}$ where we obtain $\pi_i$ by deleting the zeros from $\rho_i$. Set $r_i=\vert\rho_i\vert$ and $p_i=\vert \pi_i \vert$. Then 
	\begin{align*}
	\sum_{P\in S} q^{\dinv(P)} t^{\area(P)} = 
	t^{\maj(\tau)} q^{p_{-1}+\cdots +p_{-s}}
	&\left( \prod_{c\in \tilde \tau} [w^s(c)]_q \right)
	\left( \prod_{i=0}^l \qbinom{r_i -1}{r_i-p_i}_q \right)\times \\
	&\times \left( \prod_{i=-s}^{-1} \qbinom{p_{i+1}+r_i-p_i-1}{r_i-p_i}_q \right).
	\end{align*}
\end{theorem}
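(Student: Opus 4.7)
The plan is to adapt the scheduling argument from \cite{Leven-2016}*{proof of Theorem~2.5} to the partially labelled setting, tracking carefully how the $0$-labels interact with the nonzero ones. The key idea is to decompose the weight $q^{\dinv(P)}t^{\area(P)}$ into three essentially independent pieces: an area factor depending only on the runs of $\tau$, a bonus-dinv factor coming from the negative diagonals, and a ``primary plus secondary dinv'' factor which splits run by run.

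The area and bonus-dinv pieces are straightforward. Every vertical step on the diagonal $y=x+i$ contributes $i+s$ to the area, so $\area(P)=\sum_{i=-s}^{l}(i+s)r_i$; since the descents of $\tau$ are exactly at the boundaries between consecutive runs, a direct cumulative-sum manipulation shows this equals $\maj(\tau)$. Similarly the bonus dinv counts nonzero labels on strictly negative diagonals, and since diagonal $i<0$ carries exactly $p_i$ nonzero labels, this contribution is $q^{p_{-1}+\cdots+p_{-s}}$. These extract precisely the prefactor $t^{\maj(\tau)}q^{p_{-1}+\cdots+p_{-s}}$.

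For the remaining factor I would build every $P\in S$ run by run, processing the diagonals from top to bottom, and for each run simultaneously choose the columns of the nonzero labels and the insertion positions of the $0$-labels within that run. The Hicks--Leven schedule lemma, in its partially-labelled adaptation, shows that when a nonzero label $c$ is placed the number of available columns producing exactly $j$ units of primary-plus-secondary dinv is one for each $j\in\{0,1,\dots,w^s(c)-1\}$, giving the factor $[w^s(c)]_q$; the three cases in the definition of $w^s(c)$ correspond to whether the inversions being counted live within the positive run, within the zero run, or across to the run immediately above (the ``previous run'' in the negative case). The genuinely new ingredient is the $0$-label bookkeeping. In a run on a nonnegative diagonal, Remark~\ref{rem:zeros} together with the labelling rules forces exactly one slot to be unavailable for a $0$-label, so the $r_i-p_i$ zeros get interleaved into $r_i-1$ admissible slots, and every time a zero slides past a nonzero label inside the same run it produces exactly one unit of secondary dinv; summing over the interleavings gives $\qbinom{r_i-1}{r_i-p_i}_q$. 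On a negative diagonal $i<0$ the zeros may additionally create inversions with the $p_{i+1}$ nonzero labels one diagonal above, raising the number of admissible slots to $p_{i+1}+r_i-p_i-1$ and contributing $\qbinom{p_{i+1}+r_i-p_i-1}{r_i-p_i}_q$. Taking the product of all these contributions over the runs and the labels reassembles the stated formula. The main obstacle will be verifying the admissibility count and the $q$-weight of sliding a $0$-label through a run — in particular isolating the combinatorial reason for the ``$-1$'' appearing in each $q$-binomial — since everything else follows by a direct adaptation of Hicks--Leven.
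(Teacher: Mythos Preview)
Your overall plan matches the paper's proof almost exactly: decompose into the area factor $t^{\maj(\tau)}$, the bonus-dinv factor $q^{p_{-1}+\cdots+p_{-s}}$, the schedule factors $\prod_c [w^s(c)]_q$ from inserting nonzero labels one at a time, and the two families of $q$-binomials from interleaving the zeros. The paper's argument is precisely this run-by-run insertion.

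Two details in your sketch are inverted, though, and would trip you up when you tried to write it out. First, the insertion order is not uniformly ``top to bottom.'' The paper processes the nonnegative diagonals $i=0,1,\dots,l$ from \emph{bottom to top}, inserting the nonzero labels of $\rho_i$ from largest to smallest, and only then processes the negative diagonals $i=-1,-2,\dots,-s$ from top to bottom, inserting the nonzero labels of $\rho_i$ from smallest to largest. This order is forced by the shape of $w^s(c)$: for $c$ in a positive run the schedule number involves $\rho_{i-1}$, so that diagonal must already be in place, whereas for $c$ in a negative run it involves $\rho_{i+1}$. A uniform top-to-bottom sweep with the given $w^s(c)$ will not produce exactly $w^s(c)$ insertion slots.

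Second, and relatedly, the zeros in a nonnegative run create \emph{primary} dinv, not secondary: since in the paper's order you are always inserting into the currently highest diagonal, a zero on diagonal $i$ can only form inversions with labels on the same diagonal. In the negative runs the order is reversed (zeros are inserted \emph{before} the nonzero labels of that run), and here the zeros create \emph{secondary} dinv with the $p_{i+1}$ nonzero labels already sitting on diagonal $i+1$; the constraint that the rightmost zero must sit under such a label is exactly the source of the ``$-1$'' in $\qbinom{p_{i+1}+r_i-p_i-1}{r_i-p_i}_q$, just as the constraint that the leftmost position on a nonnegative diagonal is nonzero gives the ``$-1$'' in $\qbinom{r_i-1}{r_i-p_i}_q$. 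Once you fix the processing order and the primary/secondary attribution, your sketch becomes the paper's proof.
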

\begin{remark}
	Notice that the $\pi_i$ are not necessarily the runs of $\tilde \tau$. 
\end{remark}
\begin{proof} First of all, observe that the area of a path in $S$ is given by $\maj(\tau)$. Indeed, its area is clearly given by \[0\cdot  r_{-s} +1 \cdot r_{-s+1} + \cdots +(l+s)\cdot  r_l . \] 
	
	Let us now consider the dinv. We obtain the formula by constructing every possible path in $S$, while keeping track of the dinv. See Figure \ref{fig: tree} for an example. 
	
	We start with an empty path. For $i=0,\dots, l$ we do the following.
	\begin{itemize}
		\item  Reading $\rho_i$ from \emph{right to left} , we insert first its nonzero elements one by one, into the diagonal $y=x+i$ of the grid, in a way that each step defines a parking function. For each of these elements $c\neq 0$ there are exactly $w^s(c)$ ways to do this. Furthermore, the dinv added to the resulting path by each of these choices gives all the values from $0$ to $w^s(c)-1$, hence the factor $[w^s(c)]_q$. 
		\item Next, we insert the $r_i-p_i$ zeros of $\rho_i$, also into the diagonal $y=x+i$. A zero label can occur directly after another label in its diagonal, provided that the first label in the diagonal is not zero. Each time a nonzero label precedes a zero label, one unit of dinv is created: indeed, since we are always inserting into the highest diagonal, the zeros only create primary dinv. It follows that the dinv that is created is $q$-counted by the factor $ \qbinom{r_i-1}{r_i-p_i}_q $.
	\end{itemize}
	Then, for $i=-1,-2,\dots, -s$ we proceed as follows. 
	\begin{itemize}
		\item We insert $r_i-p_i$ zeros into the diagonal $y=x+i$, so as to obtain a labelled square path. We must insert every such zero label directly underneath a nonzero label of the diagonal $y=x+i+1$, of which there are $p_{i+1}$, or directly before another zero. To give a preference function, the last zero in the diagonal must always be of the first kind. Each time a nonzero label in the diagonal $y=x+i+1$ precedes a zero label in the diagonal $y=x+i$, one unit of secondary dinv is created. This explains the factor $\qbinom{p_{i+1}+r_i-p_i-1}{r_i-p_i}_q$. 
		\item Next, we insert the nonzero labels of $\rho_i$, one by one, from \emph{left to right}. It is not hard to see that for such a $c\neq 0$ there are exactly $w^s(c)$ ways to insert it, and the dinv of the different options is $q$-counted by $[w^s(c)]_q$. 
	\end{itemize}
	
	Finally, the factor $q^{p_{-1}+\cdots +p_{-s}}$ accounts for the bonus dinv, i.e.\ for the number of nonzero labels in negative runs. 
\end{proof}

\begin{figure}
	\begin{center}
		\includegraphics[scale=.9]{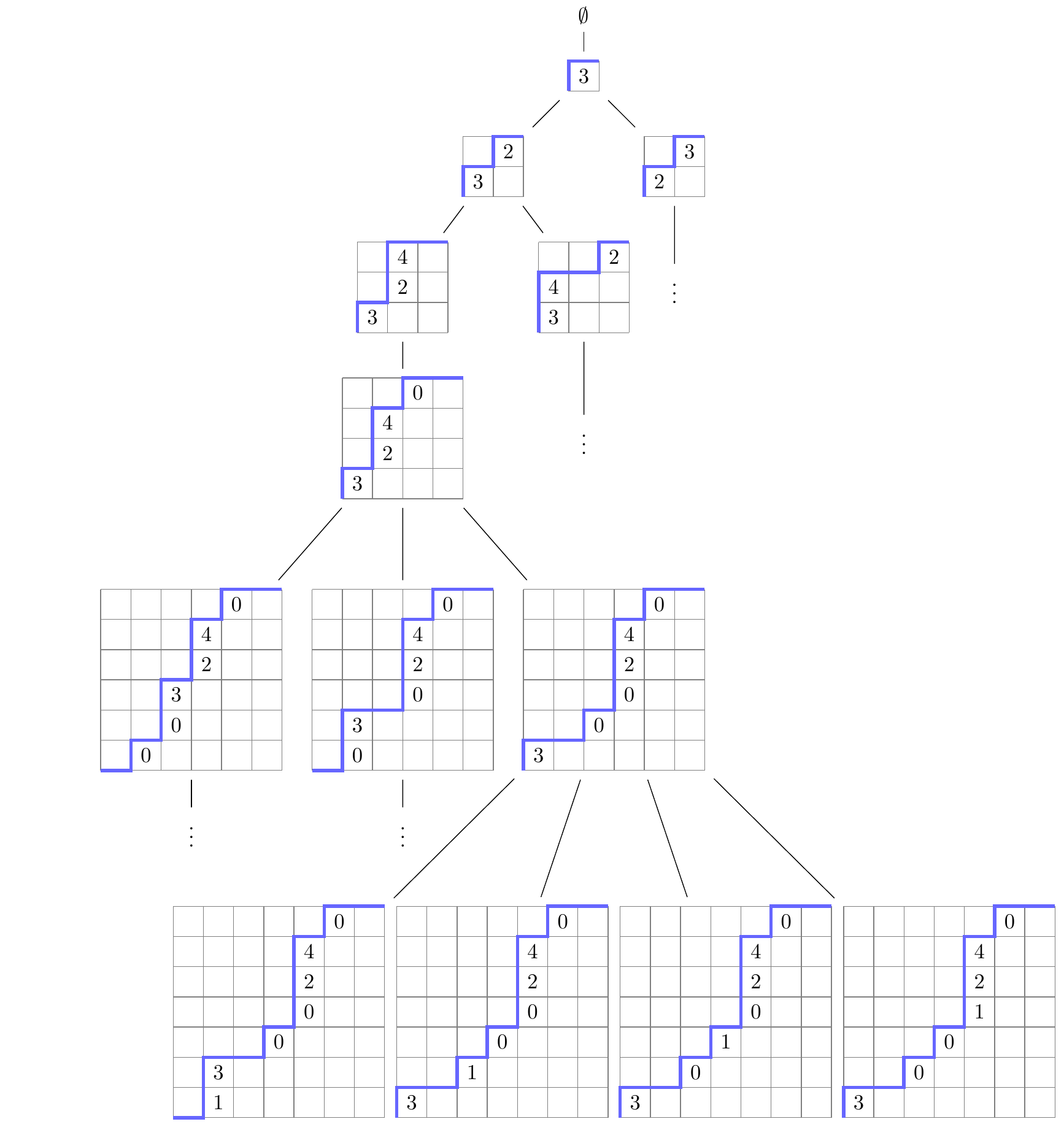}
	\end{center}
	\caption{Partial tree of construction of paths with diagonal word $0423001$ and shift $1$.}
	\label{fig: tree} 
\end{figure}

\begin{definition} 
	Set $\mathfrak S_{m,n}$ to be the set of permutations of $\mathcal O \cup \{1,\dots, n\}$ where $\mathcal O$ is the multiset containing $m$ zeros. 
\end{definition}
\begin{proposition}\label{prop: dycktosquare-k=0}
	Consider $\tau\in \mathfrak S_{m,n}$. Let $\tau=\rho_l\dots \rho_0$ be its runs and  set $ \tilde \tau =\pi_l \cdots \pi_0\in \mathfrak{S}_n$, where $\pi_i$ is obtained from $\rho_i$ by deleting its zeros. Set $p_i=\vert \pi_i \vert$. We have
	\begin{align*}
	\frac{[p_s]_q}{[p_0]_q} q^{p_{s-1}+\cdots +p_{0}}\sum_{\substack{D\in \Park(m,n)\\ \diagword(D)=\tau}} q^{\dinv(D)}t^{\area(D)}= 
	\sum_{\substack{ P\in \Pref(m,n) \\ \shift(P)=s \\ \diagword(P)=\tau}} q^{\dinv(P)}t^{\area(P)}
	\end{align*}
\end{proposition}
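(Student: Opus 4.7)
The plan is to prove the identity by induction on the shift $s$. The base case $s = 0$ is trivial, since $\Pref(m,n)$ with shift $0$ coincides with $\Park(m,n)$ and the prefactor $\frac{[p_0]_q}{[p_0]_q}q^0 = 1$.

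For the inductive step, apply Theorem~\ref{prop: zeroformula} to both $\mathrm{Pref}(\text{shift }s)$ and $\mathrm{Pref}(\text{shift }s{-}1)$, and compute their ratio. All factors coming from runs $\rho_i$ with $i \notin \{s-1, s\}$ are identical in the two cases and cancel (those runs play the same role --- positive or further negative --- under both shifts). Only three things change: (i) the bonus dinv factor gains $q^{p_{s-1}}$; (ii) $\rho_{s-1}$ transitions from the zero run to a negative run, changing its schedule product from $[p_{s-1}]_q!$ to $\prod_{c\in\pi_{s-1}}[u_{s-1}(c)+z_{s-1}+v_s(c)]_q$ and its binomial factor from $\qbinom{p_{s-1}+z_{s-1}-1}{z_{s-1}}_q$ to $\qbinom{p_s+z_{s-1}-1}{z_{s-1}}_q$; (iii) $\rho_s$ transitions from a positive run to the zero run, changing its schedule product from $\prod_{c\in\pi_s}[v_s(c)+u_{s-1}(c)+z_{s-1}]_q$ to $[p_s]_q!$. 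Here $u_i(c), v_i(c)$ denote the numbers of elements of $\pi_i$ that are respectively smaller and greater than $c$, and $z_i = r_i - p_i$; the extra $z_{s-1}$ in the schedule numbers arises because the ``next/previous run'' counts in Theorem~\ref{prop: zeroformula} must be read over $\rho_{s-1}$ (including its zero labels), not over $\pi_{s-1}$. A direct manipulation of $q$-factorials then shows that the ratio $\mathrm{Pref}(\text{shift }s)/\mathrm{Pref}(\text{shift }s{-}1)$ equals the desired $\frac{[p_s]_q}{[p_{s-1}]_q}q^{p_{s-1}}$ provided we can establish the key identity
\[
\prod_{c\in\pi_{s-1}}[f(c)+z_{s-1}]_q \cdot [p_s+z_{s-1}-1]_q! \;=\; \prod_{c\in\pi_s}[f(c)+z_{s-1}]_q \cdot [p_{s-1}+z_{s-1}-1]_q!,
\]
where $f(c) = u_{s-1}(c) + v_s(c)$ is defined for all $c \in \pi_{s-1} \cup \pi_s$.

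To prove this key identity, order $\pi_{s-1}\cup\pi_s$ as $c_1 < c_2 < \cdots < c_N$ and build a lattice path $(g_k)_{k=0}^N$ with $g_0 = p_s + z_{s-1}$, taking a unit up-step at position $k$ if $c_k\in\pi_{s-1}$ and a unit down-step if $c_k\in\pi_s$. One checks directly that $f(c_k)+z_{s-1} = \min(g_{k-1}, g_k)$ is the lower endpoint of the $k$-th step. Hence $\prod_{c\in\pi_{s-1}}[f(c)+z_{s-1}]_q = \prod_h [h]_q^{u_h}$ and $\prod_{c\in\pi_s}[f(c)+z_{s-1}]_q = \prod_h [h]_q^{d_h}$, where $u_h$ (respectively $d_h$) is the number of up-steps from level $h$ (resp.\ down-steps to level $h$). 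Their ratio is $\prod_h [h]_q^{d_h-u_h}$. Since the path goes from height $p_s+z_{s-1}$ to height $p_{s-1}+z_{s-1}$, the differences $d_h-u_h$ are nonzero only at levels strictly between these two extremes, yielding precisely $\frac{[p_{s-1}+z_{s-1}-1]_q!}{[p_s+z_{s-1}-1]_q!}$ and establishing the identity. Telescoping the inductive ratios from shift $0$ to shift $s$ then reproduces $\frac{[p_s]_q}{[p_0]_q}q^{p_0 + p_1 + \cdots + p_{s-1}}$, completing the proof.

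The main obstacle is the careful bookkeeping in the inductive step: correctly identifying which factors of Theorem~\ref{prop: zeroformula} change when one run transitions from zero to negative and another from positive to zero, keeping track of the $z_{s-1}$ shift coming from the $\rho$-versus-$\pi$ distinction in the schedule numbers, and verifying that the $q$-binomials and the $q$-factorials produced by the lattice path argument cancel in the precise pattern needed to produce the compact factor $\frac{[p_s]_q}{[p_{s-1}]_q}q^{p_{s-1}}$.
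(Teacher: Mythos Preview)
Your approach is essentially the same as the paper's: both argue by induction on the shift, establish the one-step ratio identity $\text{(shift }s)/\text{(shift }s{-}1)=\frac{[p_s]_q}{[p_{s-1}]_q}q^{p_{s-1}}$ by applying Theorem~\ref{prop: zeroformula} at the two shifts, observe that only the runs $\rho_s$ and $\rho_{s-1}$ change role, and reduce to the same key product identity (which the paper isolates as Lemma~\ref{lem: one-deviation}). The only real difference is in how that key identity is proved. The paper encodes the interaction of $\pi_s$ and $\pi_{s-1}$ as a partition $\lambda$ sitting inside a rectangle, embeds this in a staircase $\delta$, and uses the fact that the multisets of row-lengths and column-lengths of the skew shape $\delta/\lambda$ coincide. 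You instead encode the same data as a lattice path and compare up-crossings versus down-crossings at each level. The two devices are closely related---your path is essentially the boundary of the paper's $\lambda$, and your crossing count is a reformulation of the row/column multiset equality---so this is a variant packaging rather than a genuinely different route; if anything your version is a bit more streamlined, since it avoids the auxiliary staircase lemma. One cosmetic slip: the ratio $\prod_h[h]_q^{d_h-u_h}$ works out to $\tfrac{[p_s+z_{s-1}-1]_q!}{[p_{s-1}+z_{s-1}-1]_q!}$, the reciprocal of what you wrote, but that is precisely what the identity needs, so the conclusion is unaffected.
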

\begin{proof}
	In fact we will show that 
	
	\begin{align} \label{eq:shift_id}
	\frac{[p_s]_q}{[p_{s-1}]_q} q^{p_{s-1}}\sum_{\substack{P\in \Pref(m,n)\\ \shift(P)=s-1 \\ \diagword(P)=\tau}} q^{\dinv(P)}t^{\area(P)}= 
	\sum_{\substack{ P\in \Pref(m,n) \\  \shift(P)=s\\ \diagword(P)=\tau}} q^{\dinv(P)}t^{\area(P)}.
	\end{align} 
	This easily implies the thesis: just divide by the sum on the left hand side, and multiply these identities starting from $s= 1$, in order to get the identity in the statement. 
	
	We will make use of Theorem~\ref{prop: zeroformula}.
	
	Notice that, with the exception of $\rho_s$ and $\rho_{s-1}$, the sign of any other run $\rho_i$ (i.e.\ it being positive, negative or zero) is the same for shift $s$ and shift $s-1$. It follows that, after replacing in \eqref{eq:shift_id} the corresponding formulae from Theorem~\ref{prop: zeroformula} (notice  the difference in the numbering of the runs in the statement), the terms concerning these runs are the same for the right hand side and left hand side. Therefore, after the obvious cancellations, we are left to prove that 
	\begin{align*}
	\frac{[p_s]_q}{[p_{s-1}]_q} q^{p_{s-1}}
	\left(	\prod_{c\in \pi_{s}\cup\pi_{s-1}}[w^{s-1}(c)]_q \qbinom{r_{s-1}-1}{r_{s-1}-p_{s-1}}_q\qbinom{r_s-1}{r_s-p_s}_qq^{p_{s-2}+\cdots +p_0}  \right) \\
	= \prod_{c\in \pi_{s}\cup\pi_{s-1}}[w^{s}(c)]_q \qbinom{r_s-1}{r_s-p_s}_q \qbinom{p+r_{s-1}-p_{s-1}-1}{r_{s-1}-p_{s-1}}_q q^{p_{s-1}+\cdots +p_0} ,
	\end{align*}
	which is equivalent, after further cancellations, to
	\begin{align*}
	\frac{[p_s]_q}{[p_{s-1}]_q}
	\left(	\prod_{c\in \pi_{s}\cup\pi_{s-1}}[w^{s-1}(c)]_q \qbinom{r_{s-1}-1}{r_{s-1}-p_{s-1}}_q\right)
	= \prod_{c\in \pi_{s}\cup\pi_{s-1}}[w^{s}(c)]_q  \qbinom{p_s+r_{s-1}-p_{s-1}-1}{r_{s-1}-p_{s-1}}_q .
	\end{align*}
	
	By the definition of the schedule numbers we know that 
	\begin{align*}
	\prod_{c\in \pi_{s-1}}[w^{s-1}(c)]_q = [p_{s-1}]_q! \qquad \text{ and }\qquad   \prod_{c\in \pi_{s}}[w^{s}(c)]_q = [p_{s}]_q!.
	\end{align*}
	So the above condition reduces to showing that 
	\begin{align*}
	\frac{[p_s]_q}{[p_{s-1}]_q} \prod_{c\in \rho_s}[w^{s-1}(c)]_q[p_{s-1}]_q! \frac{[r_{s-1}-1]_q!}{[p_{s-1}-1]_q!}
	&= \prod_{c\in \rho_{s-1}}[w^{s}(c)]_q[p_{s}]_q! \frac{[p_s+r_{s-1}-p_{s-1}-1]_q!}{[p_s-1]_q!} ,
	\end{align*}
	which is equivalent, after trivial cancellations, to
	\begin{align*}
	\prod_{c\in \rho_s}[w^{s-1}(c)]_q[r_{s-1}-1]_q!
	&= \prod_{c\in \rho_{s-1}}[w^{s}(c)]_q[p_s+r_{s-1}-p_{s-1}-1]_q!\, . 
	\end{align*}
	We will prove this identity in Lemma \ref{lem: one-deviation}, concluding the proof of this proposition.
\end{proof}

\begin{lemma}\label{lem: one-deviation}
	Let $\tau= \alpha\beta$ and $\tilde \tau=\tilde \alpha \tilde \beta$ be the diagonal word and reduced diagonal word of a given preference function, respectively, where $\alpha$ and $\beta$ are the runs of $\tau$ and $\tilde\alpha$ and $\tilde \beta$ are obtained by deleting the zeros of $\alpha$ and $\beta$, respectively. Set $a=\vert \alpha \vert$, $b= \vert \beta \vert$, $\tilde a = \vert \tilde \alpha \vert$ and $\tilde b = \vert \tilde \beta \vert$. Then
	\begin{align*}
	[b-1]_q!\prod_{c\in \tilde \alpha}[w^{0}(c)]_q
	&= [\tilde a+b-\tilde b-1]_q!\prod_{c\in \tilde \beta}[w^{1}(c)]_q
	\end{align*}
\end{lemma}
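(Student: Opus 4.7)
My plan is to unfold the schedule numbers into an explicit $q$-identity that depends only on the interleaving pattern of the nonzero labels of $\alpha$ and $\beta$, and then verify that identity by an adjacent-swap invariance argument starting from a trivial base case.

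First I would write $\tilde\alpha = (a_1 < \cdots < a_{\tilde a})$ and $\tilde\beta = (b_1 < \cdots < b_{\tilde b})$ for the strictly increasing nonzero parts of $\alpha$ and $\beta$, and set $Z \coloneqq b - \tilde b$. In the shift-$0$ setup governing $w^0$, $\alpha$ is a positive run and $\beta$ is the zero run, so by the definition of schedule numbers
\[ w^0(a_i) = (\tilde a - i) + Z + u_i, \qquad u_i \coloneqq |\{ j : b_j < a_i \}|. \]
In the shift-$1$ setup governing $w^1$, $\alpha$ is the zero run and $\beta$ is a negative run, so
\[ w^1(b_j) = (\tilde a - v_j) + Z + (j-1), \qquad v_j \coloneqq |\{ i : a_i < b_j \}|. \]
Since the nonzero labels of a preference function are distinct, $\tilde\alpha \cup \tilde\beta$ is completely encoded by the word $\omega$ of length $\tilde a + \tilde b$ in the letters $\mathrm{A},\mathrm{B}$ obtained by listing $\tilde\alpha \cup \tilde\beta$ in increasing order and marking each element by its origin; both $u_i$ and $v_j$ depend only on $\omega$. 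The lemma is thus equivalent to the $q$-identity
\[ [Z + \tilde b - 1]_q!\prod_{i=1}^{\tilde a} \bigl[(\tilde a - i) + Z + u_i\bigr]_q = [\tilde a + Z - 1]_q!\prod_{j=1}^{\tilde b} \bigl[(\tilde a - v_j) + Z + (j-1)\bigr]_q. \]

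Next I would prove this identity by induction on the number of $\mathrm{BA}$-adjacencies of $\omega$. The base case is $\omega = \mathrm{B}^{\tilde b}\mathrm{A}^{\tilde a}$, where $u_i = \tilde b$ and $v_j = 0$; here both products telescope and both sides equal $[\tilde a + b - 1]_q!$. For the inductive step I would pick an adjacent $\mathrm{BA}$ occurrence at positions $k, k+1$, say $b_{j_0}$ at position $k$ and $a_{i_0}$ at position $k+1$, and swap it into $\mathrm{AB}$. A direct inspection shows that only $u_{i_0}$ changes among the $u$'s (decreasing by one) and only $v_{j_0}$ changes among the $v$'s (increasing by one); moreover, after the swap one reads off $u_{i_0} = k - i_0$, $v_{j_0} = i_0$, and $j_0 = k+1-i_0$. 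Substituting these values gives
\[ (\tilde a - i_0) + Z + u_{i_0} = (\tilde a - v_{j_0}) + Z + (j_0 - 1), \]
so both sides of the identity get multiplied by the same factor $[X]_q/[X+1]_q$, preserving the ratio $\mathrm{LHS}/\mathrm{RHS}$.

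The only delicate point I anticipate is the bookkeeping in the swap itself, where one needs to verify carefully that only a single factor changes on each side and that the two affected arguments are equal; once that is checked the remaining algebra is a one-line substitution. The identity then propagates from the base case to every word $\omega$ because any arrangement of $\tilde a$ letters $\mathrm{A}$ and $\tilde b$ letters $\mathrm{B}$ is reachable from $\mathrm{B}^{\tilde b}\mathrm{A}^{\tilde a}$ by a sequence of adjacent $\mathrm{BA} \to \mathrm{AB}$ swaps, and the boundary cases $\tilde a = 0$ or $\tilde b = 0$ are handled directly by empty products.
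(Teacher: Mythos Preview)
Your argument is correct. The formulas you extract for $w^0(a_i)$ and $w^1(b_j)$ match the paper's, the base case telescopes as you say, and the swap computation checks out: before the swap the two affected factors are both $[(\tilde a - i_0) + Z + j_0]_q$ and after the swap they are both $[(\tilde a - i_0) + Z + j_0 - 1]_q$, so the ratio is preserved. Connectivity of the word graph under adjacent $\mathrm{BA}\to\mathrm{AB}$ moves from $\mathrm{B}^{\tilde b}\mathrm{A}^{\tilde a}$ is standard.

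Your route is genuinely different from the paper's. The paper encodes the interleaving of $\tilde\alpha$ and $\tilde\beta$ by a partition $\lambda$ inside a $\tilde b\times\tilde a$ rectangle, builds a skew staircase shape $\Gamma=\delta_{b+\tilde a-1}/\lambda'$ (after suitable placement), and observes that its row lengths are exactly $\{w^0(c):c\in\tilde\alpha\}\cup\{1,\dots,b-1\}$ while its column lengths are $\{w^1(c):c\in\tilde\beta\}\cup\{1,\dots,\tilde a+Z-1\}$; a short lemma that for any $\lambda\subseteq\delta$ the multisets of row and column lengths of $\delta/\lambda$ coincide then gives the identity at once. So the paper proves something slightly stronger---the two sides are products over \emph{the same multiset}---via a global geometric picture, while you prove the equality by a local invariance argument that is more elementary and entirely self-contained, needing no auxiliary lemma. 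Either approach is perfectly adequate here.
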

\begin{proof}
	The general argument is better understood with the help of a specific example. 
	
	Consider $\tau=01468 \,  00023579$, so that
	\begin{align*}
	&\alpha= 01468 && \tilde \alpha= 1468 && a=5 && \tilde a = 4\\
	&\beta= 00023579 &&\tilde \beta= 23579 && b=8 && \tilde b = 5.
	\end{align*}
	Let us define a partition $\lambda$ by setting 
	\[\lambda_i= \vert \{ c\in \tilde \beta \mid c< \text{the $(\tilde a+1-i)$-th element of  } \tilde \alpha \}\vert.\]
	In Figure~\ref{fig: lambda} we construct the Young  diagram of $\lambda$ as follows: draw a $b\times a$ grid. Label its rows, bottom to top with the elements of $\tilde \alpha$ and its columns, left to right with the elements of $\tilde \beta$. Then $\lambda_i$ is the number of cells in the $i$-th row from the top such that the label of its column is smaller then the label of its row. We coloured all such cells blue.

	It is now clear from Figure~\ref{fig: lambda} that the conjugate partition $\lambda'$ of $\lambda$  is such that 
	\[\lambda'_i= \vert \{ c\in \tilde \alpha \mid c> \text{the $i$-th element of  } \tilde \beta \}\vert.\]
	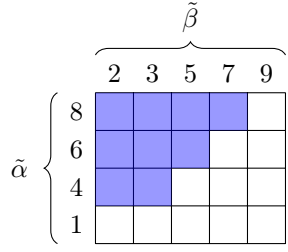
\begin{figure*}
		\centering
		\begin{tikzpicture}[scale=.25,rotate=0]
		\draw (0,0) grid[step=2] (10,8);
		\node at (-1,1){1};
		\node at (-1,3){4};
		\node at (-1,5){6};
		\node at (-1,7){8};
		\node at (1,9){2};
		\node at (3,9){3};
		\node at (5,9){5};
		\node at (7,9){7};
		\node at (9,9){9};
		\draw [decorate,decoration={brace,amplitude=5pt}] (0,10)--(10,10) node [midway,yshift=.5cm]{$\tilde \beta$} ;
		\draw [decorate,decoration={brace,amplitude=5pt}] (-2,0)--(-2,8) node [midway,xshift=-.5cm]{$\tilde \alpha$} ;
		
		\fill[blue, opacity=.4](0,2)-|(4,4)-|(6,6)-|(8,8)-|(0,2);
		\end{tikzpicture}
		\caption{Construction of the Young diagram of $\lambda$.}
		\label{fig: lambda}
	\end{figure*}	
	These partitions are useful because they encode essential information about the schedule numbers of elements that appear in the thesis. Recall that $\alpha$ is a positive run for shift $0$ and so $w^0(c)$ equals the number of elements bigger than $c$ in $\alpha$ plus the number of elements smaller than $c$ in $\beta$. Similarly, $\beta$ is a negative run for shift $1$, so $w^1(c)$ equals the number of elements smaller than $c$ in $\beta$ plus the number of elements bigger than $c$ in $\alpha$.

	So if $c$ is the $(\tilde a +1-i)$-th element of $\tilde \alpha$, we must have \[ w^0(c)= (i-1)+\lambda_i  + (b-\tilde b)\] where the first term is the number of elements of $\alpha$ that are bigger than $c$, the second term accounts for the number of elements in $\beta$ that are smaller then $c$ and different from $0$ and the last term counts the number of zeros in the next run (that must necessarily be smaller then $c\neq 0$). 
	
	Similarly, if $c$ is the $i$-th element of $\tilde \beta$, we have \[w^1(c)= (i-1)+ (b-\tilde b) +\lambda'_i\] where the first term is the number of elements in $\beta$ smaller than $c$ and different from $0$, the second term the number of zeros in $\beta$ (which are smaller than $c$) and the third term the number of elements in $\alpha$ bigger than $c$.

	So for our running example, the schedule numbers are computed in Figure~\ref{fig:schedule_numbers}.
	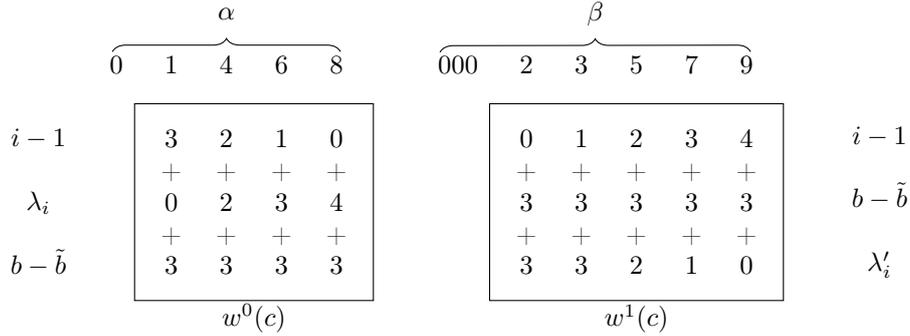
\begin{figure}[!h]
		\begin{center}
			\begin{tikzpicture}[scale=.5]
			\node (1) {0};
			\node (2) [right=.3cm of 1] {1};
			\node (3) [right=.3cm of 2] {4};
			\node (4) [right=.3cm of 3] {6};
			\node (5) [right=.3cm of 4] {8};
			\path (1) ++(135:.5cm) node (dec1) {};
			\path (5) ++(45:.5cm) node (dec2) {};
			\draw [decorate,decoration={brace,amplitude=5pt}] (dec1)--(dec2) node [midway, yshift=.5cm]{$\alpha$};
			\node (6) [right=1cm of 5] {000};
			\node (7) [right=.3cm of 6] {2};
			\node (8) [right=.3cm of 7] {3};
			\node (9) [right=.3cm of 8] {5};
			\node (10) [right=.3cm of 9] {7};
			\node (11) [right=.3cm of 10] {9};
			\path (6) ++(155:.8cm) node (dec3) {};
			\path (11) ++(45:.5cm) node (dec4) {};
			\draw [decorate,decoration={brace,amplitude=5pt}] (dec3)--(dec4) node [midway, yshift=.5cm]{$\beta$};
			\node (A1) [below=.5cm of 2, align =center] {3\\+\\0\\+\\3};
			\node (A2) [below=.5cm of 3, align =center] {2\\+\\2\\+\\3};
			\node (A3) [below=.5cm of 4,align =center] {1\\+\\3\\+\\3};
			\node (A4) [below=.5cm of 5,align =center] {0\\+\\4\\+\\3};
			\node (een) [below left=.1cm and .1 cm of A1]{};
			\node (twee) [above right=.1cm and .1 cm of A4]{};
			\draw (een) rectangle (twee);
			
			\node (lab1) [left=1cm of A1, align =center] {$i-1$\\ \\$\lambda_i$ \\ \\ $b-\tilde b$};
			
			\node (aux) at ($(A2)!0.5!(A3)$) {};
			\node (name) [below=1.1cm of aux] {$w^0(c)$};
			\node (B1) [below=.5cm of 7, align =center] {0\\+\\3\\+\\3};
			\node (B2) [below=.5cm of 8, align =center] {1\\+\\3\\+\\3};
			\node (B3) [below=.5cm of 9, align =center] {2\\+\\3\\+\\2};
			\node (B4) [below=.5cm of 10, align =center] {3\\+\\3\\+\\1};
			\node (B5) [below=.5cm of 11, align =center] {4\\+\\3\\+\\0};
			\node (een) [below left=.1cm and .1 cm of B1]{};
			\node (twee) [above right=.1cm and .1 cm of B5]{};
			\draw (een) rectangle (twee);
			\node (name) [below=.15 of B3] {$w^1(c)$};
			
			\node (lab2) [right=1cm of B5, align =center] {$i-1$\\ \\$b-\tilde b$ \\ \\ $\lambda'_i$};
			\end{tikzpicture}
		\end{center}
		\caption{Schedule numbers $w^0(c)$ for $c\in \tilde \alpha$ and $w^1(c)$ for $c\in \tilde \beta$. }
		\label{fig:schedule_numbers}
	\end{figure}
	
	Using this decomposition of the schedule numbers, we can write
	\begin{align}
	&[b-1]_q!\prod_{c\in \tilde \alpha}[w^{0}(c)]_q=[b-1]_q! \prod_{i=1}^{\tilde a} [\lambda_i +(i-1)+b-\tilde b ]_q  \label{eq: rows} \\
	&[\tilde a+b-\tilde b-1]_q!\prod_{c\in \tilde \beta}[w^{1}(c)]_q=
	[\tilde a+b-\tilde b-1]_q!\prod_{i=1}^{\tilde b}[\lambda'_i+(i-1)+b-\tilde b ]_q. \label{eq: columns}
	\end{align} 
	The fact that these two equations are equal turns out to be a consequence of a general fact about partitions.

	Consider the Ferrers diagram of $\delta_{b+\tilde a -1}\coloneqq (b+\tilde a -1, b+\tilde a -2, \dots, 2,1)$ whose parts are justified to the right (see Figure~\ref{fig:genshuffle_argument}). Next, delete all the cells in the bottom right $\tilde b\times \tilde a $ rectangle that are not elements of $\lambda$, when $\lambda$ is placed in the top left corner of this rectangle: see Figure~\ref{fig:genshuffle_argument}. Call the resulting skew diagram $\Gamma$. Next, label the bottom $\tilde a$ rows with the elements of $\tilde \alpha$, starting from the bottom; and the $\tilde b$ rightmost columns with the elements of $\tilde \beta$, starting from the left. 
	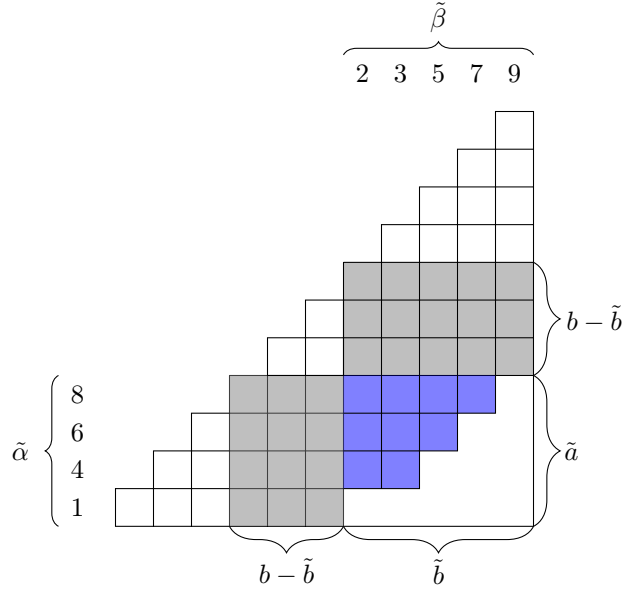
\begin{figure}[!ht]
		\begin{center}
			\begin{tikzpicture}[scale=.5,rotate=0]
			\draw(0,0) grid (-3,4);
			\fill[gray, opacity=.5] (0,0) rectangle (-3,4);
			\fill[gray, opacity=.5] (0,4) rectangle (5,7);
			\draw (0,4) grid (5,7);
			\fill[blue, opacity=.5](0,1)-|(2,2)-|(3,3)-|(4,4)-|(0,1);
			\draw (1,7)|-(5,8) (2,7)|-(5,9) (3,7) |-(5,10) (4,7)|-(5,11)--(5,7); 
			\draw (0,1)-|(2,2)-|(3,3)-|(4,4) (1,1)--(1,4) (2,2)--(2,4) (3,3)--(3,4) (0,2)--(2,2) (0,3)--(3,3);
			\draw (-3,3) -|(-4,0) (-3,2)-|(-5,0) (-3,1)-|(-6,0)--(-3,0);
			\draw (-2,4)|-(-1,5)|-(0,6)|-(-2,4) (-1,4)|-(0,5);
			\draw (0,0)-|(5,4);
			\draw [decorate,decoration={brace,amplitude=10pt,mirror}] (0,0)--(5,0) node [midway,yshift=-.6cm]{$\tilde b$} ;
			\draw [decorate,decoration={brace,amplitude=10pt,mirror}] (-3,0)--(0,0) node [midway,yshift=-.6cm]{$b- \tilde b$} ;
			\draw [decorate,decoration={brace,amplitude=10pt,mirror}] (5,0)--(5,4) node [midway,xshift=.5cm]{$\tilde a$} ;
			\draw [decorate,decoration={brace,amplitude=10pt,mirror}] (5,4)--(5,7) node [midway,xshift=.8cm]{$ b-\tilde b$} ;
			\node at (-7,.5) {1};
			\node at (-7,1.5) {4};
			\node at (-7,2.5) {6};
			\node at (-7,3.5) {8};
			\draw [decorate,decoration={brace,amplitude=5pt}] (-7.5,0)--(-7.5,4) node [midway,xshift=-.5cm]{$\tilde \alpha$} ;
			\node at (.5,12) {2};
			\node at (1.5,12) {3};
			\node at (2.5,12) {5};
			\node at (3.5,12) {7};
			\node at (4.5,12) {9}; 
			\draw [decorate,decoration={brace,amplitude=5pt}] (0,12.5)--(5,12.5) node [midway,yshift=.5cm]{$\tilde \beta$} ;
			\end{tikzpicture}
			\caption{Construction and interpretation of $\Gamma$. }
			\label{fig:genshuffle_argument}
		\end{center}
	\end{figure}
	\begin{itemize}
		\item \emph{The rows of $\Gamma$.} It follows from the discussion above and the construction of $\Gamma$, that if $1\leq i\leq \tilde a$, and $c$ is the label of the $i$-th row of $\Gamma$, then the number of cells in this $i$-th row equals $w^0(c)$. The remaining $(b-\tilde{b})+(\tilde{b}-1)=b-1$ rows form a staircase. 
		\item \emph{The columns of $\Gamma$.} Similarly, if $1\leq i\leq \tilde b$, and $c$ is the label of the $i$-th row from the right of $\Gamma$, then the number of cells in this row equals $w^1(c)$.
		The remaining $(b-\tilde b) +(\tilde a -1)$ columns form a staircase. 
	\end{itemize} It follows that taking the product of the $q$-analogues of the multiset recording the length of the rows (respectively columns) yields (\ref{eq: rows}) (respectively (\ref{eq: columns})). 
	
	To conclude, we only need the following general observation.
	\begin{lemma}
		Let $\delta=(n,n-1,\dots,2,1)$ be the staircase partition, and let $\lambda \subseteq \delta$. Then the multiset of the lengths of the rows of $\delta/\lambda$ equals the multiset of the lengths of its columns.
	\end{lemma}
	\begin{proof}[Proof of the Lemma]
		It follows immediately from the observation that any internal corner of a skew shape of the form $\delta/\mu$  with $\mu\subseteq \delta$ (i.e.\ a cell in $\delta/\mu$ adjacent to $\mu$ that once removed leaves a skew shape) belongs to exactly one row and exactly one column of $\delta/\mu$ and these must have the same length. Then removing an internal corner does not change the equality between the multisets that we are considering, so that we can remove the cells from $\lambda\subseteq \delta$ one at the time to get $\delta/\lambda$. 
	\end{proof}
	
	This concludes the proof of the theorem.
\end{proof}

\begin{corollary}
	\label{cor: dycktosquare-k=O}
	Take $\tau\in \mathfrak S_{m,n}$. Let $r$ be the number of nonzero elements in the rightmost run of $\tau$. Then 
	\begin{align*}
	\sum_{\substack{P\in \Pref(m,n) \\ \diagword(P)=\tau }} 
	q^{\dinv(P)}t^{\area(P)}
	= 
	\frac{[n]_q}{[r]_q} 
	\sum_{\substack{D\in \Park(m,n) \\ \diagword(D)=\tau} }
	q^{\dinv(D)}t^{\area(D)}.
	\end{align*}
\end{corollary}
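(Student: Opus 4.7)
The plan is to stratify the left-hand side by the shift of the preference function, apply Proposition~\ref{prop: dycktosquare-k=0} to each stratum, and collapse the resulting sum using an elementary $q$-identity.

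Write $\tau = \rho_l \rho_{l-1} \cdots \rho_0$ as its decomposition into runs, let $\pi_i$ be $\rho_i$ with its zeros deleted, and set $p_i = \vert \pi_i \vert$. By Remark~\ref{rem:zeros} each run contains at least one nonzero letter, so $p_i \geq 1$ for every $i$; summing gives $p_0 + p_1 + \cdots + p_l = n$, and by definition $r = p_0$. For a preference function $P$ with $\diagword(P) = \tau$, the shift $\shift(P) = s$ records how many of the runs of $\tau$ end up on strictly negative diagonals, and thus ranges over $\{0, 1, \dots, l\}$ (the zero-indexed run must still be present, and the base-diagonal nonzero-label condition is automatic because $p_0 \geq 1$). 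Splitting the left-hand side by shift and applying Proposition~\ref{prop: dycktosquare-k=0} to each term:
\[
\sum_{\substack{P \in \Pref(m,n) \\ \diagword(P)=\tau}} q^{\dinv(P)} t^{\area(P)} = \left( \sum_{s=0}^{l} \frac{[p_s]_q}{[p_0]_q}\, q^{p_0 + p_1 + \cdots + p_{s-1}} \right) \sum_{\substack{D \in \Park(m,n) \\ \diagword(D)=\tau}} q^{\dinv(D)} t^{\area(D)}.
\]

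It therefore suffices to check the identity
\[
\sum_{s=0}^{l} [p_s]_q \, q^{p_0 + p_1 + \cdots + p_{s-1}} = [p_0 + p_1 + \cdots + p_l]_q = [n]_q,
\]
which follows by a one-line induction on $l$ from the elementary relation $[a+b]_q = [a]_q + q^a [b]_q$. Dividing both sides by $[p_0]_q = [r]_q$ yields the corollary. The only point requiring care is the bookkeeping: matching the ``Dyck indexing'' $\rho_l, \dots, \rho_0$ used in Proposition~\ref{prop: dycktosquare-k=0} with the shift-dependent indexing of a general preference function, and verifying that exactly the shifts $s = 0, 1, \dots, l$ contribute. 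Beyond that, the argument is essentially automatic from the proposition.
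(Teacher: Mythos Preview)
Your proof is correct and follows exactly the same approach as the paper's: stratify by shift, apply Proposition~\ref{prop: dycktosquare-k=0} to each stratum, and then use the telescoping $q$-identity $\sum_{s=0}^{l}[p_s]_q\,q^{p_0+\cdots+p_{s-1}}=[n]_q$ to conclude. Your additional remark explaining why the shift is confined to $\{0,1,\dots,l\}$ is a welcome clarification that the paper leaves implicit.
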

\begin{proof}
	Let $\tau=\rho_l\dots \rho_0$ with the $\rho_i$'s its runs and $\tilde \tau=\pi_l \cdots \pi_0$, where $\pi_i$ is obtained from $\rho_i$ by deleting its zeros. Set $p_i=\vert \pi_i \vert$. Applying Proposition~\ref{prop: dycktosquare-k=0}, we get
	\begin{align*}
	\sum_{\substack{P\in \Pref(m,n)\\ \diagword(P)=\tau }} 
	q^{\dinv(P)}t^{\area(P)}
	&= \sum_{s=0}^{l} \sum_{\substack{P\in \Pref(m,n)\\ \diagword(P)=\tau \\ \shift(P)=s }} 
	q^{\dinv(P)}t^{\area(P)} \\
	&=\sum_{s=0}^l \frac{[p_s]_q}{[p_0]_q} q^{p_{s-1}+\cdots +p_{0}}\sum_{\substack{D\in \Park(m,n)\\ \diagword(D)=\tau}} q^{\dinv(D)}t^{\area(D)}. 
	\end{align*}
	
	But
	\begin{align*}
	\sum_{s=0}^l \frac{[p_s]_q}{[p_0]_q} q^{p_{0}+\cdots +p_{s-1}} 
	&= \frac{1}{[p_0]_q} \left( [p_0]_q + [p_1]_q q^{p_0}+\cdots + [p_l]_q q^{p_0+\cdots+ p_{l-1}} \right)\\
	&=\frac{[p_0+\cdots +p_l]_q}{[p_0]_q}=\frac{[n]_q}{[r]_q}
	\end{align*}
	concluding the proof.
\end{proof}

This result gives a link between the $q,t$-enumerators of preference functions and parking functions. Next, we need to deal with the Gessel quasisymmetric functions in \eqref{eq:touchgensquare}. 

In her thesis \cite[Corollary~73]{Hicks-Thesis}, Hicks found a way to factor the $q,t$-enumerator \[\sum_{P\in S} q^{\dinv(P)}t^{\area(P)}\] out of the expression \[ \sum_{P\in S} q^{\dinv(P)}t^{\area(P)} Q_{\ides(P),n},\] where $S$ is the set of parking functions of size $n$ with a fixed diagonal word. In \cite[Lemma~4.2]{Leven-2016}, Sergel showed that Hicks' argument generalizes in a straightforward way to the case where $S$ is the set of preference functions of a given diagonal word and shift. We notice here that in fact again the same argument works in the case where $S$ is the set of partially labelled preference functions of a given diagonal word and shift. The proof will be exactly the same as the ones given in the two aforementioned references, so we omit it. 

\begin{definition}
	Consider $\tau \in \mathfrak S_{m,n} $.  A \emph{consecutive block} of $\tau$ is a substring of $\tau$ of the form $i,i+1,\dots, i+k$ with $i\neq 0$. Define $\Yconsec(\tau)$ to be the Young subgroup of $\mathfrak S_n$ which permutes only elements within the same consecutive block of $\tau$. 
\end{definition}
This definition coincides with the one in \cite{Hicks-Thesis} when $m=0$.

\begin{example}
	If $\tau=00412506703$ then $\Yconsec(\tau)= \mathfrak{S}_{\{1,2\}}\times \mathfrak{S}_{\{3\}}\times \mathfrak{S}_{\{4\}}\times \mathfrak{S}_{\{5\}} \times \mathfrak{S}_{\{6,7\}}$. 
\end{example}

\begin{definition}
	If $\tau\in \mathfrak S_{m,n}$ and $\tilde \tau\in \mathfrak S_n$ is obtained from $\tau$ by deleting its zeros, we set $\ides(\tau) \coloneqq \ides (\tilde \tau )$. 
\end{definition}
For an argument to prove the following proposition, see \cite{Hicks-Thesis}*{Corollary~73} or \cite[|lemma~4.2]{Leven-2016}.

\begin{proposition}\label{lem: qsym factors}
	Given $\tau \in \mathfrak S_{m,n}$, if 
	\begin{align*}
	S \coloneqq \left\{ P\in \Pref(m,n) \mid \shift(P)=s, \diagword(P)=\tau \right\}, 
	\end{align*}
	then 
	\begin{align*}
	&\sum_{P\in S}
	q^{\dinv(P)}t^{\area(P)} Q_{\ides(P),n}
	=\left( 
	\sum_{P \in S}
	q^{\dinv(P)}t^{\area(P)} 
	\right) 
	\times 
	\left( 
	\frac{\sum\limits_{\pi\in \Yconsec(\tau)}q^{\inv(\pi)}Q_{\ides( \tau)\cup \ides(\pi),n}}
	{\sum\limits_{\pi\in \Yconsec(\tau)}q^{\inv(\pi)}} 
	\right).
	\end{align*}
\end{proposition}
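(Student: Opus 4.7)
The plan is to follow the argument of Hicks \cite{Hicks-Thesis}*{Corollary~73} and Sergel \cite{Leven-2016}*{Lemma~4.2}, which adapts with minimal changes to the partially labelled setting. First, observe that for $P\in S$ the area depends only on $\tau$ and $s$ (it equals $\maj(\tau)$, as shown in the proof of Theorem~\ref{prop: zeroformula}), so $t^{\area(P)}$ is a common constant factor that can be extracted from both sides of the desired identity.

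The central construction is a free right action of $\Yconsec(\tau)$ on the set $S$: for $\pi\in \Yconsec(\tau)$ and $P\in S$, define $P\cdot \pi$ to be the preference function obtained from $P$ by permuting the cells occupied by the labels of each consecutive block $\{i,i+1,\dots,i+k\}$ of $\tau$ according to the corresponding component of $\pi$. Since $\pi$ only permutes labels within a consecutive block, the diagonal word and the shift are preserved, so $P\cdot \pi\in S$; the zero labels, which by definition belong to no consecutive block, are untouched. Choosing a system of orbit representatives $S_0\subset S$, this yields a bijection $S_0\times \Yconsec(\tau)\to S$, $(P_0,\pi)\mapsto P_0\cdot \pi$.

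The two technical claims to verify are
\[\dinv(P_0\cdot \pi)=\dinv(P_0)+\inv(\pi)\qquad\text{and}\qquad \ides(P_0\cdot \pi)=\ides(\tau)\sqcup \ides(\pi).\]
For the first, note that transposing two labels $i,i+1$ lying in the same consecutive block of $\tau$ affects only the mutual inversion between those two labels: for any third label $j\notin\{i,i+1\}$ the comparisons $j<i$ and $j<i+1$ coincide (and likewise for $j>$), so the dinv contribution pairing the row of $j$ with the rows of $i$ or $i+1$ is unchanged by the swap; the same argument handles the bonus dinv. For the second, a descent in the inverse of the reading word comes from a pair $i,i+1$ that are either in different consecutive blocks of $\tau$ (contributing $\ides(\tau)$) or in the same consecutive block (contributing $\ides(\pi)$), and these two contributions are visibly disjoint subsets of $\{1,\dots,n-1\}$.

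Granting these claims, writing $C\coloneqq \sum_{P_0\in S_0}q^{\dinv(P_0)}t^{\area(P_0)}$, both sides factor: the left side becomes $C\cdot \sum_{\pi\in \Yconsec(\tau)}q^{\inv(\pi)}Q_{\ides(\tau)\cup \ides(\pi),n}$, while $\sum_{P\in S}q^{\dinv(P)}t^{\area(P)}=C\cdot \sum_{\pi\in \Yconsec(\tau)}q^{\inv(\pi)}$, and the stated identity follows upon dividing. The main obstacle is the verification of the two technical claims above, which requires careful bookkeeping of the effect of the transposition $i\leftrightarrow i+1$ on both the primary/secondary/bonus inversions and on the reading word; as noted in \cite{Hicks-Thesis} and \cite{Leven-2016}, this analysis is routine, and the only novelty in our setting is checking that the zero labels — which are fixed by the action of $\Yconsec(\tau)$ and contribute neither to $\ides(P)$ nor to any inversion with labels in a consecutive block of $\tau$ — do not interfere with either claim.
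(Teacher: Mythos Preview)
Your proposal is correct and follows exactly the approach the paper takes: the paper does not give its own proof but explicitly defers to \cite{Hicks-Thesis}*{Corollary~73} and \cite{Leven-2016}*{Lemma~4.2}, remarking that ``the same argument works'' in the partially labelled setting and omitting the details. Your sketch of that argument (the free $\Yconsec(\tau)$-action, the two technical claims, and the resulting factorization) is faithful to those references; the only minor imprecision is that the identity $\dinv(P_0\cdot\pi)=\dinv(P_0)+\inv(\pi)$ requires $S_0$ to consist specifically of the minimum-$\dinv$ representatives in each orbit, which you should state rather than leave as an arbitrary choice.
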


We combine this proposition with Corollary~\ref{cor: dycktosquare-k=O}.
\begin{corollary} \label{cor:7.29}
	Take $\tau\in \mathfrak S_{m,n}$. Let $r$ be the number of nonzero elements in the rightmost run of $\tau$. Then  
	\begin{align*}
	\sum_{\substack{P\in \Pref(m,n)\\\diagword(P)=\tau }} 
	q^{\dinv(P)}t^{\area(P)}Q_{\ides(P),n}
	= 
	\frac{[n]_q}{[r]_q} 
	\sum_{\substack{D\in \Park(m,n)\\ \diagword(D)=\tau } }
	q^{\dinv(D)}t^{\area(D)}Q_{\ides(D),n}
	\end{align*}
\end{corollary}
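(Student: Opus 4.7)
The plan is to combine Proposition~\ref{lem: qsym factors} with Corollary~\ref{cor: dycktosquare-k=O} in a straightforward way. The key observation is that the ``quasisymmetric correction factor'' produced by Proposition~\ref{lem: qsym factors},
\[
G(\tau) \; \coloneqq \; \frac{\sum_{\pi\in \Yconsec(\tau)} q^{\inv(\pi)} Q_{\ides(\tau)\cup \ides(\pi),n}}{\sum_{\pi\in \Yconsec(\tau)} q^{\inv(\pi)}},
\]
depends only on $\tau$ (in particular, it does \emph{not} depend on the shift of the path). This will let us factor $Q$-enumerators into a purely $q,t$-enumerator times $G(\tau)$ on both sides of the desired identity.

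First, I would partition the set of preference functions with $\diagword(P)=\tau$ according to their shift $s$, writing
\[
\sum_{\substack{P\in \Pref(m,n) \\ \diagword(P)=\tau}} q^{\dinv(P)} t^{\area(P)} Q_{\ides(P),n}
= \sum_{s\geq 0}\; \sum_{\substack{P\in \Pref(m,n) \\ \diagword(P)=\tau \\ \shift(P)=s}} q^{\dinv(P)} t^{\area(P)} Q_{\ides(P),n}.
\]
For each fixed $s$, I would apply Proposition~\ref{lem: qsym factors} to pull $G(\tau)$ out, and then re-sum over $s$ to conclude
\[
\sum_{\substack{P\in \Pref(m,n) \\ \diagword(P)=\tau}} q^{\dinv(P)} t^{\area(P)} Q_{\ides(P),n}
= G(\tau) \cdot \sum_{\substack{P\in \Pref(m,n) \\ \diagword(P)=\tau}} q^{\dinv(P)} t^{\area(P)}.
\]

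Second, I would apply the same proposition to the parking function side. Since $\Park(m,n)$ is exactly the subset of $\Pref(m,n)$ with shift $s=0$, Proposition~\ref{lem: qsym factors} applied with $s=0$ gives
\[
\sum_{\substack{D\in \Park(m,n) \\ \diagword(D)=\tau}} q^{\dinv(D)} t^{\area(D)} Q_{\ides(D),n}
= G(\tau) \cdot \sum_{\substack{D\in \Park(m,n) \\ \diagword(D)=\tau}} q^{\dinv(D)} t^{\area(D)}.
\]

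Finally, I would invoke Corollary~\ref{cor: dycktosquare-k=O}, which gives precisely
\[
\sum_{\substack{P\in \Pref(m,n) \\ \diagword(P)=\tau}} q^{\dinv(P)} t^{\area(P)}
= \frac{[n]_q}{[r]_q} \sum_{\substack{D\in \Park(m,n) \\ \diagword(D)=\tau}} q^{\dinv(D)} t^{\area(D)},
\]
and multiply both sides by $G(\tau)$. No step here appears to be a real obstacle: Proposition~\ref{lem: qsym factors} and Corollary~\ref{cor: dycktosquare-k=O} have been set up to dovetail, and the only thing to verify carefully is that $G(\tau)$ is truly shift-independent, which is immediate from its definition since it only sees $\tau$ itself (and the Young subgroup $\Yconsec(\tau)$). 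The result then follows by direct substitution.
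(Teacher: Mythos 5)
Your proposal is correct and follows essentially the same route as the paper: decompose the preference-function side by shift, use Proposition~\ref{lem: qsym factors} to factor out the $\tau$-dependent (shift-independent) quasisymmetric factor, relate the bare $q,t$-enumerators via Corollary~\ref{cor: dycktosquare-k=O}, and apply Proposition~\ref{lem: qsym factors} once more (with shift $0$) on the parking-function side. No gaps to flag.
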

\begin{proof}
	Take $l+1$ to be the number of runs of $\tau$. Then 
	
	\begin{align*}
	\sum_{\substack{P\in \Pref(m,n)\\\diagword(P)=\tau }} &
	q^{\dinv(P)}t^{\area(P)}Q_{\ides(P),n}= \\
	& = \sum_{s=0}^l \sum_{\substack{P\in \Pref(m,n)\\ \shift(P)=s \\ \diagword(P)=\tau}} 
	q^{\dinv(P)}t^{\area(P)}Q_{\ides(P),n}   \\
	\text{(by Proposition~\ref{lem: qsym factors})} 
	&=
	\sum_{s=0}^l 
	\left( 
	\sum_{\substack{P\in \Pref(m,n)\\ \shift(P)=s \\ \diagword(P)=\tau }}
	q^{\dinv(P)}t^{\area(P)} 
	\right) 
	\times 
	\left( 
	\frac{\sum\limits_{\pi\in \Yconsec(\tau)}q^{\inv(\pi)}Q_{\ides(\tau)\cup \ides(\pi),n}}
	{\sum\limits_{\pi\in \Yconsec(\tau)}q^{\inv(\pi)}} 
	\right) 
	\end{align*}
	\begin{align*}
	&=
	\left( 
	\sum_{\substack{P\in \Pref(m,n) \\ \diagword(P)=\tau }}
	q^{\dinv(P)}t^{\area(P)} 
	\right) 
	\times 
	\left( 
	\frac{\sum\limits_{\pi\in \Yconsec(\tau)}q^{\inv(\pi)}Q_{\ides(\tau)\cup \ides(\pi),n}}
	{\sum\limits_{\pi\in \Yconsec(\tau)}q^{\inv(\pi)}} 
	\right)  \\
	\text{(by Corollary~\ref{cor: dycktosquare-k=O})}
	&=
	\left( 
	\frac{[n]_q}{[r]_q} 
	\sum_{\substack{D\in \Park(m,n) \\ \diagword(D)=\tau } }
	q^{\dinv(D)}t^{\area(D)}
	\right) 
	\times 
	\left( 
	\frac{\sum\limits_{\pi\in \Yconsec(\tau)}q^{\inv(\pi)}Q_{\ides(\tau)\cup \ides(\pi),n}}
	{\sum\limits_{\pi\in \Yconsec(\tau)}q^{\inv(\pi)}} 
	\right) \\
	\text{(by Proposition~\ref{lem: qsym factors})}
	&= 
	\frac{[n]_q}{[r]_q} 
	\sum_{\substack{D\in \Park(m,n)\\ \diagword(D)=\tau } } q^{\dinv(D)}t^{\area(D)}Q_{\ides(D),n}
	\end{align*}	
\end{proof}

We are now ready to prove the announced result. 
\begin{proof}[Proof of Theorem~\ref{thm: gensquare}]
	Using Theorem~\ref{thm:GenShuffle}, we have
	\begin{align*}
	\frac{[n]_q}{[r]_q} \Delta_{h_m}\nabla E_{n,r} 
	&= \frac{[n]_q}{[r]_q} \sum\limits_{ \substack{P\in \Park(m,n) \\ \touch(P)=r} } q^{\dinv(P)}t^{\area(P)}Q_{\ides(P),n}\\
	&=  
	\sum_{\substack{\tau \in \mathfrak{S}_{m,n}\\ \text{last run of $\tau$ has}\\ \text{$r$ nonzero elements} }} \frac{[n]_q}{[r]_q} 
	\sum\limits_{ \substack{P\in \Park(m,n) \\ \diagword(P)=\tau} } 
	q^{\dinv(P)}t^{\area(P)}Q_{\ides(P),n} \\
	\text{(using Corollary~\ref{cor:7.29})} 
	&=
	\sum_{\substack{\tau \in \mathfrak{S}_{m,n}\\ \text{last run of $\tau$ has}\\ \text{$r$ nonzero elements} }} \;
	\sum\limits_{ \substack{P\in \Pref(m,n) \\ \diagword(P)=\tau} } 
	q^{\dinv(P)}t^{\area(P)}Q_{\ides(P),n} \\
	& = \sum\limits_{ \substack{P\in \Pref(m,n) \\ \touch(P)=r} } 
	q^{\dinv(P)}t^{\area(P)}Q_{\ides(P),n}.
	\end{align*}
\end{proof}

\section{Super-diagonal coinvariants and Theta operators} \label{sec:superdiag}

In this section we extend a conjecture of Zabrocki in \cite{Zabrocki_Delta_Module}. 

\medskip

Let $n,r\in \mathbb{N}$, $n,r\geq 1$, and consider the algebra of polynomials in $2+r$ sets of $n$ variables \[R_{n}^{(r)} \coloneqq \mathbb{C}[x_1,\dots,x_n,y_1,\dots,y_n,\theta_{1}^{(1)},\dots,\theta_{n}^{(1)},\dots,\theta_{1}^{(r)},\dots,\theta_{n}^{(r)}],\]
where the $x_i$ and $y_j$ are commuting variables, while the $\theta_i^{(k)}$ are Grassmannian variables, i.e. $\theta_i^{(k)}\theta_j^{(k)}=-\theta_j^{(k)}\theta_i^{(k)}$ for $1\leq i\neq j\leq n$ and $\theta_i^{(k)}\theta_i^{(k)}=0$ (notice that in $R_{n}^{(r)}$ variables from different sets commute with each others). 

Consider the diagonal action of the symmetric group $\mathfrak{S}_n$, i.e. each element of $\mathfrak{S}_n$ permutes simultaneously the $2+r$ sets of variables acting on their indices, and let $I_n^{(r)}$ be the ideal of $R_{n}^{(r)}$ generated by the homogeneous invariants of positive degree. Following \cite{Zabrocki_Delta_Module}, we call the quotient $M_n^{(r)} \coloneqq R_{n}^{(r)}/I_n^{(r)}$ the space of \emph{super-diagonal coinvariants} (in \cite{Zabrocki_Delta_Module} Zabrocki calls in this way only the case $r=1$).

Clearly $M_n^{(r)}$ is an $\mathfrak{S}_n$-module, naturally $(2+r)$-graded by the degree in the $2+r$ sets of variables. For $a,b\in \mathbb{N}$ and $\alpha \coloneqq (\alpha_1,\dots,\alpha_r)\in \mathbb{N}^r$, denote by $M_n^{(r)}(a,b,\alpha)$ the homogeneous submodule of $M_n^{(r)}$ of multidegree $(a,b,\alpha_1,\dots,\alpha_r)$ in the respective variables $x_i$'s, $y_i$'s, $\theta_i^{(1)}$'s,\dots , $\theta_i^{(r)}$'s.

For a partition $\mu$ of $n$, let $\chi_{M_n^{(r)}(a,b,\alpha)}(\mu)$ be the value of the character of the $\mathfrak{S}_n$-module $M_n^{(r)}(a,b,\alpha)$ on a permutation of cycle type $\mu$. Define the \emph{$q,t,\underline{z}$-Frobenius image} for super-diagonal coinvariants as
\[\mathcal{F}_{q,t,\underline{z}}(M_n^{(r)}) \coloneqq \sum_{a,b\in \mathbb{N}} \sum_{\alpha\in (\mathbb{N})^r}q^at^bz^\alpha \chi_{M_n^{(r)}(a,b,\alpha)}(\mu) \frac{p(\mu)}{z_\mu}\in \Lambda_{\mathbb{Q}(q,t,\underline{z})},\]
where $\underline{z} \coloneqq z_1,\dots,z_r$, $z^\alpha \coloneqq z_1^{\alpha_1}\cdots z_r^{\alpha_r}$, and $z_\mu \coloneqq \prod_{i=1}^{\mu_1}m_i!i^{m_i}$ with $m_i$ equal to the number of parts of size $i$ in $\mu$.

Using Theorem~\ref{thm:DeltakmGD}, Zabrocki's conjecture in \cite{Zabrocki_Delta_Module} can be restaded in the following way.
\begin{conjecture}[Zabrocki]
	For $n\geq 1$
	\begin{equation}
	\mathcal{F}_{q,t,\underline{z}}(M_n^{(1)}) = \sum_{i=0}^{n-1} z_1^i\Theta_{i}\nabla e_{n-i}.
	\end{equation}
\end{conjecture}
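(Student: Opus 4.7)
The statement asserts an explicit formula for the multi-graded Frobenius characteristic of $M_n^{(1)}$, so my plan would be to prove it by fermionic-degree decomposition. Write $M_n^{(1)}=\bigoplus_{k=0}^{n-1} M_n^{(1)}[k]$, where $M_n^{(1)}[k]$ is spanned by classes of total degree $k$ in the Grassmannian variables $\theta^{(1)}_i$; this decomposition is $\mathfrak{S}_n$-stable and bigraded in $(x,y)$. It then suffices to prove that for each $k$ the bigraded Frobenius characteristic of $M_n^{(1)}[k]$ equals $\Theta_k\nabla e_{n-k}$, which by Theorem~\ref{thm:DeltakmGD} is the same as $\Delta_{e_{n-k-1}}'e_n$.

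The base case $k=0$ is Haiman's theorem, since $M_n^{(1)}[0]$ is canonically the classical diagonal coinvariant ring, whose Frobenius characteristic is $\nabla e_n=\Theta_0\nabla e_n$. For $k\geq 1$, I would look for an algebraic inductive structure: either short exact sequences of $\mathfrak{S}_n$-modules relating $M_n^{(1)}[k]$ to $M_{n-1}^{(1)}[k]$ and $M_{n-1}^{(1)}[k-1]$ via restriction and a Pieri-type integration, or natural ``polarization'' maps $\partial_\theta\colon M_n^{(1)}[k]\to M_n^{(1)}[k-1]$ whose kernels/cokernels one can identify. On the symmetric function side, one would then need to verify matching recursions for $\Theta_k\nabla e_{n-k}$; the plethystic machinery developed in Section~\ref{sec:Delkm_ops}, together with the identities of Section~\ref{sec:proofs_sec_Delkm} that the paper defers, should be the right toolkit.

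An alternative, more geometric strategy would be to generalize Haiman's isospectral Hilbert scheme construction to accommodate anticommuting variables. Since the zero-fermionic-degree piece already recovers Haiman's setting, one could hope to build a ``super-isospectral'' family over (a modification of) $\mathrm{Hilb}^n(\mathbb{C}^2)$ whose pushforward yields exactly the desired Frobenius characteristic, with the $\theta$-directions coupling to the tautological bundle in a way that produces $\Delta'_{e_{n-k-1}}$ on the cohomological side. A complementary combinatorial approach would be to construct an explicit basis of $M_n^{(1)}[k]$ indexed by decorated labelled Dyck paths of size $n$ with $k$ decorated rises, carrying the $(\dinv,\area)$-bigrading — essentially realizing the Delta conjecture at the module level.

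The main obstacle is fundamental: Zabrocki's conjecture implies the Delta conjecture of Haglund--Remmel--Wilson, so any proof must produce a combinatorial proof of the Delta conjecture as a byproduct. In practice I would therefore expect the route forward to be (i) first establish the combinatorial Delta conjecture itself — perhaps by extending the Carlsson--Mellit Dyck path algebra framework, which the present paper begins to develop in Theorem~\ref{thm: comp-recursion} and Conjecture~\ref{conj:operatDelta} — and (ii) only then lift this identity to a module-theoretic statement by producing an explicit basis of $M_n^{(1)}[k]$ matching the combinatorial side. The lifting step is itself delicate, since even in the $k=0$ case Haiman needed deep algebro-geometric input; the presence of Grassmannian variables will almost certainly demand genuinely new techniques beyond those currently available.
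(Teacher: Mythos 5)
You should note first that the paper does not prove this statement at all: it is a \emph{conjecture} (Zabrocki's), and the only content the paper supplies is the observation that, by Theorem~\ref{thm:DeltakmGD} (i.e.\ $\Theta_k\nabla e_{n-k}=\Delta_{e_{n-k-1}}'e_n$), the displayed formula is a restatement of Zabrocki's original conjecture that the fermionic-degree-$k$ piece of $M_n^{(1)}$ has Frobenius characteristic $\Delta_{e_{n-k-1}}'e_n$. Your opening reduction — decomposing $M_n^{(1)}=\bigoplus_k M_n^{(1)}[k]$ by degree in the $\theta$-variables, invoking Theorem~\ref{thm:DeltakmGD} to pass between $\Theta_k\nabla e_{n-k}$ and $\Delta_{e_{n-k-1}}'e_n$, and identifying the $k=0$ piece with the diagonal coinvariants so that Haiman's theorem gives $\nabla e_n$ — is exactly the content of the paper's restatement, so on that part you are aligned with the source. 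Everything beyond that in your proposal (exact sequences in $n$, a super-isospectral Hilbert scheme, an explicit monomial basis indexed by decorated labelled Dyck paths) is a research program rather than an argument: no step is carried out, and you yourself correctly identify the fundamental obstruction that any proof would subsume a proof of the Delta conjecture, which is beyond the techniques of this paper. So there is no gap to flag relative to the paper — neither you nor the paper proves the statement — but your text should not be read as a proof; it is an accurate assessment that the statement is open together with a plausible (unverified) roadmap.
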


Now we state our conjecture for $r=2$.

\begin{conjecture}
	For $n\geq 1$
	\begin{equation}
	\mathcal{F}_{q,t,\underline{z}}(M_n^{(2)}) = \mathop{\sum_{i,j\geq 0}}_{1\leq i+j<n} z_1^iz_2^j\Theta_{i}\Theta_{j}\nabla e_{n-(i+j)}.
	\end{equation}
\end{conjecture}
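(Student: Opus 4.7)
The plan is to mimic the representation-theoretic strategy underlying Zabrocki's $r=1$ conjecture and lift it to two Grassmannian alphabets. First, I would decompose $M_n^{(2)}$ according to the bidegree in $\theta^{(1)},\theta^{(2)}$, writing
\begin{equation*}
M_n^{(2)} = \bigoplus_{i,j\geq 0} M_n^{(2)}(i,j),
\end{equation*}
where $M_n^{(2)}(i,j)$ is the $(i,j)$-homogeneous component in the Grassmannian variables. The identity then reduces to proving, for each bidegree, $\mathcal{F}_{q,t}(M_n^{(2)}(i,j)) = \Theta_i \Theta_j \nabla e_{n-(i+j)}$ whenever $1 \leq i+j < n$, together with the required vanishing when $i+j \geq n$ (which should match the fact that $\Theta_k$ applied to a symmetric function of too-small degree, via the $\mathbf{\Pi}^{-1}$ factor, produces the correct cancellation).

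Next, I would construct explicit $\mathfrak{S}_n$-equivariant polarization operators on $R_n^{(2)}$ that raise the Grassmannian bidegree, in the spirit of the classical polarizations $\sum_k x_k^a \partial_{y_k}$ used for diagonal harmonics, but contracting between the commuting and the Grassmannian alphabets. Such operators yield maps between the pieces $M_n^{(2)}(i,j)$, and the key claim is that their effect on the Frobenius side is exactly one application of $\Theta_i$ (respectively $\Theta_j$). The bisymmetry $\Theta_i\Theta_j = \Theta_j\Theta_i$ is immediate from (\ref{eq:def_Deltaf}), since the $e_k^{*}$ are pairwise commuting multiplication operators, and matches the manifest symmetry of $M_n^{(2)}$ under swapping the two Grassmannian alphabets, giving a nontrivial internal consistency check. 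As a more tractable preliminary step, I would first test the Schr\"oder-type specializations $\langle\,\cdot\,,h_k e_{n-k}\rangle$ of both sides using Lemma~\ref{lem:Schroeder_comp} (applied twice), turning the right-hand side into a $q,t$-enumerator of labelled Dyck paths with two distinguishable kinds of decorations, which can be verified directly for small $n$ against explicit character computations in $M_n^{(2)}$.

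The principal obstacle is that Zabrocki's conjecture for $r=1$ is itself still open, so a full proof of the $r=2$ statement must either subsume or bypass that case. Even granting Zabrocki's result, identifying the polarization operators raising Grassmannian degree with the symmetric-function operators $\Theta_i,\Theta_j$ appears to require an extension of Haiman's $n!$-geometry to the super-diagonal setting in two Grassmannian directions; the fact that the two polarizations must commute up to exact terms modulo $I_n^{(2)}$ is precisely the point at which the bisymmetric structure of the right-hand side must be reflected geometrically, and this compatibility is where I expect the hardest technical work to be concentrated.
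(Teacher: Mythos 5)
The statement you are trying to prove is not proved in the paper at all: it is stated as a conjecture, supported only by computer experiments (the authors thank Zabrocki for programs used to check it), so there is no ``paper proof'' for your argument to be compared with, and your text does not close the gap either. What you have written is a research program, not a proof. The reduction to fixed Grassmannian bidegree and the observation that $\Theta_i\Theta_j=\Theta_j\Theta_i$ mirrors the symmetry of swapping the two odd alphabets are correct but essentially formal; the entire mathematical content lies in the step you only gesture at, namely producing $\mathfrak{S}_n$-equivariant operators on $M_n^{(2)}$ whose effect on the graded Frobenius image is exactly $\Theta_i$ (resp.\ $\Theta_j$). No construction, no equivariance argument, and no mechanism for transferring such operators through the quotient by $I_n^{(2)}$ is given, and as you yourself note this would already have to contain (or bypass) Zabrocki's $r=1$ conjecture, which is open at the time of the paper; asserting that the needed extension of Haiman's $n!$-geometry ``is where the hardest technical work is concentrated'' names the gap rather than filling it.

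Two further cautions. First, checking the pairings $\langle\,\cdot\,,h_ke_{n-k}\rangle$ via Lemma~\ref{lem:Schroeder_comp} (even applied twice) and comparing with character computations for small $n$ is only a consistency test on a Schr\"oder-type projection of the identity; it cannot certify equality of the full Schur expansions, let alone for all $n$. Second, be careful with the range of summation: the right-hand side of the conjecture runs over $1\leq i+j<n$, so any bidegree decomposition you set up has to account precisely for which components $M_n^{(2)}(a,b,(i,j))$ are claimed to be nonzero, rather than an unspecified ``vanishing when $i+j\geq n$'' matched to a cancellation inside $\mathbf{\Pi}f^{*}\mathbf{\Pi}^{-1}$ that you do not verify. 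As it stands, the proposal is a plausible outline of how one might attack the conjecture, but it does not prove it, and the paper offers no proof to fall back on.
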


More generally, we risk the following conjecture.
\begin{conjecture}
	For $n\geq 1$, $r\geq 3$ and $\alpha\in \mathbb{N}^r$ with $\lvert \alpha \rvert \coloneqq \sum_{i=1}^r\alpha_i<n$
	\begin{equation}
	\mathcal{F}_{q,t,\underline{z}}(M_n^{(r)}(\alpha)) = z^\alpha \Theta_{\alpha_1} \cdots \Theta_{\alpha_r} \nabla e_{n- \lvert \alpha \rvert},
	\end{equation}
	where the left hand side is the $q,t,\underline{z}$-Frobenius image of the $\mathfrak{S}_n$-module $M_n^{(r)}(\alpha):=\oplus_{a,b\geq 0}M_n^{(r)}(a,b,\alpha)$.
\end{conjecture}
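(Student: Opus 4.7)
The plan is to first simplify the statement using the observation that the Theta operators commute and compose nicely: since $\Theta_f = \mathbf{\Pi}\, f^*\mathbf{\Pi}^{-1}$ and the multiplication operators satisfy $f^* g^* = (fg)^*$ on $\Lambda$, we have
\[\Theta_{\alpha_1}\Theta_{\alpha_2}\cdots\Theta_{\alpha_r} = \Theta_{e_{\alpha_1}e_{\alpha_2}\cdots e_{\alpha_r}},\]
so in particular these operators pairwise commute. This reduces the right-hand side of the conjecture to a single operator application and, more importantly, lets one peel off one Grassmannian layer at a time in any order. I would induct on $r$: the base case $r=0$ is Haiman's theorem, and $r=1$ is exactly Zabrocki's conjecture from \cite{Zabrocki_Delta_Module}.

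For the inductive step, the goal is to establish an $\mathfrak{S}_n$-equivariant identity of the form
\[\mathcal{F}_{q,t}(M_n^{(r)}(\alpha_1,\ldots,\alpha_r)) = \Theta_{\alpha_r}\,\mathcal{F}_{q,t}(M_n^{(r-1)}(\alpha_1,\ldots,\alpha_{r-1}))\]
by constructing a filtration of $M_n^{(r)}(\alpha)$ by the degree in $\theta^{(r)}_1,\ldots,\theta^{(r)}_n$, identifying each associated graded piece, and recognizing the cumulative Frobenius effect as the plethystic operator $\mathbf{\Pi}\,e_{\alpha_r}^*\mathbf{\Pi}^{-1}$. Before attempting this, I would do extensive computational verification in SageMath for $r=2$ and $r=3$, in particular testing the predicted commutativity $\Theta_a\Theta_b=\Theta_b\Theta_a$ on the module side (this is exactly the consistency required by the $z_1 \leftrightarrow z_2$ symmetry of Conjecture 8.2), and searching for a natural $\mathfrak{S}_n$-equivariant super-Koszul complex built from multiplication by the $\theta^{(r)}_i$ whose Euler characteristic encodes $\Theta_{\alpha_r}$. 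An intermediate target is the $\alpha_r=1$ case, which should correspond to a single wedge layer and may be directly computable.

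The principal obstacle is twofold. First, even the $r=1$ case is still open, so any method here must also resolve Zabrocki's original conjecture. Second, the operator $\mathbf{\Pi}$ that defines $\Theta_f$ has no known representation-theoretic meaning at the level of modules: the algebraic identity $\Theta_k\nabla e_{n-k} = \Delta'_{e_{n-k-1}}e_n$ of Theorem \ref{thm:DeltakmGD} pins down the operator on symmetric functions but gives no module-level construction. A realistic route probably requires a geometric model — a super-analogue of Haiman's isospectral Hilbert scheme in which the extra Grassmannian variables become sections of additional tautological bundles, and in which the $\Theta_{\alpha_i}$ arise from push–pull along layered projections. Short of such geometric input, I do not see how one avoids the same fundamental difficulty that has kept Zabrocki's conjecture open.
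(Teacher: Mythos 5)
The statement you are addressing is a conjecture, not a theorem: the paper supplies no proof of it, only indirect experimental support (the authors thank Zabrocki for programs used to check the Frobenius characteristics of super-diagonal coinvariants), and it remains open, as does the $r=1$ case from \cite{Zabrocki_Delta_Module}. Your text is accordingly a research plan rather than a proof, and you say so yourself; as a proof it has a genuine gap, namely the entire inductive step. Your correct observations are the purely formal ones: since $\Theta_f=\mathbf{\Pi}f^*\mathbf{\Pi}^{-1}$, one indeed has $\Theta_{\alpha_1}\cdots\Theta_{\alpha_r}=\Theta_{e_{\alpha_1}\cdots e_{\alpha_r}}$, the operators commute, and the conjecture is consistent with Haiman's theorem at $r=0$ and specializes to Zabrocki's conjecture at $r=1$. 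But the proposed identity $\mathcal{F}_{q,t}(M_n^{(r)}(\alpha))=\Theta_{\alpha_r}\,\mathcal{F}_{q,t}(M_n^{(r-1)}(\alpha_1,\dots,\alpha_{r-1}))$ is exactly what nobody knows how to prove: there is no known module-level construction realizing $\mathbf{\Pi}e_k^*\mathbf{\Pi}^{-1}$ (Theorem~\ref{thm:DeltakmGD} characterizes $\Theta_k$ only on the symmetric-function side), and no filtration or Koszul-type complex is exhibited whose associated graded pieces produce it. Naming the obstacle, as you do, is not the same as overcoming it.

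One further point your ``peel off one Grassmannian layer at a time'' strategy must confront explicitly: the paper notes that for $r\geq 3$ the module $M_n^{(r)}$ contains nonzero components of total Grassmannian degree $\geq n$ which the conjecture does not cover, whereas no such components occur for $r\leq 2$. So a clean induction transferring the whole of $M_n^{(r-1)}$ to $M_n^{(r)}$ cannot be correct as stated; any filtration argument would have to be formulated multidegree by multidegree, restricted to $\lvert\alpha\rvert<n$, and would have to explain why the extra high-degree components appear and why they escape the formula. Your suggestion of extensive computation and of a geometric (isospectral-Hilbert-scheme-type) model is reasonable as a program, but it does not constitute progress toward the statement beyond what the paper itself already conjectures.
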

Notice that this last conjecture does not wholly cover the module $M_n^{(r)}$. Indeed it seems that there are nonzero submodules of total degree $\geq n$ in the $\theta_i^{(k)}$ variables for $r \geq 3$ that are not present for $r=2$: we do not have a formula for those.

\begin{remark}
	It is a classical result that the Frobenius characteristic of the classical harmonics $\mathcal{H}_n$, i.e.\ the coinvariants of the action of $\mathfrak{S}_n$ on $\mathbb{C}[x_1,\dots,x_n]$ is given by
	\[ \mathcal{F}_{q}(\mathcal{H}_n)= \widetilde{H}_{(n)}[X]=\prod_{i=1}^n(1-q^i)h_n\left[\frac{X}{1-q}\right],  \]
	while, thanks to a theorem of Haiman \cite{Haiman-Vanishing-2002}, the Frobenius characteristic of the \emph{diagonal harmonics} $\mathcal{M}_n^{(0)}$, i.e.\ the coinvariants of the diagonal action of $\mathfrak{S}_n$ on $\mathbb{C}[x_1,\dots,x_n,y_1,\dots,y_n]$ is given by
	\[ \mathcal{F}_{q,t}(\mathcal{M}_n^{(0)})= \nabla e_n.  \]
	
	It is probably worth mentioning that Haglund's \cite{Haglund-Schroeder-2004}*{Theorem~2.5} translates into
	\[ \nabla E_{n,r}=t^{n-r}\Theta_{h_{n-r}}\widetilde{H}_{(r)}[X],  \]
	so that
	\[\nabla e_n=\sum_{r=1}^{n} t^{n-r}\Theta_{h_{n-r}}\widetilde{H}_{(r)}[X] ,\]
	and hence
	\[ \mathcal{F}_{q,t}(\mathcal{M}_n^{(0)}) = \sum_{r=1}^{n} t^{n-r}\Theta_{h_{n-r}}\mathcal{F}_{q}(\mathcal{H}_r).\]
	Though the obvious generalization of this formula does not seem to hold, it is at least conceivable that the Theta operators (or possibly some variation/extension of them) will give the universal formula for super-diagonal coinvariants with $r$ sets of $n$ commuting variables and $r'$ sets of $n$ Grassmannian variables conjectured by Bergeron and Zabrocki (see \cite{Bergeron_Multivariate_Coinvariants} for these explicitly computed formulas up to $n=5$).
\end{remark}

\section{A Theta conjecture (?)}

Let $\LD(0,n)^{\ast k, \circ r}$ be the set of partially labelled Dyck paths $P\in \LD(0,n)^{\ast k}$ with $r$ contractible valleys decorated with a $\circ$, where a \emph{contractible valley} (see \cite{Haglund-Remmel-Wilson-2015}) is a vertical step which is either preceded by two horizontal steps or by one horizontal step and the label in its row is bigger than the label in the row immediately below. We define the area of a path $P\in \LD(0,n)^{\ast k, \circ r}$ as the area of the underlying decorated Dyck path in $\D(n)^{\ast k}$, and its monomial $x^P$ in the usual way  (i.e.\ the decorations on the valleys do not affect the area nor the monomial).

Experimentally, we observed the following formula.
\begin{conjecture} 
	For $n,k,r\in \mathbb{N}$ with $n\geq 1$ and $r+k<n$,
	\begin{equation}
	\left.\Theta_r\Theta_k \nabla e_{n-r-k}\right|_{q=1}  = \sum_{P\in \LD(0,n)^{\ast k, \circ r}}t^{\area(P)}x^P.
	\end{equation}
\end{conjecture}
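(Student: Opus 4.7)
The plan is to prove this by induction on $r$, exploiting the substantial simplifications that occur at $q=1$, where only the $\area$ statistic needs to be tracked on the combinatorial side.

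The base case $r=0$ reduces, via Theorem~\ref{thm:DeltakmGD}, to the identity
\begin{equation*}
\Delta_{e_{n-k-1}}'e_n\big|_{q=1} = \sum_{P\in \LD(0,n)^{\ast k}} t^{\area(P)}x^P,
\end{equation*}
which is the $q=1$ specialization of the Delta conjecture (the $m=0$ case of Conjecture~\ref{conj:GenDelta}). This has been established in prior work on special-parameter Delta conjectures: at $q=1$, the modified Macdonald basis degenerates significantly and the identity can be verified by explicit manipulation in the Schur or Hall--Littlewood basis, or equivalently via an area-preserving bijection with a simpler family of standardized objects.

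For the inductive step, I would first rewrite the combinatorial side as
\begin{equation*}
\sum_{P\in \LD(0,n)^{\ast k, \circ r}} t^{\area(P)}x^P = \sum_{P\in \LD(0,n)^{\ast k}} \binom{v(P)}{r}\, t^{\area(P)}\, x^P,
\end{equation*}
where $v(P)$ denotes the number of contractible valleys of $P$, and then bundle the conjectural identities for $r = 0, 1, 2, \ldots$ into the single generating-function identity
\begin{equation*}
\sum_{r\geq 0} y^r\, \Theta_{e_r} \Theta_k \nabla e_{n-r-k}\big|_{q=1} = \sum_{P\in \LD(0,n)^{\ast k}} (1+y)^{v(P)}\, t^{\area(P)}\, x^P
\end{equation*}
in an auxiliary variable $y$. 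This reduces the problem to establishing a Pieri-type formula for the family $\{\Theta_{e_r}\Theta_k \nabla e_{n-r-k}|_{q=1}\}_r$, which I would try to prove by expanding $e_r^*$ in the power-sum basis and analyzing the action of $\mathbf{\Pi}\, p_j^*\, \mathbf{\Pi}^{-1}$ on the modified Macdonald basis at $q=1$, where the eigenvalues of $\mathbf{\Pi}$ collapse substantially.

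The main obstacle is the absence of any known symmetric-function operator that performs a clean ``decorated contractible valley insertion'', even at $q=1$: unlike the decorated rises, which are captured by $\Theta_k$ by construction, the decorated valleys appear only as an artifact of the right-hand side, and the naive decomposition $\Theta_{e_r} = \Theta_{e_1}^r/r!$ fails. A plausible alternative is to adapt the $q=1$ valley-insertion arguments of \cite{Haglund-Remmel-Wilson-2015} to the present setting, combining them with the generalized shuffle theorem (Theorem~\ref{thm:GenShuffle}) specialized at $q=1$, so as to produce a bijection between $\LD(0,n)^{\ast k, \circ r}$ and a set of standardized Dyck paths whose $t$-weighted generating function can be independently shown to equal $\Theta_{e_r}\Theta_k \nabla e_{n-r-k}|_{q=1}$; since at $q=1$ the enumeration is single-variable, the bijective side becomes much more tractable than in the full $(q,t)$ setting, where no candidate $\dinv$ statistic is even available.
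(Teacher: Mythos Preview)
The statement you are trying to prove is labelled a \emph{Conjecture} in the paper, not a theorem: the paper offers no proof whatsoever. Immediately after stating it, the authors write that finding a $\dinv$ statistic giving the full symmetric function is ``a very interesting problem'', and they explicitly list this $q=1$ identity among the open problems in the introduction. So there is no ``paper's own proof'' to compare against; you are attempting to settle an open problem.

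As for the substance of your proposal: it is not a proof but an outline with acknowledged gaps, and you are candid about this. Your base case already assumes the $q=1$ case of the (rise) Delta conjecture, which at the time of this paper was itself open; your hand-wave ``can be verified by explicit manipulation in the Schur or Hall--Littlewood basis'' is not a proof. For the inductive step, the key identity you would need is precisely the ``Pieri-type formula'' relating $\Theta_{e_r}\Theta_k\nabla e_{n-r-k}|_{q=1}$ to a weighted sum over $\LD(0,n)^{\ast k}$ with weight $(1+y)^{v(P)}$, and you correctly identify that no operator realizing ``contractible valley insertion'' is known. Your suggestion to analyze $\mathbf{\Pi}\,p_j^*\,\mathbf{\Pi}^{-1}$ at $q=1$ is reasonable as a direction, but note that $\mathbf{\Pi}$ is singular at $q=1$ (many $\Pi_\mu$ vanish), so one cannot simply specialize and must instead work with limits or with a suitable basis change beforehand; this is a genuine technical obstacle, not a routine simplification. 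In short, your proposal names the right difficulties but does not resolve them, which is consistent with the statement's status as a conjecture.
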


It is a very interesting problem to find a $\dinv$ statistic on $\LD(0,n)^{\ast k, \circ r}$ which would give the full $\Theta_r\Theta_k \nabla e_{n-r-k}$. Notice that this might be a statistic that ``interpolates'' the \emph{rise version} of the Delta conjecture (i.e.\ the one that we stated in this article) and its \emph{valley version}: see \cite{Haglund-Remmel-Wilson-2015} for details. Such a conjecture, which should better be called \emph{Theta conjecture}, would bring the whole framework of the Delta conjecture to this new more general setting.

\section{More on Theta operators} \label{sec:conj_Delk0}

\subsection{Relation to the $D_k$ operators}

Recall the definition of the operators $D_k$ from \cite{Bergeron-Garsia-Haiman-Tesler-Positivity-1999}: for $k\in \mathbb{Z}$ and any $F[X]\in \Lambda$
\begin{equation}
D_kF[X] \coloneqq \left.\left( F\left[X+\frac{M}{z}\right]\sum_{r\geq 0}(-z)^re_r[X]\right)\right|_{z^k}.
\end{equation}
Recall the following well-known identities \cite[Equation~I.12 (i) and (ii)]{Bergeron-Garsia-Haiman-Tesler-Positivity-1999}
\begin{align}
\label{eq:D0}	D_0\widetilde{H}_\mu[X] & =-D_\mu \widetilde{H}_\mu[X]\\
\label{eq:[Dk_e1]}	D_k\underline{e}_1- \underline{e}_1 D_k & = MD_{k+1}
\end{align}
where $\underline{e}_1$ denotes the multiplication by $e_1$.

The next proposition is proved in Section~\ref{sec:proofs_sec_Delkm}.

\begin{proposition} \label{prop:propDel10}
	We have
	\begin{equation} \label{eq:propDel10}
	\Theta_1=\frac{1}{M}(\underline{e}_1+D_1).
	\end{equation}
\end{proposition}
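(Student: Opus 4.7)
The plan is to unfold the definition of $\Theta_1$ and reduce the proposition to an operator identity checked on the Macdonald basis. Since $e_1 = p_1$, plethystic substitution gives $e_1^\ast = e_1[X/M] = e_1/M$, a scalar multiple of $e_1$, so multiplication by $e_1^\ast$ coincides with $\frac{1}{M}\underline{e}_1$, and the proposition reduces to
$$\mathbf{\Pi}\,\underline{e}_1\,\mathbf{\Pi}^{-1} = \underline{e}_1 + D_1 ,$$
which I would verify on the basis $\{\widetilde{H}_\mu\}_\mu$.

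For the left-hand side, I would invoke the Macdonald Pieri rule $e_1\widetilde{H}_\mu = \sum_{\nu\gtrdot\mu}c_{\mu\nu}(q,t)\,\widetilde{H}_\nu$, the sum ranging over partitions $\nu$ obtained from $\mu$ by adding a single cell $c$. Since $\mathbf{\Pi}$ is diagonal in the Macdonald basis, this gives
$$\mathbf{\Pi}\,\underline{e}_1\,\mathbf{\Pi}^{-1}\widetilde{H}_\mu = \sum_{\nu\gtrdot\mu}c_{\mu\nu}\,\frac{\Pi_\nu}{\Pi_\mu}\,\widetilde{H}_\nu .$$
The crucial algebraic fact is that, because a cell added to a nonempty $\mu$ is never the corner $(1,1)$ excluded from the product defining $\Pi_\mu$, the ratio simplifies to $\Pi_\nu/\Pi_\mu = 1 - q^{a'(c)}t^{l'(c)}$, which by the identity $B_\nu - B_\mu = q^{a'(c)}t^{l'(c)}$ equals $1 + B_\mu - B_\nu$.

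For the right-hand side, I would use the commutator identity \eqref{eq:[Dk_e1]} at $k=0$, namely $MD_1 = [D_0,\underline{e}_1]$, together with \eqref{eq:D0} giving $D_0\widetilde{H}_\mu = -D_\mu\widetilde{H}_\mu$ and $D_\mu = MB_\mu - 1$. Combining these with the Pieri rule yields
$$MD_1\widetilde{H}_\mu = \sum_{\nu\gtrdot\mu}c_{\mu\nu}(D_\mu - D_\nu)\widetilde{H}_\nu = M\sum_{\nu\gtrdot\mu}c_{\mu\nu}(B_\mu - B_\nu)\widetilde{H}_\nu ,$$
so that $(\underline{e}_1 + D_1)\widetilde{H}_\mu = \sum_{\nu\gtrdot\mu}c_{\mu\nu}(1 + B_\mu - B_\nu)\widetilde{H}_\nu$, matching the left-hand side exactly. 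The only delicate point, which I expect to be the main obstacle, is the edge case $\mu = \varnothing$: there the added cell is the corner $(1,1)$ excluded from the product defining $\Pi_\mu$, so the identity must be understood on $\Lambda^{>0}$ (or handled by a separate convention). This is harmless for the applications of $\Theta_1$ in this paper, which always act on functions of positive degree.
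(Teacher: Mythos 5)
Your proof is correct and takes essentially the same route as the paper's: both verify the identity on the Macdonald basis via the add-one-cell Pieri expansion, the relation $\Pi_\nu/\Pi_\mu = 1-(D_\nu-D_\mu)/M$ for the added cell, and \eqref{eq:D0} combined with \eqref{eq:[Dk_e1]} at $k=0$. Your caveat about $\mu=\varnothing$ is well taken — indeed $\Theta_1 1 = e_1/M$ while $\tfrac{1}{M}(\underline{e}_1+D_1)1=0$, so the identity holds only in positive degree, a point the paper's proof passes over silently.
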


\begin{remark}  \label{rem:e1D1}
	It has been shown in \cite{Garsia-Haiman-Tesler-Explicit-1999} that $\Lambda$ is spanned by the symmetric functions obtained by applying $D_1$ and $e_1$ to the constant $1$.
\end{remark}

We use the standard commutator notation for associative algebras, i.e. $[a,b] \coloneqq ab-ba$.

\begin{conjecture}
	For any $k\geq 0$,
	\begin{equation} \label{eq:conjDelk0_e1}
	[\Theta_k,\underline{e}_1]=\sum_{i=1}^k (-1)^{i+1}D_{i+1} \Theta_{k-i}
	\end{equation}
	and
	\begin{equation} \label{eq:conjDelk0_D1}
	[\Theta_k,D_1]=-\sum_{i=1}^k (-1)^{i+1}D_{i+1} \Theta_{k-i}.
	\end{equation}
\end{conjecture}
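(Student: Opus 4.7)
The plan is to reduce the two conjectured identities to a single one using Proposition~\ref{prop:propDel10}, and then attack that single identity by induction on $k$, using a plethystic Newton recursion for $\Theta_k$. The main obstacle will lie in finding a companion identity for the auxiliary operators $P_i := \mathbf{\Pi}\,\underline{p_i}\,\mathbf{\Pi}^{-1}$.

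First, I would package the identities into generating functions. Setting $\Theta(z) := \sum_{k\geq 0} z^k \Theta_k$ and $A(z) := \sum_{i\geq 1}(-1)^{i+1}z^i D_{i+1}$, the two conjectures become $[\Theta(z), \underline{e}_1] = A(z)\Theta(z)$ and $[\Theta(z), D_1] = -A(z)\Theta(z)$. Adding them gives $[\Theta(z),\, \underline{e}_1 + D_1] = 0$, which holds automatically: Proposition~\ref{prop:propDel10} identifies $\underline{e}_1+D_1$ with $M\Theta_1$, and $\Theta_k$ commutes with $\Theta_1$ since both are $\mathbf{\Pi}$-conjugates of the commuting multiplications by $e_k^*$ and $e_1^*$. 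Hence the two conjectures are equivalent, and I focus on \eqref{eq:conjDelk0_e1}.

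Next I would proceed by induction on $k$, the case $k=0$ being trivial. The base case $k=1$ follows at once from Proposition~\ref{prop:propDel10} and \eqref{eq:[Dk_e1]}:
\[
[\Theta_1, \underline{e}_1] = \tfrac{1}{M}\bigl[\underline{e}_1 + D_1,\, \underline{e}_1\bigr] = \tfrac{1}{M}[D_1, \underline{e}_1] = D_2 = D_2 \Theta_0,
\]
matching the right-hand side for $k=1$. For the inductive step, I would exploit the plethystic Newton identity: since $k e_k = \sum_{i=1}^k (-1)^{i-1} p_i e_{k-i}$, substituting $X \mapsto X/M$ (so that $p_i \mapsto p_i/M_i$, with $M_i := (1-q^i)(1-t^i)$) and conjugating by $\mathbf{\Pi}$ yields
\[
k\,\Theta_k \;=\; \sum_{i=1}^k \frac{(-1)^{i-1}}{M_i}\, P_i\, \Theta_{k-i},
\]
with $P_1 = M\Theta_1 = \underline{e}_1 + D_1$. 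Taking $[\,\cdot\,,\underline{e}_1]$ of both sides, applying the induction hypothesis to each $[\Theta_{k-i}, \underline{e}_1]$ with $i\geq 1$, and using $[P_1,\underline{e}_1] = [D_1,\underline{e}_1] = MD_2$, would reduce the identity to a purely combinatorial rearrangement of the $D_{j+1}\Theta_{k-i-j}$ terms, provided one knows $[P_i, \underline{e}_1]$ for $i \geq 2$.

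The hard part will be the commutators $[P_i, \underline{e}_1]$ for $i \geq 2$: unlike the case $i=1$, no closed form is available in the excerpt, since $P_i$ is not expressible simply in terms of the $D_k$. Two strategies seem worth pursuing. One is to derive a companion conjectural identity of the form $[P_i, \underline{e}_1] = \sum_j c_{ij}(q,t)\, D_{j+1} P_{i-j} + \dots$ (an analogue of \eqref{eq:conjDelk0_e1} for $p_i$ in place of $e_k$) and prove it in parallel, yielding a closed recursive system. The alternative, more brute-force route is to verify \eqref{eq:conjDelk0_e1} directly on the Macdonald basis: since $\mathbf{\Pi}$ is diagonal with eigenvalues $\Pi_\mu$, one has $\Theta_k \widetilde{H}_\mu = \Pi_\mu^{-1} \mathbf{\Pi}\bigl(e_k^* \cdot \widetilde{H}_\mu\bigr)$, so both sides reduce to expressions in the Macdonald Pieri coefficients together with $D_0 \widetilde{H}_\mu = -D_\mu \widetilde{H}_\mu$ from \eqref{eq:D0} and the iterated commutator formula $D_{j+1} = \frac{1}{M^j}[\cdots[[D_1,\underline{e}_1],\underline{e}_1],\dots,\underline{e}_1]$ coming from \eqref{eq:[Dk_e1]}; the remaining task is a (likely nontrivial) identity among plethystic expressions in $B_\mu$ and $\Pi_\nu/\Pi_\mu$ as $\nu$ ranges over the partitions obtained from $\mu$ by adding cells.
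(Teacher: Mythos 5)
This statement is left as an \emph{open conjecture} in the paper: the authors give no proof, only the remark (which you rederive correctly) that the two identities are equivalent, since by Proposition~\ref{prop:propDel10} one has $\underline{e}_1+D_1=M\Theta_1$ and the operators $\Theta_a$ commute with one another, being $\mathbf{\Pi}$-conjugates of commuting multiplication operators. Your base cases are also fine: $k=0$ is trivial, and $[\Theta_1,\underline{e}_1]=\tfrac{1}{M}[D_1,\underline{e}_1]=D_2$ follows from Proposition~\ref{prop:propDel10} and \eqref{eq:[Dk_e1]}, and the Newton recursion $k\,\Theta_k=\sum_{i=1}^k\tfrac{(-1)^{i-1}}{M_i}P_i\,\Theta_{k-i}$ with $P_i=\mathbf{\Pi}\,\underline{p_i}\,\mathbf{\Pi}^{-1}$, $P_1=\underline{e}_1+D_1$, is a correct reformulation.

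However, what you have is a plan, not a proof, and the gap you flag yourself is genuine and is exactly where the difficulty of the conjecture sits: the inductive step needs $[P_i,\underline{e}_1]$ for $i\geq 2$, and neither of your proposed routes supplies it. The computation that works for $i=1$ (the paper's proof of Proposition~\ref{prop:propDel10}) rests on the fact that for a single added cell the ratio $\Pi_\lambda/\Pi_\mu$ equals $1-\tfrac{D_\lambda-D_\mu}{M}$, i.e.\ it is affine in the eigenvalues of $D_0$ from \eqref{eq:D0}, so the conjugation by $\mathbf{\Pi}$ can be rewritten using $D_0$ and then \eqref{eq:[Dk_e1]}; when $i\geq 2$ the relevant ratios are products $\prod_{c\in\lambda/\mu}\bigl(1-q^{a'_\lambda(c)}t^{l'_\lambda(c)}\bigr)$ over several cells, which are not expressible through the $D_k$'s by any identity available in the paper, so the ``companion identity'' for $[P_i,\underline{e}_1]$ is itself an unformulated, unproven family of statements at least as hard as \eqref{eq:conjDelk0_e1}. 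Likewise, the Macdonald-basis route merely restates \eqref{eq:conjDelk0_e1} as an identity among $k$-cell Pieri coefficients weighted by $\Pi$-ratios; nothing in the paper (e.g.\ \eqref{eq:cmunu_recursion}, \eqref{eq:lem52}, \eqref{eq:mikeomegaid}) is shown to imply it. So your argument reduces the conjecture to an equally open statement rather than proving it --- consistent with the paper, where the only rigorous surrounding facts are the equivalence of the two identities and, via Remark~\ref{rem:e1D1}, that the conjectured relations would characterize the $\Theta_k$ uniquely.
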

\begin{remark}
	It is clear from their definition that the operators $\Theta_k$ commute with each others, i.e. for all $a,b\geq 0$ we have $[\Theta_a,\Theta_b]=0$. Using \eqref{eq:propDel10}, it is now clear that the identities \eqref{eq:conjDelk0_e1} and \eqref{eq:conjDelk0_D1} are equivalent.
\end{remark}
From Remark~\ref{rem:e1D1} it follows that the relations \eqref{eq:conjDelk0_e1} and \eqref{eq:conjDelk0_D1} determine the operators $\Theta_k$ uniquely.

Finally, notice that in terms of the operators of Carlsson and Mellit (cf. \cite{Carlsson-Mellit-ShuffleConj-2015}), the identity \eqref{eq:propDel10} translates in
\begin{equation} \label{eq:conjDel10CM}
\Theta_1=\frac{1}{M}d_-(d_+-d_+^*)\qquad \text{ on }V_0=\Lambda.
\end{equation}
It would be interesting to see if the $\Theta_k$'s have a similar formula, i.e.\ a formula in terms of the operators of the Dyck path algebra. This could be a step towards a proof of the operator Delta conjecture (and hence of the compositional Delta conjecture).

\subsection{Schur positivity conjectures}

As it is natural to do in these situations, after we discovered the Theta operators we looked for Schur positivity. Sure enough, we found out experimentally that anything that comes naturally from the nabla operator and is Schur positive remains Schur positive after applying $\Theta_{s_\lambda}$ for any $\lambda$.

With our limited experimental evidence, we risk the following conjecture.
\begin{conjecture}
	For $n\in \mathbb{N}$, $n\geq 1$, $\mu \vdash n$, $\alpha\vDash n$ and any partition $\lambda$ we have
	\begin{align*}
	(-1)^{|\mu|-\ell(\mu)}\big{\<} \Theta_{s_\lambda}\nabla m_\mu , s_\nu \big{\>}  & \in \mathbb{N}[q,t]\quad \text{ for every }\nu\vdash n+|\lambda | ,\\ 
	(-1)^{\mathrm{spin}(\mu)}\big{\<} \Theta_{s_\lambda}\nabla s_\mu , s_\nu \big{\>}  & \in \mathbb{N}[q,t]\quad \text{ for every }\nu\vdash n+|\lambda | ,\\ 
	\big{\<} \Theta_{s_\lambda}\nabla C_\alpha , s_\nu \big{\>}  & \in \mathbb{N}[q,t]\quad \text{ for every }\nu\vdash n+|\lambda | ,
	\end{align*}
	where $\mathrm{spin}(\mu)$ is a nonnegative integer that depends on $\mu$, but not on $\lambda$ nor on $\nu$ (cf. \cite[Appendix~B]{Haglund-Book-2008}).
\end{conjecture}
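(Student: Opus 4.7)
The plan is to combine the multiplicativity of the Theta operators with the (conjectural) module-theoretic interpretation from Section~\ref{sec:superdiag}. Observe first that the definition $\Theta_f = \mathbf{\Pi} f^\ast \mathbf{\Pi}^{-1}$ immediately gives $\Theta_{fg}=\Theta_f\Theta_g$, so that $\Theta_{e_\lambda}=\Theta_{e_{\lambda_1}}\cdots \Theta_{e_{\lambda_k}}$ for any partition $\lambda$. If the conjectures of Section~\ref{sec:superdiag} hold, then $z_1^{\lambda_1}\cdots z_k^{\lambda_k}\Theta_{e_{\lambda_1}}\cdots \Theta_{e_{\lambda_k}}\nabla e_{n-|\lambda|}$ is the $(q,t,\underline{z})$-Frobenius characteristic of a submodule of $M_n^{(k)}$, and hence is automatically Schur positive. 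This handles $\Theta_{e_\mu}\nabla e_n$ at once, and reduces the general $\Theta_{s_\lambda}$ statement to producing a Schur-positive expansion of $\Theta_{s_\lambda}$ in terms of the (composed) $\Theta_{e_\mu}$'s acting on the relevant Schur-positive inputs.

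For the case of $\nabla C_\alpha$ (the third statement), Carlsson--Mellit's compositional shuffle theorem already gives Schur positivity of $\nabla C_\alpha$, so the remaining task is to control the action of $\Theta_{s_\lambda}$. The most direct approach is to first establish the operator Delta conjecture (Conjecture~\ref{conj:operatDelta}), which would furnish a Gessel-positive combinatorial formula for $\Theta_{e_k}\nabla C_\alpha$; iterating with multiplicativity of $\Theta$ would then yield Schur positivity of $\Theta_{e_\mu}\nabla C_\alpha$ for every partition $\mu$. For $\nabla m_\mu$ and $\nabla s_\mu$, I would reduce to the $\nabla C_\alpha$ case by expanding $m_\mu$ and $s_\mu$ in the $\{C_\alpha\}$ basis and carefully tracking signs; the sign conventions $(-1)^{|\mu|-\ell(\mu)}$ and $(-1)^{\mathrm{spin}(\mu)}$ should emerge naturally from this change of basis, exactly as they appear in the existing Schur positivity conjectures for $\nabla m_\mu$ and $\nabla s_\mu$ alone, and the $\Theta_{s_\lambda}$-layer would just be transported through the expansion without altering the signs.

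The main obstacle is twofold. First, the module-theoretic interpretation is itself an open conjecture, and the operator Delta conjecture on which the $C_\alpha$ route depends is also open, so either path pushes the difficulty onto a deeper unresolved statement. Second, passing from $\Theta_{e_\lambda}$ to $\Theta_{s_\lambda}$ genuinely requires new input: a Jacobi--Trudi expansion introduces signs that neither the module picture nor the combinatorial route obviously kills, so one would need either a manifestly positive Pieri-type rule for the $\Theta_{e_k}$'s, or a direct module/combinatorial realization indexed by semistandard $\lambda$-tableaux. I expect this last point---finding a construction in which the Schur $\lambda$ dependence is simultaneously transparent and manifestly Schur positive---to be the decisive and hardest step.
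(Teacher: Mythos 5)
There is a genuine gap here, and in fact a mismatch of ambition: the statement you are attacking is stated in the paper only as a \emph{conjecture}, supported by limited experimental evidence, and the paper offers no proof of it at all. Your text is likewise not a proof but a conditional strategy, and even as a strategy it leans on statements that are themselves open. Your one solid observation is the multiplicativity $\Theta_{fg}=\Theta_f\Theta_g$, which does follow immediately from $\Theta_f=\mathbf{\Pi}f^{\ast}\mathbf{\Pi}^{-1}$ since $f\mapsto f[X/M]$ is a ring homomorphism; so $\Theta_{e_\mu}=\Theta_{e_{\mu_1}}\cdots\Theta_{e_{\mu_k}}$ is fine. But everything after that is conditional on the super-diagonal coinvariant conjectures of Section~\ref{sec:superdiag} (open, and stated there only for $e$-indexed Thetas applied to $\nabla e_{n-|\alpha|}$, not to $\nabla C_\alpha$, $\nabla m_\mu$ or $\nabla s_\mu$) or on the operator Delta conjecture, Conjecture~\ref{conj:operatDelta} (also open). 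A proof cannot rest on these.

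Beyond the conditionality, two steps of your reduction would fail as described. First, the proposed treatment of $\nabla m_\mu$ and $\nabla s_\mu$ by ``expanding $m_\mu$ and $s_\mu$ in the $\{C_\alpha\}$ basis and tracking signs'' cannot work as a positivity argument: these expansions have uncontrolled signs, and Schur positivity of each $\Theta_{s_\lambda}\nabla C_\alpha$ gives no information about a signed alternating sum of such terms without a cancellation-free mechanism, which you do not supply. Indeed the signed positivity of $\nabla m_\mu$ and $\nabla s_\mu$ themselves (your statement with $\lambda=\varnothing$) is already a long-standing open conjecture and does not follow from the compositional shuffle theorem, so the sign conventions $(-1)^{|\mu|-\ell(\mu)}$ and $(-1)^{\mathrm{spin}(\mu)}$ will not ``emerge naturally'' from any such change of basis. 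Second, as you yourself concede, passing from $\Theta_{e_\lambda}$ to $\Theta_{s_\lambda}$ via Jacobi--Trudi introduces signs that none of your ingredients remove; this is precisely the decisive step, and leaving it open means the argument never closes. In short: the statement remains a conjecture, and your proposal is a (partly flawed) reduction of one open problem to several others, not a proof.
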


\section{Technical proofs} \label{sec:proofs_sec_Delkm}

\subsection{Symmetric functions: tools} \label{sec:SF_tools}

In this subsection we introduce more tools from symmetric function theory, that we are going to use in our proofs.

\subsubsection{More symmetric function notation}

It is useful to introduce the so called \emph{star scalar product} on $\Lambda$ given by
$$
\langle p_{\lambda},p_{\mu} \rangle_*=(-1)^{|\mu|-|\lambda|}\prod_{i=1}^{\ell(\mu)}(1-q^{\mu_i})(1-t^{\mu_i}) z_{\mu}\chi(\mu=\lambda),
$$
where $\chi(\mathcal{P})=1$ if the statement $\mathcal{P}$ is true, and $\chi(\mathcal{P})=0$ otherwise.

For every symmetric function $f[X]$ and $g[X]$ we have (see \cite[Proposition~1.8]{Garsia-Haiman-Tesler-Explicit-1999})
\begin{equation} \label{eq:starprod}
\langle f,g\rangle_*= \langle \omega \phi f,g\rangle=\langle \phi \omega f,g\rangle
\end{equation}
where 
\begin{equation}
\phi f[X] \coloneqq f[MX]\qquad \text{ for all } f[X]\in \Lambda.
\end{equation}
Observe that
\begin{equation}
f^*=f^*[X]=f\left[\frac{X}{M}\right]=\phi^{-1}f[X].
\end{equation}
For all symmetric functions $f,g,h$ we have
\begin{equation} \label{eq:hperp_estar_adjoint}
\langle h^\perp f,g\rangle_*=\langle h^\perp f,\omega \phi g\rangle=\langle f,h\omega \phi g\rangle=\langle f,\omega \phi ( (\omega h)^* \cdot g)\rangle =\langle f,  (\omega h)^* \cdot g \rangle_*,
\end{equation}
so the operator $h^\perp$ is the adjoint of the multiplication by $(\omega h)^*$ with respect to the star scalar product.

We record here the addition formulas 
\begin{equation}
p_k[X+Y]=p_k[X]+p_k[Y]\quad \text{ and } \quad p_k[X-Y]=p_k[X]-p_k[Y],
\end{equation}
and
\begin{equation} \label{eq:e_h_sum_alphabets}
e_n[X+Y]=\sum_{i=0}^ne_{n-i}[X]e_i[Y]\quad \text{ and } \quad  h_n[X+Y]=\sum_{i=0}^nh_{n-i}[X]h_i[Y].
\end{equation}
Notice in particular that $p_k[-X]$ equals $-p_k[X]$ and not $(-1)^kp_k[X]$. As the latter sort of negative sign can be also useful, it is customary to use the notation $\epsilon$ to express it: we will have $p_k[\epsilon X] = (-1)^k p_k[X]$, so that, in general, 
\begin{equation} \label{eq:minusepsilon}
f[-\epsilon X] = \omega f[X]
\end{equation} 
for any symmetric function $f$.

We refer to \cite{Haglund-Book-2008} for more informations on this topic.

\subsubsection{Macdonald symmetric functions fundamental identities}

It turns out that the Macdonald polynomials are orthogonal with respect to the star scalar product: more precisely
\begin{equation} \label{eq:H_orthogonality}
\langle \widetilde{H}_{\lambda},\widetilde{H}_{\mu}\rangle_*=w_{\mu}(q,t)\chi(\lambda=\mu).
\end{equation}
These orthogonality relations give the following Cauchy identities
\begin{equation} \label{eq:Mac_Cauchy}
e_n\left[\frac{XY}{M}\right]=\sum_{\mu\vdash n} \frac{\widetilde{H}_\mu[X]\widetilde{H}_\mu[Y]}{w_\mu}\quad \text{ for all }n.
\end{equation}

We will use the following form of \emph{Macdonald-Koornwinder reciprocity} (see \cite{Macdonald-Book-1995} p. 332 or \cite{Garsia-Haiman-Tesler-Explicit-1999}): for all nonempty partitions $\alpha$ and $\beta$
\begin{equation} \label{eq:Macdonald_reciprocity}
\frac{\widetilde{H}_{\alpha}[MB_{\beta}]}{\Pi_{\alpha}}=\frac{\widetilde{H}_{\beta}[MB_{\alpha}]}{\Pi_{\beta}}.
\end{equation}

\subsubsection{Pieri rules and summation formulae}

For a given $k\geq 1$, we define the Pieri coefficients $c_{\mu \nu}^{(k)}$ and $d_{\mu \nu}^{(k)}$ by setting
\begin{align}
\label{eq:def_cmunu} h_{k}^\perp \widetilde{H}_{\mu}[X] & =\sum_{\nu \subset_k \mu} c_{\mu \nu}^{(k)}\widetilde{H}_{\nu}[X],\\
\label{eq:def_dmunu} e_{k}\left[\frac{X}{M}\right] \widetilde{H}_{\nu}[X] & =\sum_{\mu \supset_k \nu} d_{\mu \nu}^{(k)}\widetilde{H}_{\mu}[X].
\end{align}

The following identity is Proposition~5 in \cite{Bergeron-Haiman-2013}, written in the notation of \cite{Garsia-Haglund-Xin-Zabrocki-Pieri-2016}, which is coherent with ours:
\begin{equation} \label{eq:cmunu_recursion}
c_{\mu \nu}^{(k+1)}=\frac{1}{B_{\mu/\nu}}\sum_{\nu\subset_1 \alpha\subset_k\mu}c_{\mu \alpha}^{(k)}c_{\alpha \nu}^{(1)}\frac{T_{\alpha}}{T_{\nu}}\quad \text{ with }\quad B_{\mu/\nu} \coloneqq B_{\mu}-B_{\nu},
\end{equation}
where $\nu\subset_k \mu$ means that $\nu$ is
contained in $\mu$ (as Ferrers diagrams) and $\mu/\nu$ has $k$ lattice cells, while the
symbol $\mu\supset_k \nu$ is analogously defined. It follows from \eqref{eq:H_orthogonality} that
\begin{equation} \label{eq:rel_cmunu_dmunu}
c_{\mu \nu}^{(k)}=\frac{w_{\mu}}{w_{\nu}}d_{\mu \nu}^{(k)}.
\end{equation}

\subsubsection{Other useful identities}

In this section we collect some results from the literature that we are going to use later in the text.

\bigskip

The following identity is Proposition~2.2 in\cite{Garsia-Haglund-qtCatalan-2002}:
\begin{equation} \label{eq:garsia_haglund_eval}
\widetilde{H}_\mu[(1-t)(1-q^j)]=(1-q^j)\Pi_\mu h_j[(1-t)B_\mu].
\end{equation}
So, using \eqref{eq:Mac_Cauchy} with $Y=[j]_q=\frac{1-q^j}{1-q}$, we get
\begin{align} \label{eq:qn_q_Macexp}
e_n\left[X\frac{1-q^j}{1-q}\right] & =\sum_{\mu\vdash n}\frac{\widetilde{H}_\mu[X]\widetilde{H}_\mu[(1-t)(1-q^j)]}{w_\mu}\\
\notag & =(1-q^j)\sum_{\mu\vdash n}\frac{\Pi_\mu \widetilde{H}_\mu[X]h_j[(1-t)B_\mu]}{w_\mu}.
\end{align}

\bigskip

For $\mu\vdash n$, Macdonald proved (see \cite{Macdonald-Book-1995} p. 362) that
\begin{equation} \label{eq:Mac_hook_coeff_ss}
\langle \widetilde{H}_{\mu},s_{(n-r,1^r)}\rangle=e_r[B_{\mu}-1],
\end{equation}
so that, since by Pieri rule $e_rh_{n-r}=s_{(n-r,1^r)}+s_{(n-r+1,1^{r-1})}$,
\begin{equation} \label{eq:Mac_hook_coeff}
\langle \widetilde{H}_{\mu},e_rh_{n-r}\rangle=e_r[B_{\mu}].
\end{equation}

We need the following well-known proposition.
\begin{proposition} 
	For $n\in \mathbb{N}$ we have
	\begin{align}
	\label{eq:en_expansion}
	e_n[X] = e_n \left[ \frac{XM}{M} \right] = \sum_{\mu \vdash n} \frac{M B_\mu \Pi_{\mu} \widetilde{H}_\mu[X]}{w_\mu}.
	\end{align}
	Moreover, for all $k\in \mathbb{N}$ with $0\leq k\leq n$, we have
	\begin{align}
	\label{eq:e_h_expansion}
	h_k \left[ \frac{X}{M} \right] e_{n-k} \left[ \frac{X}{M} \right] = \sum_{\mu \vdash n} \frac{e_k[B_\mu] \widetilde{H}_\mu[X]}{w_\mu},
	\end{align}
	and
	\begin{align}
	\label{eq:p_expansion}
	\omega (p_n[X]) = [n]_q[n]_t\sum_{\mu \vdash n} \frac{M\Pi_\mu\widetilde{H}_\mu[X]}{w_\mu}.
	\end{align}
\end{proposition}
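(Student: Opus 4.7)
The plan is to extract each expansion by computing the coefficient of $\widetilde{H}_\mu$ via the star scalar product, using the orthogonality $\langle \widetilde{H}_\lambda, \widetilde{H}_\mu\rangle_* = w_\mu \chi(\lambda = \mu)$ from \eqref{eq:H_orthogonality}: for a homogeneous degree-$n$ symmetric function $f$, the coefficient of $\widetilde{H}_\mu[X]$ in $f$ equals $\langle f, \widetilde{H}_\mu\rangle_*/w_\mu$.

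The identity \eqref{eq:en_expansion} falls out of the Macdonald Cauchy formula \eqref{eq:Mac_Cauchy} directly, by specializing $Y = M$: since $XY/M = X$, this yields $e_n[X] = \sum_\mu \widetilde{H}_\mu[X]\,\widetilde{H}_\mu[M]/w_\mu$, and the evaluation $\widetilde{H}_\mu[M] = M B_\mu \Pi_\mu$ is the $j=1$ case of the Garsia--Haglund formula \eqref{eq:garsia_haglund_eval}, using $h_1[(1-t)B_\mu] = (1-t)B_\mu$.

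For \eqref{eq:e_h_expansion}, I compute $\langle h_k[X/M]\,e_{n-k}[X/M], \widetilde{H}_\mu\rangle_* = \langle \omega\phi(h_k^*\, e_{n-k}^*), \widetilde{H}_\mu\rangle$ using \eqref{eq:starprod}. Since $\phi$ is a plethystic ring homomorphism and is inverse to the starring operation, $\phi(h_k^*\, e_{n-k}^*) = h_k\, e_{n-k}$; applying $\omega$ turns this into $e_k h_{n-k}$, and then \eqref{eq:Mac_hook_coeff} identifies $\langle e_k h_{n-k}, \widetilde{H}_\mu\rangle = e_k[B_\mu]$, giving the claimed coefficient $e_k[B_\mu]/w_\mu$.

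For \eqref{eq:p_expansion}, \eqref{eq:starprod} similarly gives $\langle \omega(p_n), \widetilde{H}_\mu\rangle_* = \langle \phi(p_n), \widetilde{H}_\mu\rangle = (1-q^n)(1-t^n)\,\langle p_n, \widetilde{H}_\mu\rangle$, using the multiplicativity $p_n[MX] = p_n[M]\,p_n[X]$. To evaluate the remaining pairing I would expand $\widetilde{H}_\mu$ in Schur functions and use the classical character values $\langle p_n, s_\lambda\rangle = (-1)^r$ when $\lambda = (n-r,1^r)$ and zero otherwise, combined with the Macdonald hook coefficient $\langle \widetilde{H}_\mu, s_{(n-r,1^r)}\rangle = e_r[B_\mu - 1]$ from \eqref{eq:Mac_hook_coeff_ss}. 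The resulting alternating sum telescopes into a product,
\[ \sum_{r\geq 0}(-1)^r e_r[B_\mu - 1] = \prod_{c \in \mu/(1)}\bigl(1 - q^{a_\mu'(c)}\,t^{l_\mu'(c)}\bigr) = \Pi_\mu, \]
because $B_\mu - 1$ is the sum of the monomials $q^{a_\mu'(c)}\,t^{l_\mu'(c)}$ over the non-corner cells of $\mu$. Combined with $(1-q^n)(1-t^n) = M[n]_q[n]_t$ and division by $w_\mu$, this yields the stated formula. The only step that is more than a routine unwinding is this final hook-sum collapse; everything else is a direct application of the identities already recorded.
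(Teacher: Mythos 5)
Your proof is correct. The paper itself states this proposition as well-known and records no proof, so there is nothing to compare against line by line; your derivation is a complete and standard one using only identities already recorded in the paper. Each step checks out: the $Y=M$ specialization of \eqref{eq:Mac_Cauchy} together with the $j=1$ case of \eqref{eq:garsia_haglund_eval} (giving $\widetilde{H}_\mu[M]=MB_\mu\Pi_\mu$) yields \eqref{eq:en_expansion}; the coefficient extraction via \eqref{eq:H_orthogonality} and \eqref{eq:starprod}, with $\phi$ undoing the star and $\omega(h_k e_{n-k})=e_k h_{n-k}$ paired through \eqref{eq:Mac_hook_coeff}, yields \eqref{eq:e_h_expansion}; and for \eqref{eq:p_expansion} the reduction $\langle\omega p_n,\widetilde{H}_\mu\rangle_*=(1-q^n)(1-t^n)\langle p_n,\widetilde{H}_\mu\rangle$ is right, as is the hook-sum collapse $\sum_{r\geq 0}(-1)^r e_r[B_\mu-1]=\Pi_\mu$, which is legitimate because $B_\mu-1$ is a sum of $n-1$ monomials (so the terms with $r\geq n$ vanish and the alternating sum of elementary symmetric functions factors as the product $\prod_{c\in\mu/(1)}(1-q^{a_\mu'(c)}t^{l_\mu'(c)})$), and $(1-q^n)(1-t^n)=M[n]_q[n]_t$ finishes it. The only stylistic remark is that \eqref{eq:e_h_expansion} could equally be obtained from the Cauchy identity \eqref{eq:Mac_Cauchy} by a suitable specialization in the spirit of your first argument, but your scalar-product route is just as clean and arguably more uniform across the three identities.
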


We will need two more identities: Lemma~5.2 in\cite{DAdderio-VandenWyngaerd-2017}, i.e. for $\beta\vdash n>k\geq 1$
\begin{equation} \label{eq:lem52}
e_{n-k-1}[B_\beta-1]B_\beta=\sum_{\gamma\subset_k \beta}c_{\beta \gamma}^{(k)}B_\gamma T_\gamma ,
\end{equation}
and 
\begin{align} \label{eq:mikeomegaid}
\notag e_{n-k}[B_\beta] & =  T_{\beta}e_{k}[B_\beta(1/q,1/t)] \\
\text{(using \cite{Zabrocki-4Catalan-2016}*{Lemma~13})} & = \sum_{\gamma\subset_{k} \beta}c_{\beta \gamma}^{e_{k}^\perp}(1/q,1/t)T_{\beta} \\
\notag \text{(using \cite{Garsia-Haglund-qtCatalan-2002}*{Equation~(3.16)})}& =\sum_{\gamma\subset_k \beta}c_{\beta \gamma}^{(k)} T_\gamma , 
\end{align}
where $c_{\mu\nu}^{e_{n-k}^\perp}$ is the generalized Pieri coefficient defined by
\begin{equation}
\sum_{\nu\subset_{n-k} \mu}c_{\mu\nu}^{e_{n-k}^\perp}\widetilde{H}_\nu=e_{n-k}^\perp \widetilde{H}_\mu.
\end{equation}

We will use also \cite{Haglund-Schroeder-2004}*{Theorem~2.6}, i.e. for any $A,F\in \Lambda$ homogeneous
\begin{equation} \label{eq:HaglundThm}
\sum_{\mu\vdash n}\Pi_\mu F[MB_\mu]d_{\mu\nu}^A=\Pi_\nu\left(\Delta_{A[MX]}F[X]\right)[MB_\nu],
\end{equation}
where $d_{\mu\nu}^A$ is the generalized Pieri coefficient defined by
\begin{equation}
\sum_{\mu\supset \nu}d_{\mu\nu}^A\widetilde{H}_\mu=A\widetilde{H}_\nu.
\end{equation}

Another identity that we will need is \cite[Proposition~2.6]{Garsia-Hicks-Stout-2011}:
\begin{equation} \label{eq:Garsia_Hicks_Stout}
h_k\left[\frac{X}{1-q}\right]e_{n-k}\left[\frac{X}{M}\right] = \sum_{\mu\vdash n}\frac{\widetilde{H}_\mu[X]}{w_\mu}\sum_{r=1}^k\qbinom{k-1}{r-1}_qq^{\binom{r}{2}+r-kr} (-1)^{k-r} h_r[(1-t)B_\mu].
\end{equation}

Finally, we will use the following theorem from \cite{DAdderio-VandenWyngaerd-2017}.
\begin{theorem}[\cite{DAdderio-VandenWyngaerd-2017}*{Theorem~3.1}]
	For $m,k\geq 1$ and $\ell\geq 0$, we have
	\begin{align} \label{eq:mastereq}
	\sum_{\gamma\vdash m}\frac{\widetilde{H}_\gamma[X]}{w_\gamma} h_k[(1-t)B_\gamma]e_\ell[B_\gamma]  & = \sum_{j=0}^{\ell} t^{\ell-j}\sum_{s=0}^{k}q^{\binom{s}{2}}\qbinom{s+j}{s}_q \qbinom{k+j-1}{s+j-1}_q \\
	\notag & \times h_{s+j}\left[\frac{X}{1-q}\right] h_{\ell-j}\left[\frac{X}{M}\right] e_{m-s-\ell}\left[\frac{X}{M}\right].
	\end{align}
\end{theorem}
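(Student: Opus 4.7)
Since the left-hand side is already expanded in the modified Macdonald basis $\{\widetilde H_\mu[X]/w_\mu\}_{\mu\vdash m}$, my plan is to expand the right-hand side in the same basis and match coefficients for each $\mu\vdash m$. I would first establish the base case $\ell=0$, in which the claim reduces to
\[ \sum_{\gamma\vdash m}\frac{\widetilde H_\gamma[X]}{w_\gamma}h_k[(1-t)B_\gamma] \;=\; \sum_{s=1}^{k} q^{\binom{s}{2}}\qbinom{k-1}{s-1}_q h_s\!\left[\tfrac{X}{1-q}\right]e_{m-s}\!\left[\tfrac{X}{M}\right]. \]
This can be obtained by inverting the strictly lower-triangular linear system \eqref{eq:Garsia_Hicks_Stout} that expresses each $h_s[X/(1-q)]\,e_{m-s}[X/M]$ as a combination of $\sum_\gamma \widetilde H_\gamma/w_\gamma \cdot h_r[(1-t)B_\gamma]$ for $r\leq s$. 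The required inversion reduces to the classical $q$-binomial identity
\[ \sum_{s=r}^{k} (-1)^{s-r}q^{\binom{s}{2}+\binom{r}{2}+r-sr}\qbinom{k-1}{s-1}_q \qbinom{s-1}{r-1}_q \;=\; \delta_{rk}, \]
a consequence of $q$-Pascal.

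For general $\ell\geq 1$, I would induct on $\ell$, exploiting the fact that $e_\ell[B_\gamma]$ is the eigenvalue of $\Delta_{e_\ell}$ on $\widetilde H_\gamma$, so that
\[ \sum_{\gamma\vdash m}\frac{\widetilde H_\gamma[X]}{w_\gamma} h_k[(1-t)B_\gamma]\,e_\ell[B_\gamma] \;=\; \Delta_{e_\ell}\!\left( \sum_{\gamma\vdash m}\frac{\widetilde H_\gamma[X]}{w_\gamma}h_k[(1-t)B_\gamma] \right). \]
By the base case this equals $\Delta_{e_\ell}$ applied to an explicit combination of terms $h_s[X/(1-q)]\,e_{m-s}[X/M]$, so the key step becomes computing $\Delta_{e_\ell}(h_s[X/(1-q)]\,e_{m-s}[X/M])$ in closed form. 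For this I would apply \eqref{eq:HaglundThm} with $A=e_\ell[MX]$ and $F=h_s[X/(1-q)]\,e_{m-s}[X/M]$, yielding a weighted sum over generalized Pieri coefficients that, after a plethystic re-expansion via \eqref{eq:e_h_sum_alphabets}, should factor as a combination of products $h_{s+j}[X/(1-q)]\,h_{\ell-j}[X/M]\,e_{m-s-\ell}[X/M]$ carrying explicit $t^{\ell-j}$ weights.

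The main obstacle is the final bookkeeping: showing that after collecting all contributions coming from the base case and the expansion of $\Delta_{e_\ell}$, the coefficient of $h_{s+j}[X/(1-q)]\,h_{\ell-j}[X/M]\,e_{m-s-\ell}[X/M]$ is exactly $t^{\ell-j}q^{\binom{s}{2}}\qbinom{s+j}{s}_q\qbinom{k+j-1}{s+j-1}_q$. This reduces to a $q$-binomial identity in the indices $s$ and $j$ at fixed $s+j$, which I expect to follow from one form of the $q$-Chu--Vandermonde identity (or equivalently from the $q$-Pfaff--Saalschütz identity). Careful tracking of the summation ranges and sign cancellations in the intermediate expansions will be the most delicate technical part, since the $q$-powers $q^{\binom{s}{2}+r-sr}$ appearing in \eqref{eq:Garsia_Hicks_Stout} must merge correctly with the $t$-weights produced by $\Delta_{e_\ell}$ so that the final sum telescopes to the stated right-hand side.
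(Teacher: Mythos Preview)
The paper does not contain a proof of this statement: it is quoted verbatim from \cite{DAdderio-VandenWyngaerd-2017}*{Theorem~3.1} and used as an external input in the proof of Theorem~\ref{thm:SF_GenShuffle} (Section~11.5). There is therefore no ``paper's own proof'' to compare against.

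As for your sketch itself: the overall architecture is reasonable. The base case $\ell=0$ is indeed the inversion of \eqref{eq:Garsia_Hicks_Stout}, and your key observation that the left-hand side for general $\ell$ equals $\Delta_{e_\ell}$ applied to the $\ell=0$ expression is correct and is essentially how the identity is approached in \cite{DAdderio-VandenWyngaerd-2017}. Two points deserve care. First, you announce an ``induction on $\ell$'' but then describe applying $\Delta_{e_\ell}$ to the base case in one shot; these are not the same plan, and you should commit to one. Second, the appeal to \eqref{eq:HaglundThm} with $A=e_\ell[MX]$ is misplaced: that identity is a statement about evaluations at $X=MB_\nu$ weighted by $\Pi_\mu$, not a direct formula for $\Delta_{e_\ell}$ on a product $h_s[X/(1-q)]\,e_{m-s}[X/M]$. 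If you instead re-expand that product in the Macdonald basis via \eqref{eq:Garsia_Hicks_Stout} and multiply by $e_\ell[B_\mu]$, you land on sums of the form $\sum_\mu \widetilde H_\mu/w_\mu\cdot h_r[(1-t)B_\mu]e_\ell[B_\mu]$ with $r\leq s$, which is the left-hand side of the very identity you are proving (with smaller $k$); so the argument should really be organized as an induction on $k$ (or a simultaneous triangular inversion), not on $\ell$.
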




\subsection{Proof of Theorem~\ref{thm:DeltakmGD}}

Using \eqref{eq:en_expansion}, we have
\begin{align*}
\mathbf{\Pi} e_k^*\mathbf{\Pi}^{-1} \nabla e_{n-k}
& = \mathbf{\Pi} e_k^*\mathbf{\Pi}^{-1} \sum_{\mu\vdash n-k} \frac{MB_\mu\Pi_{\mu}T_\mu}{w_\mu}\widetilde{H}_\mu[X]\\
\text{(using \eqref{eq:def_dmunu})} & = \mathbf{\Pi} \sum_{\mu\vdash n-k} \frac{MB_\mu T_\mu}{w_\mu}\sum_{\lambda\supset_k \mu} d_{\lambda\mu}^{(k)}\widetilde{H}_\lambda[X]\\
\text{(using \eqref{eq:rel_cmunu_dmunu})} & = \mathbf{\Pi} \sum_{\lambda\vdash n} M\sum_{\mu\subset_k \lambda}c_{\lambda\mu}^{(k)}B_\mu T_\mu \frac{\widetilde{H}_\lambda[X]}{w_\lambda}\\
\text{(using \eqref{eq:lem52})}& = \mathbf{\Pi} \sum_{\lambda\vdash n} M e_{n-k-1}[B_\lambda-1] B_\lambda \frac{\widetilde{H}_\lambda[X]}{w_\lambda}\\
& =  \sum_{\lambda\vdash n} M e_{n-k-1}[B_\lambda-1] \Pi_\lambda B_\lambda \frac{\widetilde{H}_\lambda[X]}{w_\lambda}\\
\text{(using \eqref{eq:en_expansion})}& =  \Delta_{e_{n-k-1}}'e_n .
\end{align*}

\subsection{Proof of Theorem~\ref{thm:DeltakmDSq}}

Using \eqref{eq:p_expansion}, we have
\begin{align*}
\mathbf{\Pi} e_k^*\mathbf{\Pi}^{-1} \nabla \frac{[n]_q}{[n-k]_q}\omega(p_{n-k})
& = [n]_q[n-k]_t\mathbf{\Pi} e_k^*\mathbf{\Pi}^{-1} \sum_{\mu\vdash n-k} \frac{M \Pi_{\mu}T_\mu}{w_\mu}\widetilde{H}_\mu[X]\\
\text{(using \eqref{eq:def_dmunu})} & = [n]_q[n-k]_t\mathbf{\Pi} \sum_{\mu\vdash n-k} \frac{M T_\mu}{w_\mu}\sum_{\lambda\supset_k \mu} d_{\lambda\mu}^{(k)}\widetilde{H}_\lambda[X]\\
\text{(using \eqref{eq:rel_cmunu_dmunu})} & = [n]_q[n-k]_t \mathbf{\Pi} \sum_{\lambda\vdash n} M\sum_{\mu\subset_k \lambda}c_{\lambda\mu}^{(k)} T_\mu \frac{\widetilde{H}_\lambda[X]}{w_\lambda}\\
\text{(using \eqref{eq:mikeomegaid})}& = [n]_q[n-k]_t \mathbf{\Pi} \sum_{\lambda\vdash n} Me_{n-k}[B_\lambda] \frac{\widetilde{H}_\lambda[X]}{w_\lambda}\\
& =  [n]_q[n-k]_t \sum_{\lambda\vdash n} Me_{n-k}[B_\lambda]\Pi_\lambda  \frac{\widetilde{H}_\lambda[X]}{w_\lambda}\\
\text{(using \eqref{eq:p_expansion})}& =\frac{[n-k]_t}{[n]_t}  \Delta_{e_{n-k}}\omega(p_n) .
\end{align*}

\subsection{Proof of Proposition~\ref{prop:propDel10}}

It is enough to check \eqref{eq:propDel10} on the Macdonald basis: for $\mu\vdash n$
\begin{align*}
\Theta_1 \widetilde{H}_\mu[X] & = \mathbf{\Pi}e_1^*\mathbf{\Pi}^{-1}\widetilde{H}_\mu[X] \\
\text{(using \eqref{eq:def_dmunu})}& = \mathbf{\Pi}\sum_{\lambda\supset_1 \mu} d_{\lambda\mu}^{(1)}\frac{1}{\Pi_\mu}\widetilde{H}_\lambda[X] \\
& = \sum_{\lambda\supset_1 \mu} d_{\lambda\mu}^{(1)}\frac{\Pi_\lambda}{\Pi_\mu}\widetilde{H}_\lambda[X]\\
& = \sum_{\lambda\supset_1 \mu} d_{\lambda\mu}^{(1)}\left(1-\frac{D_\lambda-D_\mu}{M}\right)\widetilde{H}_\lambda[X]\\
& = \sum_{\lambda\supset_1 \mu} d_{\lambda\mu}^{(1)}\widetilde{H}_\lambda[X]-\frac{1}{M} \sum_{\lambda\supset_1 \mu} d_{\lambda\mu}^{(1)} D_\lambda \widetilde{H}_\lambda[X] +\frac{1}{M} \sum_{\lambda\supset_1 \mu} d_{\lambda\mu}^{(1)}D_\mu \widetilde{H}_\lambda[X]\\
\text{(using \eqref{eq:def_dmunu} and \eqref{eq:D0})} & = \frac{1}{M}\underline{e}_1\widetilde{H}_\mu[X] +\frac{1}{M} D_0\frac{1}{M}\underline{e}_1\widetilde{H}_\lambda[X] -\frac{1}{M} \frac{1}{M}\underline{e}_1D_0 \widetilde{H}_\mu[X]\\
& = \frac{1}{M}\left(\underline{e}_1 +\frac{1}{M} (D_0\underline{e}_1 - \underline{e}_1D_0) \right)\widetilde{H}_\lambda[X]\\
\text{(using \eqref{eq:[Dk_e1]})}& =\frac{1}{M}(\underline{e}_1+D_1)\widetilde{H}_\lambda[X].
\end{align*}

\subsection{Proof of Theorem~\ref{thm:SF_GenShuffle}}

We have

\begin{align*}
& \hspace{-0.5cm} h_j^\perp \nabla e_n[X[s+1]_q]=\\
\text{(using \eqref{eq:qn_q_Macexp})}& =\sum_{\lambda\vdash n}(1-q^{s+1})h_{s+1}[(1-t)B_\lambda] \Pi_\lambda T_\lambda h_j^\perp\frac{\widetilde{H}_\lambda[X]}{w_\lambda}\\
\text{(using \eqref{eq:def_cmunu})}& =\sum_{\mu\vdash n-j} \widetilde{H}_\mu[X](1-q^{s+1}) \sum_{\lambda\vdash n}\frac{c_{\lambda\mu}^{(j)}}{w_\lambda}  h_{s+1}[(1-t)B_\lambda] T_\lambda \Pi_\lambda  \\
\text{(using \eqref{eq:rel_cmunu_dmunu})}& =\sum_{\mu\vdash n-j} \frac{\widetilde{H}_\mu[X]}{w_\mu} (1-q^{s+1}) \sum_{\lambda\vdash n}d_{\lambda\mu}^{(j)} h_{s+1}[(1-t)B_\lambda] T_\lambda \Pi_\lambda \\
\text{(using \eqref{eq:HaglundThm})}& =\sum_{\mu\vdash n-j} \frac{\widetilde{H}_\mu[X]}{w_\mu} (1-q^{s+1}) \Pi_\mu\left.\left(\Delta_{e_j}h_{s+1}[Z/(1-q)] e_n[Z/M]\right)\right|_{Z=MB_\mu}  
\end{align*}
\begin{align*}
\text{(using \eqref{eq:Garsia_Hicks_Stout})}& =\sum_{\mu\vdash n-j} \frac{\widetilde{H}_\mu[X]}{w_\mu} (1-q^{s+1}) \Pi_\mu\sum_{\beta\vdash n+s+1}e_j[B_\beta]\frac{\widetilde{H}_\beta[MB_\mu]}{w_\beta}\\
& \times \sum_{i=1}^{s+1}\qbinom{s}{i-1}_qq^{\binom{i}{2}+i-i(s+1)}(-1)^{s+1-i}h_i[(1-t)B_\beta] \\
& =\sum_{\mu\vdash n-j} \frac{\widetilde{H}_\mu[X]}{w_\mu} (1-q^{s+1}) \Pi_\mu\sum_{i=1}^{s+1}\qbinom{s}{i-1}_qq^{\binom{i}{2}+i-i(s+1)}(-1)^{s+1-i} \\
& \times \sum_{\beta\vdash n+s+1}\frac{\widetilde{H}_\beta[MB_\mu]}{w_\beta}h_i[(1-t)B_\beta]e_j[B_\beta] \\
\text{(using \eqref{eq:mastereq})}& =\sum_{\mu\vdash n-j} \frac{\widetilde{H}_\mu[X]}{w_\mu} (1-q^{s+1}) \Pi_\mu\sum_{i=1}^{s+1}\qbinom{s}{i-1}_qq^{\binom{i}{2}+i-i(s+1)}(-1)^{s+1-i} \\
& \times \sum_{p=0}^jt^{j-p}\sum_{b=0}^iq^{\binom{b}{2}}\qbinom{b+p}{p}_q\qbinom{i+p-1}{b+p-1}_qh_{b+p}[(1-t)B_\mu] h_{j-p}[B_\mu] e_{n+s+1-b-j}[B_\mu]  \\
\text{($e_r[B_\mu]=0$ for $r>|\mu|$)}& =\sum_{\mu\vdash n-j} \frac{\widetilde{H}_\mu[X]}{w_\mu} (1-q^{s+1}) \Pi_\mu q^{\binom{s+1}{2}+s+1-(s+1)^2} \times\\
& \times \sum_{p=0}^jt^{j-p} q^{\binom{s+1}{2}}\qbinom{s+1+p}{p}_qh_{s+1+p}[(1-t)B_\mu] h_{j-p}[B_\mu] e_{n-j}[B_\mu] \\
\text{(using \eqref{eq:Bmu_Tmu})}& =\sum_{\mu\vdash n-j} \frac{\widetilde{H}_\mu[X]}{w_\mu} (1-q^{s+1}) \Pi_\mu \sum_{p=0}^jt^{j-p} \qbinom{s+1+p}{p}_qh_{s+1+p}[(1-t)B_\mu] h_{j-p}[B_\mu] T_\mu \\
& =\sum_{\mu\vdash n-j} \frac{\widetilde{H}_\mu[X]}{w_\mu} \Pi_\mu \sum_{p=0}^jt^{j-p} \qbinom{s+p}{p}_q(1-q^{s+1+p}) h_{s+1+p}[(1-t)B_\mu] h_{j-p}[B_\mu] T_\mu \\
\text{(using \eqref{eq:qn_q_Macexp})}& =\sum_{p=0}^jt^{j-p} \qbinom{s+p}{p}_q  \Delta_{h_{j-p}}\nabla e_{n-j}[X[s+1+p]_q] 
\end{align*}
where we used
\[\binom{s+1}{2}+s+1-(s+1)^2=\frac{s^2+s+2s+2-2s^2-4s-2}{2}=-\binom{s+1}{2}.\]

\subsection{Proof of Lemma~\ref{lem:Schroeder_comp}}

We have

\begin{align*}
& \hspace{-0.5cm}\< \Delta_{h_{\ell}}f,h_ke_{n-\ell-k} \>= \\
\text{(using \eqref{eq:starprod})}& =\< \Delta_{h_{\ell}}f,e_k^*h_{n-\ell-k}^* \>_*\\
& =\< f, \Delta_{h_{\ell}}e_k^*h_{n-\ell-k}^* \>_*\\
\text{(using \eqref{eq:e_h_expansion})}& =\sum_{\beta\vdash n-\ell} h_\ell[B_\beta] e_{n-\ell-k}[B_\beta]\< f, \frac{\widetilde{H}_\beta}{w_\beta}\>_* \\
& =\sum_{\beta\vdash n-\ell} \Pi_{\beta}\left.\left(h_\ell^* e_{n-\ell-k}^*\right)\right|_{X=MB_\beta} \< f, \mathbf{\Pi}^{-1}\frac{\widetilde{H}_\beta}{w_\beta}\>_* \\
\text{(using \eqref{eq:e_h_expansion})}& =\sum_{\beta\vdash n-\ell} \Pi_{\beta}\left.\left(\Delta_{e_{\ell}} e_{n-k}^*\right)\right|_{X=MB_\beta} \< \mathbf{\Pi}^{-1}f, \frac{\widetilde{H}_\beta}{w_\beta}\>_* \\
\text{(using \eqref{eq:HaglundThm})}& =\sum_{\beta\vdash n-\ell} \sum_{\mu\supset_\ell \beta} \Pi_{\mu}d_{\mu\beta}^{e_\ell^*}e_{n-k}[B_\mu] \< \mathbf{\Pi}^{-1}f, \frac{\widetilde{H}_\beta}{w_\beta}\>_* \\
\text{(using \eqref{eq:rel_cmunu_dmunu})}& =\sum_{\mu \vdash n}\frac{\Pi_{\mu}}{w_\mu}e_{n-k}[B_\mu] \left\< \mathbf{\Pi}^{-1}f, \sum_{\beta \subset_\ell \mu}  c_{\mu\beta}^{h_\ell^\perp} \widetilde{H}_\beta \right\>_*
\end{align*}
\begin{align*}
\text{(using \eqref{eq:def_cmunu})}& = \sum_{\mu \vdash n}\frac{\Pi_{\mu}}{w_\mu}e_{n-k}[B_\mu]\left\< \mathbf{\Pi}^{-1}f, h_\ell^\perp\widetilde{H}_\mu \right\>_*\\
\text{(using \eqref{eq:hperp_estar_adjoint})}& = \sum_{\mu \vdash n}\frac{1}{w_\mu}e_{n-k}[B_\mu]\left\<\mathbf{\Pi}e_\ell^*\mathbf{\Pi}^{-1}f,  \widetilde{H}_\mu \right\>_* \\
\text{(using \eqref{eq:e_h_expansion})}& = \left\<\mathbf{\Pi}e_\ell^*\mathbf{\Pi}^{-1}f, e_k^* h_{n-k}^*  \right\>_*\\
\text{(using \eqref{eq:starprod})}& = \left\<\mathbf{\Pi}e_\ell^*\mathbf{\Pi}^{-1}f, h_k e_{n-k} \right\>\\
& =\left\<\Theta_\ell f, h_k e_{n-k} \right\>.
\end{align*}


\bibliographystyle{amsalpha}
\bibliography{Biblebib}

\end{document}